\definecolor{job}{RGB}{200,65,0}
\definecolor{maitreyee}{RGB}{25,2,255}
\definecolor{jacob}{RGB}{221,7,223}
\definecolor{kaveh}{RGB}{255,255,255}
\newcommand{\bDiamond}{\mathbin{\Diamond}}
\newcommand{\textdef}[1]{\emph{#1}}
\newcommand\bigDiamond{\mathop{\mathpalette\bigDi@mond\relax}}
\newcommand\bigDi@mond[2]{%
  \vcenter{\hbox{\m@th
    \scalebox{\ifx#1\displaystyle 2\else1.2\fi}{$#1\Diamond$}%
  }}%
}
\newcommand\bigLozenge{\mathop{\mathpalette\bigL@zenge\relax}}
\newcommand\bigL@zenge[2]{%
  \vcenter{\hbox{\m@th
    \scalebox{\ifx#1\displaystyle 2\else1.2\fi}{$#1\blacklozenge$}%
  }}%
}
\DeclareMathOperator{\Hom}{Hom}
\newcommand{\e}{\varepsilon}
\newcommand{\R}{\mathbb R}
\newcommand{\Z}{\mathbb Z}
\newcommand{\modu}{\operatorname{mod}}
\newcommand{\add}{\operatorname{add}}
\newcommand{\intc}{\underline{c}}
\newcommand{\Ext}{\operatorname{Ext}}
\newcommand{\Ind}{\operatorname{Ind}}
\newcommand{\quilt}{\mathfrak Q}
\newcommand{\nset}{[\mathbf{n},{<}]}
\newcommand{\Db}{\mathcal D^b}
\newcommand{\udim}{\underline{\operatorname{dim}}}
\newcommand{\ClosedHeart}{\mathcal C_{\mathcal Z}}
\newcommand{\TT}{\mathbf T}
\newcommand{\UZc}{\mathbb U_{\mathcal Z, \underline{c}}}
\newcommand{\HZc}{\mathbb H_{\mathcal Z, \underline{c}}}
\newcommand{\rep}{\mathop{\text{rep}}}
\newcommand{\Fix}{\mathop{\text{Fix}}}
\newcommand{\repPlus}{\add(\mathcal I^+)}
\newcommand{\gZ}{g_{\mathcal Z}}
\newtheorem{theorem}{Theorem}[section]
\newtheorem{lemma}[theorem]{Lemma}
\newtheorem{proposition}[theorem]{Proposition}
\theoremstyle{definition}
\newtheorem{example}[theorem]{Example}
\newtheorem{definition}[theorem]{Definition}
\newtheorem*{introdef}{Definition}
\newtheorem{remark}[theorem]{Remark}
\newtheorem{notation}[theorem]{Notation}
\newtheorem{question}[theorem]{Question}
\newtheorem*{introremark}{Remark}
\newtheorem{thm}{Theorem}
\begin{document}

\title{A continuous associahedron of type $A$}
\author[Kulkarni]{Maitreyee C. Kulkarni}
\address{Department of Mathematics, North Carolina State University, Raleigh, NC, USA.}
\email{mckulkar@ncsu.edu}

\author[Matherne]{Jacob P. Matherne}
\address{Department of Mathematics, North Carolina State University, Raleigh, NC, USA.}
\email{jpmather@ncsu.edu}

\author[Mousavand]{Kaveh Mousavand}
\address{Representation Theory and Algebraic Combinatorics Unit, Okinawa Institute of Science and Technology (OIST), Japan.}
\email{mousavand.kaveh@gmail.com}

\author[Rock]{Job D. Rock}
\address{Algebra group, Department of Mathematics, KU Leuven, Leuven, Belgium;
Mathematics:\ Analysis, Logic and Discrete Mathematics, UGent, Gent, Belgium}
\email{jobdaisie.rock@kuleuven.be}

\thanks{MK and JM received support from the Max Planck Institute for Mathematics in Bonn, Germany. MK, JM, and JR also received support from the Hausdorff Research Institute for Mathematics. JM also received support from the Deutsche Forschungsgemeinschaft (DFG) under Germany's Excellence Strategy - GZ 2047/1, Projekt-ID 390685813, as well as from a Simons Foundation Travel Support for Mathematicians Award MPS-TSM00007970. In the final stage of this work, KM was supported by Early-Career Scientist JSPS Kakenhi grant number 24K16908.}
\subjclass[2020]{16G20, 18G80 (primary); 13F60, 05E10 (secondary)}
\keywords{associahedra, (continuous) cluster categories, amplituhedra.}

\begin{abstract}
    Taking a representation-theoretic viewpoint, we construct a continuous associahedron motivated by the realization of the generalized associahedron in the physical setting.
    We show that our associahedron shares important properties with the generalized associahedron of type $A$.
    Our continuous associahedron is convex and manifests a cluster theory:
    the points which correspond to the clusters are on its boundary, and the edges that correspond to mutations are given by intersections of hyperplanes.
    This requires development of several methods that are continuous analogues of discrete methods.
    We conclude the paper by showing that there is a sequence of embeddings of type $A$ generalized associahedra into our continuous associahedron.
    
\end{abstract}

\maketitle
\setcounter{tocdepth}{1}
\tableofcontents
	\section{Introduction}
	    The classical $(n-2)$-dimensional associahedron is a convex polytope whose vertices are binary bracketings of words using $n$ symbols, and whose edges correspond to one application of associativity.  It was first discovered by Tamari while studying general questions about associativity in algebras \cite{T51} and rediscovered under the name of Stasheff polytope in the context of homotopy theory \cite{S63}. 
	    The associahedron captures the combinatorial structure of a variety of objects throughout mathematics, including triangulations of polygons, operads and homotopy theory, real moduli spaces, and cluster structures \cite{FZ01, CFZ02}.  For details of these constructions and for other realizations of associahedra, we point to \cite{CSZ15} and the references therein.
	    
	    The purpose of this paper is to introduce a continuous version of the classical associahedron guided by recent advances in cluster categories and particle physics. 
	    Our motivation for such a construction is two-fold:
	    \begin{itemize}
	        \item From \cite{FZ01,CFZ02}, the associahedron can be viewed as the ``cluster polytope" which captures the combinatorics of type $A$ cluster algebras.
	        In each of \cite{IT15} and \cite{IRT22}, the authors use representation theory of type $A$ quivers with infinitely many vertices to give an analogue for a cluster structure.
	        However, neither of these settings has a known cluster algebra.
	        In this paper, we strengthen the analogy between the continuous and finite dimensional cases by developing an analogue of the cluster polytope in the continuous setting.
	        
	        \item In \cite{ABHY18}, where the scattering amplitude of a certain quantum field theory is treated, the authors realized the corresponding amplituhedron as the generalized associahedron of type $A$. 
	        Moreover, a continuous associahedron of type $A$ appears in \cite{AHST22}, where it is obtained as an inverse limit of the generalized associahedra for $A_n$, as $n$ goes to infinity. 
	        In contrast, we
	        start from a category with a continuous cluster structure, allowing us to treat time as a continuous phenomenon from the beginning and consequently give a continuous associahedron without taking limits.
	    \end{itemize}
	    As in \cite{AHST22}, our construction of the continuous associahedron is motivated by the representation-theoretic techniques developed in \cite{B-MDMTY24}.
		
	\subsection{The continuous associahedron}
	
	Fix an algebraically closed field $k$ of characteristic $0$.  The central category considered in \cite{IT15} is the Krull--Schmidt triangulated category $\mathcal{D}$ (defined in Section~\ref{sec:the category D}), whose indecomposable objects are the points $(x,y)$ in $\R \times (-\frac{\pi}{2},\frac{\pi}{2})$, and whose shift functor is given by $(x,y)[1] := (x+\pi,-y)$ for every indecomposable object $(x,y)$. 
	Choosing a zigzag $\mathcal{Z}$ in $\mathcal{D}$ (Definition~\ref{def:zigzag}) determines a $t$-structure $(\mathcal{D}^{\le 0},\mathcal{D}^{\ge0})$ on $\mathcal{D}$ (Section~\ref{sec:t-structures}).  One may ``quilt" off of $\mathcal{Z}$ (Definition~\ref{def:quilt of zigzag}), just as one knits off of the projective slice in $\mathcal{D}^b(A_n)$.  The indecomposable objects of $\mathcal{D}^{\le 0}$ are those obtained by quilting off $\mathcal{Z}$, and the indecomposable objects of $\mathcal{D}^{\ge 0}$ are those obtained by inverse quilting off $\mathcal{Z}[1]$.  We write $\mathcal{D}^\heartsuit := \mathcal{D}^{\le0} \cap \mathcal{D}^{\ge0}$ for the heart of this $t$-structure and note that it plays an analogous role to $\rep(A_n)$ in our continuous story.  See Section~\ref{sec:additional properties of heart} for more on this philosophy. 
    
    The setting for the continuous associahedron is the category
	$
	\ClosedHeart = \add\left(\Ind(\mathcal D^\heartsuit) \sqcup \Ind(\mathcal Z[1]) \right)
	$.
	The category $\ClosedHeart$ has a continuous analogue of the mesh relations appearing in the Auslander--Reiten theory of algebras of type $A$:
	this allows us to define continuous deformed mesh relations (Definition \ref{def:contintuous deformed mesh relations}) using a function $\intc:\Ind(\ClosedHeart)\to \R_{>0}$ analogous to the construction in \cite{B-MDMTY24}.  For the remainder of the introduction, we fix a zigzag $\mathcal{Z}$ and such a function $\intc$.
	
	\begin{introdef}[Definition \ref{def:continuous associahedron}]
	The continuous associahedron $\UZc$ is the subset of $\prod_{\Ind(\ClosedHeart)}\R$ consisting of nonnegative solutions of the continuous deformed mesh relations with respect to $\intc$.
	\end{introdef}
	
	\begin{thm}[Theorem \ref{thm:convex}]\label{thm:intro:convex}
		The continuous associahedron $\UZc$ is convex in the sense that any line segment in $\prod_{\Ind(\ClosedHeart)}\R$ whose endpoints are in $\UZc$ is entirely contained in $\UZc$.
	\end{thm}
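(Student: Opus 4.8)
The plan is to observe that, by its very definition, $\UZc$ is the intersection of a family of affine hyperplanes (one for each continuous deformed mesh relation) with the nonnegative orthant of the vector space $V := \prod_{\Ind(\ClosedHeart)}\R$, and that any such intersection is automatically convex. Concretely, it suffices to show: if $v_0, v_1 \in \UZc$ and $v_t := (1-t)v_0 + t v_1$ for $t \in [0,1]$, then $v_t \in \UZc$. Since every line segment in $V$ with endpoints in $\UZc$ is of exactly this form, the theorem follows. So the proof reduces to checking that each of the two defining conditions of $\UZc$ is preserved under taking convex combinations of two of its points.

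Nonnegativity is immediate: for each $X \in \Ind(\ClosedHeart)$, the $X$-coordinate of $v_t$ equals $(1-t)(v_0)_X + t(v_1)_X$, which is $\geq 0$ because $(v_0)_X,(v_1)_X \geq 0$ and $1-t, t \geq 0$. Hence $v_t$ again lies in the nonnegative orthant.

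For the mesh relations, I would unwind Definition~\ref{def:contintuous deformed mesh relations} and record that each such relation is an equation of the shape $\Lambda(v) = \kappa$, where $\Lambda\colon V \to \R$ is a linear functional pairing the coordinates of $v$ over the indecomposables occurring in a given mesh against coefficients built from $\intc$, and $\kappa \in \R$ is a constant likewise determined by $\intc$. Equivalently, the relation says $v$ lies on the fixed affine hyperplane $\Lambda^{-1}(\kappa)$. Since affine conditions are preserved by convex combinations, $\Lambda(v_t) = (1-t)\Lambda(v_0) + t\Lambda(v_1) = (1-t)\kappa + t\kappa = \kappa$. As this holds for every mesh, $v_t$ satisfies all the continuous deformed mesh relations, and together with the previous paragraph this gives $v_t \in \UZc$.

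The only real content is verifying that Definition~\ref{def:contintuous deformed mesh relations} genuinely produces \emph{affine} constraints, i.e.\ that no relation hides a nonlinear combination of the coordinates of $v$. In the continuous setting a mesh relation is naturally written using an integral over a one-parameter family of indecomposables in place of a finite sum of middle terms; here one uses that integration against a fixed ($\intc$-weighted) measure is a linear operation, so $\Lambda$ is still linear and the constraint still affine. I expect this bookkeeping — confirming affineness and that the relevant integrals/limits are well defined on all of $\UZc$, which is already part of the standing setup — to be the main point to pin down, rather than any conceptual obstacle; once it is in place, the argument above goes through verbatim.
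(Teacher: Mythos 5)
Your proposal is correct and is essentially the paper's own argument: the paper writes the deformed mesh relation for each tilting rectangle at $\Phi_0$ and $\Phi_1$, takes the convex combination $\Phi_t$, and observes that the constant $\int_{\Diamond}\intc$ is unchanged, which is exactly your ``affine constraints are preserved under convex combinations'' step together with the trivial nonnegativity check. The one point worth noting is that the affineness you flag as the main content is immediate from Definition~\ref{def:contintuous deformed mesh relations}: each relation involves only the four corner coordinates with coefficients $\pm 1$, and the integral is of the \emph{fixed} function $\intc$ over the rectangle (hence a constant), not an integral of $\Phi$ over a continuous family of middle terms.
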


\subsection{The continuous associahedron as a cluster ``polytope''}

    Using the triangulated structure in $\mathcal D$, we define compatibility of a pair of indecomposables in $\ClosedHeart$ (Definition \ref{def:compatibility}).
    A $\TT$-cluster $\mathcal T$ (Definition \ref{def:T-cluster}) is a maximal collection of pairwise compatible indecomposables in $\ClosedHeart$.
    We define an exchange relation $\mathcal T\to (\mathcal T\setminus\{X\})\cup\{Y\}$, called $\TT$-mutation (Definition \ref{def:algebra mutation}), that replaces exactly one indecomposable $X \in \mathcal{T}$ with a new indecomposable $Y\notin \mathcal{T}$ whenever possible.
    We say a solution $\Phi$ corresponds to a $\TT$-cluster $\mathcal T$ if $\Phi(X)=0$ for all $X\in\mathcal T$.  Given a $\TT$-cluster $\mathcal{T}$, it is not known whether a solution corresponding to $\mathcal{T}$ exists, and if it exists, whether or not it is unique.
	We refer the reader to Question \ref{quest:open problem} and the discussion preceding it for further details.
	
	\begin{introremark}
	    A key difference to the cluster structure in \cite{B-MDMTY24} is that one is not necessarily able to exchange an arbitrary object in a $\TT$-cluster (see Example \ref{xmp:mutation}).
	    Additionally, our category $\ClosedHeart$ is not a cluster category in the sense of \cite{BMRRT} because $\ClosedHeart$ is not an orbit category of $\mathcal D$.
	    We do, however, have a cluster theory as in \cite[Definition 5.1.1]{IRT22}.
	\end{introremark}
	
	Cluster polytopes were introduced by Fomin and Zelevinsky \cite{FZ01} and their polytopality was proved by Chapoton, Fomin, and Zelevinsky \cite{CFZ02}.
	In particular, the vertices correspond to clusters and the edges to mutation.
	We show similar behavior in $\UZc$.
	To each indecomposable $X$ in $\ClosedHeart$, we associate the hyperplane $\mathbb H_X$ in $\prod_{\Ind(\ClosedHeart)}\R$ given by setting the $X$-coordinate to 0. Additionally, using Theorem \ref{thm:intro:convex}, we define a point $X$ to be on the boundary of $\UZc$ if there exists a nontrivial line segment (a line segment with distinct endpoints) ending at $X$ that cannot be extended inside $\UZc$ such that $X$ is no longer an endpoint (Definition \ref{def:boundary}).
	
	\begin{thm}[Theorems \ref{thm:clusters are extremal} and \ref{thm:mutation on associahedron}]
	    If $\mathcal{T}$ is a $\TT$-cluster, then a solution corresponding to $\mathcal{T}$ is on the boundary of $\UZc$. 
		Let $\mathcal T$ and $\mathcal T'$ be two $\TT$-clusters, each of which uniquely correspond to two respective points $\Phi$ and $\Phi'$ on the boundary of $\UZc$.
		Then there is a $\TT$-mutation $\mathcal T\to \mathcal T'$ if and only if there is an edge in $\UZc$ connecting $\Phi$ to $\Phi'$, which is given by $\bigcap_{X\in\mathcal T\cap\mathcal T'} \mathbb H_X$.
	\end{thm}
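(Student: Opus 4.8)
The plan is to treat $\UZc$ as $S \cap \prod_{\Ind(\ClosedHeart)}\R_{\ge 0}$, where $S\subseteq\prod_{\Ind(\ClosedHeart)}\R$ is the affine space of solutions of the continuous deformed mesh relations, and to deduce everything from the convexity of $\UZc$ (Theorem~\ref{thm:intro:convex}) together with two structural inputs about solutions, each the continuous analogue of a standard fact for the ABHY realization. First, a \emph{fullness} property: $\UZc$ is not contained in any coordinate hyperplane $\mathbb{H}_X$, which I would deduce from the strict positivity of $\intc$. Second: the zero set of any nonnegative solution is a pairwise compatible collection — to be proved from the deformed mesh relations much as two crossing diagonals cannot both vanish in the ABHY associahedron — so that a solution $\Phi$ corresponding to a $\TT$-cluster $\mathcal{T}$ has zero set exactly $\mathcal{T}$, by maximality of $\mathcal{T}$.

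For the extremality statement, let $\Phi$ correspond to $\mathcal{T}$ and pick $X\in\mathcal{T}$, so $\Phi(X)=0$. By fullness choose $\Psi\in\UZc$ with $\Psi(X)>0$; then $\Psi\neq\Phi$, and by convexity the segment $[\Psi,\Phi]$ lies in $\UZc$ and ends at $\Phi$. Along the line through $\Psi$ and $\Phi$ the coordinate $\phi\mapsto\phi(X)$ is affine, positive at $\Psi$ and zero at $\Phi$, hence strictly negative on the open ray past $\Phi$; so the segment cannot be extended within $\UZc$ beyond $\Phi$, and $\Phi$ is on the boundary.

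For the edge statement, write $H:=\bigcap_{W\in\mathcal{T}\cap\mathcal{T}'}\mathbb{H}_W$. Assume first a $\TT$-mutation $\mathcal{T}\to\mathcal{T}'=(\mathcal{T}\setminus\{X\})\cup\{Y\}$, so $\mathcal{T}=(\mathcal{T}\cap\mathcal{T}')\sqcup\{X\}$, $\mathcal{T}'=(\mathcal{T}\cap\mathcal{T}')\sqcup\{Y\}$, and $\Phi,\Phi'\in H\cap\UZc$. Since $H\cap\mathbb{H}_X\cap\UZc=\bigcap_{W\in\mathcal{T}}\mathbb{H}_W\cap\UZc=\{\Phi\}$ by the uniqueness hypothesis (and symmetrically $H\cap\mathbb{H}_Y\cap\UZc=\{\Phi'\}$), every point of $H\cap\UZc$ other than $\Phi,\Phi'$ has both $X$- and $Y$-coordinates strictly positive. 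The crux is then to show $H\cap S$ is an affine line: this is the continuous analogue of the fact that an almost-complete cluster of type $A$ has a one-parameter family of completions, with $X$ and $Y$ its two ``endpoints,'' and it must be extracted from the deformed mesh relations and the combinatorics of compatibility and quilting off $\mathcal{Z}$ rather than from a finite rank count. Granting it, $\phi(X)$ and $\phi(Y)$ are affine along this line, vanishing at $\Phi$ and $\Phi'$ respectively and strictly positive at the other endpoint, so nonnegativity cuts the line down to exactly $[\Phi,\Phi']$; the relative interior of this segment has zero set $\mathcal{T}\cap\mathcal{T}'$, hence consists of boundary points, and $[\Phi,\Phi']=H\cap\UZc$ is the asserted edge. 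Conversely, if an edge $e$ joins $\Phi$ and $\Phi'$, then since $e=[\Phi,\Phi']$ and each coordinate is affine along it, $e\subseteq\mathbb{H}_W$ exactly for $W$ in the common zero set of $\Phi$ and $\Phi'$, which equals $\mathcal{T}\cap\mathcal{T}'$ by the zero-set identification; so $e\subseteq H\cap\UZc$, and by the same one-dimensionality argument $H\cap\UZc$ is a segment containing the maximal boundary segment $e$, forcing $e=H\cap\UZc$. Reading off one-dimensionality — that $\mathcal{T}\cap\mathcal{T}'$ is one object short of each of $\mathcal{T}$ and $\mathcal{T}'$, with those two clusters realized at the endpoints — then yields that $\mathcal{T}$ and $\mathcal{T}'$ differ in a single object and that the exchange is an admissible $\TT$-mutation.

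I expect the main obstacle, in both theorems, to be the passage from finite-dimensional linear algebra to a structural argument valid over the infinite index set $\Ind(\ClosedHeart)$. In the finite-type associahedron, ``$H\cap S$ is a line'' is the rank statement that $|\mathcal{T}\cap\mathcal{T}'|$ independent hyperplanes meet the solution space in a line; here it has to be proved directly from the continuous deformed mesh relations, and one must accommodate the phenomenon (noted in the remark following the introduction) that not every object of a $\TT$-cluster is exchangeable, so the one-parameter completion family need not always terminate in a second cluster. Making this one-dimensionality precise, together with the identification of the cluster completions realized at the two endpoints, is where I expect the real work to lie.
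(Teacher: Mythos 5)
Your argument for the first assertion (Theorem \ref{thm:clusters are extremal}) is correct and is essentially the paper's: the paper instantiates your ``fullness'' input by taking the unique solution $\Phi_0$ vanishing on a zigzag that avoids the chosen $X\in\mathcal T$ (Proposition \ref{prop:all the values}), deduces $\Phi_0(X)>0$ from Proposition \ref{prop:cluster solutions}, and then observes exactly as you do that the affine coordinate $t\mapsto\Phi_t(X)$ becomes negative past $t=1$. Your second structural input --- that the zero set of a nonnegative solution is pairwise compatible, hence equals $\mathcal T$ when it contains $\mathcal T$ --- is also correct and is the content of Proposition \ref{prop:cluster solutions}.

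The edge statement, however, rests on a claim you explicitly grant rather than prove: that $H\cap S=\bigl(\bigcap_{W\in\mathcal T\cap\mathcal T'}\HZc(W)\bigr)\cap S$ is an affine line. This is a genuine gap, and not a harmless one. In this setting the solution space $S$ is parametrized by arbitrary real-valued functions on $\Ind(\mathcal Z)$ (Proposition \ref{prop:all the values}), so there is no rank count available, and worse, one-dimensionality of $H\cap S$ would essentially subsume the uniqueness of the solutions attached to $\mathcal T$ and $\mathcal T'$ --- which the theorem takes as a \emph{hypothesis} precisely because it is open in general (Question \ref{quest:open problem}). So the route ``line in $S$, then cut by nonnegativity'' cannot be completed from the deformed mesh relations alone. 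The paper avoids this entirely: it builds the candidate segment directly as $\Phi_t=t\Phi'+(1-t)\Phi$, which lies in $\UZc$ by convexity and in every $\HZc(W)$ for $W\in\mathcal T\cap\mathcal T'$ since both endpoints vanish there, and then identifies $H\cap\UZc$ with this segment using the assumed uniqueness (which gives $\bigcap_{W\in\mathcal T}\HZc(W)\cap\UZc=\{\Phi\}$ and likewise for $\Phi'$) together with Proposition \ref{prop:mutation yields a unique cluster}: any $\Psi\in H\cap\UZc$ has a pairwise compatible zero set containing $\mathcal T\setminus\{T_0\}$, and the only indecomposables compatible with $\mathcal T\setminus\{T_0\}$ outside it are $T_0$ and $T_1$, since the exchange partner is unique. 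The converse direction likewise goes through Proposition \ref{prop:mutation yields a unique cluster} applied to $\mathcal T\cap\mathcal T'=\mathcal T\setminus\{T_0\}=\mathcal T'\setminus\{T_1\}$, not through a dimension count. You should replace the granted linear-algebraic claim with this compatibility/uniqueness argument, or else supply a proof of one-dimensionality --- which, as stated, I do not believe can be done without re-deriving the uniqueness hypotheses.
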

	
	In Section~\ref{sec:finite embeddings}, we embed the finite dimensional generalized associahedra $\mathbb{U}_{n,\intc}$ given in \cite{B-MDMTY24}, for all $n$, into the continuous associahedron $\UZc$ in a way that preserves the cluster structures in the sense of the following theorem.

	\begin{thm}[Theorem \ref{thm:big finite embedding}]
		There is an infinite sequence of embeddings
		\begin{displaymath}
			\mathbb U_{2,\intc}\hookrightarrow\mathbb U_{3,\intc}\hookrightarrow\cdots\hookrightarrow\mathbb U_{n,\intc}\hookrightarrow\mathbb U_{n+1,\intc}\hookrightarrow\cdots  \UZc.
		\end{displaymath}
		For $\mathbb U_{n,\intc}\hookrightarrow\mathbb U_{n+1,\intc}$, cluster vertices are taken to cluster vertices and mutation edges are taken to mutation edges.
		For $\mathbb U_{n,\intc}\hookrightarrow \UZc$, cluster vertices are taken to solutions corresponding to $\TT$-clusters and mutation edges are taken to edges corresponding to $\TT$-mutations.
	\end{thm}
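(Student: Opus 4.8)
The plan is to build the embeddings $\mathbb U_{n,\intc}\hookrightarrow\mathbb U_{n+1,\intc}$ and $\mathbb U_{n,\intc}\hookrightarrow\UZc$ at the level of index sets, and then check that the defining (deformed mesh) equations and the nonnegativity constraints are respected. First I would recall that $\mathbb U_{n,\intc}$ sits inside $\prod_{\Ind}\R$ indexed by the indecomposables of the relevant type $A_n$ cluster-type category of \cite{B-MDMTY18}, while $\UZc$ is indexed by $\Ind(\ClosedHeart)=\Ind(\mathcal D^\heartsuit)\sqcup\Ind(\mathcal Z[1])$. The key structural input is an embedding of the underlying AR-quivers: a fixed $A_n$-slice inside the zigzag $\mathcal Z$ determines, via quilting (Definition~\ref{def:quilt of zigzag}), a full subquiver of the mesh quiver of $\ClosedHeart$ isomorphic to the AR-quiver of $\Db(A_n)$ together with its shift, and these subquivers are nested as $n$ increases because one may enlarge the chosen slice. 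I would make this map $\iota_n\colon \Ind(A_n\text{-category})\hookrightarrow\Ind(\ClosedHeart)$ explicit, and likewise $\iota_n^{n+1}$ between consecutive finite models, so that $\iota_{n+1}\circ\iota_n^{n+1}=\iota_n$.

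Next I would define the embedding on points by extension by zero: given $\Phi\in\mathbb U_{n,\intc}$, set $\widehat\Phi(Z):=\Phi(\iota_n^{-1}(Z))$ if $Z$ is in the image of $\iota_n$ and $\widehat\Phi(Z):=0$ otherwise (and similarly for $\iota_n^{n+1}$). The nonnegativity condition is then immediate. The content is to verify that $\widehat\Phi$ satisfies the continuous deformed mesh relations of Definition~\ref{def:contintuous deformed mesh relations}. Here I would argue meshwise: a mesh relation in $\ClosedHeart$ at an indecomposable $Z$ either lies entirely within the image of $\iota_n$ — in which case it is (a relabelling of) a mesh relation of the finite model, so $\Phi\in\mathbb U_{n,\intc}$ gives it for free, provided $\intc$ is chosen compatibly with its finite counterpart under $\iota_n$ — or it involves at least one indecomposable outside the image, along a "boundary" of the embedded slice. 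For the latter, the crucial point is that the finite model's mesh relations are exactly the continuous ones with the out-of-image terms set to $0$, which is precisely how the $A_n$ mesh (with its two "wall" sides) degenerates from the continuous one; so I would show that setting the missing coordinates to $0$ turns the continuous relation into the finite one, making $\widehat\Phi$ a solution. This is where a careful bookkeeping of which meshes are "interior" versus "on the frame" of the embedded $\Db(A_n)$ inside $\ClosedHeart$ is needed, and it is the step I expect to be the main obstacle — one must check there is no leftover term that fails to vanish, i.e.\ that the frame of the embedded slice really does behave like the boundary of the $A_n$ AR-quiver.

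Finally, for the cluster-theoretic compatibility I would trace through the definitions: compatibility of a pair of indecomposables in $\ClosedHeart$ (Definition~\ref{def:compatibility}) restricts, along $\iota_n$, to the compatibility relation of the finite model (this should follow from $\iota_n$ being induced by a triangulated-subcategory-style inclusion, so $\Hom$ and $\Ext$ vanishing conditions are inherited), hence $\TT$-clusters pull back from maximal pairwise-compatible collections in the finite setting, and $\TT$-mutation $\mathcal T\to(\mathcal T\setminus\{X\})\cup\{Y\}$ restricts to finite mutation whenever the exchanged pair lies in the image. Combining this with the previous paragraph: a cluster vertex of $\mathbb U_{n,\intc}$ is a solution vanishing on a finite cluster, so its extension by zero vanishes on the corresponding $\TT$-cluster $\iota_n(\mathcal T)$ and is therefore, by Theorem~\ref{thm:clusters are extremal}, on the boundary of $\UZc$ and corresponds to $\iota_n(\mathcal T)$; and a mutation edge $\bigcap_{X}\mathbb H_X$ in the finite model maps into $\bigcap_{X\in\iota_n(\mathcal T)\cap\iota_n(\mathcal T')}\mathbb H_X$, which by Theorem~\ref{thm:mutation on associahedron} is the mutation edge in $\UZc$. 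The statement for $\mathbb U_{n,\intc}\hookrightarrow\mathbb U_{n+1,\intc}$ is the same argument with $\UZc$ replaced by the $(n+1)$-model, using the analogous finite results of \cite{B-MDMTY18}. Taking the colimit of the compatible family $\{\iota_n\}$ yields the displayed infinite sequence.
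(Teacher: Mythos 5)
There is a genuine gap, and it sits exactly where you locate the ``main obstacle,'' but it is worse than bookkeeping: extension by zero cannot produce a point of $\UZc$ at all. A solution with respect to $\intc$ must satisfy $\Phi(X)+\Phi(W)=\Phi(Y)+\Phi(Z)+\int_{XYWZ}\intc$ for \emph{every} tilting rectangle, and $\intc$ is strictly positive on $\Ind(\ClosedHeart)$, so any nondegenerate tilting rectangle lying entirely outside the image of $\iota_n$ forces your $\widehat\Phi$ to satisfy $0=\int\intc>0$. More structurally, by Proposition \ref{prop:all the values} a solution is already determined by its values on a single zigzag, and by Proposition \ref{prop:cluster solutions} a solution vanishing on a $\TT$-cluster is strictly positive everywhere else; the zero set of a solution can never contain the complement of a finite set. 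For the same reason the image $\iota_n(\mathcal T)$ of a finite cluster, being finite, is far from maximal among pairwise compatible sets in $\ClosedHeart$, so it is not a $\TT$-cluster and Theorem \ref{thm:clusters are extremal} does not apply to it.

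The paper repairs both defects with one device: a fixed infinite $\TT$-cluster $\mathcal T_U$ whose discrete part contains the images $X_i$ of all the projectives. A finite cluster $\mathcal S$ is sent to $\widetilde{\mathcal S}=\left(\mathcal T_U\setminus\{X_i\}_{i\geq -n}\right)\cup\Theta_n(\mathcal S)$ (Notation \ref{note:theta of cluster}); that is, the image of $\mathcal S$ is completed by the infinite ``tail'' of $\mathcal T_U$. One must then prove that $\widetilde{\mathcal S}$ really is a $\TT$-cluster (a maximality argument with tilting rectangles and the triangulated embedding $\Theta_n$) and that there is a unique solution vanishing on it (Proposition \ref{prop:embedding T computable}, which rests on the limiting construction of Lemma \ref{lem:vertical solution} rather than on finite meshwise checking); the values of that solution off $\widetilde{\mathcal S}$ are strictly positive integrals of $\intc$, not zero. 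Your identification of the nested triangulated embeddings and your claim that compatibility and mutation are preserved along them are correct and agree with Propositions \ref{prop:big finite representation embedding} and \ref{prop:big finite derived embedding}; what is missing is the passage from a finite cluster to a point of $\UZc$, which requires completion plus the existence--uniqueness argument instead of extension by zero. The same correction applies to $\mathbb U_{n,\intc}\hookrightarrow\mathbb U_{n+1,\intc}$: the paper completes an $A_n$ cluster to an $A_{n+1}$ cluster by adjoining the projective $P_{n+1}$, and the resulting point of $\mathbb U_{n+1,\intc}$ is in general nonzero on the new coordinates.
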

	
	\subsection{Outline}
	In Section~\ref{sec:Amplituhedron for Dynkin Quivers}, we review the representation-theoretic construction of the finite dimensional associahedron in \cite{B-MDMTY24}.  In the rest of Section~\ref{sec:finiteandd}, we introduce the category $\mathcal{D}$ and prove some basic properties.  Section~\ref{Sec:Continuous Deformed Mesh Relations} is dedicated to introducing the continuous deformed mesh relations using a process called ``quilting", which is analogous to knitting in the finite dimensional case.
	We develop connections to representations of quivers in Section \ref{sec:rep theory}, where we prove that the hearts of certain $t$-structures have several of the same properties as finitely generated representations of type $A_n$ quivers.
	In Section \ref{sec:T-clusters}, we introduce $\TT$-clusters and $\TT$-mutation.
	Our last section, Section \ref{sec:associahedron}, is devoted to the definition of the associahedron $\UZc$ and the proofs of our main results.
	
	\subsection{Setting and notation}
	We work over an algebraically closed field $k$ of characteristic $0$ (for example, the field of complex numbers).
	For a finite dimensional $k$-algebra $\Lambda$, by $\modu \Lambda$ we denote the category of all finite dimensional left $\Lambda$-modules and $\Ind(\Lambda)$ denotes the set of all isomorphism classes of indecomposable objects in $\modu \Lambda$. 
    To $\modu \Lambda$, we associate its Auslander--Reiten quiver $\Gamma_\Lambda$, where the set of vertices of $\Gamma_\Lambda$ is in bijection with $\Ind(\Lambda)$ and the arrows between two vertices $X$ and $Y$ in $\Gamma_\Lambda$ correspond to the irreducible morphisms in $\Hom_\Lambda(X,Y)$. We denote by $\mathcal{D}^b(\Lambda)$ the bounded derived category of $\Lambda$, and consider the full additive subcategory $\mathcal{C}$ of $\mathcal{D}^b(\Lambda)$ generated by
    $\Ind(\Lambda) \sqcup \{P_i[1] \mid i\in Q_0\}$, where $P_i[1]$ is the shift of the projective module $P_i$ in $\mathcal{D}^b(\Lambda)$.
    For a detailed treatment of the representation theory of finite dimensional algebras, and for all undefined terms, we point to \cite{ASS06}. In this paper, we write $\mathcal D^b(A_n):=\mathcal D^b(\Lambda)$, where $\Lambda=kA_n$ is the path algebra of the type $A_n$ quiver.
	
    \subsection*{Acknowledgements} This research was part of the Junior Trimester Program at the Hausdorff Research Institute for Mathematics (HIM) in Bonn, Germany. MK, JM, and JR would like to thank the HIM for the financial support and the stimulating working environment.
    The authors would like to thank Hugh Thomas for insightful conversations and numerous helpful comments on an earlier draft of this manuscript.
    Also, the authors would like to thank Nima Arkani-Hamed and Giulio Salvatori for stimulating discussions. 
    Finally, we thank the anonymous referees for helpful suggestions and references---these have greatly improved the quality of the manuscript.
	
\section{Finite case and category $\mathcal D$}\label{sec:finiteandd}
		
\subsection{Amplituhedron for Dynkin quivers}\label{sec:Amplituhedron for Dynkin Quivers}
The amplituhedron studied by physicists in \cite{ABHY18} was constructed  representation theoretically in \cite{B-MDMTY24}.  The main goal of this paper is to introduce an analogue of this construction in the continuous setting.  To set the scene, we use this section to briefly recall the construction in \cite{B-MDMTY24}, preferring to stick to an example rather than inundating the reader with technical details.

\begin{example}\label{xmp:Running example}
Let $Q$ be the quiver given by the following orientation of $A_5$:
\begin{displaymath}
\xymatrix{1 & 2 \ar[l]_{\alpha}  & 3 \ar[l]_{\beta}  \ar[r]^{\gamma}  & 4& 5   \ar[l]_{\delta} }
\end{displaymath}
where $\{3,5\}$ is the set of sources and $\{1,4\}$ consists of sinks in $Q_0$.  

The algebra $\Lambda=kQ$ is finite dimensional and $B:=\{e_1,e_2,e_3,e_4,e_5, \alpha, \beta, \gamma, \delta, \alpha\beta \}$ is a $k$-basis for $\Lambda$, where $e_i$ denotes the trivial path at vertex $i$, for each $1\leq i \leq 5$.
Moreover, the multiplication in $\Lambda$ is given by the composition of directed paths, whenever possible, and zero otherwise. For example $0\neq \alpha\beta \in B$, but $\beta\alpha=0$ and $\gamma\beta=0$.

In this case, $\Gamma_\Lambda$ is the subquiver of the quiver in Figure \ref{fig:augmented} determined by the solid vertices (those of the form $\bullet$) and the arrows between them.  In particular, the leftmost copy of $Q^{op}$ in $\Gamma$, which appears in bold, has the projective indecomposable modules $P_i:= \Lambda e_i$ as its vertices, where $1\leq i \leq 5$.
Moreover, the additive subcategory $\mathcal{C}$ of $\mathcal{D}^b(kA_{5})$ is generated by
$\Ind(kA_5) \sqcup \{P_i[1] \mid 1\leq i \leq 5\}$, where $P_i[1]$ is the shift of the projective module $P_i$ in $\mathcal{D}^b(kA_5)$. The Auslander--Reiten quiver $\Gamma_\mathcal{C}$ is obtained by extending $\Gamma_\Lambda$, as further explained below.
\end{example}

In Figure~\ref{fig:augmented}, $\Gamma_\mathcal{C}$ is the quiver consisting of both solid and star vertices, together with all arrows between them.
The vertices depicted by $\circ$ and the dotted arrows in Figure \ref{fig:augmented} are called \textdef{virtual}, for the reason we soon explain in the following.
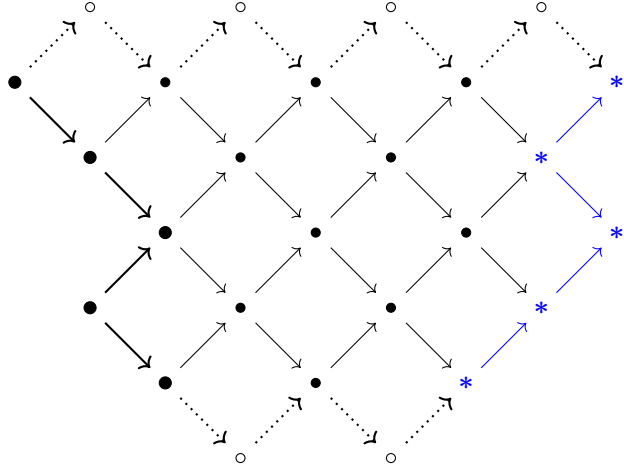
\begin{figure}
\begin{center}
\begin{tikzpicture}
\filldraw (0,4) circle[radius=.8mm];
\draw[->, dotted, thick] (0.2,4.2) -- (0.8,4.8);
\draw[->, thick] (0.2,3.8) -- (0.8,3.2);
\filldraw (1,3) circle[radius=.8mm];
\draw[->] (1.2, 3.2) -- (1.8,3.8);
\draw[->, thick] (1.2,2.8) -- (1.8,2.2);
\filldraw (2,2) circle[radius=.8mm];
\draw[->] (2.2, 2.2) -- (2.8,2.8);
\draw[->] (2.2, 1.8) -- (2.8,1.2);
\filldraw (1,1) circle[radius=.8mm];
\draw[->, thick] (1.2,1.2) -- (1.8,1.8);
\draw[->, thick] (1.2,0.8) -- (1.8,0.2);
\filldraw (2,0) circle[radius=.8mm];
\draw[->] (2.2,0.2) -- (2.8,0.8);
\draw[->, dotted, thick] (2.2,-0.2) -- (2.8,-0.8);

\foreach \x in {2,4,6} 
	\filldraw (\x, 4) circle[radius=.6mm];
\foreach \x in {3,5} 
{
	\filldraw (\x, 3) circle[radius=.6mm];
	\filldraw (\x, 1) circle[radius=.6mm];
	\filldraw (\x+1,2) circle[radius=.6mm];
}
\filldraw (4,0) circle[radius=.6mm]; 

\foreach \x in {1,3,5,7}
	\filldraw[fill=white] (\x,5) circle[radius=.6mm];
\foreach \x in {3,5}
	\filldraw[fill=white] (\x,-1) circle[radius=.6mm];
	
\draw[blue] (8,4) node {$\boldsymbol{*}$};
\draw[blue] (7,3) node {$\boldsymbol{*}$};
\draw[blue, ->] (7.2,3.2) -- (7.8,3.8);
\draw[blue, ->] (7.2,2.8) -- (7.8,2.2);
\draw[blue] (8,2) node {$\boldsymbol{*}$};
\draw[blue] (7,1) node {$\boldsymbol{*}$};
\draw[blue,->] (7.2,1.2) -- (7.8,1.8);
\draw[blue] (6,0) node {$\boldsymbol{*}$};
\draw[blue,->] (6.2,0.2) -- (6.8,0.8);

\foreach \x in {1,3,5,7}
	\draw[->, dotted, thick] (\x+0.2,4.8) -- (\x+0.8,4.2);
\foreach \x in {3,5}
	\draw[->, dotted, thick] (\x+0.2,-0.8) -- (\x+0.8,-0.2);
	
\foreach \x in {2,4,6}
{
	\draw[->, dotted, thick] (\x+0.2, 4.2) -- (\x+0.8, 4.8);
	\draw[->] (\x+0.2, 3.8) -- (\x+0.8, 3.2);
}
\foreach \x in {3,5}
{
	\draw[->] (\x+0.2, 3.2) -- (\x+0.8,3.8);
	\draw[->] (\x+0.2, 2.8) -- (\x+0.8,2.2);

	\draw[->] (\x+1.2, 2.2) -- (\x+1.8,2.8);
	\draw[->] (\x+1.2, 1.8) -- (\x+1.8,1.2);
	
	\draw[->] (\x+0.2, 1.2) -- (\x+0.8,1.8);
	\draw[->] (\x+0.2, 0.8) -- (\x+0.8,0.2);
}
\draw[->] (4.2,0.2) -- (4.8,0.8);
\draw[->, dotted, thick] (4.2,-0.2) -- (4.8,-0.8);
\end{tikzpicture}
\stepcounter{equation}
\caption{Augmented Auslander--Reiten quiver for $1\leftarrow 2\leftarrow 3\rightarrow 4\leftarrow 5$.}
\label{fig:augmented}
\end{center}
\end{figure}

Following the same notation from Example \ref{xmp:Running example}, if $Q$ is a simply-laced Dynkin quiver and $\Lambda=kQ$, the leftmost copy of $Q^{op}$ in $\Gamma_\Lambda$ is called the \textdef{projective slice}, while the rightmost copy of $Q^{op}$ in $\Gamma_{\mathcal{C}}$ is called the \textdef{shifted projective slice}. 
This is because for all $i\in Q_0$, the vertices of these copies of $Q^{op}$ respectively correspond to the projective indecomposable modules $P_i=\Lambda e_i$ and their shifts $P_i[1]$ in $\mathcal{D}^b(\Lambda)$.
For each $P_i$, the dimension vector $\underline{\dim}(P_i) \in \mathbb{Z}^{Q_0}_{\geq 0}$ is given as follows: For any $j \in Q_0$, the $j$th coordinate of $\underline{\dim}(P_i)$ is the number of paths from $i$ to $j$ in $Q$.
For each quiver $Q$ treated in this work, and each pair of vertices $i$ and $j$ in $Q$, there is at most one directed path from $i$ to $j$. Hence, for a fixed vertex $i$, the coordinates of the dimension vector $\underline{\dim}(P_i)$ in $Q_0^{\mathbb{Z}_{\geq 0}}$ always belong to $\{0,1\}$, whereas for an arbitrary quiver $Q$ the coordinates of $\underline{\dim}(P_i)$ can be arbitrarily large. In particular, we always have the $i$th coordinate of $\underline{\dim}(P_i)$ is $1$.
Since the Auslander--Reiten theory induces mesh relations on $\Gamma_{\mathcal{C}}$, we can start from the projective slice and dimension vectors of projective modules and use the mesh relations to determine the dimension vector $\underline{\dim}(X)$, for each vertex $X$ of $\Gamma_{\mathcal{C}}$. 

It is well known that each vertex of $\Gamma_{\mathcal{C}}$ corresponds to a cluster variable of the cluster algebra associated to the simply-laced Dynkin quiver $Q$.  Recall that two cluster variables are said to be \textdef{compatible} if there is some cluster which contains both of them. Consequently, two elements of $\Gamma_{\mathcal{C}}$ are called compatible if the corresponding cluster variables are such.

The case that generalizes to our continuous setting is when we take $\Lambda = kQ$ for $Q$ a type $A_n$ quiver (for $n \in \mathbb{Z}_{>1}$).
Here, we consider
$\mathcal{C}:=\add \left(\Ind(\Lambda) \sqcup \big\{P_i[1] \mid i\in Q_0 \big\}\right)$ as a full subcategory of $\mathcal{D}^b(\Lambda)$, and we extend the Auslander--Reiten quiver $\Gamma_{\mathcal{C}}$ to a larger quiver $\widetilde{\Gamma}_A$ which plays a prominent role in our studies.
For an inductive construction of $\widetilde{\Gamma}_A$ compatible with our continuous setting, we fix the following particular configuration of $\Gamma_{\mathcal{C}}$ as the initial step:

\begin{enumerate}
\item[($0$)]
Suppose $\Gamma_{\mathcal{C}}^0$ is a realization of $\Gamma_{\mathcal{C}}$ in $\mathbb{R}^2$ such that each arrow of $\Gamma_{\mathcal{C}}$ is of unit length which makes a $\pm 45$-degree angle with the horizon, and the Auslander--Reiten translation is a horizontal shift to the left.

It follows that any mesh relation in $\Gamma_{\mathcal{C}}^0$ forms a diamond or an isosceles triangle. Now that the initial step of our inductive construction is described, we extend $\Gamma_{\mathcal{C}}^0$ to the desired translation quiver $\widetilde{\Gamma}_\Lambda$. In particular, provided $\Gamma_{\mathcal{C}}^i$ is constructed, the next step is as follows:

\item[($i+1$)] In $\Gamma_{\mathcal{C}}^i$, if $x$ is a leftmost vertex of odd degree, add an arrow $\delta_x$ of unit length outgoing from $x$ and pointing to the East such that $\delta_x$ is orthogonal to all arrows of $\Gamma_{\mathcal{C}}^i$ connected to $x$. Denote the resulting quiver by $\Gamma_{\mathcal{C}}^{i+1}$.
\end{enumerate}

We say $\delta_x$ is a \textdef{virtual arrow} of $\Gamma_{\mathcal{C}}^{i+1}$. Moreover, if $e(\delta_x)$ does not belong to $\Gamma_{\mathcal{C}}$, it is called a \textdef{virtual vertex} of $\Gamma_{\mathcal{C}}^{i+1}.$ Consequently, $\Gamma_{\mathcal{C}}^{i+1}$ is said to be the \textdef{virtual extension} of $\Gamma_{\mathcal{C}}^{i}$ by $\delta_x$.
Finally, if $\Gamma_{\mathcal{C}}^{i+1}$ has no vertex of odd degree, set $\widetilde{\Gamma}_\Lambda:=\Gamma_{\mathcal{C}}^{i+1}$ and call it the \textdef{augmented Auslander--Reiten quiver} of $A$. From the construction it is easy to see that there is a unique $i \in \mathbb{Z}$ with $\widetilde{\Gamma}_\Lambda=\Gamma_{\mathcal{C}}^{i+1}$.

We observe that $\widetilde{\Gamma}_\Lambda$ can be tiled by squares (symmetric diamonds) of the same size. Moreover, the projective slice (resp. the shifted projective slice) in $\Gamma_{\mathcal{C}}$ determines the leftmost (resp. rightmost) border of $\widetilde{\Gamma}_\Lambda$. All arrows in $\widetilde{\Gamma}_\Lambda \setminus \Gamma_{\mathcal{C}}$ are virtual, but there are virtual arrows whose endpoints belong to $\Gamma_{\mathcal{C}}$, meaning that $e(\delta_x)$ is not necessarily virtual. In fact, the top and bottom rows of $\widetilde{\Gamma}_\Lambda$ consists of virtual vertices and each virtual vertex of $\widetilde{\Gamma}_\Lambda$ belongs to exactly one of these two rows. 

For instance, if $Q$ is the quiver from Example \ref{xmp:Running example}, then in Figure \ref{fig:augmented} we can see the explicit construction of the augmented quiver $\widetilde{\Gamma}_\Lambda$ for $\Lambda=kQ$ via the above algorithms. In this case, we iterate the second step of the algorithm twelve times, each time creating a new virtual arrow shown by dotted arrows. The virtual vertices are depicted by circle (nonsolid) vertices, which form the top and bottom rows of vertices in $\widetilde{\Gamma}_\Lambda$.

Recall that each vertex of $\Gamma_{\mathcal{C}}$ comes with its own dimension vector. Further, to any virtual vertex $v$ of $\widetilde{\Gamma}_\Lambda$ we associate the \textdef{virtual dimension vector}, being $\underline{\dim}(v)=(0,0, \ldots,0) \in \mathbb{Z}^{Q_0}$. 
For each diamond tile $\bDiamond$ in $\widetilde{\Gamma}_\Lambda$, let $L_{\Diamond}, R_{\Diamond}, U_{\Diamond}$, and $D_{\Diamond}$ respectively denote the left, right, top and bottom vertices of $\bDiamond$.
Then, thanks to the Auslander--Reiten translation in $\Gamma_{\mathcal{C}}$ and the induced mesh relations, the following identity holds for any diamond $\bDiamond$ in $\widetilde{\Gamma}_\Lambda$:
$$\underline{\dim}(L_{\Diamond}) + \underline{\dim}(R_{\Diamond}) =  \underline{\dim}(U_{\Diamond}) + \underline{\dim}(D_{\Diamond}).$$

From the above identity and a straightforward computation, one obtains similar equations for any rectangular area that entirely lies in the quiver $\widetilde{\Gamma}_\Lambda$. In particular, every such rectangle is tiled with the symmetric diamonds. Via the cancellations induced by these diamonds, an analogous equation holds for the rectangle, where $L_{\Diamond}, R_{\Diamond}, U_{\Diamond}$, and $D_{\Diamond}$ should be replaced by the appropriate corners of the rectangle. 

\begin{remark}\label{rmk:finite compatibility}
The notion of compatibility of vertices in $\Gamma_{\mathcal{C}}$ also has a more homological incarnation and can be phrased in terms of extensions between the corresponding indecomposable objects in $\mathcal{C}$. In particular, as shown in \cite{BMRRT06}, two vertices $v$ and $w$ of $\Gamma_{\mathcal{C}}$, with the corresponding indecomposable objects $M_v$ and $M_w$ in $\mathcal{C}$, are compatible if and only if $\Ext^1_{\mathcal{C}}(M_v,M_w)=\Ext^1_{\mathcal{C}}(M_w,M_v)=0$.
As described above, we can view $v$ and $w$ as vertices of $\widetilde{\Gamma}_\Lambda$.
This implies that $M_v$ and $M_w$ are incompatible if and only if there exists a rectangle that fully lies in $\widetilde{\Gamma}_\Lambda$ whose left and right corners are $v$ and $w$ (or $w$ and $v$).
\end{remark}

Suppose $\mathcal{I}^{+}$\label{tag:I +} denotes the set of vertices in $\widetilde{\Gamma}_\Lambda$ associated to $\Ind(\Lambda)$. The dimension vectors for all such vertices are nonzero with nonnegative coordinates. This justifies the choice of notation $\mathcal{I}^{+}$.
From the construction of $\widetilde{\Gamma}_\Lambda$, it follows that any vertex in $\widetilde{\Gamma}_\Lambda$ which does not belong to $\mathcal{I}^{+}$ is either a virtual vertex or it is associated to $P_i[1]$, for some $i\in Q_0$. In the former case the associated dimension vector is virtual and thus the zero vector, but in the latter case the associated dimension vector is nonzero with nonpositive coordinates. In particular, the set of vertices of $\widetilde{\Gamma}_\Lambda$ is a disjoint union of the form $\mathcal{I}^{+}\sqcup \mathcal{I}^{v} \sqcup \mathcal{I}^{[1]}$, where $\mathcal{I}^{v}$ denotes the set of virtual vertices, and $\mathcal{I}^{[1]}$ consists of vertices associated to the shifted projective indecomposable objects $P_i[1]$ in $\mathcal{C}$, for all $i\in Q_0$.
For the dimension vectors associated to the vertices of $\widetilde{\Gamma}_\Lambda$, 
we often refer to those corresponding to $\mathcal{I}^{+}$, $\mathcal{I}^{v}$ and $\mathcal{I}^{[1]}$ respectively as the positive, zero and negative dimension vectors.

In addition to the set of dimension vectors, the collection of $g$-vectors corresponding to the positive and negative vertices of $\widetilde{\Gamma}_\Lambda$ satisfy all the mesh relations. It is well known that the $g$-vectors of the indecomposable modules in the projective slice, being the leftmost boundary of $\widetilde{\Gamma}_\Lambda$, are given by the standard basis of $\mathbb{R}^{Q_0}$. Therefore, if to each vertex in $\mathcal{I}^v$ we again associate the zero vector, one can use the mesh relations in $\widetilde{\Gamma}_\Lambda$ to compute the $g$-vectors of all the remaining indecomposable modules in $\widetilde{\Gamma}_\Lambda$. This new set of vectors has played a pivotal role in the generalization of the setting and results in \cite{ABHY18}. This is because the physical phenomena studied by the authors resulted in a system of equations which could be realized as the deformed mesh relations that hold in the augmented quiver $\widetilde{\Gamma}_\Lambda$, where $\Lambda=kA_n$ for the linearly oriented quiver $A_n$. In particular, for each diamond $\bDiamond$ in $\widetilde{\Gamma}_\Lambda$, the \textdef{deformed mesh relation} is of the form 
\begin{equation}\label{defmesh}
    \underline{\dim}(L_{\Diamond}) + \underline{\dim}(R_{\Diamond}) =  \underline{\dim}(U_{\Diamond}) + \underline{\dim}(D_{\Diamond})+ c_{\Diamond}
\end{equation}

\noindent where $c_{\Diamond}$ is a nonnegative real value associated to ${\Diamond}$. We observe that for each rectangular area that entirely lies in the quiver $\widetilde{\Gamma}_\Lambda$, the deformed mesh relations imply a similar equation, while the constant $c_{\Diamond}$ must be replaced by the sum of all $c_{\Diamond}$'s for the diamonds ${\Diamond}$ that tile the rectangle. 
Furthermore, it is obvious that if $c_{\Diamond}=0$, for every diamond ${\Diamond}$ in $\widetilde{\Gamma}_\Lambda$, we get the old system of equations induced by the mesh relations.

In retrospect, in \cite{ABHY18} the authors considered a polytopal realization of the space induced by the deformed mesh relations coming from $\widetilde{\Gamma}_\Lambda$, where $\Lambda=kA_n$ is given by a linearly oriented quiver $A_n$. This phenomenon was in fact viewed as a geometric description of the scattering amplitudes for bi-adjoint $\phi^3$ scalar theory and plays an analogous role to that of a particular semialgebraic set in a Grassmannian, called the  \emph{amplituhedron}, which encodes scattering amplitudes for $\mathcal{N}=4$ super Yang--Mills theory. Due to the analogous nature of the aforementioned polytopal realization in \cite{ABHY18}, we adopt this terminology and henceforth refer to that as the amplituhedron for bi-adjoint $\phi^3$ scalar theory.
This realization of the amplituhedron was generalized to all simply-laced Dynkin quivers in terms of \textdef{generalized associahedra}---the polyhedra whose normal fan is given by the $g$-vectors (see \cite{B-MDMTY24}). This last realization occurs in a kinematic space $\mathbb V=\prod_{\mathcal I}\mathbb R$, where $\mathcal{I}:=\mathcal{I}^{+}\sqcup \mathcal{I}^{[1]}$.
Provided $\underline{c}=(c_{\Diamond})_{\Diamond \in \widetilde{\Gamma}_\Lambda}$ is a collection of positive integers, in \cite{B-MDMTY24} the authors consider an $n$-dimensional affine space $\mathbb E_{\underline{c}}$
inside $\mathbb V$ determined by \underline{c}. Here $n$ is the number of vertices of the quiver. 
In fact, $\mathbb E_{\underline{c}}$ is
induced by the $\underline{c}$-deformed mesh relations of the form 
$$p_{i,j} + p_{i+1,j} = c_{ij} + \sum_{(i,j) \rightarrow (i',j') \rightarrow (i+1,j)} p_{i',j'},$$
where $p_{i,j}$ is the coordinate function on $\mathbb V$ indexed by $(i,j)$ in $\mathcal{I}$.
Let $\mathbb U_{\underline{c}}$ denote the intersection of
the positive orthant in $\mathbb V$ with $\mathbb E_{\underline{c}}$.
In \cite{B-MDMTY24},
the authors further use these $p_{i,j}$ and the $g$-vectors to define a projection of $\mathbb V$ onto the $n$-dimensional real space. In particular, they consider $\mathbb A_{\underline{c}}=\pi(\mathbb U_{\underline{c}})$, where $\pi:\mathbb V\rightarrow \mathbb R^n$, whose $k$th
coordinate is given by $p_{ij}$ for $(i,j)\in \mathcal{I}^{[1]}$ such that $g(i,j)=-e_k$. Here $e_k$ denotes the $k$th standard bases of $\mathbb R^n$. More specifically, they show the following theorem. For further details, see \cite{B-MDMTY24}.

\begin{theorem}\label{th-one} 
\begin{enumerate}
\item Each facet of $\mathbb U_{\underline{c}}$ is defined by the vanishing of exactly one
coordinate of $\mathbb V$. Moreover, the vertices of $\mathbb U_{\underline{c}}$  correspond to clusters.
\item The faces of $\mathbb A_{\underline{c}}$
correspond bijectively to compatible sets in $\mathcal I$.
\end{enumerate}\end{theorem}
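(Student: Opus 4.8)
The plan is to exhibit $\mathbb U_{\underline c}$ and $\mathbb A_{\underline c}$ as the same polytope written in two coordinate systems, and to identify its normal fan with the $g$-vector (cluster) fan. I would first read the $\underline c$-deformed mesh relations of \eqref{defmesh} as a knitting recursion: starting from the $n$ coordinates indexed by the shifted projective slice $\mathcal I^{[1]}$ and propagating leftward across $\widetilde{\Gamma}_\Lambda$ (with all virtual coordinates set to $0$), each coordinate $p_{i,j}$, $(i,j)\in\mathcal I$, is determined as an affine-linear function of those $n$ parameters. Hence $\mathbb E_{\underline c}$ is an $n$-dimensional affine space and the projection $\pi$ restricts to an affine isomorphism $\mathbb E_{\underline c}\xrightarrow{\ \sim\ }\mathbb R^n$; in particular $\mathbb A_{\underline c}=\pi(\mathbb U_{\underline c})$ is affinely isomorphic to $\mathbb U_{\underline c}$. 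Since the $g$-vectors satisfy the \emph{undeformed} mesh relations and agree, on the slice $\mathcal I^{[1]}$, with the linear parts of the $p_{i,j}$, an induction along the same recursion identifies the linear part of $p_{i,j}$ (as a functional on $\mathbb R^n\cong\mathbb E_{\underline c}$) with $\pm g(i,j)$, while the constant term of $p_{i,j}$ is a sum of the $c_{\Diamond}$ over the diamonds separating $(i,j)$ from the slice, hence strictly positive for $(i,j)\in\mathcal I^+$. This presents $\mathbb U_{\underline c}$ as $\{x\in\mathbb R^n : \langle g(i,j),x\rangle\le b_{i,j}\text{ for all }(i,j)\in\mathcal I\}$ with $b_{i,j}\ge 0$, the inequalities for $(i,j)\in\mathcal I^{[1]}$ being the coordinate ones.

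Next I would verify that this is a full-dimensional bounded polytope. Full-dimensionality uses $\underline c>0$: since $b_{i,j}>0$ for $(i,j)\in\mathcal I^+$, the point of $\mathbb E_{\underline c}$ all of whose $\mathcal I^{[1]}$-coordinates equal a small positive number satisfies every inequality strictly. Boundedness follows from the fact — recalled from \cite{FZ01,CFZ02} — that the cones $\operatorname{cone}\{g(i,j):(i,j)\in C\}$ over compatible $C\subseteq\mathcal I$ form a complete simplicial fan $\mathcal F$, the cluster fan, whose maximal cones correspond to clusters; in particular the rays $\{g(i,j)\}$ positively span $\mathbb R^n$. The central claim, taken up below, is that $\mathcal F$ is the normal fan of $\mathbb U_{\underline c}$. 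Granting this, part (1) follows: every facet lies in some $\{p_{i,j}=0\}$ by construction; no facet lies in two of these hyperplanes, because two distinct coordinate hyperplanes meet $\mathbb U_{\underline c}$ in codimension $\ge 2$ (the two $g$-vectors are independent when the indices are compatible, and the intersection is empty otherwise); each $\{p_{i,j}=0\}$ is a genuine facet since $g(i,j)$ is a ray of $\mathcal F$; and the vertices, being the points where $n$ tight inequalities have independent normals, are precisely the points $x_{\mathcal T}$ cut out by the clusters $\mathcal T$.

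For part (2), it suffices to match the faces of $\mathbb U_{\underline c}$ with compatible sets. If the vanishing locus $\{x\in\mathbb U_{\underline c} : p_{i,j}(x)=0 \text{ for all }(i,j)\in S\}$ is nonempty, I claim $S$ is a compatible set: if $v,w\in S$ were incompatible then, by Remark~\ref{rmk:finite compatibility}, there is a rectangle in $\widetilde{\Gamma}_\Lambda$ with left and right corners $v$ and $w$, and summing \eqref{defmesh} over the diamonds tiling it gives $p_v+p_w = \big(\sum c_{\Diamond}\big)+\big(\sum p_{i',j'}\big)$ with the first sum strictly positive and every $p_{i',j'}\ge 0$ on $\mathbb U_{\underline c}$, contradicting $p_v=p_w=0$; since for type $A$ a set is compatible as soon as it is pairwise compatible, $S$ is compatible. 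Conversely, a cluster $\mathcal T$ has $n$ elements with $\{g(i,j):(i,j)\in\mathcal T\}$ a basis, so the equations $p_{i,j}=0$ ($(i,j)\in\mathcal T$) have a unique solution $x_{\mathcal T}\in\mathbb E_{\underline c}$, and the crucial point is that $x_{\mathcal T}$ lies in the positive orthant; then $x_{\mathcal T}$ is a vertex, every compatible set extends to a cluster, and the face with that vertex set is nonempty. Combined with the central claim this gives the bijection in (2) and the statement about vertices in (1).

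The step I expect to be the main obstacle is precisely the positivity assertion — that $x_{\mathcal T}$ has all coordinates nonnegative for every cluster $\mathcal T$, equivalently that the normal fan of $\mathbb U_{\underline c}$ is the entire cluster fan $\mathcal F$ and not a coarsening of it. This is where the hypothesis $\underline c>0$ is indispensable, and proving it requires genuine cluster-theoretic input: either an induction over the exchange graph tracking how the tight inequalities change under mutation, or an appeal to the polytopality of the generalized associahedron in \cite{CFZ02} together with the compatibility-degree realization of $\mathcal F$ in \cite{FZ01}. Everything else — the coordinatization of $\mathbb E_{\underline c}$, the identification of the linear parts of the $p_{i,j}$ with the $g$-vectors, and the rectangle-summing argument ruling out incompatible pairs — is routine bookkeeping with \eqref{defmesh} and the combinatorics of $\widetilde{\Gamma}_\Lambda$.
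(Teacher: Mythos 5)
First, a point of reference: the paper does not prove this theorem at all --- it is recalled verbatim from \cite{B-MDMTY18} (``they show the following theorem\ldots''), so your proposal can only be measured against the strategy of that source.

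Your architecture matches it: coordinatize $\mathbb E_{\underline{c}}$ by the shifted projective slice, identify the linear parts of the $p_{i,j}$ with the $g$-vectors via the undeformed recursion, and rule out incompatible pairs on a face by summing \eqref{defmesh} over the rectangle of Remark~\ref{rmk:finite compatibility}. Those steps are correct. But there is a genuine gap, and you have named it yourself without closing it: the positivity of $x_{\mathcal T}$ for every cluster $\mathcal T$, equivalently that the normal fan of $\mathbb U_{\underline{c}}$ is the entire cluster fan rather than a coarsening. Every nontrivial conclusion in your parts (1) and (2) --- that each coordinate hyperplane cuts a genuine facet, that the vertices are exactly the cluster points, that the face of a compatible set is nonempty --- rests on this, and you offer two routes without executing either. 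The first (induction over the exchange graph) is essentially what \cite{B-MDMTY18} does, and it is not ``routine bookkeeping'': at a mutation $\mathcal T\to\mathcal T'$ exchanging $X$ for $Y$ one must verify both that the exchange relation $p_X+p_Y=p_{T_1}+p_{T_2}+\sum c_{\Diamond}>0$ forces $p_Y(x_{\mathcal T'})>0$ \emph{and} that the remaining coordinates stay nonnegative as one moves along the edge $\bigcap_{Z\in\mathcal T\cap\mathcal T'}\{p_Z=0\}$; the second check is where the real work lies. The second route --- appealing to the polytopality results of \cite{CFZ02} and \cite{FZ01} --- does not close the gap as stated: polytopality of the abstract cluster fan does not show that \emph{this} affine slice of the positive orthant realizes it. One would still need to verify that the specific right-hand sides $b_{i,j}$ (sums of the $c_{\Diamond}$) satisfy the wall-crossing inequalities characterizing admissible height functions for $\mathcal F$, which is the same computation in different clothing. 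So the proposal is an accurate roadmap whose decisive step is missing.
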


In our treatment of the more general setting which will be discussed in the following sections, $\underline{c}$ need not be an integer vector, meaning that for each diamond $\bDiamond$ in $\widetilde{\Gamma}_\Lambda$, we only assume $c_{\bDiamond}$ is a nonnegative real value.

\begin{remark}
Since the dimensions of the kinematic space $\mathbb V=\mathbb \prod_{\mathcal I} R$ and the subspace $\mathbb U_{\underline{c}}$ grow fast, visualization of the amplituhedron $\mathbb A_{\underline{c}}$ is possible only for small Dynkin quivers. 
In Figure \ref{fig:ABHY illustration} we consider the case $Q=A_2$. In particular, $\mathbb A_{\underline{c}}$ is the ordinary associahedron for $\underline{c}\in \mathbb{R}^3_{>0}$, where the $X_{ij}$ denote the (affine-)linear forms on $\mathbb{R}^{\mathcal I}$, indexed by the vertices of $\Gamma_{\mathcal{C}}$, as labelled below:

\[\begin{tikzpicture}[xscale=1.3, yscale=1]
    \draw (-3,0) node (02)   {$(0,2)$};
    \draw (-2,1.2) node (01) {$(0,1)$};
    \draw (-1,0) node (12)   {$(1,2)$};
    \draw (0,1.2) node (11)  {${\color{blue}(1,1)}$};
    \draw (1,0) node  (22)   {${\color{blue}(2,2)}$};
    \draw [->] (02) -- (01);
    \draw [->] (01) -- (12);
    \draw [->] (12) -- (11);
    \draw [blue,->] (11) -- (22);
\end{tikzpicture} \]
We note that each choice of $\underline{c} \in \mathbb{R}^3_{\geq 0}$ yields different deformed mesh relations and the corresponding system of equations gives rise to different generalized associahedra $\mathbb A_{\underline{c}}$ in $\mathbb{R}^2$.  Regardless of the choice of $\intc$, we remark that all the generalized associahedra $\mathbb A_{\underline{c}}$ have two pairs of parallel facets, which are $\{X_{01}, X_{22}\}$ and $\{X_{02}, X_{12}\}$.
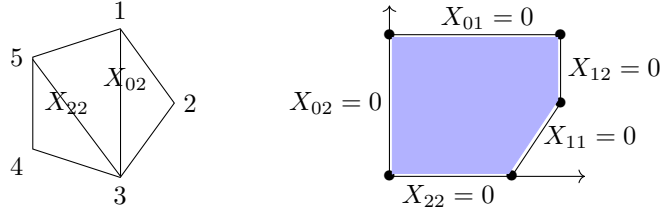
\begin{figure}
    \centering
\begin{tikzpicture}[scale=1.3]
   \newdimen\Ra
   \Ra=0.8cm
   \draw (0:\Ra)
      \foreach \x in {72,144,...,360} {  -- (\x:\Ra) }
               -- cycle (360:\Ra) node[right] {2}
               -- cycle (288:\Ra) node[below] {3}
               -- cycle (216:\Ra) node[below left] {4}
               -- cycle (144:\Ra) node[left] {5}
               -- cycle  (72:\Ra) node[above] {1};
               
    \draw [-] (0.25,0.75) --(0.25,-0.75);
\node at (0.3,0.25) {$X_{02}$};
    \draw [-] (-0.65,0.45) --(0.25,-0.75);
\node at (-0.3,0) {$X_{22}$};

    \draw [->] (3,-0.75) --(5,-0.75);
    \node at (3,-0.75) {$\bullet$};
    \node at (4.25,-0.75) {$\bullet$};
\node at (3.6,-0.95) {$X_{22}=0$};
    \draw [->] (3,-0.75) --(3,1);
    \node at (3,0.7) {$\bullet$};
    \node at (4.75,0.7) {$\bullet$};
\node at (2.45,0) {$X_{02}=0$};
    \draw [-] (4.25,-0.75) --(4.75,0);
\node at (5.05,-0.35) {$X_{11}=0$};
    \draw [-] (4.75,0) --(4.75,0.7);
    \node at (4.75,0) {$\bullet$};
\node at (5.3,0.35) {$X_{12}=0$};
    \draw [-] (3,0.7) --(4.75,0.7);
\node at (4,0.87) {$X_{01}=0$};

\filldraw[blue!30,-] (3.03,-0.73)--(3.03,0.67)--(4.72,0.67)--(4.72,0.03)--(4.22,-0.72)--(3.03,-0.72);

\end{tikzpicture}
\stepcounter{equation}
    \caption{\small{For $n=2$, the kinematic space is $\mathbb{R}^5$. The associahedron is realized inside a $2$-dimensional affine plane determined by a system of inequalities in terms of the mesh relations. The facets correspond to diagonals of the pentagon, and vertices correspond to its triangulations.}}
    \label{fig:ABHY illustration}
\end{figure}		

\end{remark}

		\subsection{The category $\mathcal D$}\label{sec:the category D}
		We now introduce a continuous version of $\Db(A_n)$, which we denote by $\mathcal D$.
		This category is triangulated equivalent to the category $\mathcal D_\pi$ defined by Igusa and Todorov \cite{IT15}.
		Moreover, these categories are isomorphic: they are equivalent and their objects (not just isomorphism classes) are in bijection.
		
		\subsubsection{Objects and morphisms}
		The indecomposable objects of $\mathcal D$ are the points in the set $\R\times (-\frac{\pi}{2}, \frac{\pi}{2})$, and each object in $\mathcal D$ is a finite (possibly empty) direct sum of indecomposable objects.
		\begin{displaymath}
			\begin{tikzpicture}
				\draw (-3,0) -- (3,0);
				\draw (0,-2) -- (0,2);
				\draw[line width=.4mm, draw opacity = .6, dotted] (-3,-1) -- (3,-1);
				\draw[line width=.4mm, draw opacity = .6, dotted] (-3,1) -- (3,1);
				\filldraw[fill opacity =.4, draw opacity =0, fill=gray] (-3,-1) --  (-3,1) -- (3,1) -- (3,-1) -- (-3,-1);
				\draw (-3,1) node[anchor=east] {$y=\frac{\pi}{2}$};
				\draw (-3,-1) node[anchor=east] {$y=-\frac{\pi}{2}$};
			\end{tikzpicture}	
		\end{displaymath}
		For each point $(x,y)$, define the set $H(x,y)$ in the following way.
		First, consider the rectangle determined by the points
		\begin{displaymath}
			(x,y),\quad
			\left(x +\frac{\pi}{2}-y, \frac{\pi}{2}\right), \quad 
			\left(x+\frac{\pi}{2}+y,-\frac{\pi}{2}\right), \quad
			\text{and} \quad
			(x+\pi,-y).
		\end{displaymath}
		The set $H(x,y)$ is the interior of this rectangle together with the left boundary, but without the points $(x +\frac{\pi}{2}-y, \frac{\pi}{2})$ and 	$(x+\frac{\pi}{2}+y,-\frac{\pi}{2})$.
		\begin{figure}
		\begin{displaymath}
			\begin{tikzpicture}
				\draw (-3,0) -- (3,0);
				\draw (0,-2) -- (0,2);
				\draw[line width=.4mm, draw opacity = .6, dotted] (-3,-1) -- (3,-1);
				\draw[line width=.4mm, draw opacity = .6, dotted] (-3,1) -- (3,1);
				\filldraw[fill opacity =.4, draw opacity =0, fill=gray] (-3,-1) --  (-3,1) -- (3,1) -- (3,-1) -- (-3,-1);
				\draw (-3,1) node[anchor=east] {$y=\frac{\pi}{2}$};
				\draw (-3,-1) node[anchor=east] {$y=-\frac{\pi}{2}$};
				\filldraw (-1,-0.5) circle[radius=.3ex];
				\draw (-1,-0.5) node[anchor=east] {$(x,y)$};
				\draw[thick] (0.5,1) -- (-1,-0.5) -- (-0.5,-1);
				\draw[thick, dotted] (0.5,1) -- (1,0.5) -- (-0.5,-1);
				\draw (-0.5,-1) node[anchor=north] {$(x+\frac{\pi}{2}+y,-\frac{\pi}{2})$};
				\draw (0.5,1) node[anchor=south] {$(x+\frac{\pi}{2}-y,\frac{\pi}{2})$};
				\draw (1,0.5) node[anchor=west] {$(x+\pi,-y)$};
				\filldraw[fill opacity=.2, draw opacity=0] (0.5,1) -- (-1,-0.5) -- (-0.5,-1) -- (1,0.5) -- (0.5,1);
			\end{tikzpicture}	
		\end{displaymath}
		\caption{Morphisms between indecomposable objects in $\mathcal{D}$.}\label{fig:morphs}
		\end{figure}
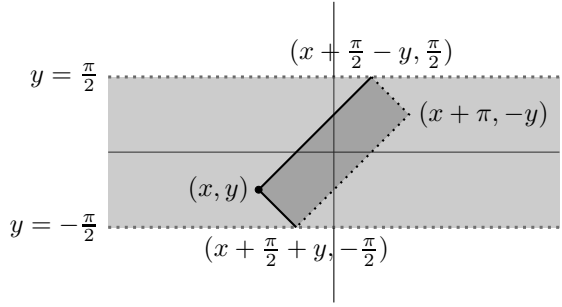
		One may also consider $H(x,y)$ as being defined by the beams emitted from $(x,y)$ with slopes $\pm 1$.  See Figure~\ref{fig:morphs} for an illustration of the set $H(x,y)$.
				
		We define $\Hom$ between indecomposable objects as follows:
		\begin{displaymath}
			\Hom((x,y),(x',y')) = \begin{cases}
		 	k & \text{if } (x',y')\in H(x,y) \\
		 	0 & \text{otherwise}.
		 	\end{cases}
		\end{displaymath}
		When $(x,y)=(x',y')$, the identity morphism on $(x,y)$ is $1\in k$.
		The composition of morphisms $f:(x_1,y_1)\to (x_2,y_2)$ and $g:(x_2,y_2)\to(x_3,y_3)$ is given by multiplication of $f$ and $g$ inside $k$ whenever $\Hom((x_1,y_1),(x_3,y_3))$ is not 0.
		Hom sets and composition of morphisms for arbitrary objects is given by extending the structure bilinearly.
		
		\subsubsection{Triangulated structure}\label{sec:triangulated structure of D}
		Let $(x,y)$ be an indecomposable object in $\mathcal D$.
		We define the shift of $(x,y)$ by
		\begin{displaymath}
			(x,y)[1] := (x+\pi,-y).
		\end{displaymath}
		The shift of a sum of indecomposables is defined to be the sum of the shift of each of the indecomposables.
		
		The minimal distinguished triangles in $\mathcal D$ are of the form
		\begin{displaymath}
			\xymatrix{
				(x,y) \ar[r] & E \ar[r] & (x',y') \ar[r] & (x+\pi,-y),
			}
		\end{displaymath}
	where the case $E=(x,y)\oplus (x',y')$ is called a \textdef{trivial} triangle. 
	
	Among these minimal distinguished triangles, we are particularly interested in the nontrivial ones. This is because, as in Definition~\ref{def:tilting rectangle} and Section~\ref{sec:T-clusters}, we use them to discuss the notions of tilting rectangles (Definition~\ref{def:tilting rectangle}) and compatibility (Definition~\ref{def:compatibility}).
	Moreover, the rest of the distinguished triangles in $\mathcal D$ can be constructed from this collection of minimal distinguished triangles (see \cite{IT15} for details).
		
		The other possibility occurs when $(x',y')$ is an element of $\overline{H(x,y)}\cap \left(\R \times(-\frac{\pi}{2},\frac{\pi}{2})\right)$, where the overline denotes the closure of $H(x,y)$ in $\R^2$ with the standard Euclidean topology.
		In this case, consider the following two points:
		\begin{align*}
			T&=\left( \frac{x+x'-y+y'}{2}, \frac{-x+x'+y'+y}{2} \right) \\
			B&=\left( \frac{x+x'+y-y'}{2}, \frac{\,\,\,\,\,x-x'+y'+y}{2} \right).
		\end{align*}
		If the $y$-coordinate of $T$ is $\frac{\pi}{2}$ then we set $T=0$ in $\mathcal D$, and similarly for $B$ if the $y$-coordinate of $B$ is $-\frac{\pi}{2}$.
		Thus, each of $T$ and $B$ is an indecomposable object or $0$ in $\mathcal D$.
		
		These can be seen in Figure \ref{fig:hom and rectangle} as the top and bottom points of the rectangle whose respective left and right endpoints are $(x,y)$ and $(x',y')$.
		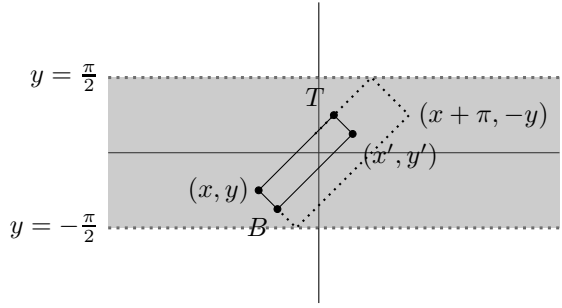
\begin{figure}[h]
		\begin{center}
			\begin{tikzpicture}
				\draw (-3,0) -- (3,0);
				\draw (-.2,-2) -- (-.2,2);
				\draw[line width=.4mm, draw opacity = .6, dotted] (-3,-1) -- (3,-1);
				\draw[line width=.4mm, draw opacity = .6, dotted] (-3,1) -- (3,1);
				\filldraw[fill opacity =.4, draw opacity =0, fill=gray] (-3,-1) --  (-3,1) -- (3,1) -- (3,-1) -- (-3,-1);
				\draw (-3,1) node[anchor=east] {$y=\frac{\pi}{2}$};
				\draw (-3,-1) node[anchor=east] {$y=-\frac{\pi}{2}$};
				\filldraw (-1,-0.5) circle[radius=.3ex];
				\draw (-1,-0.5) node[anchor=east] {$(x,y)$};
				\draw[thick, dotted] (-.25,.25) -- (0.5,1) -- (1,0.5) -- (-0.5,-1) -- (-.75,-.75);
				\draw (1,0.5) node[anchor=west] {$(x+\pi,-y)$};
				\filldraw (.25,.25) circle[radius=.3ex];
				\draw (.25,.25) node[anchor=north west] {$(x',y')$};
				\filldraw (0,.5) circle[radius=.3ex];
				\filldraw (-.75,-.75) circle[radius=.3ex];
				\draw (0,.5) node[anchor=south east] {$T$};
				\draw (-.75,-.75) node[anchor=north east] {$B$};
				\draw (0,.5) -- (.25,.25) -- (-.75,-.75) -- (-1,-.5) -- (0,.5);
			\end{tikzpicture}
			\caption{An example of a tilting rectangle corresponding to the distinguished triangle $(x,y)\to T\oplus B\to (x',y')\to (x+\pi,-y)$.}\label{fig:hom and rectangle}
			\end{center}
		\end{figure}
		If $T=B=0$ in $\mathcal D$ then $(x',y')=(x+\pi,-y)$, and we have the distinguished triangle
		\begin{displaymath}
			\xymatrix{
			(x,y) \ar[r] & 0 \ar[r] & (x+\pi,-y) \ar[r]^-{\cong} & (x+\pi,-y).
			}
		\end{displaymath}
		
		The rectangles we obtain from such distinguished triangles will play a key role in our construction of the continuous associahedron.
		\begin{definition}\label{def:tilting rectangle}
			Let $X=(x,y)$ be a point in the strip $\R\times (-\frac{\pi}{2},\frac{\pi}{2})$.
			Let $a$ and $b$ be positive real numbers such that
			\begin{align*}
				0 < & \, \, a \leq \frac{\pi}{2}-y \\
				0 < & \, \, b \leq y+\frac{\pi}{2}.
			\end{align*}
			Let $Y$, $Z$, and $W$ be the points
			\begin{align*}
				Y &= (x+a, y+a) \\
				Z &= (x+b, y-b) \\
				W &= (x+a+b, y+a-b).	
			\end{align*}
			We call the rectangle $XYWZ$ a \textdef{tilting rectangle}.
		\end{definition}
		We see an example of a tilting rectangle in Figure \ref{fig:hom and rectangle}.
		
		\begin{proposition}\label{prop:rectangles are triangles}
			There is a bijection between tilting rectangles in $\R\times [-\frac{\pi}{2},\frac{\pi}{2}]$ and distinguished triangles in $\mathcal D$ whose first and third terms are both nonzero and indecomposable.
		\end{proposition}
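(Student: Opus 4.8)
The plan is to write the bijection down explicitly in coordinates and then check that the two prescriptions are mutually inverse; essentially everything reduces to the arithmetic already encoded in the formulas for $T$ and $B$ in Section~\ref{sec:triangulated structure of D}. Throughout, for a point $X=(x,y)$ in the open strip and a point $(u,v)$, it is convenient to record the \emph{rectangle coordinates} $\alpha=\tfrac12\big((u-x)+(v-y)\big)$ and $\beta=\tfrac12\big((u-x)-(v-y)\big)$ of $(u,v)$ relative to $X$; the four affine inequalities cutting out the closed rectangle $\overline{H(x,y)}$ then read simply $0\le\alpha\le\tfrac\pi2-y$ and $0\le\beta\le\tfrac\pi2+y$.

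\textbf{Forward map.} Given a tilting rectangle $XYWZ$ as in Definition~\ref{def:tilting rectangle}, with $X=(x,y)$, $Y=(x+a,y+a)$, $Z=(x+b,y-b)$, $W=(x+a+b,y+a-b)$, I set $(x',y'):=W$ and attach to the rectangle the triangle $(x,y)\to Y\oplus Z\to (x',y')\to (x,y)[1]$, with the convention that the summand $Y$ (resp.\ $Z$) is replaced by $0$ in $\mathcal D$ exactly when $a=\tfrac\pi2-y$ (resp.\ $b=\tfrac\pi2+y$), i.e.\ when $Y$ lies on $y=\tfrac\pi2$ (resp.\ $Z$ on $y=-\tfrac\pi2$). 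To see this is well defined, note that the rectangle coordinates of $W$ relative to $X$ are exactly $(a,b)$, so the bounds $0<a\le\tfrac\pi2-y$, $0<b\le\tfrac\pi2+y$ of Definition~\ref{def:tilting rectangle} give $W\in\overline{H(x,y)}$ and also $y+a-b\in(-\tfrac\pi2,\tfrac\pi2)$, so $W$ is a nonzero indecomposable object. Section~\ref{sec:triangulated structure of D} then produces a distinguished triangle $(x,y)\to T\oplus B\to (x',y')\to (x,y)[1]$, and substituting the coordinates of $W$ into the displayed formulas for $T$ and $B$ gives $T=(x+a,y+a)=Y$ and $B=(x+b,y-b)=Z$, with $T$ having $y$-coordinate $\tfrac\pi2$ iff $a=\tfrac\pi2-y$ and $B$ having $y$-coordinate $-\tfrac\pi2$ iff $b=\tfrac\pi2+y$. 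Hence the triangle attached to the rectangle really has the required shape, and the forward map is injective since the rectangle is recovered from the pair $(X,W)=(\text{first term},\text{third term})$.

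\textbf{Inverse map.} Conversely, given a distinguished triangle with nonzero indecomposable first and third terms, I put it in the normal form $(x,y)\to E\to(x',y')\to (x,y)[1]$ of Section~\ref{sec:triangulated structure of D} with $(x',y')\in\overline{H(x,y)}\cap(\R\times(-\tfrac\pi2,\tfrac\pi2))$ and $E=T\oplus B$, and let $(a,b)$ be the rectangle coordinates of $(x',y')$ relative to $(x,y)$; then $0\le a\le\tfrac\pi2-y$ and $0\le b\le\tfrac\pi2+y$. Moreover $a>0$ and $b>0$: if $(x',y')$ lay on one of the two edges of $\overline{H(x,y)}$ through $(x,y)$ (i.e.\ $a=0$ or $b=0$), the formulas would give $T=(x,y)$ or $B=(x,y)$, so $E=(x,y)\oplus(x',y')$, the trivial middle term, which is excluded. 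Thus $XYWZ$ built from $(x,y)$ and these $a,b$ is a genuine tilting rectangle, with right vertex $W=(x+a+b,y+a-b)=(x',y')$, and by the computation of the previous paragraph the forward map sends it back to the given triangle. This proves surjectivity, and combined with injectivity, the bijection.

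\textbf{Where the work is.} There is no deep point; the proof is an unwinding of definitions, and the only real ``computation'' is the verification $T=Y$, $B=Z$ above. What needs care is a uniform treatment of the boundary cases — when $Y$ and/or $Z$ reach $y=\pm\tfrac\pi2$, so that $T$ and/or $B$ become $0$ in $\mathcal D$ (the extreme instance being $W=(x,y)[1]$ with $E=0$) — together with keeping track of the fact that the correspondence is with the \emph{nontrivial} minimal distinguished triangles: a trivial triangle $(x,y)\to(x,y)\oplus(x',y')\to(x',y')\to (x,y)[1]$ has nonzero indecomposable first and third terms too, but corresponds to a degenerate ``rectangle'' with $a=0$ or $b=0$, which Definition~\ref{def:tilting rectangle} rules out.
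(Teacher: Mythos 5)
Your proof is correct, and it does something the paper does not: the paper's entire proof of Proposition~\ref{prop:rectangles are triangles} is a one-line citation to Igusa--Todorov's construction in \cite{IT15}, whereas you give a self-contained verification by unwinding the formulas for $T$ and $B$ from Section~\ref{sec:triangulated structure of D}. Your computation checks out: with $(x',y')=W=(x+a+b,\,y+a-b)$ one indeed gets $T=(x+a,y+a)=Y$ and $B=(x+b,y-b)=Z$, the boundary degenerations $T=0$, $B=0$ occur exactly at $a=\tfrac{\pi}{2}-y$, $b=\tfrac{\pi}{2}+y$, and the bounds in Definition~\ref{def:tilting rectangle} translate precisely into $W\in\overline{H(x,y)}\cap\left(\R\times(-\tfrac{\pi}{2},\tfrac{\pi}{2})\right)$. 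The one point of friction is the one you yourself flag at the end: read literally, ``distinguished triangles whose first and third terms are nonzero and indecomposable'' also admits the trivial triangles $(x,y)\to(x,y)\oplus(x',y')\to(x',y')\to(x,y)[1]$ (which exist for arbitrary pairs of indecomposables, not only those with $(x',y')\in\overline{H(x,y)}$), and these correspond to no tilting rectangle since Definition~\ref{def:tilting rectangle} requires $a,b>0$. So the bijection is really with the \emph{nontrivial} minimal distinguished triangles of Section~\ref{sec:triangulated structure of D}; your identification of the excluded cases with $a=0$ or $b=0$ (where $E$ becomes the trivial middle term) is exactly the right way to see this, and making that restriction explicit is a genuine improvement in precision over the statement as printed.
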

		\begin{proof}
			This follows from Igusa and Todorov's construction \cite[Section 2]{IT15}.
		\end{proof}
		This proposition asserts that the set $\R\times (-\frac{\pi}{2},\frac{\pi}{2})$ acts like the Auslander--Reiten quiver of $\mathcal D$ and $\R\times [-\frac{\pi}{2},\frac{\pi}{2}]$ acts like the augmented Auslander--Reiten quiver of $\mathcal D$.
		In \cite{IT15} the authors show that this, along with a choice of which triangles are distinguished, are sufficient to yield a triangulated structure on $\mathcal D$.
		This choice does not affect our constructions.

		\subsection{Continuous mesh relations}\label{sec:continuous mesh relations}
		In the finite and discrete setting, the mesh relations are induced by the almost split triangles in $\Db(k\mathbb A_n)$.
		Recall that almost split triangles are not trivial.
		However, in $\mathcal D$, the only irreducible morphisms are isomorphisms.
		Thus, we cannot have any almost split triangles.
		Nevertheless, a continuous version of the mesh relations hold instead.
		
		From now on, we use capital letters to denote indecomposable objects in $\mathcal D$.
		\begin{definition}
			Let $\mathbb V$ be an arbitrary real vector space and  $\Phi:\Ind(\mathcal D)\sqcup \{0\}\to\mathbb V$ a function satisfying $\Phi(0)=0$.
			We extend $\Phi$ to all objects of $\mathcal{D}$ by defining its value on a direct sum to be the sum of the values of $\Phi$ on each indecomposable summand.
			
			We say $\Phi$ \textdef{satisfies the continuous mesh relations} provided that for every distinguished triangle
			\begin{displaymath}
				\xymatrix{
					X \ar[r] & Y \ar[r] & Z \ar[r] & X[1],
				}
			\end{displaymath}
			where $X$ and $Z$ are indecomposable, the following equation holds:
			\begin{displaymath}
				\Phi(X) + \Phi(Z) = \Phi(Y).
			\end{displaymath}
		\end{definition}
		
		Two examples of such functions are $g_{\mathcal Z}$-vectors (Section \ref{sec:g-vectors}) and dimension vectors (Section \ref{sec:dimension vectors}).
		We now set the context for an important property of functions satisfying continuous mesh relations, which is stated in Proposition \ref{prop:continuous mesh}.
		Suppose there are nontrivial distinguished triangles
		\begin{displaymath}
			\xymatrix@R=3ex{
				(1) & X \ar[r] & E_1\oplus E_2 \ar[r] & Y \ar[r] & X[1] \\
				(2) & Y \ar[r] & G_1\oplus G_2 \ar[r] & Z \ar[r] & Y[1] \\
				(3) & X \ar[r] & F_1\oplus F_2 \ar[r] & Z \ar[r] & X[1].
			}
		\end{displaymath}
		Without loss of generality, we assume that $E_1$, $F_1$, and $G_1$ are top points of the tilting rectangles in $\R\times(-\frac{\pi}{2},\frac{\pi}{2})$ obtained from the given distinguished triangles in $\mathcal D$.
		
		The relationship between the geometry in $\R\times(-\frac{\pi}{2},\frac{\pi}{2})$ and the distinguished triangles in $\mathcal D$ yields the schematic of the points in $\R\times(-\frac{\pi}{2},\frac{\pi}{2})$ in Figure \ref{fig:schematic}.
		\begin{figure}
		\begin{center}
			\begin{tikzpicture}
				\draw (0,0) -- (1.5,1.5) -- (3.5,-0.5) -- (2,-2) -- (0,0);
				\draw (.75,.75) -- (2.75,-1.25);
				\draw (1,-1) -- (2.5,0.5);
				\filldraw (0, 0) circle[radius=.4ex];
				\filldraw (.75, .75) circle[radius=.4ex];
				\filldraw (1,-1) circle[radius=.4ex];
				\filldraw (1.5, 1.5) circle[radius=.4ex];
				\filldraw (1.75, -0.25) circle[radius=.4ex];
				\filldraw (2,-2) circle[radius=.4ex];
				\filldraw (2.5, 0.5) circle[radius=.4ex];
				\filldraw (2.75,-1.25) circle[radius=.4ex];
				\filldraw (3.5, -0.5) circle[radius=.4ex];
				\draw (0, 0) node[anchor=east] {$X$};
				\draw (.75, .75) node[anchor=east] {$E_1$};
				\draw (1,-1) node[anchor=east] {$E_2$};
				\draw (1.5, 1.5) node[anchor=east] {$F_1$};
				\draw (1.75, -0.25) node[anchor=east] {$Y$};
				\draw (2,-2) node[anchor=east] {$F_2$};
				\draw (2.5, 0.5) node[anchor=west] {$G_1$};
				\draw (2.75,-1.25) node[anchor=west] {$G_2$};
				\draw (3.5, -0.5) node[anchor=west] {$Z$};
			\end{tikzpicture}
			\caption{A schematic of distinguished triangles in $\mathcal D$ and the corresponding rectangles in $\R\times(-\frac{\pi}{2},\frac{\pi}{2})$.}\label{fig:schematic}
		\end{center}
		\end{figure}
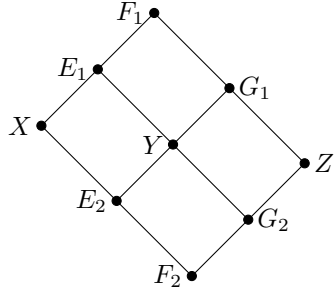
		It can be seen from the schematic that we have two more distinguished triangles:
		\begin{displaymath}
			\xymatrix@R=3ex{
				(4) & E_1 \ar[r] & F_1\oplus Y \ar[r] & G_1 \ar[r] & E_1[1] \\
				(5) & E_2 \ar[r] & Y\oplus F_2 \ar[r] & G_2 \ar[r] & E_2[1].
			}
		\end{displaymath}
		We see that distinguished triangles (1), (3) yield distinguished triangles (2), (4), and (5) both algebraically in $\mathcal D$ and geometrically in $\R\times(-\frac{\pi}{2},\frac{\pi}{2})$.
		
	    The following proposition highlights how the continuous mesh relations behave similarly to the discrete version.
		\begin{proposition}\label{prop:continuous mesh}
		    Assume $\Phi$ satisfies the continuous mesh relations.
			Let $X$, $E_1$, $E_2$, $F_1$, and $F_2$ be indecomposables in $\Ind(\mathcal D)$.
			Let $X\to E_1$ and $X\to E_2$ be morphisms, the slope from $X$ to $E_1$ be $1$, and the slope from $X$ to $E_2$ be $-1$.
			Also let $E_1\to F_1$ and $E_2\to F_2$ be morphisms, the slope from $X$ to $F_1$ be $1$, and the slope from $X$ to $F_2$ be $-1$.
			
			Then one obtains the distinguished triangles (1)--(5) above.
			Furthermore, the values $\Phi(X)$, $\Phi(E_1)$, $\Phi(E_2)$, $\Phi(F_1)$, and $\Phi(F_2)$ determine $\Phi(Y)$, $\Phi(G_1)$, $\Phi(G_2)$, and $\Phi(Z)$.
		\end{proposition}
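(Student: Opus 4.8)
The plan is to realize the whole configuration geometrically inside the strip $\R\times(-\tfrac{\pi}{2},\tfrac{\pi}{2})$, read off the five distinguished triangles from Proposition~\ref{prop:rectangles are triangles}, and then solve the small linear system that the continuous mesh relations impose on those triangles.

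First I would fix coordinates $X=(x,y)$. The slope hypotheses force $E_1=(x+a_1,y+a_1)$, $F_1=(x+a_2,y+a_2)$, $E_2=(x+b_1,y-b_1)$, and $F_2=(x+b_2,y-b_2)$ for some positive reals $a_i,b_i$; since a morphism $E_1\to F_1$ exists and $F_1$ lies on the slope-$1$ line through $E_1$, inspecting $H(E_1)$ shows $a_1\le a_2$, and similarly $b_1\le b_2$. Because $E_1,E_2,F_1,F_2$ are indecomposable they lie in the \emph{open} strip, so $y+a_2<\tfrac{\pi}{2}$ and $y-b_2>-\tfrac{\pi}{2}$. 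I would assume $a_1<a_2$ and $b_1<b_2$; the degenerate cases merely collapse some of the triangles below to trivial ones and the $\Phi$-computation is unchanged.

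Next I would set $Y=(x+a_1+b_1,y+a_1-b_1)$, $G_1=(x+a_2+b_1,y+a_2-b_1)$, $G_2=(x+a_1+b_2,y+a_1-b_2)$, and $Z=(x+a_2+b_2,y+a_2-b_2)$, and check that each is the right-hand vertex $W$ of a tilting rectangle in the sense of Definition~\ref{def:tilting rectangle}: for $(1)$ take $X$ with top $E_1$ and bottom $E_2$; for $(3)$ take $X$ with top $F_1$ and bottom $F_2$; for $(4)$ take $E_1$ with top $F_1$ and bottom $Y$; for $(5)$ take $E_2$ with top $Y$ and bottom $F_2$; and for $(2)$ take $Y$ with top $G_1$ and bottom $G_2$. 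In every case the inequalities required by Definition~\ref{def:tilting rectangle} reduce to positivity of the side lengths $a_1,b_1,a_2-a_1,b_2-b_1$ together with $y+a_2<\tfrac{\pi}{2}$ and $y-b_2>-\tfrac{\pi}{2}$, which are already known; in particular $Y,G_1,G_2,Z$ all lie in the open strip and are genuine indecomposables of $\mathcal D$ (never $0$). Proposition~\ref{prop:rectangles are triangles} then turns these five rectangles into exactly the distinguished triangles $(1)$--$(5)$. Once $(1)$ and $(3)$ are in hand this way one could instead deduce $(2)$, $(4)$, $(5)$ from the octahedral axiom in $\mathcal D$, but the rectangle description has the advantage of exhibiting the middle terms explicitly.

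Finally, applying the continuous mesh relation to $(1)$--$(5)$ gives
\[ \Phi(X)+\Phi(Y)=\Phi(E_1)+\Phi(E_2), \qquad \Phi(X)+\Phi(Z)=\Phi(F_1)+\Phi(F_2), \]
\[ \Phi(E_1)+\Phi(G_1)=\Phi(F_1)+\Phi(Y), \qquad \Phi(E_2)+\Phi(G_2)=\Phi(Y)+\Phi(F_2), \]
\[ \Phi(Y)+\Phi(Z)=\Phi(G_1)+\Phi(G_2). \]
The first equation gives $\Phi(Y)=\Phi(E_1)+\Phi(E_2)-\Phi(X)$; feeding this into the third and fourth gives $\Phi(G_1)=\Phi(F_1)+\Phi(E_2)-\Phi(X)$ and $\Phi(G_2)=\Phi(E_1)+\Phi(F_2)-\Phi(X)$; the second gives $\Phi(Z)=\Phi(F_1)+\Phi(F_2)-\Phi(X)$. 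The fifth equation is then automatically satisfied, so the system is consistent, and $\Phi(Y),\Phi(G_1),\Phi(G_2),\Phi(Z)$ are expressed solely in terms of $\Phi(X),\Phi(E_1),\Phi(E_2),\Phi(F_1),\Phi(F_2)$, as claimed. I expect the only genuine friction to be the bookkeeping that makes Proposition~\ref{prop:rectangles are triangles} applicable to the four auxiliary rectangles and rules out degenerations of $Y,G_1,G_2,Z$; the closing linear algebra is immediate and even carries a built-in consistency check.
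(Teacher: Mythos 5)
Your proposal is correct and follows essentially the same route as the paper: the paper obtains triangles (1)--(5) via the octahedral axiom (with the same geometric schematic of nested tilting rectangles set up in the surrounding text), and then performs exactly your substitution chain, solving for $\Phi(Y)$ first and feeding it into the relations for $\Phi(G_1)$, $\Phi(G_2)$, and $\Phi(Z)$. Your version is merely more explicit about the coordinates, the nondegeneracy of the four auxiliary rectangles, and the consistency of relation (2), all of which check out.
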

		\begin{proof}
			The application of the octahedral axiom yields the distinguished triangles.
			We see that
			\begin{displaymath}
				\Phi(Y) = \Phi(E_1)+\Phi(E_2)-\Phi(X).
			\end{displaymath}
			Then we may replace $\Phi(Y)$ in the equations for $\Phi(G_1)$ and $\Phi(G_2)$.
			Finally we replace $\Phi(Y)$, $\Phi(G_1)$, and $\Phi(G_2)$ in the equation for $\Phi(Z)$.
		\end{proof}
		Note that, in the proposition above, if any of $E_1$, $E_2$, $F_1$, and $F_2$ are the $0$ object then we may choose points on the boundary of $\R\times[-\frac{\pi}{2},\frac{\pi}{2}]$ such that the assertions hold.
		
	\section{Continuous deformed mesh relations}\label{Sec:Continuous Deformed Mesh Relations}
	The goal of this section is to introduce the continuous version of the deformed mesh relations~\eqref{defmesh} from Section \ref{sec:Amplituhedron for Dynkin Quivers}. For that we first develop a technique that we call \textdef{quilting}, which does not depend on knitting.
		\subsection{Quilting}\label{subsec:Quilting}

		\subsubsection{Patches and quilting}
		Notice that in Section \ref{sec:Amplituhedron for Dynkin Quivers} the projective slice forms a zigzag shape.
		To introduce a continuous version of the projective slice, we begin with the following definition.
		
		\begin{definition}\label{def:zigzag}\label{def:left vertex}\label{def:right vertex}
            Let $L=\{\ell_1,\ldots,\ell_n\}$ be a set of distinct line segments in $\R\times[-\frac{\pi}{2},\frac{\pi}{2}]$ which satisfy the following conditions.
			\begin{itemize}
				\item The slope of $\ell_1$ is $\pm 1$, and its top point is on the line $y=\frac{\pi}{2}$.
				\item For $1\leq i < n$, the segment $\ell_{i+1}$ is below $\ell_{i}$ and they intersect at a right angle.
				\item The bottom point of $\ell_n$ lies on the line $y=-\frac{\pi}{2}$.
			\end{itemize}
			Then we set
			\begin{displaymath}
				\mathcal Z = \left(\bigcup_{i=1}^n\ell_i\right) \cap \left( \R\times\left(-\frac{\pi}{2},\frac{\pi}{2}\right)\right),
			\end{displaymath}
			and call it a \textdef{zigzag with $n$ line segments}. We consider $\mathcal Z$ as a full subcategory of $\mathcal D$. 
			
			For each $1\leq i<n$, the second condition implies $\ell_{i+1}$ and $\ell_{i}$ have exactly one point $P_i=(a_i,b_i)$ in common and $y<b_i$, for any other point $(x,y)$ on $\ell_{i+1}$. If the slope of $\ell_i$ is $1$ (respectively, $-1$), this unique point $P_i$ is called a \textdef{left vertex} (respectively, \textdef{right vertex}) of $\mathcal{Z}$.
		\end{definition}
		
		In Definition \ref{def:zigzag}, the top point of $\ell_1$ and the bottom point of $\ell_n$ do not belong to $\mathcal{Z}$, and all points in $\mathcal Z$ are indecomposable objects in the category $\mathcal D$.
		
		Next we define the two types of patches.
		We will save visual depictions until we define quilting.
		
		\begin{definition}\label{def:patch}
			Let $\mathcal Z$ be a zigzag with $n$ line segments.
			\begin{itemize}
			\item Let $\ell_i$ and $\ell_{i+1}$ be line segments of $\mathcal Z$ that share a left vertex.
				Let the top point of $\ell_i$ be $(y_1,y_2)=Y$ and the bottom point of $\ell_{i+1}$ be $(z_1,z_2)=Z$.
				Let the shared left vertex be $(x,y)=X$, and let
		    	\begin{displaymath}
					W= (y_1+z_1-x_1,y_2+z_2-x_2).
				\end{displaymath}
				Then $XYWZ$ is a rectangle in $\R\times[-\frac{\pi}{2},\frac{\pi}{2}]$ whose sides have slopes $\pm 1$.
				By a \textdef{rectangular patch}, we mean the part of $XYWZ$ in $\R\times(-\frac{\pi}{2},\frac{\pi}{2})$ together with the region it bounds.
				
			\item If $\ell_i$ is a line segment that does not share a left vertex with another line segment, then $i=1$ or $i=n$.
				Let $Y$ be the right vertex on $\ell_i$ and $X$ the point in $\R\times\{-\frac{\pi}{2},\frac{\pi}{2}\}$ on $\ell_i$.
				We reflect $\ell_i$ about the vertical line through $Y$. Then there is a point $Z$ in both $\R\times\{-\frac{\pi}{2},\frac{\pi}{2}\}$ and the reflection of $\ell_i$.
				This defines a triangle $XYZ$ in $\R\times[-\frac{\pi}{2},\frac{\pi}{2}]$.
				By a \textdef{triangular patch}, we mean  the part of $XYZ$ in $\R\times(-\frac{\pi}{2},\frac{\pi}{2})$ together with the region it bounds.
			\end{itemize}
			Henceforth, by a \textdef{patch} we mean a rectangular patch or a triangular patch.
		\end{definition}
		
		Note that patches are always to the right of the corresponding zigzag.
		
		\begin{definition}[Quilting]\label{def:quilt}\label{def:quilt of zigzag}
		For a zigzag $\mathcal{Z}$, we define \emph{quilting} $\quilt(\mathcal Z)$ of $\mathcal{Z}$ via a process on all patches induced by $\mathcal{Z}$.
		
		First, let $XYWZ$ be a rectangular patch. By $\quilt(\ell_{i+1})$ we denote the top right side of $XYWZ$, which is a translation of $\ell_{i+1}$.
	    Furthermore, by $\quilt(\ell_i)$ we denote the bottom right side of $XYWZ$, which is a translation of $\ell_i$.
			Note that left vertices of $\quilt(\ell_i)\cup\quilt(\ell_{i+1})$ are the right vertices of $\ell_i\cup\ell_{i+1}$.
			Also note that $\quilt(\ell_{i+1})$ is now above $\quilt(\ell_i)$.
			\begin{displaymath}
				\begin{tikzpicture}
					\draw (0,1) -- (1,0) -- (3,2) -- (2,3) -- (0,1);
					\draw (.5,.5) node[anchor=north east] {$\ell_{i+1}$};
					\draw (1,2) node[anchor=south east] {$\ell_i$};
					\draw (2.5,2.5) node[anchor=south west] {$\quilt(\ell_{i+1})$};
					\draw (2,1) node[anchor=north west] {$\quilt(\ell_i)$};
					\draw (0,1) node[anchor=east] {$X$};
					\draw (1,0) node[anchor=north] {$Z$};
					\draw (2,3) node[anchor=south] {$Y$};
					\draw (3,2) node[anchor=west] {$W$};
					\filldraw (0,1) circle[radius=.3ex];
					\filldraw (1,0) circle[radius=.3ex];
					\filldraw (3,2) circle[radius=.3ex];
					\filldraw (2,3) circle[radius=.3ex];
				\end{tikzpicture}
			\end{displaymath}
			
			Now let $XYZ$ be a triangular patch.
			We define $\quilt(\ell_i)$ to be the reflection of $\ell_i$ across the vertical line through $Y$.
			\begin{displaymath}
				\begin{tikzpicture}
					\draw (0,1.5) -- (1.5,0) -- (3,1.5);
					\draw[dashed] (1.5,1.5) -- (1.5,0);
					\draw (.75,.75) node[anchor=north east] {$\ell_1$};
					\draw (2.25,.75) node[anchor=north west] {$\quilt (\ell_1)$};
					\draw (0,1.5) node[anchor=east] {$X$};
					\draw (1.5,0) node[anchor=north] {$Y$};
					\draw (3,1.5) node[anchor=west] {$Z$};
					\filldraw (0,1.5) circle[radius=.3ex];
					\filldraw (1.5,0) circle[radius=.3ex];
					\filldraw (3,1.5) circle[radius=.3ex];
				\end{tikzpicture}
				\qquad
				\quad
				\text{and/or}
				\quad
				\qquad
				\begin{tikzpicture}
					\draw (0,0) -- (1.5,1.5) -- (3,0);
					\draw[dashed] (1.5,1.5) -- (1.5,0);
					\draw (.75,.75) node[anchor=south east] {$\ell_n$};
					\draw (2.25,.75) node[anchor=south west] {$\quilt (\ell_n)$};
					\draw (0,0) node[anchor=east] {$X$};
					\draw (1.5,1.5) node[anchor=south] {$Y$};
					\draw (3,0) node[anchor=west] {$Z$};
					\filldraw (0,0) circle[radius=.3ex];
					\filldraw (1.5,1.5) circle[radius=.3ex];
					\filldraw (3,0) circle[radius=.3ex];
				\end{tikzpicture}
			\end{displaymath}
			We define $\quilt(\mathcal Z) := \left(\bigcup_i \quilt(\ell_i)\right) \cap \left(\mathbb R \times (-\frac{\pi}{2},\frac{\pi}{2})\right)$.
			Notice again that the left vertex of $\quilt(\ell_i)$ is the right vertex of $\ell_i$.
			In the top-to-bottom ordering of the line segments of $\quilt(\mathcal Z)$, we consider these reflections to be fixed when compared to $\mathcal Z$.
		\end{definition}
		
		\begin{remark}
			It follows directly from the definition that $\quilt(\mathcal Z)$ is unique.
			Moreover, in $\R\times[-\frac{\pi}{2},\frac{\pi}{2}]$, a rectangular patch is a tilting rectangle.
		\end{remark}
		
		\subsubsection{Requisite combinatorics}
		In this section, we collect the combinatorial tools needed for our proofs about quilting and continuous deformed mesh relations.
		We treat permutations as functions, and therefore if $\alpha$ and $\beta$ are permutations of a set $X$, by $\beta\circ \alpha$ we denote the permutation obtained by first performing $\alpha$ and then performing $\beta$. 
		If $\nset$ is an ordered set with $n$ elements, say $\{x_1,\ldots,x_n\}$, by $s_i$ we denote the simple permutation which swaps the position of $x_i$ and $x_{i+1}$. In particular, $s_i(\nset)=\{x_1,\ldots,x_{i-1},\,\,\, x_{i+1},x_i,\,\,\, x_{i+2}, \ldots, x_n\}$.
		
		For any $m \in \mathbb{R}$, let $\lceil m \rceil:= \min \{a \in \mathbb{Z} \mid m \leq a \}$ and
		$\lfloor m \rfloor:= \max \{b \in \mathbb{Z} \mid b \leq m \}$.
		
		\begin{definition}\label{def:permutations}
		We consider two particular permutations on the ordered set $\nset$:
			\begin{enumerate}
				\item Let $\pi_1:= s_{2\lfloor \frac{n}{2} \rfloor-1}\circ \cdots \circ s_3 \circ s_1$.
				\item Let $\pi_2:= s_{2\lceil \frac{n-1}{2} \rceil} \circ \cdots \circ s_4 \circ s_2$.
			\end{enumerate}
		\end{definition}
		
		One may think of $\pi_1$ as a permutation that moves elements in odd positions forward and elements in even positions backward, where the indices permit.
		
		\begin{example}\label{xmp:permutations}
			Consider the sets $\{A,B,C,D,E\}$ and $\{A,B,C,D,E,F\}$.
			We can visualize $\pi_2\circ \pi_1$ on the left and right, respectively. 
			\begin{displaymath}
				\xymatrix@R=3ex@C=3ex{
					A \ar@{}[d]_-{\pi_1} \ar[dr] & B \ar[dl] & C \ar[dr] & D\ar[dl] & E\ar[d]^-{\pi_1} 
					& & & A \ar@{}[d]_-{\pi_1} \ar[dr] & B\ar[dl] & C \ar[dr]& D\ar[dl] & E\ar[dr] & F\ar[dl] \ar@{}[d]^-{\pi_1} \\
					B\ar[d]_-{\pi_2} & A\ar[dr] & D\ar[dl] & C\ar[dr] & E\ar[dl] \ar@{}[d]^-{\pi_2}
					& & & B \ar[d]_-{\pi_2} & A \ar[dr]& D \ar[dl] & C\ar[dr] & F\ar[dl] & E\ar[d]^-{\pi_2} \\
					B & D & A & E & C & & & B & D & A & F & C & E\\
				}
			\end{displaymath}
			
		Observe that the permutations behave slightly differently depending on whether or not we are working with an even number of elements or an odd number of elements. For a permutation $\alpha$, by $\Fix(\alpha)$ we denote the set of elements fixed under $\alpha$.
		\end{example}
		
		\begin{proposition}\label{prop:fixed elements}
		For $\nset=\{x_1,\ldots, x_n\}$, we have the following properties.
			\begin{itemize}
				\item Both $\pi_1$ and $\pi_2$ are involutions on $\nset$.
				\item If $n$ is odd, then $\Fix(\pi_1)=\{x_n\}$ and $\Fix(\pi_2)=\{x_1\}$.
				\item If $n$ is even, then $\Fix(\pi_1)=\emptyset$ and $\Fix(\pi_2)=\{x_1, x_n\}$.
				\item $\Fix(\pi_1) \cap \Fix(\pi_2)= \emptyset $.
			\end{itemize}
		\end{proposition}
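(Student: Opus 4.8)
The plan is to recognize that $\pi_1$ and $\pi_2$ are each a product of \emph{pairwise disjoint} transpositions, after which all four assertions reduce to elementary bookkeeping about which positions are moved.

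First I would observe that the factors $s_1, s_3, s_5, \dots$ of $\pi_1$ transpose the pairs of positions $\{1,2\}, \{3,4\}, \{5,6\},\dots$, and similarly the factors $s_2, s_4, s_6, \dots$ of $\pi_2$ transpose the pairs $\{2,3\}, \{4,5\}, \{6,7\}, \dots$; here, when $n$ is even, the displayed top factor $s_n$ of $\pi_2$ has no effect, since there is no $x_{n+1}$. In both cases the constituent transpositions have pairwise disjoint supports, hence commute, so the composition is independent of the order of the factors and equals its own inverse. This gives the first bullet.

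Next, since the fixed set of a product of pairwise disjoint transpositions is exactly the complement of the union of their supports, it suffices to identify that union in each case. The factors of $\pi_1$ are $s_{2j-1}$ for $1\le j\le \lfloor n/2\rfloor$, so the moved positions are $\{1,2,\dots,2\lfloor n/2\rfloor\}$: this is all of $\{1,\dots,n\}$ when $n$ is even, giving $\Fix(\pi_1)=\emptyset$, and it is $\{1,\dots,n-1\}$ when $n$ is odd, giving $\Fix(\pi_1)=\{x_n\}$. The nontrivial factors of $\pi_2$ are $s_{2j}$ for $1\le j\le \lfloor (n-1)/2\rfloor$, so the moved positions are $\{2,3,\dots,2\lfloor(n-1)/2\rfloor+1\}$: this is $\{2,\dots,n\}$ when $n$ is odd, giving $\Fix(\pi_2)=\{x_1\}$, and it is $\{2,\dots,n-1\}$ when $n$ is even, giving $\Fix(\pi_2)=\{x_1,x_n\}$. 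This proves the second and third bullets. The last bullet is then immediate from the case analysis: the only possible element of $\Fix(\pi_1)$ is $x_n$, and it belongs to $\Fix(\pi_1)$ only when $n$ is odd, in which case $\Fix(\pi_2)=\{x_1\}$ does not contain $x_n$ (as $x_1\ne x_n$ for $n\ge 2$).

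There is no genuine obstacle here; the only thing requiring care is the parity bookkeeping — matching the displayed top indices $2\lfloor n/2\rfloor$ and $2\lceil (n-1)/2\rceil$ to the actual ranges of moved positions — together with the harmless convention that the top factor of $\pi_2$ acts trivially when $n$ is even.
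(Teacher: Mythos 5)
Your proof is correct and is essentially the argument the paper has in mind: the paper's own proof is just the one-line remark that the statements follow directly from Definition \ref{def:permutations}, and your write-up supplies exactly the missing bookkeeping (disjoint-transposition decomposition, identification of the moved positions, and the harmless convention that the top factor $s_n$ of $\pi_2$ acts trivially when $n$ is even, which is consistent with the paper's Example \ref{xmp:permutations}). No issues.
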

		\begin{proof}
			The statements follow directly from Definition \ref{def:permutations}.
		\end{proof}
		
		\begin{definition}
			Let $\nset$ be an ordered set with $n$ elements.
			For an even positive integer $j$, we define
			\begin{itemize}
				\item the \textdef{odd $j$th composition} to be
						$\pi^j_o:= (\pi_2\circ \pi_1)^{\frac{j}{2}}$,
				\item and the \textdef{even $j$th composition} to be
						$\pi^j_e:= (\pi_1\circ\pi_2)^{\frac{j}{2}}$.
			\end{itemize}
			For an odd positive integer $j$, we define
			\begin{itemize}
				\item the \textdef{odd $j$th composition} to be
						$\pi^j_o:=\pi_1\circ(\pi_2\circ \pi_1)^{\frac{j-1}{2}}$,
				\item and the \textdef{even $j$th composition} to be
						$\pi^j_e:=\pi_2\circ(\pi_1\circ\pi_2)^{\frac{j-1}{2}}$.
			\end{itemize}
		\end{definition}
		The naming convention above was chosen so that the odd compositions start with $\pi_1$ and the even compositions start with $\pi_2$.
		In essence, $j$ counts the number of alternating compositions of $\pi_1$ and $\pi_2$ that we are performing.

		\begin{proposition}\label{prop:forwards}
			Let $\nset=\{x_1,\ldots, x_n\}$. For $1\leq i < n$ and $j=n-i$, we have
			\begin{itemize}
				\item if $i$ is odd, then $\pi_o^j (x_i)=x_n$ and 
				$\pi_o^{j+1}(x_i)=x_n$.
				\item if $i$ is even, then $\pi_e^j(x_i)=x_n$ and
				$\pi_e^{j+1}(x_i)=x_n$.
			\end{itemize}
		\end{proposition}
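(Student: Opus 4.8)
The plan is to reduce the statement to an explicit description of $\pi_1$ and $\pi_2$ as permutations of the positions $1,\dots,n$, and then to run two parallel short inductions. First I would observe that, since the simple transpositions occurring in each of the two products are pairwise disjoint and hence commute, $\pi_1$ is exactly the permutation that interchanges $x_{2k-1}$ with $x_{2k}$ for every $k$ with $2k\le n$, while $\pi_2$ is exactly the permutation that interchanges $x_{2k}$ with $x_{2k+1}$ for every $k$ with $2k+1\le n$ (and fixes $x_1$). In particular $\pi_1$ fixes $x_n$ precisely when $n$ is odd, and $\pi_2$ fixes $x_n$ precisely when $n$ is even; this reproves Proposition~\ref{prop:fixed elements}, and it is the only structural fact about $\pi_1,\pi_2$ I use below. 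Throughout I read $\pi_2\circ\pi_1$ with the convention of the paper, applying $\pi_1$ first.

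From this description one immediately extracts a one-step shift: if $i$ is odd with $i+2\le n$, then $(\pi_2\circ\pi_1)(x_i)=\pi_2(x_{i+1})=x_{i+2}$, and if $i$ is even with $i+2\le n$, then $(\pi_1\circ\pi_2)(x_i)=\pi_1(x_{i+1})=x_{i+2}$. Alongside this I would record, directly from the definition of the odd and even compositions, the telescoping identities $\pi^{j+2}_o=\pi^j_o\circ(\pi_2\circ\pi_1)$ and $\pi^{j+2}_e=\pi^j_e\circ(\pi_1\circ\pi_2)$, valid for every $j\ge1$ irrespective of the parity of $j$, together with $\pi^{j+1}_o=\pi_1\circ\pi^j_o$ when $j$ is even and $\pi^{j+1}_o=\pi_2\circ\pi^j_o$ when $j$ is odd (and symmetrically, with $\pi_1$ and $\pi_2$ swapped, for $\pi^{j+1}_e$).

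Next I would prove $\pi^{n-i}_o(x_i)=x_n$ for all odd $i$ with $1\le i<n$, by downward induction over the odd values of $i$. There are two base cases: $i=n-1$, which forces $n$ to be even and makes $\pi^1_o=\pi_1$ send $x_{n-1}$ to $x_n$; and $i=n-2$, which forces $n$ to be odd and makes $\pi^2_o=\pi_2\circ\pi_1$ send $x_{n-2}$ to $x_{n-1}$ to $x_n$. Whichever parity $n$ has, exactly one of these is the largest relevant odd index, and together with the range $i\le n-3$ they exhaust all odd $i<n$. For odd $i\le n-3$ (so that $i+2\le n-1<n$ and $n-i-2\ge1$), the telescoping identity and the shift give $\pi^{n-i}_o(x_i)=\pi^{n-i-2}_o\bigl((\pi_2\circ\pi_1)(x_i)\bigr)=\pi^{n-(i+2)}_o(x_{i+2})$, which equals $x_n$ by the inductive hypothesis applied at the odd index $i+2<n$. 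The statement $\pi^{n-i}_e(x_i)=x_n$ for even $i<n$ is proved verbatim after interchanging the roles of $\pi_1$ and $\pi_2$ (the base cases now being $i=n-1$ with $\pi^1_e=\pi_2$ and $i=n-2$ with $\pi^2_e=\pi_1\circ\pi_2$).

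Finally the statements for index $j+1$ fall out: writing $j=n-i$, the identities from the second paragraph give $\pi^{j+1}_o(x_i)=\pi_1(x_n)$ when $j$ is even and $\pi^{j+1}_o(x_i)=\pi_2(x_n)$ when $j$ is odd; but for odd $i$, $j$ even forces $n$ to be odd (so $\pi_1$ fixes $x_n$) while $j$ odd forces $n$ to be even (so $\pi_2$ fixes $x_n$), hence $\pi^{j+1}_o(x_i)=x_n$ in both cases, and likewise $\pi^{j+1}_e(x_i)=x_n$ for even $i$. The only place that requires genuine care is matching the parity of $n$ against the two base cases in the third paragraph and against the even/odd dichotomy in this last step; once the explicit description of $\pi_1,\pi_2$ from the first paragraph is in place, there is no substantive obstacle.
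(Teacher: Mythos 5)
Your proof is correct and takes essentially the same route as the paper's: both reduce the statement to the explicit description of $\pi_1$ and $\pi_2$ as products of disjoint adjacent transpositions and then induct along the positions, finishing the $j+1$ case by checking which of $\pi_1,\pi_2$ fixes $x_n$ for the relevant parity of $n$. The only organizational difference is that you step by two and stay within a single parity class of $i$ (via the telescoping identity $\pi_o^{j+2}=\pi_o^j\circ(\pi_2\circ\pi_1)$ and the two-step shift $x_i\mapsto x_{i+2}$), whereas the paper inducts on $j$ in steps of one, alternating between the odd-$i$ and even-$i$ statements.
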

		\begin{proof}
			The proof is by induction on $j$.
			First suppose $j=1$; we use Proposition \ref{prop:fixed elements}.
			If $i$ is odd, then $n$ is even.
			So $\pi_1(x_i)=x_n$ and $\pi_2(x_n)=x_n$.
			If instead $i$ is even, then $n$ is odd.
			In this case, $\pi_2(x_i)=x_n$ and $\pi_1(x_n)=x_n$.
		
			Assume that the assertion holds for all positive integers less than or equal to $\ell$, and let $j=\ell+1$.
			Suppose $i$ is odd. Then $\pi_1(x_i)=x_{i+1}$.
			We know $n-(i+1)=\ell$ and the proposition holds for $\ell$.
		
			If we perform $\pi_e^\ell\circ \pi_1$ then we have performed $\pi_o^j$, which sends $x_i$ to $x_n$.
			If we perform $\pi_e^{\ell+1}\circ \pi_1$ we have performed $\pi_o^{j+1}$, which sends $x_i$ to $x_n$.
			If $i$ is even, we can perform a similar argument.
		\end{proof}
		
		\begin{proposition}\label{prop:backwards}
	    Let $\nset=\{x_1,\ldots, x_n\}$. For $1< i \leq n$ and $j=i-1$, we have
			\begin{itemize}
				\item if $i$ is odd, then $\pi_e^j(x_i)=x_1$ and $\pi_e^{j+1}(x_i)=x_1$.
				\item if $i$ is even, then $\pi_o^j(x_i)=x_1$ and $\pi_o^{j+1}(x_i)=x_1$.
			\end{itemize}
		\end{proposition}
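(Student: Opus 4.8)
The plan is to mirror the proof of Proposition~\ref{prop:forwards}, inducting on $j=i-1$ but now pushing $x_i$ \emph{down} to $x_1$ instead of up to $x_n$. Reading off Definition~\ref{def:permutations} just as in Proposition~\ref{prop:fixed elements}, the two facts that play the role of ``$\pi_1(x_i)=x_{i+1}$ for odd $i$'' are: $\pi_1(x_i)=x_{i-1}$ for every even $i$ with $2\le i\le n$, and $\pi_2(x_i)=x_{i-1}$ for every odd $i$ with $3\le i\le n$. Besides these I will use only that $\pi_2$ fixes $x_1$ (Proposition~\ref{prop:fixed elements}) and the elementary rewriting identities $\pi_o^{m}=\pi_e^{m-1}\circ\pi_1$ and $\pi_e^{m}=\pi_o^{m-1}\circ\pi_2$ for $m\ge 2$, which are immediate from the definition of the odd and even $m$th compositions.

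The base case is $j=1$, i.e.\ $i=2$, which is even: $\pi_o^1=\pi_1$ gives $\pi_o^1(x_2)=x_1$, and $\pi_o^2=\pi_2\circ\pi_1$ sends $x_2\mapsto x_1\mapsto x_1$ since $x_1\in\Fix(\pi_2)$. For the inductive step, assume the statement for all positive integers at most $\ell$ and set $j=\ell+1$, so $i=j+1\ge 3$. If $i$ is even, then $\pi_1(x_i)=x_{i-1}$ with $i-1$ odd and $i-1\ge 3$, so the inductive hypothesis applied to $x_{i-1}$ with $j'=(i-1)-1=j-1$ gives $\pi_e^{j-1}(x_{i-1})=\pi_e^{j}(x_{i-1})=x_1$; composing with $\pi_1$ and using $\pi_o^{j}=\pi_e^{j-1}\circ\pi_1$ and $\pi_o^{j+1}=\pi_e^{j}\circ\pi_1$ yields $\pi_o^{j}(x_i)=\pi_o^{j+1}(x_i)=x_1$. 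If $i$ is odd, then $\pi_2(x_i)=x_{i-1}$ with $i-1$ even, and the inductive hypothesis applied to $x_{i-1}$ with $j'=j-1$ gives $\pi_o^{j-1}(x_{i-1})=\pi_o^{j}(x_{i-1})=x_1$; composing with $\pi_2$ and using $\pi_e^{j}=\pi_o^{j-1}\circ\pi_2$ and $\pi_e^{j+1}=\pi_o^{j}\circ\pi_2$ finishes the case.

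The step that needs genuine care, and what I regard as the main obstacle, is verifying the two facts above about where $\pi_1$ and $\pi_2$ send $x_i$ — that is, that the transposition $s_{i-1}$ actually occurs in the zigzag of transpositions defining $\pi_1$ (resp.\ $\pi_2$) throughout the stated range of $i$. This is exactly where the parity of $n$ enters: when $i=n$ is even one must invoke $\pi_1$ (since $\pi_2$ may fix $x_n$), and when $i=n$ is odd one invokes $\pi_2$, and in each case $n$ has precisely the parity that guarantees $s_{n-1}$ is a factor of the permutation being used, so $i=n$ needs no separate treatment. An alternative to checking this by hand is to derive the proposition from Proposition~\ref{prop:forwards} by conjugating by the order-reversing involution $\rho\colon x_i\mapsto x_{n+1-i}$, which interchanges $x_1$ with $x_n$ and, according to the parity of $n$, either fixes both $\pi_1$ and $\pi_2$ or swaps them; this again reduces to a short parity bookkeeping, so I would present the direct induction above as the cleaner route.
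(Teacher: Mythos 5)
Your proof is correct and is essentially the paper's argument: the paper disposes of this proposition by declaring it symmetric to Proposition~\ref{prop:forwards}, and your induction on $j=i-1$ (base case $i=2$, then splitting on the parity of $i$ and using $\pi_o^{m}=\pi_e^{m-1}\circ\pi_1$, $\pi_e^{m}=\pi_o^{m-1}\circ\pi_2$) is precisely that symmetric proof written out in full. The two facts you flag as needing care, $\pi_1(x_i)=x_{i-1}$ for even $i\le n$ and $\pi_2(x_i)=x_{i-1}$ for odd $i\ge 3$, do hold throughout the stated ranges, so the argument goes through.
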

		\begin{proof}
			The statement and the proof are symmetric to Proposition \ref{prop:forwards} and its proof.	
		\end{proof}
		
		\begin{lemma}\label{lem:reverse}\label{lem:fixed once}\label{lem:combinatorial directions}
			Let $\nset$ be an ordered set with $n$ elements.
			The following hold.
			\begin{itemize}
				\item $\pi_o^n$ and $\pi_e^n$ both reverse the order of the element in $\nset$.
				\item
				If  $x\in\nset$, there exist unique $0\leq i, j< n$ such that $\pi_o^i(x)=\pi_o^{i+1}(x)$ and $\pi_e^j(x)=\pi_e^{j+1}(x)$.
			\end{itemize}
		\end{lemma}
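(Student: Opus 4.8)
The plan is to describe the full orbit $x,\pi_o(x),\pi_o^2(x),\dots$ of an arbitrary element $x=x_p\in\nset$ explicitly, and likewise for $\pi_e$; both assertions then drop out by inspection. Write $\pi_o^j=\sigma_j\circ\cdots\circ\sigma_1$ with $\sigma_t=\pi_1$ for $t$ odd and $\sigma_t=\pi_2$ for $t$ even, so that $\pi_o^j(x)$ is $x$ with $\sigma_1,\dots,\sigma_j$ applied in turn, and similarly $\pi_e^j=\tau_j\circ\cdots\circ\tau_1$ with $\tau_t=\pi_2$ for $t$ odd and $\tau_t=\pi_1$ for $t$ even. The starting point is the \emph{local rule}: $\pi_1$ sends $x_q$ to $x_{q+1}$ when $q$ is odd and $q<n$, and to $x_{q-1}$ when $q$ is even; $\pi_2$ sends $x_q$ to $x_{q+1}$ when $q$ is even and $q<n$, and to $x_{q-1}$ when $q$ is odd and $q>1$; and the only boundary fixed points are $x_1$ (fixed by $\pi_2$) and $x_n$ (fixed by $\pi_1$ iff $n$ is odd, by $\pi_2$ iff $n$ is even). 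In particular $x_1$ and $x_n$ are the only elements that any single $\pi_1$ or $\pi_2$ can fix.

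Next I would prove the \emph{trajectory lemma}: under iteration of $\pi_o$ the element $x_p$ moves one index at a time — toward $x_n$ if $p$ is odd and toward $x_1$ if $p$ is even — until it reaches that endpoint, then remains there for exactly one more application of $\pi_o$, and thereafter moves one index at a time in the opposite direction. The crux is that the ``phase'' persists: if at step $t$ the element sits at $x_q$ with $1<q<n$ and $\sigma_t$ pushes it to $x_{q+1}$, then $x_{q+1}$ has the opposite parity and $\sigma_{t+1}$ is the other permutation, so it is pushed forward again; and when the element first lands on $x_n$ (resp.\ $x_1$), the permutation scheduled to act next is — by the boundary fixed-point list together with the parity of $n$ — exactly the one fixing $x_n$ (resp.\ $x_1$), producing a single stall, after which the phase has flipped. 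Since $\sigma_1=\pi_1$, the element $x_p$ starts by moving toward $x_n$ precisely when $p$ is odd. The same argument applies to $\pi_e$: now $x_p$ starts toward $x_n$ iff $p$ is even (because $\tau_1=\pi_2$), with $x_1$ the degenerate case already sitting at its near endpoint and hence stalling at step $1$.

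Granting the trajectory lemma, both statements reduce to counting moves. If $p$ is odd then under $\pi_o$ the element $x_p$ makes $n-p$ forward moves (reaching $x_n$ after step $n-p$), stalls at step $n-p+1$, and makes the remaining $p-1$ moves backward, ending at $x_{n-(p-1)}=x_{n+1-p}$; if $p$ is even it makes $p-1$ backward moves to $x_1$, one stall, and $n-p$ forward moves, again ending at $x_{n+1-p}$. Hence $\pi_o^n$ is the order-reversing permutation $x_p\mapsto x_{n+1-p}$, and the parallel count for the $\pi_e$-trajectory gives the same for $\pi_e^n$. For the second statement, $\pi_o^i(x)=\pi_o^{i+1}(x)$ means exactly that $\sigma_{i+1}$ fixes $\pi_o^i(x)$, i.e.\ that step $i+1$ is a stall; the trajectory lemma says there is exactly one such step among $1,\dots,n$, located at step $s_1:=n-p+1$ if $p$ is odd and $s_1:=p$ if $p$ is even, which yields the unique $i=s_1-1\in\{0,\dots,n-1\}$. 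Uniqueness is what the lemma buys: after the single stall at step $s_1$ the phase has reversed and there are no further fixed points strictly between $x_1$ and $x_n$, so the element needs $n-1$ more moves to reach the far endpoint, whence the next stall would occur only at step $s_1+n>n$ — never within the first $n$ applications. The argument for $\pi_e$ is identical, with $s_1$ shifted by one.

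The genuinely fiddly part is the trajectory lemma, specifically the boundary bookkeeping: one must track the parity of the current index, the parity of the step, and the parity of $n$ simultaneously, in order to confirm that $x_n$ (and $x_1$) is fixed by exactly the permutation scheduled to act when the element first arrives there — this is what makes the stall last precisely one step rather than zero or two. Everything after that is arithmetic. (One could instead try to bootstrap from Propositions~\ref{prop:forwards} and~\ref{prop:backwards}, but those only locate an element once it has already reached an endpoint, so they would still have to be supplemented with the monotonicity of the trajectory provided by the trajectory lemma.)
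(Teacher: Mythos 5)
Your proof is correct, and it rests on the same underlying picture as the paper's: each element walks one position at a time toward one endpoint of $\nset$, pauses there for exactly one application, and then walks back, so that after $n$ applications it sits at the mirror position. The difference is in packaging. The paper's proof imports Propositions~\ref{prop:forwards} and~\ref{prop:backwards} (which record only the arrival at an endpoint and the one-step stall there) and then composes the two halves of the journey, checking parities of $i$ and $n$ to see that the composite is $\pi_o^n$ or $\pi_e^n$; you instead re-derive the whole orbit from a local rule for $\pi_1$ and $\pi_2$ and a self-contained trajectory lemma. What your version buys is a genuinely cleaner treatment of the second bullet: the uniqueness of the stall index $i$ (equivalently, that a fixed point of $\sigma_{i+1}$ can only occur at positions $1$ or $n$, and that the trajectory meets such a position with the matching permutation scheduled exactly once among the first $n$ steps) is exactly what the monotone-trajectory statement delivers, whereas the paper's written proof establishes the order reversal in detail and leaves the uniqueness claim essentially to the reader. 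What it costs is the boundary bookkeeping you flag --- verifying that when the element first reaches position $n$ (resp.\ $1$) the permutation scheduled to act is precisely the one fixing that position --- but the parity check there ($n-p+1\equiv n \pmod 2$ for $p$ odd under $\pi_o$, and its three companions) does come out right, so the stall lasts exactly one step as claimed.
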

		\begin{proof}
		We prove the statement by showing that the $i$th element is sent to the $((n+1)-i)$th element.
		First suppose $i$ is odd.
		Then, by Proposition \ref{prop:forwards}, both $\pi_o^{n-i}$ and $\pi_{o}^{n+1-i}$ send $i$ to the $n$th element.
			
			If $n$ is odd $\pi_e^{i-1}$ sends the $n$th element to the $((n+1)-i)$th element.
			In this case, $((n+1)-i)$ is odd; so $\pi_e^{i-1}\circ\pi_o^{n+1-i}=\pi_o^n$.
				
			If $n$ is even then $\pi_o^{i-1}$ sends the $n$th element to the $((n+1)-i)$th element.
			In this case, $((n+1)-i)$ is even; so $\pi_o^{i-1}\circ \pi_o^{n+1-i}=\pi_o^n$.
			The case when $i$ is odd and we start with the $((n+1)-i)$th even composition is similar.
			
			Now suppose $i$ is odd and we first perform $\pi_e^i$.
			By Proposition \ref{prop:backwards}, $\pi_e^i$ and $\pi_e^{i-1}$ send the $i$th element to the first position.
			Thus, $\pi_e^i=\pi_2\circ \pi_e^{i-1}$.
			We know the odd $(n-i)$th composition will send the first element to the $((n+1)-i)$th position.
			We also see that $\pi_o^{n-1}\circ\pi_e^i = \pi_e^n$.
			The case when $i$ is even is similar.
			This concludes the proof.
		\end{proof}
		
		\subsubsection{Quilting}
		For a zigzag $\mathcal Z$ with $n$ line segments, we show in Theorem~\ref{thm:n quilt of a zigzag is the shift} below that performing quilting $n$ times on $\mathcal Z$ yields $\mathcal Z[1]$.
		
		\begin{proposition}\label{prop:lines after quilting}
		Let $\mathcal Z$ be a zigzag with the set of line segments $L:=\{\ell_1, \ldots, \ell_n\}$.
			Then $\quilt(\mathcal Z)$ is a zigzag, and the order on $\{\quilt(\ell_i)\mid \ell_i\in L\}$ is $\pi_1(L)$ if the slope of $\ell_1$ is $+1$, and $\pi_2(L)$ otherwise.
		\end{proposition}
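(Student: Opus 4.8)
The plan is to first extract the combinatorial pattern of the patches determined by $\mathcal Z$, then to read off the permutation that quilting performs on the line segments, and finally to verify the three defining properties of a zigzag.

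To begin, I would note that because consecutive segments of a zigzag meet at a right angle, the slopes of $\ell_1,\ldots,\ell_n$ alternate, so if the slope of $\ell_1$ is $1$ then $\ell_i$ has slope $1$ exactly when $i$ is odd (and symmetrically when the slope of $\ell_1$ is $-1$). By Definition~\ref{def:patch} a pair $\ell_i,\ell_{i+1}$ forms a rectangular patch exactly when it shares a left vertex, and by Definition~\ref{def:zigzag} their shared point $P_i$ is a left vertex exactly when the slope of $\ell_i$ is $1$; a single segment forms a triangular patch exactly when it lies in no such pair, which by Definition~\ref{def:patch} forces it to be $\ell_1$ or $\ell_n$. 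Putting these together, when the slope of $\ell_1$ is $1$ the rectangular patches are precisely the pairs $\{\ell_i,\ell_{i+1}\}$ with $i$ odd, and $\ell_n$ is a triangular patch iff $n$ is odd; when the slope of $\ell_1$ is $-1$ they are precisely the pairs with $i$ even, and $\ell_1$ is always a triangular patch while $\ell_n$ is one iff $n$ is even. In every case the patches partition $L$.

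Next I would unwind Definition~\ref{def:quilt of zigzag} patch by patch. For a rectangular patch built from $\{\ell_i,\ell_{i+1}\}$, quilting produces the segments $\quilt(\ell_{i+1})$ and $\quilt(\ell_i)$ as the two right-hand sides of the rectangle $XYWZ$; these are translates of $\ell_{i+1}$ and $\ell_i$, so they have the same slopes, and $\quilt(\ell_{i+1})$ now lies above $\quilt(\ell_i)$, so within the patch the two segments swap their positions in the top-to-bottom order. For a triangular patch at $\ell_1$ or $\ell_n$, quilting produces the reflection of that segment across the vertical line through its right vertex, which again reaches the line $y=\frac{\pi}{2}$ or $y=-\frac{\pi}{2}$ and so retains the extreme position it had. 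Combining this with the patch pattern above, the permutation of positions induced by quilting is the product of the pairwise-disjoint (hence commuting) transpositions $s_i$ over the rectangular pairs: it is $s_1\circ s_3\circ\cdots$ when the slope of $\ell_1$ is $1$, and $s_2\circ s_4\circ\cdots$ when it is $-1$. By Definition~\ref{def:permutations} these are exactly $\pi_1$ and $\pi_2$, which gives the claimed description of the order (and is consistent with Proposition~\ref{prop:fixed elements}, the fixed segments being exactly the triangular patches).

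Finally, to see that $\quilt(\mathcal Z)$ is a zigzag, I would observe that the $\quilt(\ell_i)$ all have slope $\pm1$ and are pairwise distinct since the patch regions are interior-disjoint and lie to the right of $\mathcal Z$; it then remains to check the conditions of Definition~\ref{def:zigzag} at the seams. Within a rectangular patch the two quilted segments meet at the outer corner $W$ of the rectangle, which is a right angle by construction. Between consecutive patches I would check that the two quilted segments in question meet at an old right vertex $P_j$ of $\mathcal Z$: this point is the bottom endpoint of $\ell_j$ and the top endpoint of $\ell_{j+1}$, it is inherited by one quilted segment of slope $1$ and one of slope $-1$, and hence the two meet there at a right angle (the triangular patches at $\ell_1,\ell_n$ are treated the same way, using that the reflected segment still has an endpoint on $y=\pm\frac{\pi}{2}$). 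Since the topmost quilted segment has its top endpoint on $y=\frac{\pi}{2}$ (inherited from $\ell_1$) and the bottommost has its bottom endpoint on $y=-\frac{\pi}{2}$ (inherited from $\ell_n$), all three conditions of Definition~\ref{def:zigzag} hold. I expect this seam-tracking to be the only real obstacle: one must keep careful account of which patch each vertex $P_j$ falls into after quilting in order to confirm that the segments reassemble into a single monotone staircase, but the rest follows by directly unwinding the definitions.
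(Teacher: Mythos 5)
Your proposal is correct and follows essentially the same route as the paper: identify which consecutive pairs share a left vertex (odd-indexed pairs when $\ell_1$ has slope $+1$, even-indexed otherwise), observe that quilting swaps the two segments of each rectangular patch and fixes the reflected end segments, and read off $\pi_1$ or $\pi_2$. The only difference is that you verify the zigzag conditions for $\quilt(\mathcal Z)$ at the seams explicitly, whereas the paper asserts this directly from Definition~\ref{def:quilt of zigzag}; your extra check is sound.
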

		\begin{proof}
			By Definition \ref{def:quilt of zigzag}, $\quilt(\mathcal Z)$ is indeed a zigzag: all line segments have alternating slopes $\pm 1$ and the top and bottom points respectively belong to the lines $y=\frac{\pi}{2}$ and $y=-\frac{\pi}{2}$. To show the assertion, we treat two cases based on the slope of $\ell_1$.
			
			We consider the case where the slope of $\ell_1$ is $+1$
			(the case where the slope of $\ell_1$ is $-1$ is similar).
		    In this case, the slope of all odd line segments are $+1$.
		    Hence, the slope of all even line segments are $-1$.
			Thus, for each pair $\ell_i$ and $\ell_{i+1}$ that share a left vertex, $i$ must be odd.
			So, $\quilt(\ell_{i+1})$ is above $\quilt(\ell_i)$.
			If $n>1$ is odd, then, by Definition \ref{def:quilt of zigzag}, $\quilt(\ell_n)$ is a reflection.
			If $n$ is even there are no reflections.
			Thus, the top-to-bottom order of the line segments of $\quilt(\mathcal Z)$ is given by $\pi_1(L)$.
		\end{proof}
		
		\begin{proposition}\label{prop:inverse quilt}
			For a zigzag $\mathcal Z$, there exists a unique zigzag $\mathcal Z'$ such that $\quilt(\mathcal Z')=\mathcal Z$.
		\end{proposition}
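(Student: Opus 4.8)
The plan is to reverse the quilting construction patch by patch, exchanging the roles of left and right vertices. For \textbf{existence}, I would define an explicit \emph{inverse quilting} of a zigzag $\mathcal Z$ with line segments $m_1,\dots,m_n$ listed from top to bottom. First, decompose $\{m_1,\dots,m_n\}$ into consecutive blocks, using the \emph{right} vertices of $\mathcal Z$ where Definition~\ref{def:patch} uses left vertices: two consecutive segments meeting at a right vertex form a $2$-block, and the top segment $m_1$ (resp.\ bottom segment $m_n$), when it meets its neighbor at a left vertex, is a $1$-block. Since the internal vertices of a zigzag alternate between left and right, this decomposition is forced by $\mathcal Z$ and covers all segments. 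Now reverse the two cases of Definition~\ref{def:quilt}: given a $2$-block $\{m_a,m_{a+1}\}$ meeting at a right vertex $W$, let $Y$ (resp.\ $Z$) be the endpoint of $m_a$ (resp.\ $m_{a+1}$) other than $W$, set $X=Y+Z-W$, and replace the block by the segments $XY$ and $XZ$; and given a $1$-block consisting of an extreme segment $m$ meeting its neighbor at a left vertex $P$, replace $m$ by its reflection across the vertical line through $P$. Carrying this out on every block produces a collection $\mathcal Z'$ of $n$ line segments.

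Next I would verify that $\mathcal Z'$ is a zigzag in the sense of Definition~\ref{def:zigzag} and that $\quilt(\mathcal Z')=\mathcal Z$. For the first point: within a reconstructed $2$-block, $X$, $Y$, $W$, $Z$ are the corners of a tilting rectangle, so $XY$ and $XZ$ have slopes $\pm1$ and meet at a right angle; reflection across a vertical line flips slopes, so the $1$-blocks contribute segments of the complementary slope; and the extreme endpoints of $\mathcal Z'$ land on $y=\pm\tfrac{\pi}{2}$ because in a $2$-block touching an end the relevant endpoint $Y$ or $Z$ already lay there, while reflection in a $1$-block preserves the $y$-coordinate. For $\quilt(\mathcal Z')=\mathcal Z$: comparing the reconstruction with Definition~\ref{def:quilt} term by term, a reconstructed rectangular patch $XYWZ$ quilts to the two sides $YW$ and $ZW$, that is, back to $\{m_a,m_{a+1}\}$, and a reconstructed triangular patch quilts to exactly the reflection that was undone.

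For \textbf{uniqueness}, suppose $\quilt(\mathcal Z_1)=\quilt(\mathcal Z_2)=\mathcal Z$. By Definition~\ref{def:quilt}, quilting a rectangular patch of $\mathcal Z_j$ yields two sides of that patch, which are consecutive segments of $\mathcal Z$ meeting at its right corner, hence at a right vertex of $\mathcal Z$ (Proposition~\ref{prop:lines after quilting} pins down the resulting top-to-bottom order), and quilting a triangular patch yields an extreme segment of $\mathcal Z$ meeting its neighbor at a left vertex of $\mathcal Z$. Hence the image under $\quilt$ of the patch decomposition of $\mathcal Z_j$ is precisely the right-vertex block decomposition of $\mathcal Z$ above, which depends on $\mathcal Z$ alone; so $\mathcal Z_1$ and $\mathcal Z_2$ have patch decompositions with identical blocks, and on each block the quilting map is inverted by the formulas $X=Y+Z-W$ and ``reflect back.'' Therefore $\mathcal Z_1=\mathcal Z_2$, and in fact the inverse quilting constructed above is a two-sided inverse of $\quilt$ on zigzags.

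The step I expect to be the main obstacle is checking that $\mathcal Z'$ is genuinely a zigzag in all parity cases: that the new left vertex $X=Y+Z-W$ of a reconstructed rectangular patch lies in the \emph{open} strip $\R\times(-\tfrac{\pi}{2},\tfrac{\pi}{2})$; that after reflection the top and/or bottom $1$-blocks really do reach $y=\pm\tfrac{\pi}{2}$; and, most delicately, that one tracks correctly which of the two reconstructed segments lies above the other, so that the new vertices carry the left/right types required by Definition~\ref{def:zigzag}. This is the same geometry of slope-$\pm1$ beams and tilting rectangles used throughout Section~\ref{Sec:Continuous Deformed Mesh Relations}, so it is careful bookkeeping rather than a genuinely new difficulty, but it is where the proof has to be made precise.
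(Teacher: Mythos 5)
Your proposal is correct and is exactly the approach the paper takes: the paper's proof simply asserts that ``the symmetric construction to Definition~\ref{def:quilt of zigzag}'' (i.e., your right-vertex block decomposition with the reversed rectangle and reflection rules) yields the unique preimage, leaving the verification to the reader. Your write-up supplies the bookkeeping the paper omits, including the uniqueness argument via the fact that the right-vertex block decomposition of $\mathcal Z$ is intrinsic to $\mathcal Z$.
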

		\begin{proof}
			One may check that the symmetric construction to Definition \ref{def:quilt of zigzag} also yields a zigzag $\mathcal Z'$.
			It follows directly that $\quilt(\mathcal Z')=\mathcal Z$ and is unique.
		\end{proof}

		Propositions \ref{prop:lines after quilting} and \ref{prop:inverse quilt} justify the following definition and notation.
		
		\begin{notation}\label{note:inverse quilt}
	    Let $\mathcal Z$ be a zigzag.
		By $\quilt^{-1}(\mathcal Z)$ we denote the zigzag $\mathcal Z'$ such that $\quilt(\mathcal Z')=\mathcal Z$. Furthermore, for $i\in\Z$, we set
			\begin{itemize}
				\item  $\quilt^i(\mathcal Z)=\overbrace{ \quilt ( \quilt( \cdots (\quilt}^{i \text{ times}} (\mathcal Z)) \cdots ) )$ if $i>0$,
				\item $\quilt^i(\mathcal Z)=\mathcal Z$ if $i=0$, and	
					
				\item $\quilt^i(\mathcal Z)=\underbrace{ \quilt^{-1} ( \quilt^{-1}( \cdots (\quilt^{-1}}_{i \text{ times}} (\mathcal Z)) \cdots ) )$ if $i<0$. 
			\end{itemize}
		\end{notation}
		
		\begin{lemma}\label{lem:n-quilt of zigzag}
		Let $\mathcal Z$ be a zigzag and $L$ the set of its $n$ line segments. 
		The line segments of the zigzag $\quilt^n (\mathcal Z)$ are the translations of the reflections of those in $\mathcal Z$, in the reverse order from top-to-bottom.	
		\end{lemma}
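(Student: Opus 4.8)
The plan is to induct on the number of quilts, carrying along two pieces of data about $\quilt^{t}(\mathcal Z)$: the top-to-bottom order of its line segments, and, for each segment, whether the $t$ quilts performed so far have reflected it or merely translated it. The order will be controlled by Proposition~\ref{prop:lines after quilting} together with the combinatorics of $\pi_1$ and $\pi_2$ (Proposition~\ref{prop:fixed elements} and Lemma~\ref{lem:reverse}); the reflections will be controlled by locating the triangular patches.

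First I would record how a single quilt changes the ``type'' of a zigzag. Write the line segments of $\mathcal Z$ in top-to-bottom order as $(\ell_1,\dots,\ell_n)$. By Definitions~\ref{def:patch} and~\ref{def:quilt}, a segment lies in a rectangular patch exactly when it shares a left vertex with a neighbour, in which case $\quilt$ translates it; every other segment is $\ell_1$ or $\ell_n$, lies in a triangular patch, and is reflected by $\quilt$ across a vertical line, which negates its slope. Now $\ell_1$ fails to share a left vertex with $\ell_2$ precisely when $P_1$ is a right vertex, i.e.\ when the slope of $\ell_1$ is $-1$, and $\ell_n$ fails to share a left vertex with $\ell_{n-1}$ precisely when $P_{n-1}$ is a right vertex, i.e.\ when the slope of $\ell_{n-1}$ is $-1$; since the slopes of the $\ell_i$ alternate, a short parity count gives: if the slope of $\ell_1$ is $+1$, the triangular patches sit exactly at position $n$ when $n$ is odd and at no position when $n$ is even, while if the slope of $\ell_1$ is $-1$ they sit at position $1$ always and additionally at position $n$ when $n$ is even. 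By Proposition~\ref{prop:fixed elements} these positions are exactly $\Fix(\pi_1)$ in the first case and $\Fix(\pi_2)$ in the second. I would also check that one quilt negates the slope of the top segment: if the slope of $\ell_1$ is $+1$ and $n\ge 2$, the new top segment is $\quilt(\ell_2)$, a translate of $\ell_2$ and hence of slope $-1$; if the slope of $\ell_1$ is $-1$, then $\ell_1$ remains on top (it is fixed by $\pi_2$) but is reflected, so the new top segment has slope $+1$. (The degenerate case $n=1$ is handled directly from Definition~\ref{def:quilt}.)

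With these observations in hand, Proposition~\ref{prop:lines after quilting} says the $t$-th quilt permutes the ordered list of segments by $\pi_1$ when the current top segment has slope $+1$ and by $\pi_2$ when it has slope $-1$; since the slope of the top segment flips with each quilt, the sequence of these permutations is $\pi_1,\pi_2,\pi_1,\dots$ when the slope of $\ell_1$ is $+1$ and $\pi_2,\pi_1,\pi_2,\dots$ otherwise, and comparing with the definition of the compositions $\pi_o^{j}$, $\pi_e^{j}$ shows that the composite of the first $t$ of them is $\pi_o^{t}$, respectively $\pi_e^{t}$. Thus a segment starting in position $p$ sits in position $\pi_o^{t}(p)$ (resp.\ $\pi_e^{t}(p)$) after $t$ quilts; for $t=n$, Lemma~\ref{lem:reverse} gives $\pi_o^{n}(p)=\pi_e^{n}(p)=n+1-p$, so the segments of $\quilt^{n}(\mathcal Z)$ appear in the reverse of their original order. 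For the reflections, the second paragraph shows that during the $(t+1)$-st quilt the segments in triangular patches are exactly those whose current positions lie in the fixed-point set of the permutation that quilt applies, so the segment starting in position $p$ is reflected during quilt $t+1$ exactly when $\pi_o^{t+1}(p)=\pi_o^{t}(p)$ (resp.\ with $\pi_e$). By Lemma~\ref{lem:fixed once} there is a unique such $t\in\{0,\dots,n-1\}$, hence over the course of $n$ quilts each segment is reflected exactly once and translated the remaining times. A composition of translations and a single reflection across a vertical line is again a translation of a reflection across a vertical line, so each segment of $\quilt^{n}(\mathcal Z)$ is a translation of the reflection of the corresponding segment of $\mathcal Z$; combining this with the order reversal yields the statement.

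The step I expect to be the main obstacle is the parity bookkeeping in the second paragraph: correctly matching, case by case in the parity of $n$ and the sign of the slope of $\ell_1$, the positions occupied by the triangular patches of a zigzag with $\Fix(\pi_1)$ and $\Fix(\pi_2)$, and confirming that a single quilt negates the slope of the top segment. Once these facts are in place, everything else is a formal assembly of Proposition~\ref{prop:lines after quilting}, Lemma~\ref{lem:reverse}, and Lemma~\ref{lem:fixed once}.
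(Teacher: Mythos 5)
Your proposal is correct and follows essentially the same route as the paper's proof: both use Proposition~\ref{prop:lines after quilting} together with the observation that the slope of the top segment flips under each quilt to identify the composite permutation as $\pi_o^n$ or $\pi_e^n$, and then invoke Lemma~\ref{lem:reverse} for the order reversal. Your treatment is in fact more complete than the paper's, which leaves implicit the verification (via matching the triangular-patch positions with $\Fix(\pi_1)$, $\Fix(\pi_2)$ and applying Lemma~\ref{lem:fixed once}) that each segment is reflected exactly once over the $n$ quilts.
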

		\begin{proof}
			Notice that for the line segments $\ell_1$ and $\ell_n$ of a zigzag, reflection implies $\quilt(\ell_1)$ and $\quilt(\ell_n)$ are respectively the first and last line segments of $\quilt(\mathcal Z)$.
			If $\ell_1$ in $\mathcal Z$ has slope $+1$ then $\quilt(\ell_2)$, which is the top of $\quilt(\mathcal Z)$, has slope $-1$.
			If $\ell_1$ in $\mathcal Z$ has slope $-1$ then $\quilt(\ell_1)$ has slope $+1$.
			Thus, the order of the line segments in $\quilt^n(\mathcal Z)$ is given either by $\pi_o^n(L)$ or $\pi_e^n(L)$.
			In either case, by Lemma \ref{lem:reverse}  the line segments of $\quilt(\mathcal Z)$ are as described.	
		\end{proof}
		
		We now prove the main result of the section.
		In Figure \ref{fig:quilt}, we see an example of Theorem \ref{thm:n quilt of a zigzag is the shift}.
		\begin{figure}
\begin{center}
	\begin{tikzpicture}[scale=.8]
	\draw[white!75!black, line width=.4mm] (-2,0) -- (12,0);
	\draw[white!75!black, line width=.4mm] (0,-6) -- (0, 6);
	\draw[dotted, line width=.4mm] (-2,5) -- (12,5);
	\draw[dotted, line width=.4mm] (-2,-5) -- (12,-5);
	
	\draw[line width=.4mm] (0,5) -- (1,4) -- (-.25,2.75) -- (1.25,1.25) -- (-.5,-.5) -- (1.5,-2.5) -- (-1,-5);
	
	\draw[line width=.4mm, red] (2,5) -- (1,4) -- (2.5,2.5) -- (1.25,1.25) -- (3.25, -.75) -- (1.5,-2.5) -- (4,-5);
	
	\draw[line width=.4mm, blue] (2,5) -- (3.5,3.5) -- (2.5,2.5) -- (4.5,0.5) -- (3.25,-.75) -- (5.75, -3.25) -- (4,-5);
	
	\draw[line width=.4mm, red] (5,5) -- (3.5,3.5) -- (5.5,1.5) -- (4.5,0.5) -- (7,-2) -- (5.75,-3.25) -- (7.5,-5);
	
	\draw[line width=.4mm, blue] (5,5) -- (7,3) -- (5.5,1.5) -- (8,-1) -- (7,-2) -- (8.75,-3.75) -- (7.5,-5);
	
	\draw[line width=.4mm, red] (9,5) -- (7,3) -- (9.5,.5) -- (8,-1) -- (9.75,-2.75) -- (8.75,-3.75) -- (10,-5);
	
	\draw[line width=.4mm, blue] (9,5) -- (11.5,2.5) -- (9.5,.5) -- (11.25,-1.25) -- (9.75,-2.75) -- (11,-4) -- (10,-5);
	
	\draw (.5,4.5) node[anchor=east] {1};
	\draw (1.5,4.5) node[anchor=east] {\textcolor{red}{r1}};
	\draw (3,3) node[anchor=east] {\textcolor{blue}{r1}};
	\draw (5,1) node[anchor=east] {\textcolor{red}{r1}};
	\draw (7.5,-1.5) node[anchor=east] {\textcolor{blue}{r1}};
	\draw (9.25,-3.25) node[anchor=east] {\textcolor{red}{r1}};
	\draw (10.5,-4.5) node[anchor=east] {\textcolor{blue}{r1}};
	
	\draw (.325,3.375) node[anchor=east] {2};
	\draw (1.825, 1.875) node[anchor=east] {\textcolor{red}{2}};
	\draw (3.825, -0.125) node[anchor=east] {\textcolor{blue}{2}};
	\draw (6.325, -2.625) node[anchor=east] {\textcolor{red}{2}};
	\draw (8.075, -4.375) node[anchor=east] {\textcolor{blue}{2}};
	\draw (9.325, -4.375) node[anchor=east] {\textcolor{red}{r2}};
	\draw (10.325, -3.375) node[anchor=east] {\textcolor{blue}{r2}};
	
	\draw (0.5,2) node[anchor=east] {3};
	\draw (1.75, 3.25) node[anchor=east] {\textcolor{red}{3}};
	\draw (2.75, 4.25) node[anchor=east] {\textcolor{blue}{3}};
	\draw (4.25, 4.25) node[anchor=east] {\textcolor{red}{r3}};
	\draw (6.25, 2.25) node[anchor=east] {\textcolor{blue}{r3}};
	\draw (8.75, -0.25) node[anchor=east] {\textcolor{red}{r3}};
	\draw (10.5, -2) node[anchor=east] {\textcolor{blue}{r3}};
	
	\draw (.375,.375) node[anchor=east] {4};
	\draw (2.375, -1.625) node[anchor=east] {\textcolor{red}{4}};
	\draw (4.875, -4.125) node[anchor=east] {\textcolor{blue}{4}};
	\draw (6.625, -4.125) node[anchor=east] {\textcolor{red}{r4}};
	\draw (7.875, -2.875) node[anchor=east] {\textcolor{blue}{r4}};
	\draw (8.875, -1.875) node[anchor=east] {\textcolor{red}{r4}};
	\draw (10.375, -0.375) node[anchor=east] {\textcolor{blue}{r4}};
	
	\draw (.5,-1.5) node[anchor=east] {5};
	\draw (2.25, 0.25) node[anchor=east] {\textcolor{red}{5}};
	\draw (3.5, 1.5) node[anchor=east] {\textcolor{blue}{5}};
	\draw (4.5, 2.5) node[anchor=east] {\textcolor{red}{5}};
	\draw (6, 4) node[anchor=east] {\textcolor{blue}{5}};
	\draw (8,4) node[anchor=east] {\textcolor{red}{r5}};
	\draw (10.5, 1.5) node[anchor=east] {\textcolor{blue}{r5}};
	
	\draw (.25 ,-3.75) node[anchor=east] {6};
	\draw (2.75 ,-3.75) node[anchor=east] {\textcolor{red}{r6}};
	\draw (4.5, -2) node[anchor=east] {\textcolor{blue}{r6}};
	\draw (5.75, -0.75) node[anchor=east] {\textcolor{red}{r6}};
	\draw (6.75, 0.25) node[anchor=east] {\textcolor{blue}{r6}};
	\draw (8.25, 1.75) node[anchor=east] {\textcolor{red}{r6}};
	\draw (10.25, 3.75) node[anchor=east] {\textcolor{blue}{r6}};
	
	\draw (.25,-5) node[anchor=north] {$\mathcal Z$};
	\draw (2.75,-5) node[anchor=north] {\textcolor{red}{$\mathfrak Q(\mathcal Z)$}};
	\draw (4.875,-5) node[anchor=north] {\textcolor{blue}{$\mathfrak Q^2(\mathcal Z)$}};
	\draw (6.625,-5) node[anchor=north] {\textcolor{red}{$\mathfrak Q^3(\mathcal Z)$}};
	\draw (8.075,-5) node[anchor=north] {\textcolor{blue}{$\mathfrak Q^4(\mathcal Z)$}};
	\draw (9.325,-5) node[anchor=north] {\textcolor{red}{$\mathfrak Q^5(\mathcal Z)$}};
	\draw (10.75,-5) node[anchor=north] {\textcolor{blue}{$\mathfrak Q^6(\mathcal Z)$}};
	\end{tikzpicture}
	\caption{
	On a zigzag $\mathcal Z$ with $6$ line segments, performing quilting $6$ times results in $\mathcal{Z}[1]$.
	Here `r' indicates the reflection of the line segment as in Definition \ref{def:quilt of zigzag}.}
	\label{fig:quilt}
\end{center}
\end{figure}
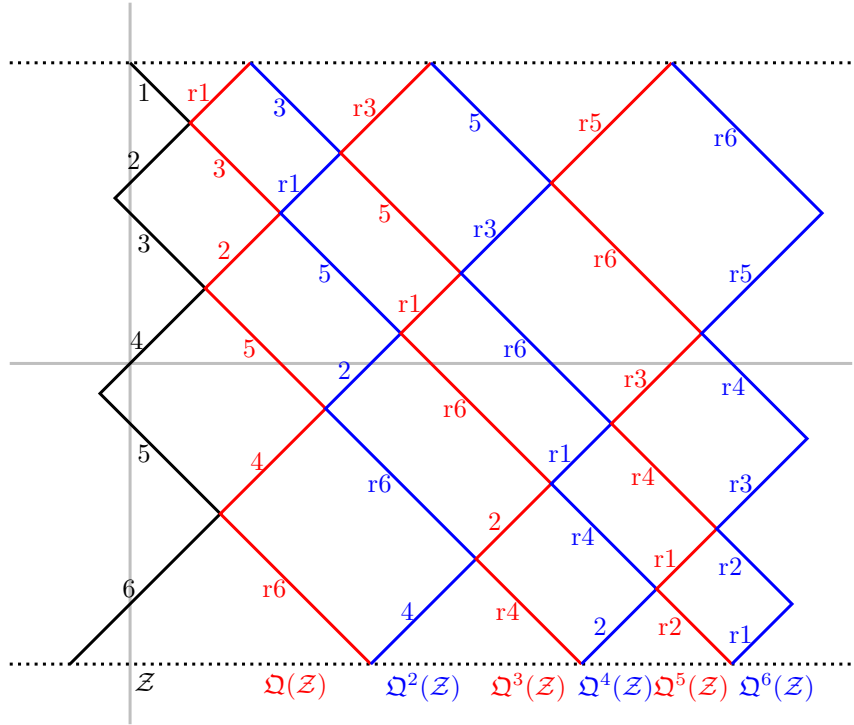
		
		\begin{theorem}\label{thm:n quilt of a zigzag is the shift}
		Let $\mathcal{Z}$ be a zigzag in $\mathcal D$ with $n$ line segments. Then, $\quilt^n (\mathcal Z) = \mathcal Z[1]$ and $\quilt^{-n} (\mathcal Z)=\mathcal Z[-1]$.
		\end{theorem}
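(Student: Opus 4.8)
The plan is to combine the shape information already recorded in Lemma~\ref{lem:n-quilt of zigzag} with a coordinate computation that pins down the position. By Lemma~\ref{lem:n-quilt of zigzag}, the line segments of $\quilt^n(\mathcal Z)$, listed from top to bottom, are translations of the reflections (across vertical lines) of $\ell_n,\ell_{n-1},\dots,\ell_1$. On the other hand, the shift functor $(x,y)\mapsto(x+\pi,-y)$ acts on each $\ell_i$ as reflection across the line $y=0$ followed by translation by $(\pi,0)$; since reflection across $y=0$ reverses slopes and interchanges the top and bottom endpoints of a segment, the top endpoint of $\ell_1$ (on $y=\frac{\pi}{2}$) is carried to the line $y=-\frac{\pi}{2}$ and the bottom endpoint of $\ell_n$ to the line $y=\frac{\pi}{2}$. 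Hence $\mathcal Z[1]$ is likewise a zigzag whose segments, from top to bottom, are translations of the reflections of $\ell_n,\dots,\ell_1$, with the very same slopes and horizontal spans $a_1,\dots,a_n$, where $a_1+\dots+a_n=\pi$. Since a zigzag is determined by the ordered list of slopes and spans of its segments together with the location of any single one of its points, it suffices to show that $\quilt^n(\mathcal Z)$ and $\mathcal Z[1]$ share one point; I would take the unique point of each on the line $y=-\frac{\pi}{2}$.

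Next I would follow the trajectory of the single segment $\ell_1$ under repeated quilting and compute the total displacement it undergoes. Assume first that $\ell_1$ has slope $+1$ (the slope $-1$ case is the mirror image, and the identity $\quilt^{-n}(\mathcal Z)=\mathcal Z[-1]$ follows formally once $\quilt^n(\mathcal Z)=\mathcal Z[1]$ is known for all zigzags, by applying it to $\quilt^{-n}(\mathcal Z)$). An easy induction using Definition~\ref{def:quilt}---a rectangular patch translates its two sides, hence preserves their slopes, whereas the triangular patch at a free end (Definition~\ref{def:patch}) is a reflection, hence flips the slope---shows that the top segment of $\quilt^j(\mathcal Z)$ has slope $(-1)^j$; so the $j$th quilting reorders the line segments by $\pi_1$ when $j$ is odd and by $\pi_2$ when $j$ is even, and therefore $\quilt^j$ reorders them by the odd $j$th composition $\pi_o^j$. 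By Proposition~\ref{prop:forwards} (and its proof) with $i=1$, the image of $\ell_1$ moves one position forward at each of the first $n-1$ quiltings and then stays in the last position; in particular it begins as the upper segment of the rectangular patch on $(\ell_1,\ell_2)$, remains the upper segment of a rectangular patch at every intermediate quilting, and becomes the bottom segment of $\quilt^n(\mathcal Z)$ after $n$ quiltings---consistently with Lemma~\ref{lem:n-quilt of zigzag}.

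The heart of the proof is then the bookkeeping in this computation. At each quilting step I would read off, from the current position of the image of $\ell_1$ and the slope of the current top segment, whether that step applies a rectangular patch (translating the segment by the vector from the shared left vertex of the patch to the appropriate right vertex) or a triangular patch (reflecting it across the vertical line through its right vertex), and record the resulting displacement as an explicit combination of the spans $a_i$. Summing over the $n$ steps, the intermediate contributions telescope and, using $a_1+\dots+a_n=\pi$, the net motion carries the point of $\mathcal Z$ on $y=-\frac{\pi}{2}$ exactly to the point of $\mathcal Z[1]$ on $y=-\frac{\pi}{2}$; this yields $\quilt^n(\mathcal Z)=\mathcal Z[1]$, and $\quilt^{-n}(\mathcal Z)=\mathcal Z[-1]$ follows as above (using that inverse quilting is the mirror construction, Propositions~\ref{prop:lines after quilting} and~\ref{prop:inverse quilt}). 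The main obstacle is precisely this telescoping: one must juggle the interlocking parities---of $n$, of the current position of the image of $\ell_1$, and of which of the (at most two) free ends a triangular patch acts on---to confirm that all the partial displacements assemble into a single translation by $\pi$, and Propositions~\ref{prop:forwards} and~\ref{prop:backwards} together with Lemma~\ref{lem:reverse} are exactly the combinatorial devices that organize this.
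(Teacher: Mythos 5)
Your proposal is correct and follows essentially the same route as the paper: both reduce via Lemma~\ref{lem:n-quilt of zigzag} to matching a single boundary point and then use the permutation machinery (Propositions~\ref{prop:forwards} and~\ref{prop:backwards}, Lemma~\ref{lem:reverse}) to show the net horizontal displacement is $\pi$, the only difference being that the paper tracks the top point of the zigzag (which jumps by $2h_i$ precisely when the top segment is reflected) while you dually track $\ell_1$ down to the bottom position. The telescoping you defer is sound --- the $n-1$ translation steps carry $\ell_1$ along each of $\ell_2,\dots,\ell_n$ exactly once, contributing $a_2+\cdots+a_n=\pi-a_1$, and the final reflection contributes $2a_1$ against the initial offset $a_1$ --- so this is a correct plan for the same argument.
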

		\begin{proof}
				If $\mathcal Z$ has one line segment the theorem is immediate.
				Assume $\mathcal Z$ has at least two line segments.
				By Lemma \ref{lem:n-quilt of zigzag}, $\quilt^n(\mathcal Z)$ and $\mathcal Z[1]$ have the same shape; i.e., they are translations of each other.
				Similarly, $\quilt^{-n}(\mathcal Z)$ is a translation of $\mathcal Z[-1]$.
				
				We show the top points of $\quilt^n(\mathcal Z)$ and $\mathcal Z[1]$ are the same, which implies that the translation is the identity and $\quilt^n(\mathcal Z)$ and $\mathcal Z[1]$ are the same.
				The symmetric argument shows $\quilt^{-n}(\mathcal Z)=\mathcal Z[-1]$.
				
				We assume the slope of $\ell_1$ is $-1$, as the proof when $\ell_1$ has slope $+1$ is similar.
				
				Let $m$ be the largest odd number inclusively between $1$ and $n$.
				We first show that the top point of $\quilt^n(\mathcal Z)$ is the same as the top point of $\quilt^m(\ell_m)$.
				Then, by combining Propositions \ref{prop:forwards} and \ref{prop:backwards} with Lemma \ref{lem:fixed once}, we conclude that the top line segment of each zigzag obtained from $\mathcal Z$ is among
				\begin{displaymath}
					\quilt^0(\ell_1), \quilt(\ell_1), \quilt^2(\ell_3), \quilt^3(\ell_3),\ldots, \quilt^{i-1}(\ell_i),\quilt^i (\ell_i),\ldots 
				\end{displaymath}
				where $i$ is odd.
				
				By Lemma~\ref{lem:n-quilt of zigzag}, note that the top line segment of $\quilt^n(\mathcal Z)$ is $\quilt^n(\ell_n)$.
				If $n$ is odd this line segment has slope $+1$ since it was reflected exactly once.
				If $n$ is even this line segment has slope $-1$ and so shares its top point with $\quilt^{n-1}(\ell_{n-1})$.
				Thus, the top point of $\quilt^n(\mathcal Z)$ is the top point of $\quilt^m(\ell_m)$.
				We have also shown that the top point of $\quilt^i(\mathcal Z)$ is the top point of $\quilt^{i+1}(\mathcal Z)$ when $i<n$ is odd.
				
				For each line segment $\ell_i$, let $x_{b_i}$ and $x_{t_i}$ be respectively the $x$-coordinates of the bottom point and top point of $\ell_i$. Let $h_i=|x_{t_i}-x_{b_i}|$.
				Now we show that the distance between the top point of $\mathcal Z$ and the top point of $\quilt^n(\mathcal Z)$ is 
				\begin{displaymath}
					2 \left(\sum_{\text{odd } 1\leq i \leq n} h_i\right).
				\end{displaymath}
				We see the distance between the top point of $\mathcal Z$ and the top point of $\quilt^1(\mathcal Z)$ is $2h_1$. The distance between the top point of $\quilt^2 (\mathcal Z)$ and $\quilt^3(\mathcal Z)$ is $2h_3$, and so on.
				
				Let $x_b$ (respectively, $x_t$) denote the $x$-coordinate of the bottom point (respectively, the top point) of $\mathcal Z$. Observe that 
				\begin{displaymath}
					x_b - x_t = \left(\sum_{\text{odd } 1\leq i \leq n} h_i\right) - \left( \sum_{\text{even } 1\leq i \leq n} h_i \right).
				\end{displaymath}
				Denote by $x_q$ the $x$-coordinate of the top point of $\quilt^n(\mathcal Z)$.
				From the previous equation it follows that
				\begin{align*}
					x_q - x_b &=
					2\left(\sum_{\text{odd } 1\leq i \leq n} h_i\right) - \left(\sum_{\text{odd } 1\leq i \leq n} h_i\right) + \left( \sum_{\text{even } 1\leq i \leq n} h_i \right) \\
					&= \sum_{i=1}^n h_i=\pi.
				\end{align*}
				This implies that the top point of $\quilt^n(\mathcal Z)$ is $(x_b+\pi,\frac{\pi}{2})$, which is the top point of $\mathcal Z[1]$.
		\end{proof}
		
		\subsection{Deformed mesh relations}
		In this section, we define the continuous analog of the deformed mesh relations \eqref{defmesh} in Section \ref{sec:Amplituhedron for Dynkin Quivers}.
		
		\subsubsection{Permissible functions}\label{sec:permissible functions}
		We define the continuous analog of the $\intc$ values in Section \ref{sec:Amplituhedron for Dynkin Quivers}.
		In practice we want to be able to integrate such a $\intc$ function over any tilting rectangle (Definition \ref{def:tilting rectangle}) in the closed strip $\R\times[-\frac{\pi}{2},\frac{\pi}{2}]$.
		This will allow us to define what it means for a function $\Phi:\Ind (\mathcal D)\sqcup \{0\} \to \mathbb V$ to satisfy the continuous deformed mesh relations (Definition \ref{def:contintuous deformed mesh relations}).
		
		\begin{definition}\label{def:permissible function}
			Let $\intc:\Ind(\mathcal D)\sqcup \{0\}\to \R$ be a function such that $\intc (0)=0$.
			We say $\intc$ is \textdef{permissible} if for every tilting rectangle $XYWZ$ in $\R\times[-\frac{\pi}{2},\frac{\pi}{2}]$, the surface integral 
			\begin{displaymath}
				\int_{XYWZ} \intc
			\end{displaymath}
			over $XYWZ$ yields a real number.
		\end{definition}
		
		We now define what it means for a function $\Phi$ to satisfy the continuous deformed mesh relations. The reader is invited to compare the following definition with constructions preceding Theorem~\ref{th-one}.
		When we say a real vector space $\mathbb V$ has coordinates indexed by a set $\Omega$, we mean $\mathbb V = \prod_\Omega \R$.
		\begin{definition}\label{def:contintuous deformed mesh relations}
			Let $\mathcal Z$ be a zigzag in $\mathcal D$,  $\intc:\Ind(\mathcal D)\sqcup\{0\}\to \R$ a permissible function, and $\mathbb V$ a real vector space of arbitrary dimension whose coordinates are indexed by a set $\Omega$.
			Suppose $\Phi:\Ind(\mathcal D)\sqcup\{0\} \to \mathbb V$ is a function such that for every tilting rectangle $XYWZ$ in $\R\times[-\frac{\pi}{2},\frac{\pi}{2}]$ and $\omega\in\Omega$ the following equation is satisfied:
			\begin{displaymath}
				\Phi(X)(\omega)+\Phi(W)(\omega)=\Phi(Y)(\omega) + \Phi(Z)(\omega) + \int_{XYWZ} \intc.
			\end{displaymath}
			Then we say $\Phi$ \textdef{satisfies the continuous deformed mesh relations over $\intc$}.
		\end{definition}
		
		In Section~\ref{sec:associahedron} we work with $\mathbb V=\R$.
		However, to adopt a framework that does not require modification in other contexts, we also allow $\mathbb V$ to be an arbitrary product of copies of $\R$.
		
		\begin{remark}\label{rmk:deformed over 0 is not deformed}
		    If $\intc$ in Definition \ref{def:contintuous deformed mesh relations} is the constant function at $0$, then $\Phi$ satisfies the continuous mesh relations as in Section \ref{sec:continuous mesh relations}.
		\end{remark}
		
		\begin{proposition}\label{prop:tilting rectangle for quilting}
			Let $\mathcal Z$ be a zigzag in $\mathcal D$ and $W$ an indecomposable in a patch between $\mathcal Z$ and $\quilt (\mathcal Z)$.
			If $W$ is not in $\mathcal Z$, there exist $X$, $Y$, and $Z$ in $Ob(\mathcal Z)\sqcup \{0\}$ such that $XYWZ$ is a tilting rectangle.
			If $W$ is instead an indecomposable in a patch between $\quilt^{-1}(\mathcal Z)$ and $\mathcal Z$, but not in $\mathcal Z$, then there exist $X$, $Y$, and $Z$ in $Ob(\mathcal Z)\sqcup \{0\}$ such that $WYXZ$ is a tilting rectangle.
		\end{proposition}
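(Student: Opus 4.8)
The plan is to split on whether $W$ lies in a rectangular patch or a triangular patch and, in each case, to construct the tilting rectangle directly, using that morphisms in $\mathcal D$ travel along beams of slope $\pm1$.

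Suppose first that $W$ lies in a rectangular patch. By Definition~\ref{def:patch} this patch is a rectangle with corners the shared left vertex $P=P_i$ of two consecutive segments $\ell_i$ (necessarily of slope $+1$) and $\ell_{i+1}$ (of slope $-1$), the top point $Y_0$ of $\ell_i$, the bottom point $Z_0$ of $\ell_{i+1}$, and $W_0=Y_0+Z_0-P$; by the remark after Definition~\ref{def:quilt of zigzag} it is itself a tilting rectangle, with sides along $\ell_i$ and $\ell_{i+1}$ emanating from $P$. Write $W=P+s(Y_0-P)+t(Z_0-P)$ with $s,t\in[0,1]$. Since $\ell_i$ and $\ell_{i+1}$ lie in $\mathcal Z$ except at their endpoints on $y=\pm\tfrac\pi2$, and $W\notin\mathcal Z$ is an indecomposable (hence in the open strip), we must have $s,t>0$. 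The two beams from $W$ of slopes $-1$ and $+1$ followed towards the left are parallel to the two sides of the patch through $W_0$, so they meet the segments $\ell_i$ and $\ell_{i+1}$ at $Y':=P+s(Y_0-P)$ and $Z':=P+t(Z_0-P)$ respectively; if such a point lies on $y=\tfrac\pi2$ (only possible when $s=1$, $i=1$) or on $y=-\tfrac\pi2$ (only when $t=1$, $i+1=n$) we declare it to be the zero object. A one-line computation gives $Y'+Z'-W=P$, so $PY'WZ'$ is a tilting rectangle---the non-strict inequalities in Definition~\ref{def:tilting rectangle} absorb the two boundary cases---with $P\in Ob(\mathcal Z)$ and $Y',Z'\in Ob(\mathcal Z)\sqcup\{0\}$.

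Suppose instead $W$ lies in a triangular patch, say the one attached to $\ell_1$ (which then has slope $-1$), with corners the endpoint $X_0$ of $\ell_1$ on $y=\tfrac\pi2$, the right vertex $Y_0=P_1$ of $\ell_1$, and the reflection $Z_0$ of $X_0$ across the vertical through $P_1$. The beam from $W$ of slope $+1$ followed to the lower left is perpendicular to $\ell_1$ and meets the segment $\ell_1$ at a point $Z'$, while the beam of slope $-1$ followed to the upper left reaches $y=\tfrac\pi2$; putting $Y':=0$ for the latter endpoint and $X':=Y'+Z'-W$, one checks that $X'$ lies on the line through $\ell_1$, hence on $\ell_1$ itself, so $X'\,0\,W\,Z'$ is a tilting rectangle with $X',Z'\in Ob(\mathcal Z)$. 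The triangular patch attached to $\ell_n$ (slope $+1$) is symmetric, producing instead $Z'=0$ and $X',Y'\in\ell_n$. Finally, for $W$ in a patch between $\quilt^{-1}(\mathcal Z)$ and $\mathcal Z$ one runs the same constructions for the zigzag $\mathcal Z':=\quilt^{-1}(\mathcal Z)$, noting $\quilt(\mathcal Z')=\mathcal Z$: shooting the beams from $W$ to the right until they hit $\mathcal Z$ yields a tilting rectangle whose \emph{left} corner is $W$ and whose other three corners lie in $Ob(\mathcal Z)\sqcup\{0\}$, which in the notation of Definition~\ref{def:tilting rectangle} is exactly the statement that $WYXZ$ is a tilting rectangle.

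I expect the main point requiring care to be the geometric claim that the beams from $W$ actually land on the segments $\ell_i,\ell_{i+1}$---the correct ones, and on the segments themselves rather than merely the supporting lines or an unrelated piece of $\mathcal Z$. This is precisely where one uses that $W$ sits inside a patch, since by construction the boundary of a patch consists exactly of (parts of) these two segments together with their quilts; once that is set up, the identity $Y'+Z'-W=P$ and the identification of a beam exiting through $y=\pm\tfrac\pi2$ with a zero corner are routine.
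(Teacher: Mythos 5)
Your proof is correct and follows essentially the same route as the paper's: split into rectangular and triangular patch cases, shoot the slope $\pm1$ beams from $W$ back to $\mathcal Z$ to get the top and bottom corners (declaring a corner on $y=\pm\frac{\pi}{2}$ to be $0$), and identify the left corner as the shared left vertex (rectangular case) or a further point of $\ell_1$ resp.\ $\ell_n$ (triangular case). Your explicit parametrization $W=P+s(Y_0-P)+t(Z_0-P)$ makes precise the landing-point claim that the paper only conveys by pictures.
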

		\begin{proof}
			First assume $W$ is contained in a patch between $\mathcal Z$ and $\quilt (\mathcal Z)$.
			If $W$ is contained in a rectangular patch then the indecomposables $Y$ and $Z$ are obtained by intersecting $\mathcal Z$ with the lines of slope $\pm 1$ that intersect at $W$.
			The indecomposable $X$ is the left vertex shared by the line segments used to create the rectangle.
			\begin{displaymath}
				\begin{tikzpicture}
					\draw (0,1) -- (1,0) -- (3,2) -- (2,3) -- (0,1);
					\filldraw (0,1) circle[radius=.5mm];
					\draw (0,1) node[anchor=east] {$X$};
					\draw (.5,.5) node[anchor=north east] {$\ell_{i+1}$};
					\draw (1,2) node[anchor=south east] {$\ell_i$};
					\draw (2.5,2.5) node[anchor=south west] {$\quilt (\ell_{i+1})$};
					\draw (2,1) node[anchor=north west] {$\quilt (\ell_i)$};
					\draw[dashed] (0,0) -- (3,3);
					\draw[dashed] (1,3) -- (3,1);
					\filldraw (2,2) circle[radius=.5mm];
					\draw (2,2) node[anchor=west] {$W$};
					\filldraw (1.5,2.5) circle[radius=.5mm];
					\draw (1.5,2.5) node[anchor=south] {$Y$};
					\filldraw (.5,.5) circle[radius=.5mm];
					\draw (.5,.5) node[anchor=west] {$Z$};
				\end{tikzpicture}
			\end{displaymath}
			Note that if $W$ is the intersection of $\quilt(\ell_{i+1})$ and $\quilt(\ell_i)$ then $Y$ is the top point of the rectangle and $Z$ is the bottom point.
			In this case $Y$ or $Z$ may be $0$.
			
			If $W$ is contained in a triangular patch then $Y$ and $Z$ are obtained by intersecting $\mathcal Z$ and the base of the triangle with the lines of slope $\pm 1$ that intersect at $W$.
			At least one of these will be $0$.
			
			Let $h$ be the line containing $W$ that intersects the base of the triangle to the left of $W$.
			Then $X$ is obtained by intersecting $\mathcal Z$ with the line perpendicular to $h$ that intersects at the same point on the base of the triangle.
			\begin{displaymath}
				\begin{tikzpicture}[scale=1.25]
					\draw (0,1.5) -- (1.5,0) -- (3,1.5);
					\draw (.75,.75) node[anchor=north east] {$\ell_1$};
					\draw (2.25,.75) node[anchor=north west] {$\quilt (\ell_1)$};
					\filldraw (2,1) circle[radius=.5mm];
					\draw (2,1) node[anchor=south] {$W$};
					\filldraw (1.5,1.5) circle[radius=.5mm];
					\draw (1.5,1.5) node[anchor=south] {$Y=0$};
					\filldraw (1.25,.25) circle[radius=.5mm];
					\draw (1.25,.25) node[anchor=south] {$Z$};
					\filldraw (.75,.75) circle[radius=.5mm];
					\draw (.75,.75) node[anchor=south] {$X$};
					\draw[dashed] (1,0) -- (2.5,1.5);
					\draw[dashed] (0,0) -- (1.5,1.5) -- (3,0);
				\end{tikzpicture}
				\qquad
				\text{or}
				\qquad
				\begin{tikzpicture}[scale=1.25]
					\draw (0,0) -- (1.5,1.5) -- (3,0);
					\draw (.75,.75) node[anchor=south east] {$\ell_n$};
					\draw (2.25,.75) node[anchor=south west] {$\quilt (\ell_n)$};
					\filldraw (2,.5) circle[radius=.5mm];
					\draw (2,.5) node[anchor=north] {$W$};
					\draw[dashed] (1,1.5) -- (2.5,0);
					\draw[dashed] (0,1.5) -- (1.5,0) -- (3,1.5);
					\filldraw (1.5,0) circle[radius=.5mm];
					\draw (1.5,0) node[anchor=north] {$Z=0$};
					\filldraw (1.25,1.25) circle[radius=.5mm];
					\draw (1.25,1.25) node[anchor=north] {$Y$};
					\filldraw (.75,.75) circle[radius=.5mm];
					\draw (.75,.75) node[anchor=north] {$X$};
				\end{tikzpicture}
			\end{displaymath}
			If $W$ is in $\quilt (\ell_i)$ and $\mathcal Z$ has one line segment then both $Y$ and $Z$ are $0$.
			
			If instead $W$ is contained in a patch between $\quilt^{-1}(\mathcal Z)$ and $\mathcal Z$, then the argument is the same using the symmetric geometry.
		\end{proof}
		
		\begin{definition}\label{def:can be quilted from}
		    Let $\mathcal Z$ be a zigzag in $\mathcal D$ and let $W$ be an indecomposable in $\mathcal D$.
		    We say \textdef{$W$ can be quilted from $\mathcal Z$} if there exists a nonnegative $n\in\Z$ such that $W$ is in a patch of $\quilt^n(\mathcal Z)$.
		    We say \textdef{$W$ can be inverse quilted from $\mathcal Z$} if there exists a negative $n\in\Z$ such that $W$ is in a patch of $\quilt^n(\mathcal Z)$.
		\end{definition}
		
		The following lemma shows that for any zigzag $\mathcal Z$ in $\mathcal D$ and any indecomposable $W$ in $\mathcal D$, $W$ can be quilted or inverse quilted from $\mathcal Z$.
		
		\begin{lemma}\label{lem:totality of quilting}
			Let $\mathcal Z$ be a zigzag in $\mathcal D$.
			For every indecomposable $W$ in $\mathcal D$, there exists $n\in\Z$ such that $W$ is in a patch of $\quilt^n (\mathcal Z)$.
		\end{lemma}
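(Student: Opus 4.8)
The plan is to show that the zigzags $\quilt^j(\mathcal Z)$, $j\in\Z$, slice the strip $\R\times(-\frac\pi2,\frac\pi2)$ into ``bands'', that each band is exactly the union of the patches of the zigzag bounding it on the left, and that these bands exhaust the whole strip because the zigzags run off to $\pm\infty$ as $j\to\pm\infty$; the given indecomposable $W$ then lies in some band, hence in some patch.

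Fix $W=(w_1,w_2)$. By Proposition~\ref{prop:lines after quilting} each $\quilt^j(\mathcal Z)$ is again a zigzag, so it meets the line $y=w_2$ in a unique point; let $\zeta_j$ be its $x$-coordinate. I would establish two facts about $(\zeta_j)_{j\in\Z}$. \textbf{(i) Monotonicity:} $\zeta_j\le\zeta_{j+1}$. Indeed $\quilt^{j+1}(\mathcal Z)=\quilt(\quilt^j(\mathcal Z))$ is, by Definition~\ref{def:quilt}, assembled from the right-hand sides of the patches of $\quilt^j(\mathcal Z)$, and every patch lies weakly to the right of its zigzag, so the point of $\quilt^{j+1}(\mathcal Z)$ on the line $y=w_2$ is weakly to the right of $\quilt^j(\mathcal Z)$. \textbf{(ii) Divergence:} writing $m$ for the number of line segments of $\mathcal Z$ and iterating Theorem~\ref{thm:n quilt of a zigzag is the shift} (in both directions) gives $\quilt^{km}(\mathcal Z)=\mathcal Z[k]$ for every $k\in\Z$; since the shift acts by $(x,y)\mapsto(x+\pi,-y)$ we have $\mathcal Z[2q]=\mathcal Z+(2q\pi,0)$, so $\zeta_{2qm}=\zeta_0+2q\pi$ for all $q\in\Z$. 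Together (i) and (ii) force $\zeta_j\to\pm\infty$ as $j\to\pm\infty$.

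The step I expect to require the most care is the geometric claim underpinning the whole picture: for any zigzag $\mathcal Z'$, the patches of $\mathcal Z'$ tile the closed region of the strip lying weakly to the right of $\mathcal Z'$ and weakly to the left of $\quilt(\mathcal Z')$. By Definition~\ref{def:quilt} the patches are contained in this region; one must check they cover it without gaps. This is a finite verification: $\mathcal Z'$ has one rectangular patch per left vertex, two consecutive rectangular patches meet along the corner at the right vertex between them (the bottom corner $Z$ of one being the top corner $Y$ of the next), and the triangular patches of $\ell_1$ and/or $\ell_n$ (present exactly when those end segments do not share a left vertex with a neighbor) close the region off against $y=\pm\frac\pi2$; the outer boundary of the union is then $\mathcal Z'$ on the left and $\quilt(\mathcal Z')$ on the right, as desired.

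Finally, given $W=(w_1,w_2)$, pick $q\in\Z$ with $\zeta_0+2q\pi\le w_1<\zeta_0+2(q+1)\pi$, i.e.\ $\zeta_{2qm}\le w_1<\zeta_{2(q+1)m}$ by (ii). By (i) the intervals $[\zeta_j,\zeta_{j+1})$ with $2qm\le j<2(q+1)m$ cover $[\zeta_{2qm},\zeta_{2(q+1)m})$, so $w_1\in[\zeta_j,\zeta_{j+1})$ for some such $j$. Then at height $w_2$ the point $W$ lies weakly to the right of $\quilt^j(\mathcal Z)$ and strictly to the left of $\quilt^{j+1}(\mathcal Z)$, hence in the region tiled by the patches of $\quilt^j(\mathcal Z)$, and therefore in one of those patches, giving the required $n=j$.
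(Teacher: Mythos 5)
Your proposal is correct and follows essentially the same route as the paper: both arguments track the unique intersection of each iterated quilt $\quilt^j(\mathcal Z)$ with the horizontal line through $W$, use Theorem~\ref{thm:n quilt of a zigzag is the shift} to see these intersection points march off to $\pm\infty$ in steps totalling $\pi$ per full cycle, and then sandwich the $x$-coordinate of $W$ between two consecutive quilts. The only difference is presentational — you make explicit the claim that the patches of a zigzag tile the band between it and its quilt, a point the paper's proof takes for granted — and your verification of it is sound.
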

		\begin{proof}
			Let $y_W$ be the $y$-coordinate of $W$.
			Then there exist $X$ and $X'$ in $\mathcal Z$ where the $y$-value of $X$ is $y_W$ and the $y$-value of $X'$ is $-y_W$.
			If $y_W=0$ then $X=X'$.
			
			Let $x_W$, $x_0$, and $x'_0$ be the $x$-values of respectively $W$, $X$, and $X'$.
			Since $|x_0-x'_0|<\pi$, we have $x'_0 < x_0+\pi$ and $x_0 < x'_0+\pi$.
			So, for every $m\in\Z$ define
			\begin{displaymath}
				x_m	= 	\begin{cases}
							m\pi + x_0 & m\text{ is even} \\
							m\pi + x'_0 & m\text{ is odd}.
 						\end{cases}
			\end{displaymath}
			Then there exists $m\in\Z$ such that $x_m \leq x_W < x_{m+1}$.
			
			Let $j$ be the number of line segments in $\mathcal Z$.
			Then $\quilt^j(\mathcal Z[m])=\mathcal Z[m+1]$ by Theorem \ref{thm:n quilt of a zigzag is the shift}.
			For each $0\leq i < j$ let $X_i$ be the object on $\quilt^i(\mathcal Z[m])$ with $y$-coordinate $y_W$.
			Let $x_{m,i}$ be the $x$-coordinate of $X_i$ and set $x_{m,j}=x_{m+1}$.
			Then there exists $0\leq i_W < j$ such that $x_{m,i_W} \leq x_W \leq x_{m,i_W+1}$.
			
			It follows that $W$ is in a patch of $\quilt^{i_W}(\mathcal Z[m])$, so $W$ is in a patch of $\quilt^n(\mathcal Z)$, where $n= jm+i_W$.
		\end{proof}
		
			Let  $\intc:\Ind(\mathcal D)\sqcup \{0\}\to \R$ be a permissible function,  $\mathbb V$ a real vector space whose coordinates are indexed by a set $\Omega$, $\mathcal Z$ a zigzag in $\mathcal D$, and $\Phi:Ob(\mathcal Z)\to \mathbb V$ a function.
			For each indecomposable $W\notin \mathcal Z$ in a patch between $\mathcal Z$ and $\quilt (\mathcal Z)$ (respectively between $\quilt^{-1}(\mathcal Z)$ and $\mathcal Z$), there is a unique tilting rectangle $\Diamond=XYWZ$ (respectively $\Diamond=WYXZ$) as in Proposition \ref{prop:tilting rectangle for quilting}.
			
			\begin{definition}\label{def:quilting value}
			With the same notation and setting as above, the \textdef{quilting value of $W$ over $\intc$} is the vector $\Phi(W)$ in $\mathbb V$, where the value for each coordinate $\omega\in\Omega$ is given by
			\begin{displaymath}
				\Phi(W)(\omega) := \Phi (Y)(\omega) + \Phi (Z)(\omega) - \Phi (X)(\omega) + \int_{\Diamond} \intc.
			\end{displaymath}
		\end{definition}
		
		\begin{proposition}\label{prop:all the values}
			Let $\intc:\Ind(\mathcal D)\sqcup \{0\}\to \R$ be a permissible function and $\mathbb V$ a real vector space with coordinates indexed by a set $\Omega$.
			For a zigzag $\mathcal Z$ and function $\Phi:\Ind(\mathcal Z)\to \mathbb V$, there exists a unique extension of $\Phi$ to $\Ind(\mathcal D)\sqcup\{0\}$ that satisfies the continuous deformed mesh relations over $\intc$.
		\end{proposition}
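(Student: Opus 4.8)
The plan is to prove existence and uniqueness separately; working one coordinate at a time, one may assume $\mathbb V=\R$.

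For existence I would simply \emph{define} the extension by the only recipe available: put $\Phi(0)=0$, keep the prescribed values on $\Ind(\mathcal Z)$, and for any other indecomposable $W$ declare $\Phi(W)$ to be the quilting value of $W$ over $\intc$ (Definition~\ref{def:quilting value}). By Lemma~\ref{lem:totality of quilting} every indecomposable lies in a patch of $\quilt^k(\mathcal Z)$ for some $k\in\Z$, and by Theorem~\ref{thm:n quilt of a zigzag is the shift} together with the construction of quilting the zigzags $\quilt^k(\mathcal Z)$ sweep strictly rightward as $k$ increases; hence every $W\notin\bigcup_k\quilt^k(\mathcal Z)$ lies in exactly one band (the patch-covered region between two consecutive zigzags), a $W$ lying on some $\quilt^k(\mathcal Z)$ is assigned a value at exactly one step of the construction, and the prescribed values on $\mathcal Z$ are never overwritten. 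Moreover the quilting value of a $W$ in band $k$ only invokes values of $\Phi$ on $\quilt^k(\mathcal Z)$ (for $k\ge 0$) or on $\quilt^{k+1}(\mathcal Z)$ (for $k<0$), and those values are either prescribed or were assigned at an earlier (strictly smaller $|k|$) stage. So the recipe is a well-founded recursion and defines $\Phi$ on all of $\Ind(\mathcal D)\sqcup\{0\}$.

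The work is then to check that this $\Phi$ satisfies the continuous deformed mesh relations over $\intc$ for \emph{every} tilting rectangle, not merely for the ``patch rectangles'' of Proposition~\ref{prop:tilting rectangle for quilting} for which the relation holds by construction. Write $\Delta(XYWZ):=\Phi(X)+\Phi(W)-\Phi(Y)-\Phi(Z)$, so the relation for a tilting rectangle $R$ reads $\Delta(R)=\int_R\intc$. The first ingredient is a subdivision lemma: if $R$ is cut by a finite grid of lines of slopes $\pm1$ into tilting rectangles $R_1,\dots,R_m$, then $\Delta(R)=\sum_j\Delta(R_j)$ (a formal telescoping in which the $\Phi$-contributions at interior grid vertices cancel) and $\int_R\intc=\sum_j\int_{R_j}\intc$ (finite additivity of the surface integral, each term finite since $\intc$ is permissible); hence if $\Delta=\int\intc$ holds on every $R_j$ it holds on $R$. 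The second and main ingredient is the relation for a tilting rectangle $R''$ contained in the closure of a single patch. Passing to characteristic coordinates $(s,t)$ in which the patch becomes an axis-aligned box and, for a rectangular patch, the two boundary segments of the inner zigzag become the coordinate axes, Definition~\ref{def:quilting value} reads $\Phi(s,t)=\Phi(s,0)+\Phi(0,t)-\Phi(0,0)+\int_{[0,s]\times[0,t]}\intc$, with $(0,0)$ the left vertex of the patch; substituting this at the four corners of $R''=[s_1,s_2]\times[t_1,t_2]$ and using additivity of the integral over sub-boxes, a short inclusion--exclusion collapses everything to $\Delta(R'')=\int_{R''}\intc$. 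A triangular patch is handled by the same device, now taking the base $y=\pm\tfrac{\pi}{2}$ as the second coordinate axis and using $\Phi=0$ there; the corner rectangles no longer share a common vertex, so the inclusion--exclusion is slightly longer, but it again involves only integrals over honest tilting rectangles and goes through.

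Finally, for an arbitrary tilting rectangle $R$, only finitely many patch-boundary segments meet $R$; extending each to a chord of $R$ produces a finite grid subdivision of $R$ into tilting rectangles, and since the patches of all the $\quilt^k(\mathcal Z)$ tile $\R\times[-\tfrac{\pi}{2},\tfrac{\pi}{2}]$ with boundaries among those chords, each piece lies in the closure of a single patch. The previous step gives $\Delta=\int\intc$ on each piece and the subdivision lemma lifts it to $R$; this proves existence. For uniqueness, suppose $\Phi'$ also satisfies the continuous deformed mesh relations over $\intc$ and agrees with $\Phi$ on $\Ind(\mathcal Z)$; set $n(W)=\min\{|k|:W$ lies in a patch of $\quilt^k(\mathcal Z)\}$ and induct on $n(W)$. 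If $W\in\mathcal Z$ the value is prescribed; otherwise choose $k$ with $|k|=n(W)$ and $W$ in a patch of $\quilt^k(\mathcal Z)$, and apply Proposition~\ref{prop:tilting rectangle for quilting} to the zigzag $\quilt^k(\mathcal Z)$ to get a tilting rectangle $\Diamond$ with $W$ as one vertex and its other three vertices on $\quilt^k(\mathcal Z)$ (if $k\ge 0$) or on $\quilt^{k+1}(\mathcal Z)$ (if $k<0$), all of whose points have $n(\cdot)$ strictly smaller than $n(W)$; by induction $\Phi'$ is already determined there, and the mesh relation for $\Diamond$ forces $\Phi'(W)$, while the same computation forces $\Phi(W)$, so $\Phi'=\Phi$. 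I expect the crux to be exactly this propagation of the mesh relations from the patch rectangles to all tilting rectangles --- the inclusion--exclusion inside a patch (especially for triangular patches) and the geometric fact that every tilting rectangle subdivides into patch-contained pieces --- whereas the well-definedness of the recursion and the uniqueness argument are routine once Lemma~\ref{lem:totality of quilting} and Proposition~\ref{prop:tilting rectangle for quilting} are in hand.
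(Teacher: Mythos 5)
Your proof is correct and takes the same route as the paper: define the extension recursively by the quilting values of Definition~\ref{def:quilting value} and invoke Lemma~\ref{lem:totality of quilting} to reach every indecomposable, then argue uniqueness by the same recursion. The paper's own proof is much terser---it simply asserts that the patchwise extension satisfies the continuous deformed mesh relations and that the recursion is consistent---so your subdivision lemma together with the inclusion--exclusion inside a single patch, which verifies the relation for \emph{arbitrary} tilting rectangles rather than only the patch rectangles of Proposition~\ref{prop:tilting rectangle for quilting}, supplies exactly the detail the paper leaves to the reader.
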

		\begin{proof}
			Using Definition \ref{def:quilting value} we may extend $\Phi$ to all the patches of $\mathcal Z$ and, on these patches, the extension satisfies the continuous deformed mesh relations over $\intc$.
			Similarly, we may extend $\Phi$ to all the patches of $\quilt^{-1}(\mathcal Z)$.
			Notice these extensions are unique.
		
			By Lemma \ref{lem:totality of quilting}, each indecomposable $W$ in $\mathcal D$ is in a patch of $\quilt^n(\mathcal Z)$, for some $n \in \Z$.
			We recursively use this argument to obtain the desired extension of $\Phi$.
			In particular, this uniquely defines $\Phi(W)$.
			Therefore, we may extend $\Phi$ to $\Ind(\mathcal D)$ as stated in the proposition.
		\end{proof}

	\section{Connections to representation theory}\label{sec:rep theory}
	Our goal in this section is to highlight some fundamental connections between our construction and the representation theory of quivers. 
	More specifically, in Sections~\ref{sec:g-vectors} and~\ref{sec:dimension vectors} we respectively introduce the notions of $g$-vectors and dimension vectors in our setting. 
	Furthermore, from the results of Sections~\ref{sec:t-structures} and~\ref{sec:additional properties of heart}, we observe that for any zigzag $\mathcal Z$, there is a $t$-structure in $\mathcal D$ whose heart is analogous to the category of finitely generated representations of an $A_n$ quiver. In particular, by the end of this section the reader observes that the subspace of $\R\times(-\frac{\pi}{2},\frac{\pi}{2})$ corresponding to the indecomposables in $\mathcal D^\heartsuit$  shares many properties with the Auslander--Reiten quiver of a type $A_n$ quiver (for further details, see Section \ref{sec:additional properties of heart}). 
		\subsection{$t$-structures}\label{sec:t-structures}
		For a zigzag $\mathcal Z$ in $\mathcal D$, define the full subcategories
		\begin{align*}
			\mathcal D^{\leq 0} &= \add\{X \mid X\text{ can be quilted from }\mathcal Z\} \\
			\mathcal D^{\geq 0} &= \add\left(\{Y \mid Y\text{ can be inverse quilted from }\mathcal Z[1]\}\setminus \mathcal Z[1]\right).
		\end{align*}
		We have a $t$-structure because the following hold.
		\begin{itemize}
			\item $\mathcal D^{\leq 0}$ is closed under $[1]$ and $\mathcal D^{\geq 0}$ is closed under $[-1]$.
			\item For any indecomposables $X$ in $\mathcal D^{\leq 0}$ and $Y$ in $\mathcal D^{\geq 0}$, we have $\Hom_{\mathcal D}(X,Y[-1])=0$.
				Since $\mathcal D$ is Krull--Schmidt, this extends to all $X$ in $\mathcal D^{\leq 0}$ and $Y$ in $\mathcal D^{\geq 0}$.
			\item Every indecomposable in $\mathcal D$ belongs to at least one of $\mathcal D^{\leq 0}$ or $\mathcal D^{\geq 0}$.
			This immediately yields that every object $E$ in $\mathcal D$ belongs to a distinguished triangle
				\begin{displaymath}
					\xymatrix{
						X \ar[r] & E \ar[r] & Y[-1] \ar[r] & X[1],
					}
				\end{displaymath}
				where $X$ is in $\mathcal D^{\leq 0}$ and $Y$ is in $\mathcal D^{\geq 0}$. 
		\end{itemize}
		
	Notice that the heart $\mathcal D^{\heartsuit}:=\mathcal{D}^{\ge 0} \cap \mathcal{D}^{\le 0}$ does not contain $\mathcal Z[1]$, but it does contain all other indecomposables that can be both quilted from $\mathcal Z$ (including $\mathcal Z$) and inverse quilted from $\mathcal Z[1]$.
	We observe that $\mathcal D^\heartsuit$ is similar to the categories of representations of continuous quivers in \cite{IT15,IRT23,R19}, where the projective representations are those on $\mathcal Z$ (Proposition \ref{prop:zigzag is projective}).
	See Section \ref{sec:additional properties of heart} for further discussion.
	Representations of continuous quivers connect our interpretation to the construction given in \cite{B-MDMTY24}.
	
	We remark that once we have chosen $\mathcal Z$, we have no further choices regarding our $t$-structure.
	This implies that the heart is in some sense canonical.
		
	\subsection{$\gZ$-vectors}\label{sec:g-vectors}
	Let $\mathcal Z$ be a zigzag in $\mathcal D$.
	We now show that the heart $\mathcal D^\heartsuit$ of the $t$-structure obtained from $\mathcal Z$ in Section \ref{sec:t-structures} has $\gZ$-vectors, which behave like $g$-vectors in the classical sense.
		
	Before we state the next definition, recall that each object $Z$ in $\mathcal Z$ is an indecomposable in $\mathcal D$.
		
	\begin{definition}[$g_{\mathcal Z}$-vectors]\label{def:g-vector}
	Let $\mathbb V$ be the real vector space whose coordinates are indexed by $Ob(\mathcal Z)$.  For each object $Z$ in $\mathcal Z$, set $g_{\mathcal Z}(Z)$ to be the vector in $\mathbb V$ whose coordinates are $1$ in the $Z$-coordinate and $0$ elsewhere.
	Let $\intc$ be the $0$ function.
	We uniquely extend $g_{\mathcal Z}$ to all of $\mathcal D$ as in Proposition \ref{prop:all the values}.
	For an indecomposable $X$ in $\mathcal D^\heartsuit \sqcup \mathcal Z[1]$, the \textdef{$\gZ$-vector of $X$} is defined to be $g_{\mathcal Z}(X)$.
	\end{definition}
		
		We now give an explicit description of the $g_{\mathcal Z}$-vectors in $\mathcal{D}^\heartsuit \sqcup \mathcal{Z}[1]$; namely, in Proposition \ref{prop:g-vectors}, we show that each such $g_{\mathcal Z}$-vector $g_{\mathcal Z}(X)$ is a finite sum of $g_{\mathcal Z}$-vectors of objects in $\mathcal Z$.
		
    For each indecomposable in $\mathcal{Z}$, the associated $g_{\mathcal{Z}}$-vector is already defined. Therefore, to explicitly describe the $\gZ$-vector of every indecomposable $X$ in $\mathcal D^\heartsuit$, we only need to treat those indecomposable $X$ in $\mathcal D^\heartsuit$ such that $X$ is not in $\mathcal Z$. 
	For every such indecomposable $X$ consider the rays $\mathfrak u$ and $\mathfrak d$ in $\R\times[-\frac{\pi}{2},\frac{\pi}{2}]$, respectively with slopes $-1$ and $+1$, which emanate from $X$ and propagate in the negative $x$-direction.  (For a graphical depiction, see Figure \ref{fig:g-vector}.)
	Each of these rays $\mathfrak u$ and $\mathfrak d$ may ``bounce'' off one of the horizontal lines $y=\frac{\pi}{2}$ or $y=-\frac{\pi}{2}$ at most once before they intersect $\mathcal Z$.
	Further, we associate a unique point $Z_{\mathfrak{u}}$ in $\mathcal{Z}$ to the ray $\mathfrak{u}$ as follows. (The point $Z_{\mathfrak d}$ can be described analogously.)
		
		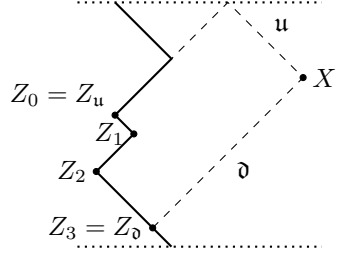
\begin{figure}
		\begin{center}
			\begin{tikzpicture}
				\foreach \x in {0, 3.25}
					\draw[dotted, thick] (-.25,\x) -- (3,\x);
					
				\draw[thick] (1,0) -- (0,1) -- (.5,1.5) -- (.25, 1.75) -- (1,2.5) -- (.25, 3.25);
				
				\draw[dashed] (0.75,0.25) -- (2.75,2.25) -- (1.75,3.25) -- (1,2.5);
				
				\filldraw (2.75,2.25) circle[radius=.4mm];
				\draw (2.75,2.25) node[anchor=west] {$X$};
				\draw (2.25,2.75) node[anchor=south west] {$\mathfrak u$};
				\draw (1.75, 1.25) node[anchor=north west] {$\mathfrak d$};
				
				\filldraw (.25,1.75) circle[radius=.4mm];
				\draw (.25,1.75) node[anchor=south east] {$Z_0=Z_{\mathfrak u}$};
				\filldraw (.75,.25) circle[radius=.4mm];
				\draw (.75,.25) node[anchor=east] {$Z_3=Z_{\mathfrak d}$};
				\filldraw (.5,1.5) circle[radius=.4mm];
				\draw (.5,1.5) node[anchor=east] {$Z_1$};
				\filldraw (0,1) circle[radius=.4mm];
				\draw (0,1) node [anchor=east] {$Z_2$};
			\end{tikzpicture}
			\caption{An example $\mathcal Z$ and $X$ with $\mathfrak u$, $\mathfrak d$, $Z_{\mathfrak u}$, and $Z_{\mathfrak d}$. Each of the $Z_i$ are also shown.
			The region bounded by $\mathfrak u$, $\mathfrak d$, and the zigzag will be of use in Section \ref{sec:solutions}.}\label{fig:g-vector}
		\end{center}
		\end{figure}

		\begin{enumerate}
			\item Suppose $\mathfrak u$ bounces off  $y=\frac{\pi}{2}$ \emph{and} intersects $\mathcal Z$ at a right vertex $Z$ (Definition \ref{def:left vertex}).
				Then we define $Z_{\mathfrak u}$ to be the adjacent left vertex below $Z$ or the intersection between $\mathfrak u$ and $\mathfrak d$ to the left of $X$, whichever is closer to $X$.
			\item If $\mathfrak u$ does not bounce off  $y=\frac{\pi}{2}$ \emph{or} does not intersect $\mathcal Z$ at a right vertex of $\mathcal Z$, we define $Z_{\mathfrak u}$ to be the rightmost intersection between $\mathcal Z$ and $\mathfrak u$.
		\end{enumerate}
		By Euclidean geometry and the triangulated structure of $\mathcal D$ (Section \ref{sec:triangulated structure of D}), $Z_{\mathfrak u} =Z_{\mathfrak d}$ if and only if $X\in \mathcal Z[1]$.
		
		Suppose $X\notin\mathcal Z[1]$.
		Consider the (possibly empty) set of all left and right vertices of $\mathcal Z$ whose $y$-coordinates are strictly between those of $Z_{\mathfrak u}$ and $Z_{\mathfrak d}$.
		Enumerate these left and right vertices as $Z_i$, starting with $Z_1$, where $i<i'$ if the $y$-coordinate of $Z_i$ is greater than the $y$-coordinate of $Z_{i'}$.
		Let $j$ be the number of such $Z_i$'s, $Z_0=Z_{\mathfrak u}$, and $Z_{j+1}=Z_{\mathfrak d}$.
		For $i\notin[0,j+1]$, set $Z_i=0$.
		
		\begin{proposition}\label{prop:g-vectors}
			Let $X$ be an indecomposable in $\mathcal D^\heartsuit\sqcup \mathcal Z[1]$, but not in $\mathcal Z$.
			Then the $\gZ$-vector of $X$ is given by
			\begin{displaymath}
				g_{\mathcal Z}(X) = \begin{cases}
 					\sum\limits_{\mathrm{odd}\, i} g_{\mathcal Z}(Z_i) - \sum\limits_{\mathrm{even}\, i} g_{\mathcal Z}(Z_i) & Z_0 \text{ is left of } Z_1 \\
 					\sum\limits_{\mathrm{even }\, i} g_{\mathcal Z}(Z_i) - \sum\limits_{\mathrm{odd }\, i} g_{\mathcal Z}(Z_i) & Z_0 \text{ is right of } Z_1 \\
 					-g_{\mathcal Z}(X[-1]) & X\in\mathcal Z[1].
 				\end{cases}
			\end{displaymath}	
		\end{proposition}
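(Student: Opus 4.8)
The plan is to exploit the only property that pins down $g_{\mathcal Z}$: being the extension from $\mathcal Z$ furnished by Proposition~\ref{prop:all the values} for the zero function $\intc$, and hence (Remark~\ref{rmk:deformed over 0 is not deformed}) satisfying the ordinary continuous mesh relations, it obeys
\[
g_{\mathcal Z}(X) + g_{\mathcal Z}(W) = g_{\mathcal Z}(Y) + g_{\mathcal Z}(Z)
\]
over \emph{every} tilting rectangle $XYWZ$ in $\R\times[-\frac{\pi}{2},\frac{\pi}{2}]$, with the convention $g_{\mathcal Z}(0)=0$. I would dispatch the third case first. If $X\in\mathcal Z[1]$, write $X[-1]=(x',y')\in\mathcal Z$ and take $a=\frac{\pi}{2}-y'$, $b=\frac{\pi}{2}+y'$ in Definition~\ref{def:tilting rectangle}; the resulting (limiting but permitted) tilting rectangle has left vertex $X[-1]$, right vertex $W=(x'+\pi,-y')=X$, and top and bottom vertices on $y=\pm\frac{\pi}{2}$, hence equal to $0$ in $\mathcal D$. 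The mesh relation over it reads $g_{\mathcal Z}(X[-1])+g_{\mathcal Z}(X)=0$, which is exactly $g_{\mathcal Z}(X)=-g_{\mathcal Z}(X[-1])$.

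For $X\in\mathcal D^\heartsuit$ with $X\notin\mathcal Z$, I would induct on the least integer $m\ge 0$ for which $X$ lies strictly inside a patch between $\quilt^m(\mathcal Z)$ and $\quilt^{m+1}(\mathcal Z)$ (such $m$ exists by Lemma~\ref{lem:totality of quilting}, and $m$ is at most one less than the number of line segments of $\mathcal Z$, since $X$ can also be inverse quilted from $\mathcal Z[1]$; in particular $\quilt^m(\mathcal Z)\neq\mathcal Z[1]$). By Proposition~\ref{prop:tilting rectangle for quilting} applied to the zigzag $\quilt^m(\mathcal Z)$ there is a tilting rectangle with $X$ and a shared left vertex $C$ of $\quilt^m(\mathcal Z)$ as opposite corners, its other two corners $A$ and $B$ being the points where the two lines of slope $\pm1$ through $X$ meet $\quilt^m(\mathcal Z)$ (any of $A,B,C$ possibly $0$); the mesh relation over it gives
\[
g_{\mathcal Z}(X) = g_{\mathcal Z}(A) + g_{\mathcal Z}(B) - g_{\mathcal Z}(C).
\]
Each of $A,B,C$ lies on $\quilt^m(\mathcal Z)$, so it is $0$, or lies on $\mathcal Z$ (when $m=0$, or when it is a right vertex of $\mathcal Z$), or is a corner of a patch of $\quilt^{m-1}(\mathcal Z)$ and hence has strictly smaller index; in all cases $g_{\mathcal Z}(A),g_{\mathcal Z}(B),g_{\mathcal Z}(C)$ are known from $g_{\mathcal Z}(0)=0$, from Definition~\ref{def:g-vector}, or from the inductive hypothesis.

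The remaining work, and the heart of the argument, is to check that feeding these three known values into the displayed identity collapses to the asserted alternating sum. The geometric input is that the rays $\mathfrak u$ and $\mathfrak d$ leaving $X$ in the negative $x$-direction pass through $A$ and $B$ and, continuing past $\quilt^m(\mathcal Z)$ with at most one reflection off $y=\pm\frac{\pi}{2}$ each, reach $\mathcal Z$ at $Z_{\mathfrak u}$ and $Z_{\mathfrak d}$; the lists of left/right vertices of $\mathcal Z$ attached to $A$ and to $B$ (as in the set-up before the Proposition) therefore share a common tail, which is precisely the list attached to $C$, and the list attached to $X$ is their concatenation with that tail deleted. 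Because consecutive entries of each list alternate between left and right vertices of $\mathcal Z$, the signs $(-1)^i$ in the three inductive formulas line up along the overlap, so the $C$-contributions cancel and the surviving terms are exactly those of the claimed formula for $X$, with the correct global sign read off from whether $Z_{\mathfrak u}=Z_0$ lies to the left or to the right of the first intermediate vertex $Z_1$. The base case $m=0$ is the instance with $A,B,C\in Ob(\mathcal Z)\sqcup\{0\}$: for $X$ in a rectangular patch this yields $j=1$ with $Z_1=C$ and $Z_{\mathfrak u}=A$ to the right of $C$, matching the second case of the formula, while for $X$ in a triangular patch one of the two rays bounces once, $j=0$, and a short computation with the reflected copy of the extremal line segment of $\mathcal Z$ pins down the relevant case.

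The hard part will be exactly this Euclidean bookkeeping, for which the alternatives (1)--(2) in the definition of $Z_{\mathfrak u}$ before the Proposition are the delicate point: one must track through each quilting step whether a ray meets $\mathcal Z$ at a right vertex and whether it has already reflected, since these two pieces of data govern both which ``$Z_0$ left of $Z_1$''/``$Z_0$ right of $Z_1$'' branch applies and the alignment of the alternating signs, and one must confirm that everything stays consistent in the degenerate configurations where $A$, $B$, $Z_{\mathfrak u}$, or $Z_{\mathfrak d}$ slides onto $y=\pm\frac{\pi}{2}$ and becomes $0$.
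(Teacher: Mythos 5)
Your proposal is correct and follows essentially the same route as the paper's proof: the degenerate tilting rectangle with corners on $y=\pm\frac{\pi}{2}$ for the case $X\in\mathcal Z[1]$, then strong induction on the quilting depth using the tilting rectangle of Proposition~\ref{prop:tilting rectangle for quilting}, the continuous mesh relation $g_{\mathcal Z}(X)=g_{\mathcal Z}(A)+g_{\mathcal Z}(B)-g_{\mathcal Z}(C)$, and the observation that the $Z_{\mathfrak u}$, $Z_{\mathfrak d}$ data of the four corners interleave so that the alternating sums telescope. The sign bookkeeping you flag as the hard part is treated at the same schematic level in the paper (a four-case diagram of the $y$-coordinates), so your level of detail matches the original.
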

	\begin{proof}
		First, we treat the case $X\in\mathcal Z[1]$. In this case, note that $Z_{\mathfrak u}=Z_{\mathfrak d}=X[-1]$.
		Consider the tilting rectangle whose left corner is $X[-1]$ and right corner is $X$, where the top and bottom corners lie on $y=\frac{\pi}{2}$ and $y=-\frac{\pi}{2}$, respectively.
		Thus, $g_{\mathcal Z}(X[-1]) + g_{\mathcal Z}(X)= 0$ implies the desired result.
			
		Now suppose $X\notin \mathcal Z[1]$.
		Let $n_\ell$ be the number of line segments of $\mathcal Z$.
		We proceed by strong induction on $n<n_\ell$, starting with $n=0$.
		If $X$ is in a patch of $\mathcal Z=\quilt^0(\mathcal Z)$, the result follows from Definition \ref{def:quilting value}.
			
		Suppose the result holds for all $X$ in any patch of $\quilt^m_{\mathcal Z}$, for all $0\leq m \leq n< n_\ell-1$.
		Let $X$ be in a patch of $\quilt^{n+1}(\mathcal Z)$, and define $Z_{\mathfrak u}$, $Z_{\mathfrak d}$, and the $Z_i$ as above. 
			Further, we can assume $X$ is not in $\quilt^{n+1}(\mathcal Z)$, as otherwise $X$ is in a patch of $\quilt^n(\mathcal Z)$ and we are done.
			
			Since $X\notin\mathcal Z[1]$, we know $Z_{\mathfrak u}\neq Z_{\mathfrak d}$ and the $y$-coordinate of $Z_{\mathfrak u}$ is greater than the $y$-coordinate of $Z_{\mathfrak d}$.
			Furthermore, since $X$ is in a patch of $\quilt^{n+1}(\mathcal Z)$, there exist $X'$, $Y'$, and $Z'$ in $\quilt^{n+1}(\mathcal Z)\sqcup \{0\}$ such that $X'Y'XZ'$ is a tilting rectangle.
			By induction we know the proposition holds for $X'$, $Y'$, and $Z'$.
			Note that the rays $\mathfrak d$ coming from $Y'$ and $X'$ will determine the same $Z_{\mathfrak d}$ for both objects.
			Similarly, $Z_{\mathfrak u}$ is the same for $Z'$ and $X'$.
			Moreover, $Z_{\mathfrak u}$ is the same for $X$ and $Y'$; $Z_{\mathfrak d}$ is the same for $X$ and $Z'$.
			The $y$-coordinates of these four objects in $\mathcal Z$ will all be distinct since $X$ is not in $\quilt^{n+1}(\mathcal Z)$.
			Schematically, there are four cases of the $y$-coordinates based on which rays (if any) bounce off the boundary:
			\begin{displaymath}
			\begin{tikzpicture}
				\foreach \x in {0, 3, 6, 9}
				{
					\draw[|-|] (\x,0) -- (\x,3);
					\draw[|-|] (\x,1) -- (\x,2);
				}
				\foreach \x in {0, 3}
				{
					\draw (\x,0) node[anchor=east] {$Z_{\mathfrak d}$, $Y'$ and $X'$};
					\draw (\x,1) node[anchor=east] {$Z_{\mathfrak d}$, $Z'$ and $X$};
				}
				\foreach \x in {6, 9}
				{
					\draw (\x,1) node[anchor=east] {$Z_{\mathfrak d}$, $Y'$ and $X'$};
					\draw (\x,0) node[anchor=east] {$Z_{\mathfrak d}$, $Z'$ and $X$};
				}
				\foreach \x in {0, 6}
				{
					\draw (\x,2) node[anchor=east] {$Z_{\mathfrak u}$, $Z'$ and $X'$};
					\draw (\x,3) node[anchor=east] {$Z_{\mathfrak u}$, $Y'$ and $X$};
				}
				\foreach \x in {3, 9}
				{
					\draw (\x,3) node[anchor=east] {$Z_{\mathfrak u}$, $Z'$ and $X'$};
					\draw (\x,2) node[anchor=east] {$Z_{\mathfrak u}$, $Y'$ and $X$};
				}
			\end{tikzpicture}	
			\end{displaymath}
			Therefore, since
			\begin{displaymath}
				g_{\mathcal Z}(X) = g_{\mathcal Z}(Y')+g_{\mathcal Z}(Z')-g_{\mathcal Z}(X'),
			\end{displaymath}
			we see the proposition follows.
		\end{proof}

		\subsection{Dimension vectors}\label{sec:dimension vectors}
		Again let $\mathcal Z$ be a zigzag and $\mathcal D^\heartsuit$ be the heart of the $t$-structure obtained from $\mathcal Z$, as in Section \ref{sec:t-structures}.
		Now we show that for each indecomposable $X$ in $\mathcal D^\heartsuit$, there is a notion of dimension vector analogous to the discrete case.
		
		First we introduce a partial order on $\mathcal{Z}$: If $Z$ and $Z'$ belong to the same line segment of $\mathcal{Z}$, we put $Z \leq Z'$ provided that the $x$-coordinate of $Z$ is no larger than the $x$-coordinate of $Z'$.
		If $Z$ and $Z'$ are not on the same line segment in $\mathcal Z$, then $Z$ and $Z'$ are not comparable.
		Then, for each $Z$ in $\mathcal Z$, define $\udim_{\mathcal Z}(Z)$ in $\prod_{Ob(\mathcal Z)} \R$ to be $1$ on each $Z'$-coordinate with $Z'\leq Z$, and $0$ elsewhere.
		Again by Proposition \ref{prop:all the values}, we extend $\udim_{\mathcal Z}$ uniquely to a function $\udim_{\mathcal Z}:\mathcal D\to\prod_{Ob(\mathcal Z)} \R$.
		Now, for each indecomposable object $X$ in $\mathcal D^\heartsuit$, define the \textdef{dimension vector of $X$ with respect to $\mathcal Z$} to be $\udim_{\mathcal Z}(X)$.
		
		\begin{proposition}\label{def:dimension vectors are intervals}
			Let $X$ be an indecomposable object in $\mathcal D^\heartsuit$.
			Then each coordinate of $\udim_{\mathcal Z}(X)$ is $0$ or $1$.
			Moreover, $\{Z\in \Ind(\mathcal Z) \mid Z\text{-coordinate of $\udim_{\mathcal Z}(X)$ is }1 \}$ forms a connected set, where $\Ind(\mathcal Z)\subsetneq \R^2$ has the subspace topology.
		\end{proposition}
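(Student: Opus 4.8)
The plan is to prove the stronger statement that $\udim_{\mathcal Z}(X)$ is the indicator function of a connected subset (a sub-arc) of the zigzag $\mathcal Z$. Since $\mathcal Z$ is homeomorphic to an open interval of $\R$ (it is a polygonal arc with its two extreme points deleted), this stronger statement yields both assertions at once: an indicator function takes only the values $0$ and $1$, and any sub-arc of $\mathcal Z$ is connected in the subspace topology. Two preliminary observations. First, $\udim_{\mathcal Z}$ is extended off $\mathcal Z$ via Proposition~\ref{prop:all the values} using the permissible function $\intc=0$ (as for $\gZ$-vectors), so by Definition~\ref{def:quilting value} every tilting rectangle $X'Y'W'Z'$ in $\R\times[-\frac{\pi}{2},\frac{\pi}{2}]$ satisfies the plain mesh relation $\udim_{\mathcal Z}(X')+\udim_{\mathcal Z}(W')=\udim_{\mathcal Z}(Y')+\udim_{\mathcal Z}(Z')$. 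Second, the stronger statement already holds on $\mathcal Z$ itself: by the definition of $\udim_{\mathcal Z}$, if $X$ is an interior point of a line segment of $\mathcal Z$ then its down-set is a sub-arc of that segment; if $X$ is a left vertex then its down-set is the single point $\{X\}$; and if $X$ is a right vertex then its down-set is the union of the two line segments of $\mathcal Z$ meeting at $X$, which is again a sub-arc.

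For $X\in\mathcal D^{\heartsuit}$ not on $\mathcal Z$, recall from the set-up preceding Proposition~\ref{prop:g-vectors} the rays $\mathfrak u$ and $\mathfrak d$ from $X$, the points $Z_{\mathfrak u},Z_{\mathfrak d}\in\mathcal Z$, and the enumeration $Z_0=Z_{\mathfrak u},Z_1,\dots,Z_j,Z_{j+1}=Z_{\mathfrak d}$ of $Z_{\mathfrak u}$, $Z_{\mathfrak d}$ together with the left/right vertices of $\mathcal Z$ whose $y$-coordinate lies strictly between those of $Z_{\mathfrak u}$ and $Z_{\mathfrak d}$. The first step is to observe that the induction in the proof of Proposition~\ref{prop:g-vectors} goes through verbatim with $\udim_{\mathcal Z}$ in place of $\gZ$ — it uses only the mesh relation recalled above, Definition~\ref{def:quilting value}, and the Euclidean geometry relating the rays of the four corners of a tilting rectangle (the ``four cases'' schematic) — so that
\[
\udim_{\mathcal Z}(X)=\pm\Big(\textstyle\sum_{\text{odd }i}\udim_{\mathcal Z}(Z_i)-\sum_{\text{even }i}\udim_{\mathcal Z}(Z_i)\Big),
\]
with sign $+$ if $Z_0$ lies to the left of $Z_1$ and sign $-$ if it lies to the right.

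The second step is to evaluate this alternating sum and identify it with an indicator function. Consecutive terms $Z_i,Z_{i+1}$ lie on a common line segment of $\mathcal Z$, and the interior vertices $Z_1,\dots,Z_j$ are alternately left and right vertices of $\mathcal Z$; using the explicit down-set description from the first paragraph one checks, point by point along $\mathcal Z$, that all the $\pm1$ contributions reconcile. At an interior point of a segment at most two of the $\udim_{\mathcal Z}(Z_i)$ are nonzero there, and their signed sum equals $1$ exactly on the sub-arc between the two flanking $Z_i$'s; at a left vertex $Z_i$ precisely the three terms $\udim_{\mathcal Z}(Z_{i-1}),\udim_{\mathcal Z}(Z_i),\udim_{\mathcal Z}(Z_{i+1})$ are nonzero, with signs $+,-,+$, summing to $1$; and at a right vertex $Z_i$ only $\udim_{\mathcal Z}(Z_i)$ is nonzero there and contributes $+1$. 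Telescoping these along $\mathcal Z$ shows the alternating sum is the indicator function of the sub-arc of $\mathcal Z$ from $Z_0$ to $Z_{j+1}$ (with endpoints possibly included or excluded, according to the local slopes), which is precisely the stronger statement.

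I expect the main obstacle to be the degenerate configurations, already handled in the proof of Proposition~\ref{prop:g-vectors}: when a corner of a tilting rectangle is the zero object (a corner landing on $y=\pm\frac{\pi}{2}$), and when $\mathfrak u$ or $\mathfrak d$ bounces off $y=\pm\frac{\pi}{2}$ before meeting $\mathcal Z$, so that $Z_{\mathfrak u}$ or $Z_{\mathfrak d}$ is given by the ``bounce'' clause of its definition rather than as a plain intersection. In those cases the dictionary between the tilting-rectangle corners and the endpoints of the relevant sub-arcs is the subtler one recorded there, and one must track whether the sub-arc's endpoints are open or closed throughout the computation; but since neither connectedness nor the $0$–$1$ property is affected by the open/closed distinction at endpoints, the same case analysis as in Proposition~\ref{prop:g-vectors} dispatches them.
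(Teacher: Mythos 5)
Your proposal is correct and follows essentially the same route as the paper: both reduce to the observation that the inductive proof of Proposition~\ref{prop:g-vectors} uses only the mesh relations and hence applies verbatim to $\udim_{\mathcal Z}$, and then both evaluate the resulting alternating sum of down-set indicator functions, using that interior left vertices contribute a singleton with sign $-$ and right vertices contribute the union of their two segments with sign $+$. The only cosmetic difference is that the paper organizes the evaluation as an induction on the number of $Z_i$'s (tracking connectedness of the support of the partial sums), whereas you telescope the full sum pointwise along $\mathcal Z$; the content is the same.
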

		\begin{proof}
			If $X\in\mathcal Z$, the proposition follows by the definition of $\udim_{\mathcal Z}$.
			We use the same notation as in Section \ref{sec:g-vectors}. In particular, let $Z_{\mathfrak u}$, $Z_{\mathfrak d}$, and the $Z_i$ be as before.
			For the case where $X\notin\mathcal{Z}$, we know that $Z_0$ and $Z_1$ are distinct and we can assume that $Z_0$ is to the left of $Z_1$ (the other case is similar). 
			We further remark that the proof of Proposition \ref{prop:g-vectors} does not rely on the values of $g_{\mathcal Z}$. Thus,
			\begin{displaymath}
				\udim_{\mathcal Z}(X) = \sum\limits_{\text{odd } i} \udim_{\mathcal Z}(Z_i) - \sum\limits_{\text{even } i} \udim_{\mathcal Z}(Z_i) = \sum_{i\in \Z} (-1)^{i+1} \udim_{\mathcal Z}(Z_i).
			\end{displaymath}
			
			We now prove the result by induction on the number of $Z_i$'s.
			We start with $Z_0=Z_{\mathfrak u}$ and $Z_1=Z_{\mathfrak d}$.
			In this case, the result is straightforward to check.
			
			For the induction step on $j+1\geq 2$, let $\mathcal S$ be the set of $Z\in\mathcal Z$ such that the $Z$-coordinate of $\sum_{i=1}^j (-1) \udim_{\mathcal Z}(Z_i)$ is $1$.
			Assume $\mathcal S$ is connected and that the $Z$-coordinate of $\sum_{i=1}^j (-1) \udim_{\mathcal Z}(Z_i)$ is $0$ for all $Z\notin \mathcal S$.
			There are even and odd cases for the induction step; we first consider the even case.
			If $j+1$ is even, then $Z_j$ is a right vertex and $\mathcal S\setminus \{Z \mid Z\leq Z_{j+1}\}$ is also a connected set.
			If $j+1$ is odd, then $Z_j\notin \mathcal S$ is a left vertex and $\mathcal S\cup \{Z\mid Z\leq Z_{j+1}\}$ is a connected set.
			In both cases, $\sum_{i=1}^j (-1)^{i+1}\udim_{\mathcal Z}(Z_i)$ is $1$ precisely on $\mathcal S$ and $0$ elsewhere.
		\end{proof}

	\subsection{Continuous representations and $\mathcal D^\heartsuit$}\label{sec:additional properties of heart}
	    In this brief section we discuss how $\mathcal D^\heartsuit$ is similar to $\rep(A_n)$ for a type $A_n$ quiver, and thus may be thought of as a category of continuous representations.
	    We have already shown that the $\gZ$-vectors (Section \ref{sec:g-vectors}) and dimension vectors (Section \ref{sec:dimension vectors}) in $\mathcal D^\heartsuit$ behave similarly to $g$-vectors and dimension vectors, respectively, in $\rep(A_n)$.
	
	    For a type $A_n$ quiver (Section \ref{sec:Amplituhedron for Dynkin Quivers}), the projective indecomposables in $\rep(A_n)$ form a zigzag shape in the (augmented) Auslander--Reiten quiver.
	    The following proposition shows that the indecomposables in the zigzag $\mathcal Z$ are exactly the projective indecomposable objects in $\mathcal D^\heartsuit$.
	    
		\begin{proposition}\label{prop:zigzag is projective}
		    An indecomposable $Z$ is projective in $\mathcal D^\heartsuit$ if and only if $Z\in \Ind(\mathcal Z)$.
		\end{proposition}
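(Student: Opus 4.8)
The plan is to reduce projectivity in the abelian category $\mathcal D^\heartsuit$ to a statement about forward light cones in the strip and then read that statement off the geometry. Since $\mathcal D^\heartsuit$ is the heart of a $t$-structure it is abelian, and for $X,Y\in\mathcal D^\heartsuit$ there is the standard isomorphism $\Ext^1_{\mathcal D^\heartsuit}(X,Y)\cong\Hom_{\mathcal D}(X,Y[1])$. Hence an indecomposable $Z\in\mathcal D^\heartsuit$ is projective if and only if $\Ext^1_{\mathcal D^\heartsuit}(Z,-)=0$, i.e.\ $\Hom_{\mathcal D}(Z,Y[1])=0$ for every $Y\in\mathcal D^\heartsuit$. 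By the description of morphisms in $\mathcal D$ (Section~\ref{sec:the category D}), $\Hom_{\mathcal D}(Z,Y[1])\neq0$ precisely when $Y[1]\in H(Z)$; and by the construction of the $t$-structure (Section~\ref{sec:t-structures}), together with the fact that quilting commutes with $[1]$, the set $\Ind(\mathcal D^\heartsuit)$ is the ``slab'' of indecomposables lying weakly to the right of $\mathcal Z$ and strictly to the left of $\mathcal Z[1]$ (it contains $\mathcal Z$ but not $\mathcal Z[1]$), while $\Ind(\mathcal D^\heartsuit[1])$ is the slab bounded by $\mathcal Z[1]$ (included) and $\mathcal Z[2]$ (excluded). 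So the proposition is equivalent to the purely geometric assertion: $H(Z)\cap\Ind(\mathcal D^\heartsuit[1])=\emptyset$ exactly when $Z\in\Ind(\mathcal Z)$.

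First I would prove the ``if'' direction. Let $Z=(z_1,z_2)\in\Ind(\mathcal Z)$. Then $H(Z)$ is a tilting rectangle whose left corner is $Z$ and whose right corner is $Z[1]=(z_1+\pi,-z_2)\in\Ind(\mathcal Z[1])$; its two edges at $Z[1]$ have slopes $+1$ and $-1$ and together constitute the right boundary of $H(Z)$, which is \emph{excluded} from the set $H(Z)$. The geometric claim to establish is that $\mathcal Z[1]$ meets the closed rectangle $\overline{H(Z)}$ only along this right boundary: the segment(s) of $\mathcal Z[1]$ incident to $Z[1]$ lie on the two lines of slope $\pm1$ through $Z[1]$ — this follows by transporting the segment(s) of $\mathcal Z$ at $Z$ under $[1]\colon(x,y)\mapsto(x+\pi,-y)$ — so they lie on the boundary edges of $H(Z)$ at $Z[1]$, and at every subsequent vertex the zigzag turns by a right angle, which necessarily steers it to the side of that boundary line away from $H(Z)$. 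Granting this, $H(Z)$ is a connected set disjoint from $\mathcal Z[1]$ and containing $Z$, hence it lies entirely in the component of the strip lying before $\mathcal Z[1]$; in particular it is disjoint from $\Ind(\mathcal D^\heartsuit[1])$, and $Z$ is projective.

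Next I would prove the ``only if'' direction. Suppose $Z=(z_1,z_2)\in\Ind(\mathcal D^\heartsuit)$ but $Z\notin\Ind(\mathcal Z)$. Since $Z$ lies on neither $\mathcal Z$ nor $\mathcal Z[1]$ nor the lines $y=\pm\tfrac{\pi}{2}$, it lies in the topological interior of the slab $\Ind(\mathcal D^\heartsuit)$, so $A:=(z_1-\e,z_2)$ lies in $\Ind(\mathcal D^\heartsuit)$ for all sufficiently small $\e>0$. For such $\e$ the point $A[1]=(z_1+\pi-\e,-z_2)$ sits just to the left of the right corner $Z[1]$ of the rectangle $H(Z)$, at the same height, hence in the interior of $H(Z)$; therefore $\Hom_{\mathcal D}(Z,A[1])=k\neq0$, i.e.\ $\Ext^1_{\mathcal D^\heartsuit}(Z,A)\neq0$, so $Z$ is not projective.

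The hard part will be the geometric claim used in the ``if'' direction — that the zigzag $\mathcal Z[1]$ never penetrates the open rectangle $H(Z)$. Verifying it requires a short case analysis according to whether $Z$ is an interior point, a left vertex, or a right vertex of $\mathcal Z$, together with a careful bookkeeping of the local effect of the shift functor near $Z$ and of the $90^\circ$ turns of the zigzag. Everything else — the reduction to the cone condition, the slab descriptions of $\Ind(\mathcal D^\heartsuit)$ and $\Ind(\mathcal D^\heartsuit[1])$, and the ``only if'' direction — is routine once this translation is in place.
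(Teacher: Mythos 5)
Your proposal is correct, and for the ``if'' direction it takes a genuinely different route from the paper. The paper argues directly with short exact sequences: for $Z\in\Ind(\mathcal Z)$ it observes that no indecomposable of $\mathcal D^\heartsuit$ off $\mathcal Z$ admits a nonzero map to $Z$, forces all terms of a short exact sequence ending in $Z$ onto $\mathcal Z$, and then uses the vanishing of $\Hom$ between distinct line segments to split the sequence. You instead reduce projectivity to $\Ext^1_{\mathcal D^\heartsuit}(Z,-)\cong\Hom_{\mathcal D}(Z,-[1])=0$ and check the equivalent geometric statement that the hom-cone $H(Z)$ misses the slab $\Ind(\mathcal D^\heartsuit[1])$; this buys a uniform computation in place of the paper's splitting argument, at the cost of invoking the standard identification of Yoneda $\Ext^1$ in a heart with $\Hom_{\mathcal D}(-,-[1])$ (which the paper implicitly uses elsewhere, e.g.\ in Theorem~\ref{ref:nice heart}). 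The geometric claim you flag as the ``hard part'' is in fact immediate and needs no case analysis on the type of the point $Z$: writing the zigzag $\mathcal Z[1]$ as a graph $x=\zeta(y)$ over $y\in(-\frac{\pi}{2},\frac{\pi}{2})$, the function $\zeta$ is $1$-Lipschitz and passes through $Z[1]$, while the right boundary of $H(Z)$ at height $y$ is exactly $z_1+\pi-|y+z_2|$; hence $\zeta(y)\ge z_1+\pi-|y+z_2|$ everywhere and every point of $H(Z)$ lies strictly to the left of $\mathcal Z[1]$. Your ``only if'' direction is essentially the paper's: your nonzero class in $\Hom_{\mathcal D}(Z,A[1])$ with $A=(z_1-\e,z_2)$ is precisely the nonsplit extension coming from the small nondegenerate tilting rectangle with right corner $Z$ that the paper exhibits.
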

		\begin{proof}
		    Let $Z$ be an indecomposable in $\mathcal Z$ and $X\to Y \to Z$ be a short exact sequence in $\mathcal D^\heartsuit$.
		    Note that if an indecomposable $W$ in $\mathcal D^\heartsuit$ is not on $\mathcal Z$ then $\Hom_{\mathcal D^\heartsuit}(W,Z)=0$.
		    This implies $X$ and $Y$ consist of indecomposable summands on $\mathcal Z$ as well.
		    Further note that for any pair of indecomposables $W, W'$ in $\mathcal Z$, if $W$ and $W'$ are not on the same line segment then $\Hom_{\mathcal{D}^\heartsuit}(W,W')=0$ (see Hom supports in Figure \ref{fig:morphs}).
		    By the triangulated structure proven in \cite[Proposition 2.5.1]{IT15}, and thus the abelian structure in $\mathcal D^\heartsuit$, any such short exact sequence must be split.
		    Thus, $Z$ is projective in $\mathcal D^\heartsuit$.
		    
		    Let $X$ be an indecomposable in $\mathcal D^\heartsuit$, but not in $\mathcal Z$.
		    Then we may find a small enough nondegenerate tilting rectangle $\Diamond$ in $\mathcal D^\heartsuit$ to the right of $\mathcal Z$, such that the right corner of $\Diamond$ is $X$.
		    This distinguished triangle, and thus the short exact sequence in $\mathcal D^\heartsuit$, does not split.
		\end{proof}
		
		In the following theorem, we capture some of the main properties of $\mathcal D^\heartsuit$. Since the results follow from Propositions \ref{prop:g-vectors} and \ref{prop:zigzag is projective}, we omit the proof. 
		
		\begin{theorem}\label{ref:nice heart}
		The abelian category 
		$\mathcal D^\heartsuit$ has enough projectives, it is Krull--Schmidt, and every indecomposable object is finitely generated.
		From the construction of $\mathcal D$, it follows that $\Ext^i_{\mathcal D^\heartsuit}(X,Y)=0$ for $i>1$, thus  $\mathcal D^\heartsuit$ is of global dimension $1$.
		Furthermore, the isomorphism classes of indecomposable objects of $\mathcal D$ are given by shifts of those in $\mathcal D^\heartsuit$.
		\end{theorem}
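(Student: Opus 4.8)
The plan is to transport the standard facts about $\rep(A_n)$ to $\mathcal D^\heartsuit$ using the dictionary set up in Sections~\ref{sec:g-vectors} and~\ref{sec:dimension vectors}, the only step needing genuine input being hereditariness. For the Krull--Schmidt property, recall that $\mathcal D$ is Krull--Schmidt and that, since $\mathcal D^{\le 0}$ and $\mathcal D^{\ge 0}$ are defined using $\add$, the heart $\mathcal D^\heartsuit = \mathcal D^{\le 0}\cap\mathcal D^{\ge 0}$ is closed under direct summands in $\mathcal D$; consequently every object of $\mathcal D^\heartsuit$ is a finite direct sum of indecomposable objects of $\mathcal D$ that again lie in $\mathcal D^\heartsuit$, and since each indecomposable $(x,y)$ has $\Hom_{\mathcal D}((x,y),(x,y)) = k$, a local ring, these decompositions are essentially unique.

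For enough projectives and finite generation, Proposition~\ref{prop:zigzag is projective} tells us that the indecomposable projectives of $\mathcal D^\heartsuit$ are exactly the objects of $\Ind(\mathcal Z)$. Let $X$ be an indecomposable of $\mathcal D^\heartsuit$. By Propositions~\ref{prop:g-vectors} and~\ref{def:dimension vectors are intervals}, the set of $Z\in\Ind(\mathcal Z)$ whose $Z$-coordinate of $\udim_{\mathcal Z}(X)$ equals $1$ is a connected interval of $\mathcal Z$, and since $\mathcal Z$ has only $n$ line segments this interval meets finitely many of them and hence contains only finitely many left vertices. As in the familiar construction of a projective cover of an interval module over a type-$A$ path algebra --- here one glues together a bounded number of tilting rectangles whose left corners lie on $\mathcal Z$ --- one produces a surjection $P\twoheadrightarrow X$ with $P$ a finite direct sum of objects of $\Ind(\mathcal Z)$. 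Thus every indecomposable of $\mathcal D^\heartsuit$, and hence (by taking direct sums) every object, is a quotient of a finitely generated projective; in particular $\mathcal D^\heartsuit$ has enough projectives and every object is finitely generated.

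For the homological statement, note first that, $\mathcal D^\heartsuit$ being the heart of a $t$-structure, $\Ext^1_{\mathcal D^\heartsuit}(X,Y)\cong\Hom_{\mathcal D}(X,Y[1])$; since $\mathcal D^\heartsuit$ is not semisimple (tilting rectangles give non-split short exact sequences), this group is nonzero for suitable $X,Y$, so the global dimension is at least $1$. To bound it above by $1$, I would show that every indecomposable $X$ has projective dimension at most $1$: take a surjection $P\twoheadrightarrow X$ from a projective as above and let $K$ be its kernel, so that $\udim_{\mathcal Z}(K)=\udim_{\mathcal Z}(P)-\udim_{\mathcal Z}(X)$; one then checks from the interval description of Proposition~\ref{def:dimension vectors are intervals} that the support of $\udim_{\mathcal Z}(K)$ breaks up as a finite disjoint union of sub-intervals of $\mathcal Z$, each of which is the dimension-vector support of an object of $\Ind(\mathcal Z)$ --- equivalently, that submodules of projective objects in $\mathcal D^\heartsuit$ are again projective, the continuous analogue of $kA_n$ being hereditary. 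Then $K$ is projective, $0\to K\to P\to X\to 0$ is a projective resolution, and therefore $\Ext^i_{\mathcal D^\heartsuit}(X,Y)=0$ for all $i\ge 2$, i.e.\ $\mathcal D^\heartsuit$ has global dimension exactly $1$.

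Finally, for the assertion about shifts, let $W$ be an indecomposable of $\mathcal D$ and let $j$ be the number of line segments of $\mathcal Z$. By Lemma~\ref{lem:totality of quilting}, $W$ lies in a patch of $\quilt^{n}(\mathcal Z)$ for some $n\in\Z$; writing $n=jm+i$ with $0\le i<j$, Theorem~\ref{thm:n quilt of a zigzag is the shift} gives $\quilt^{n}(\mathcal Z)=\quilt^{i}(\mathcal Z[m])$, so $W[-m]$ lies in a patch of $\quilt^{i}(\mathcal Z)$ with $0\le i<j$ and hence can be quilted from $\mathcal Z$ and inverse quilted from $\mathcal Z[1]$. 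Thus $W[-m]\in\mathcal D^\heartsuit$ unless $W[-m]\in\mathcal Z[1]$, in which case $W[-m-1]\in\Ind(\mathcal Z)\subseteq\mathcal D^\heartsuit$; since distinct points of $\R\times(-\tfrac\pi2,\tfrac\pi2)$ are pairwise non-isomorphic (immediate from the description of $\Hom$, Figure~\ref{fig:morphs}), every isomorphism class of indecomposables of $\mathcal D$ is a shift of one in $\mathcal D^\heartsuit$. The main obstacle in this program is the hereditariness step of the third paragraph: verifying, through the left-vertex/right-vertex and parity bookkeeping already visible in the proof of Proposition~\ref{def:dimension vectors are intervals}, that the syzygy $K$ genuinely has a projective dimension vector; all the remaining steps are essentially formal.
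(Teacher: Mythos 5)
Your proposal follows exactly the route the paper indicates: the paper in fact \emph{omits} the proof of Theorem~\ref{ref:nice heart}, saying only that it follows from Propositions~\ref{prop:g-vectors} and~\ref{prop:zigzag is projective}, and those (together with Proposition~\ref{def:dimension vectors are intervals} and the quilting results) are precisely the tools you deploy. The Krull--Schmidt argument, the construction of a surjection onto $X$ from a finite sum of objects of $\Ind(\mathcal Z)$, and the shift statement via Lemma~\ref{lem:totality of quilting} and Theorem~\ref{thm:n quilt of a zigzag is the shift} are all fine.

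The one step you leave open --- projectivity of the syzygy $K$ --- merits two comments. First, as you half-acknowledge, matching $\udim_{\mathcal Z}(K)$ against dimension vectors of projectives is not by itself conclusive, since a dimension vector does not a priori determine an object; the cleaner observation is that the alternating formula of Proposition~\ref{prop:g-vectors} is literally the class of a projective presentation $\bigoplus_{\mathrm{even}\, i} Z_i \to \bigoplus_{\mathrm{odd}\, i} Z_i \to X \to 0$ with all $Z_i\in\Ind(\mathcal Z)$, so what remains is to check the left-hand map is injective. Second, and closer to what the paper's phrase ``from the construction of $\mathcal D$'' suggests, you can bypass the syzygy analysis: for the heart of a $t$-structure the natural map $\Ext^2_{\mathcal D^\heartsuit}(X,Y)\to\Hom_{\mathcal D}(X,Y[2])$ is injective, and $\Hom_{\mathcal D}(X,Y[2])=0$ here because $Y[2]\notin\overline{H(X)}$ whenever $X$ and $Y$ both lie in the region between $\mathcal Z$ and $\mathcal Z[1]$ --- membership would force two points of $\mathcal Z$ to differ horizontally by strictly more than their vertical distance, impossible since $\mathcal Z$ consists of slope-$\pm 1$ segments traversed monotonically in $y$. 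Be warned that this vanishing is \emph{not} formal: the $t$-structure axioms alone only kill $\Hom_{\mathcal D}(X,Y[i])$ for $i<0$, so the explicit geometry of $\Hom$ in $\mathcal D$ is genuinely needed. Once $\Ext^2\equiv 0$ and enough projectives are in hand, dimension shifting on $0\to K\to P\to X\to 0$ gives $\Ext^1_{\mathcal D^\heartsuit}(K,-)\cong\Ext^2_{\mathcal D^\heartsuit}(X,-)=0$, so $K$ is projective after all and the global dimension is exactly $1$.
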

		
		From the preceding theorem, observe that one can think of $\mathcal D^\heartsuit$ as the category of certain finitely generated representations of a continuous quiver whose orientation is inherited by the partial order on $\mathcal Z$. The reader is referred to \cite{IRT23} for a detailed introduction to continuous quivers of type $A$.
	    Our results are inspired by, but not reliant upon, that work.
				
	\section{\textbf{T}-clusters}\label{sec:T-clusters}
		In this section, we present a continuous generalization of the clusters and compatibility in \cite[Section 2]{B-MDMTY24}.
		This will be used in our construction of a continuous analogue of the ABHY associahedron for type $A$ quivers, as we discuss in Section \ref{sec:associahedron}.
		
		For the remainder of the paper, let $\mathcal{Z}$ be a fixed
		zigzag in $\mathcal{D}$ (Definition~\ref{def:zigzag}).  Recall that $\mathcal D^\heartsuit$ denotes the heart of the $t$-structure in $\mathcal{D}$, as described in Section \ref{sec:t-structures}. We consider the following subcategory of $\mathcal D$. 
		\begin{notation}\label{note:closed heart}
			Let $\ClosedHeart := \add(\Ind(\mathcal D^\heartsuit)\sqcup \Ind(\mathcal Z[1]))$. Namely, $\ClosedHeart$ is the full subcategory of $\mathcal D$ whose objects are finite direct sums of indecomposable objects in $\mathcal D^\heartsuit$ and $\mathcal Z[1]$.
		\end{notation}
		\subsection{Compatibility}
		To generalize the compatibility in \cite{B-MDMTY24} to the continuous version, we now define compatibility in $\ClosedHeart$, making use of the continuous deformed mesh relations (Definition \ref{def:contintuous deformed mesh relations}).
		\begin{definition}\label{def:compatibility}
			Let $X$ and $Y$ be indecomposable objects in $\ClosedHeart$.
			We say $X$ and $Y$ are \textdef{incompatible} if there exists a distinguished triangle in $\mathcal D$ of one of the following forms:
			\begin{displaymath}
				\xymatrix{
					X \ar[r] & E \ar[r] & Y \ar[r] & X[1] & \text{or} & Y \ar[r] & E \ar[r] & X\ar[r] & Y[1].
				}
			\end{displaymath}
			Otherwise, we say $X$ and $Y$ are \textdef{compatible}.
		\end{definition}
		
		\begin{remark}\label{rmk:trivial case}\label{rmk:geometric compatibility}
		    Note that Definition \ref{def:compatibility} implies that each indecomposable $X\in\ClosedHeart$ is compatible with itself.
		    Recall that in Section \ref{sec:Amplituhedron for Dynkin Quivers}, for two vertices of $\widetilde{\Gamma}_{\mathcal{C}}$, we considered the notion of compatibility based on the associated cluster variables in the cluster algebra of type $A_n$.
		    From the tilting rectangles (Definition \ref{def:tilting rectangle}), one observes that the compatibility condition in Definition~\ref{def:compatibility} is analogous to that in Section \ref{sec:Amplituhedron for Dynkin Quivers}, described in Remark \ref{rmk:finite compatibility}.
		    In particular, two indecomposable objects $X$ and $Y$ in $\ClosedHeart$ are incompatible if and only if there exists a tilting rectangle in $\R\times[-\frac{\pi}{2},\frac{\pi}{2}]$ whose left and right corners are $X$ and $Y$ (or $Y$ and $X$).
			
			We also note that if $X$ is an indecomposable object in $\mathcal Z$, then $X$ and $X[1]$ are incompatible. This is because there is a distinguished triangle $X\to 0\to X[1]\to X[1]$ in $\mathcal D$, which corresponds to a tilting rectangle in $\R\times[-\frac{\pi}{2},\frac{\pi}{2}]$.
			By Proposition \ref{prop:zigzag is projective}, this recovers a well known property of cluster structures in the categorical settings: Every indecomposable projective $P$ is incompatible with its shift $P[1]$. 
		
		    Finally, we remark that our compatibility condition differs from the condition used by Igusa and Todorov in \cite{IT15}, which is based on $\Ext$ spaces.
		    It also differs from the condition for $\mathbf E$-clusters used by Igusa, Todorov, and the fourth author in \cite{IRT22}, which is based on the Euler product.
        \end{remark}

		\subsection{\textbf{T}-clusters}
		In this subsection, we define $\TT$-clusters.
		Since our compatibility condition is based on tilting rectangles, we use the prefix $\TT$-, which also distinguishes our compatibility condition from those in \cite{IT15} and \cite{IRT22}.
		While the $\TT$-clusters do not form a cluster structure in the sense of \cite{BIRS09}, they have many properties of clusters.
		In particular, in Section \ref{sec:algebraic mutation} we show that mutation of $\TT$-clusters is relatively well behaved.
		
		\begin{definition}\label{def:T-cluster}
		    A \textdef{$\TT$-cluster} $\mathcal{T}$ is a maximal collection of pairwise compatible indecomposable objects in $\ClosedHeart$.  That is, 
			a collection of indecomposable objects $\mathcal T$ in $\ClosedHeart$ is a $\TT$-cluster if every pair $X$ and $Y$ in $\mathcal T$ are compatible, and 
	        if for each $Z \notin \mathcal{T}$ there exists $X \in \mathcal{T}$ such that $X$ and $Z$ are incompatible.
		\end{definition}
		
		\begin{example}\label{xmp:universal examples}
			We provide a list of basic examples of $\TT$-clusters.
			\begin{enumerate}
				\item The set of indecomposable objects in $\mathcal Z\subsetneq \ClosedHeart$ is a $\TT$-cluster. Similarly, the set of indecomposable objects in $\mathcal Z[1]\subsetneq \ClosedHeart$ is a $\TT$-cluster.
				\item\label{xmp:universal examples:vertical curve} 
				Let $\ell$ be a smooth curve in $\mathcal D$ such that the slope of $\ell$ at each point is less than $-1$, greater than $+1$, or equal to $\infty$ and, for all $a\in(-\frac{\pi}{2},\frac{\pi}{2})$, there is an $A\in\ell$ such that the $y$-coordinate of $A$ is $a$.
				If $\ell\subsetneq \Ind(\ClosedHeart)$,
				then $\mathcal T=\{A \mid A\in \ell\}$ is a $\TT$-cluster.
				\item Let $\mathcal Z'$ be a zigzag in $\mathcal D$ distinct from $\mathcal Z$ and $\mathcal Z[1]$. If $\Ind(\mathcal Z')\subsetneq\Ind(\ClosedHeart)$, then $\mathcal Z'$ is a $\TT$-cluster.
			\end{enumerate}	
		\end{example}
		
		\begin{remark}\label{rmk:disconnected}
		    Note that all of the $\TT$-clusters in Example \ref{xmp:universal examples} are connected as subsets of $\R\times(-\frac{\pi}{2},\frac{\pi}{2})$.
		    We warn the reader that this is not always the case.
		    In fact, $\TT$-clusters may be \textdef{totally disconnected} in $\R\times(-\frac{\pi}{2},\frac{\pi}{2})$.
		    We further discuss this phenomenon through an example in Section \ref{sec:totally disconnected}.
		\end{remark}
		
		\subsubsection{A totally disconnected example}\label{sec:totally disconnected}
		Consider the zigzag $\mathcal Z_+$ consisting of one line segment of slope $+1$.
		Without loss of generality, assume the indecomposable corresponding to the point $(0,0)$ is in $\mathcal Z_+$.
		We use a recursive process to modify $\mathcal Z_+$ and obtain a totally disconnected $\TT$-cluster.
		In fact, we produce a sequence of $\TT$-clusters $\mathcal T_0, \mathcal T_1, \mathcal T_2,\ldots$ where the limit of the process, denoted by $\mathcal T_\infty$, is a totally disconnected set which is maximally compatible.  We note here that although the construction of the sequence of $\TT$-clusters depends on $\mathcal{Z}_+$, we avoid unwieldy notation by suppressing mention of $\mathcal{Z}_+$ in the notation of $\mathcal{T}_i$ and $\mathcal{T}_\infty$.
		
		As already stated in Example \ref{xmp:universal examples}, $\mathcal Z_+$ is a $\TT$-cluster.
		Set $\mathcal T_0=\mathcal Z_+$. Now we aim to explicitly describe $\mathcal{T}_1$, for which the reader may find it helpful to refer to Figure \ref{fig:T1} as we construct this new $\TT$-cluster.
        Choose an integer-indexed subset $\mathcal X=\{(x_i,y_i) \mid i \in \mathbb{Z} \}$ of $\mathcal T_0=\mathcal Z_+$ such that
		\begin{itemize}
			\item $y_i < y_{i+1}$, for all $i\in \mathbb{Z}$; 
			\item $\mathcal X$ has two accumulation points, exactly at $(-\frac{\pi}{2},-\frac{\pi}{2})$ and $(\frac{\pi}{2},\frac{\pi}{2})$.
		\end{itemize}
		For each $i \in \mathbb{Z}$, let $X_i$ be the indecomposable object in $\mathcal{Z}_+$ corresponding to $(x_i,y_i)$, and note that each of these points has the form $X_i = (t_i\pi-\frac{\pi}{2},t_i\pi-\frac{\pi}{2})$ for some $t_i\in(0,1)$.

		For each $Y\in\mathcal Z_+\setminus \mathcal X$, there exist $i\in\mathbb Z$ and $s\in(0,1)$ such that $Y$ corresponds to the point
		\begin{displaymath}
			(1-s)(x_i,y_i) + s(x_{i+1},y_{i+1}).
		\end{displaymath} 
		For each $X_i$, let
		\begin{align*}
			E_i &:= \Bigl( x_{i+1}+(x_i+\frac{\pi}{2}), y_{i+1}-(y_i+\frac{\pi}{2})\Bigr), \\
			F_i &:= \Bigl(x_{i+1}+(x_i+\frac{\pi}{2})+(x_{i+1}-x_i), \,-\frac{\pi}{2}\Bigr).	
		\end{align*}
		Then, by $f_1(Y)$ we denote the indecomposable in $\mathcal{C}_{\mathcal{Z}_+}$ which corresponds to the point $(1-s)E_i + sF_i$.
		Now, we set
		\begin{displaymath}
			\mathcal T_1 := \{X_i\} \cup \{f_1(Y)\mid \text{for all } i\in\mathbb Z, Y\neq X_i\}.
		\end{displaymath}
		
		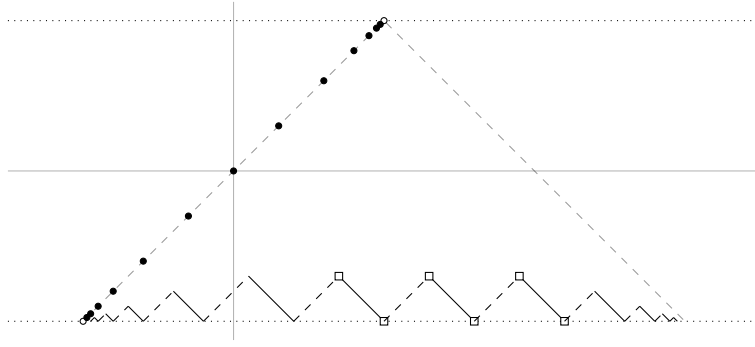
\begin{figure}
		\begin{center}
			\begin{tikzpicture}
				\draw[dotted] (-5,2) -- (5,2);
				\draw[dotted] (-5,-2) -- (5,-2);
				\draw[dashed, draw opacity=.4] (-4,-2) -- (0,2) -- (4,-2);
				\draw[draw opacity = .3] (-5,0) -- (5,0);
				\draw[draw opacity = .3] (-2,2.25) -- (-2,-2.25);
				
				\foreach \x in {-2,2}
					\filldraw[fill=white](\x-2,\x) circle[radius=.4mm];
				\foreach \x in {-1.95, -1.9, -1.8, -1.6, -1.2, -0.6, 0, 0.6, 1.2, 1.6, 1.8, 1.9, 1.95}
				{
					\filldraw (\x-2, \x) circle[radius=.4mm];
				}
				
				\coordinate (A) at (-3.85,-1.95);
				\coordinate (B) at (-3.8,-2);
				\coordinate (C) at (-3.9,-2);
				\draw[dashed] (C) -- (A);
				\draw (A) -- (B);
				\coordinate (A) at (-3.7,-1.9);
				\coordinate (B) at (-3.6,-2);
				\coordinate (C) at (-3.8,-2);
				\draw[dashed] (C) -- (A);
				\draw (A) -- (B);
				\coordinate (A) at (-3.4,-1.8);
				\coordinate (B) at (-3.2,-2);
				\coordinate (C) at (-3.6,-2);
				\draw[dashed] (C) -- (A);
				\draw (A) -- (B);
				\coordinate (A) at (-2.8,-1.6);
				\coordinate (B) at (-2.4,-2);
				\coordinate (C) at (-3.2,-2);
				\draw[dashed] (C) -- (A);
				\draw (A) -- (B);
				\coordinate (A) at (-1.8,-1.4);
				\coordinate (B) at (-1.2,-2);
				\coordinate (C) at (-2.4,-2);
				\draw[dashed] (C) -- (A);
				\draw (A) -- (B);
				\coordinate (A) at (-0.6,-1.4);
				\coordinate (B) at (0,-2);
				\coordinate (C) at (-1.2,-2);
				\draw[dashed] (C) -- (A);
				\draw (A) -- (B);
				\filldraw[fill=white] (-0.65,-1.45) -- (-0.65,-1.35) -- (-0.55,-1.35) -- (-0.55,-1.45) -- cycle;
				\filldraw[fill=white] (0.05,-1.95) -- (0.05,-2.05) -- (-0.05,-2.05) -- (-0.05,-1.95) -- cycle;
				\coordinate (A) at (0.6,-1.4);
				\coordinate (B) at (1.2,-2);
				\coordinate (C) at (0,-2);
				\draw[dashed] (C) -- (A);
				\draw (A) -- (B);
				\filldraw[fill=white] (0.65,-1.45) -- (0.65,-1.35) -- (0.55,-1.35) -- (0.55,-1.45) -- cycle;
				\filldraw[fill=white] (1.25,-1.95) -- (1.25,-2.05) -- (1.15,-2.05) -- (1.15,-1.95) -- cycle;
				\coordinate (A) at (1.8,-1.4);
				\coordinate (B) at (2.4,-2);
				\coordinate (C) at (1.2,-2);
				\draw[dashed] (C) -- (A);
				\draw (A) -- (B);
				\filldraw[fill=white] (1.85,-1.45) -- (1.85,-1.35) -- (1.75,-1.35) -- (1.75,-1.45) -- cycle;
				\filldraw[fill=white] (2.45,-1.95) -- (2.45,-2.05) -- (2.35,-2.05) -- (2.35,-1.95) -- cycle;
				\coordinate (A) at (2.8,-1.6);
				\coordinate (B) at (3.2,-2);
				\coordinate (C) at (2.4,-2);
				\draw[dashed] (C) -- (A);
				\draw (A) -- (B);
				\coordinate (A) at (3.4,-1.8);
				\coordinate (B) at (3.6,-2);
				\coordinate (C) at (3.2,-2);
				\draw[dashed] (C) -- (A);
				\draw (A) -- (B);
				\coordinate (A) at (3.7,-1.9);
				\coordinate (B) at (3.8,-2);
				\coordinate (C) at (3.6,-2);
				\draw[dashed] (C) -- (A);
				\draw (A) -- (B);
				\coordinate (A) at (3.85,-1.95);
				\coordinate (B) at (3.9,-2);
				\coordinate (C) at (3.8,-2);
				\draw[dashed] (C) -- (A);
				\draw (A) -- (B);
			\end{tikzpicture}
			\caption{The indecomposable objects of $\mathcal T_1$ drawn as filled points in $\Ind(\mathcal D)\subsetneq \R^2$.
			The open circles are the described limit points, but are not objects in $\mathcal D$.
			Additionally, $E_i$ and $F_i$ are indicated by squares, for $i\in\{-1,0,1\}$.
			}\label{fig:T1}
		\end{center}
		\end{figure}
		
		As it can be seen via the dashed lines in Figure \ref{fig:T1}, from the construction it follows that each of the open line segments from $E_i$ to $F_i$ form the right side of an isosceles triangle. 
		This triangle is similar to the one defined by $\mathcal Z_+$, $\mathcal Z_+[1]$, and part of $y=-\frac{\pi}{2}$.
		Inside each of the smaller triangles, we repeat the construction of $\mathcal T_1$, except scaled and reflected about the vertical axis of symmetry in the triangle.
		This produces $\mathcal T_2$, with its own set of smaller triangles.
		
		We repeat the above process on the smaller and smaller triangles obtained.
		The limit of this process, which we denote by $\mathcal T_\infty$, contains all the discrete points from $\mathcal T_n$, for all $n\geq 1$.
		In the limit, the line segments between the $E_i$'s and $F_i$'s vanish
		and we are left with a totally disconnected (indeed discrete) set.
		
		Finally, note that the construction guarantees that each $\mathcal T_n$ is a maximally compatible set in $\mathcal{C}_{\mathcal{Z}_+}$. 
		\begin{proposition}\label{prop:disconnected is a T-cluster}
			With the same notation as above, the set $\mathcal T_\infty$ is a $\TT$-cluster.
		\end{proposition}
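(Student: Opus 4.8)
The plan is to verify the two defining properties of a $\TT$-cluster: that the elements of $\mathcal{T}_\infty$ are pairwise compatible, and that $\mathcal{T}_\infty$ is maximal with this property. Throughout I would work with the geometric reformulation of incompatibility from Remark~\ref{rmk:geometric compatibility}: two indecomposables of $\ClosedHeart$ are incompatible exactly when they occur as the left and right corners of a tilting rectangle (Definition~\ref{def:tilting rectangle}) in $\R\times[-\frac{\pi}{2},\frac{\pi}{2}]$. I would also record the elementary monotonicity this entails: if $X$ is the left corner and $W$ the right corner of a tilting rectangle $XYWZ$, then $X$ is incompatible with every point of that rectangle lying strictly east of $X$; consequently an incompatibility witnessed by a point on an open segment can be ``slid'' along the segment toward either endpoint, and such incompatibilities persist under limits as long as one does not degenerate the rectangle to a slope-$\pm1$ ray.

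For pairwise compatibility, the key structural remark is that the recursion producing $\mathcal{T}_{n+1}$ from $\mathcal{T}_n$ only alters the interiors of the shrinking triangles and leaves every previously placed discrete point untouched; hence a discrete point of $\mathcal{T}_n$ lies in $\mathcal{T}_m$ for all $m\ge n$, and $\mathcal{T}_\infty$ is precisely the union over all stages of these discrete points. Given $A,B\in\mathcal{T}_\infty$, choose $m$ with $A,B\in\mathcal{T}_m$; since $\mathcal{T}_0=\mathcal{Z}$ is a $\TT$-cluster by Example~\ref{xmp:universal examples}(1) and each $\mathcal{T}_m$ with $m\ge 1$ is maximally compatible by construction, $A$ and $B$ are compatible.

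For maximality, let $Z\in\Ind(\ClosedHeart)\setminus\mathcal{T}_\infty$; I must exhibit $W\in\mathcal{T}_\infty$ incompatible with $Z$. At stage $n$ the construction carries a finite family of nested open triangles whose right sides are the open segments of $\mathcal{T}_n$, and along any branch the triangle diameters are controlled by the (summable, hence null) tails of the accumulating subsets chosen at each stage, so the triangles in a branch shrink to a point. If for some $n$ the point $Z$ lies on no stage-$n$ open segment and interior to no stage-$n$ triangle, then $Z\notin\mathcal{T}_n$, so maximality of $\mathcal{T}_n$ yields a witness $W_n\in\mathcal{T}_n$; if $W_n$ is a discrete point we are done, and otherwise I would slide $W_n$ along its open segment toward an endpoint and use closedness of the incompatibility relation together with the density of discrete points of $\mathcal{T}_{n+1}$ near that endpoint to replace $W_n$ by a genuine element of $\mathcal{T}_\infty$. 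In the remaining case $Z$ is trapped inside the nested triangles for every $n$, hence is the common shrinking-limit point and lies in $\overline{\mathcal{T}_\infty}$; here I would exhibit incompatibility of $Z$ with a discrete point (an $X_i$-type point of an outer generation), which sits strictly inside the beam region of $Z$.

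The main obstacle is exactly this last, ``trapped'', case. One must confirm that along every branch the sub-triangles genuinely shrink to a point (which rests on the summability of the gaps in the accumulating subsets $\mathcal{X}$ used at each stage), that the limiting point is an honest indecomposable in the open strip rather than one of the excluded boundary limit points, and — most delicately — that the witness extracted lies in $\mathcal{T}_\infty$ itself and not merely in its closure: $\mathcal{T}_\infty$ omits many of its own accumulation points, such as the $E_i$-type points, so ``close to $\mathcal{T}_\infty$'' is not automatically enough. One also has to make sure that none of the limiting steps degenerates a tilting rectangle to a triangle, which would silently destroy the incompatibility being transferred.
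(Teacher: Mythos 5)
Your strategy is essentially the paper's: pairwise compatibility of $\mathcal T_\infty$ is inherited from the finite stages, and maximality is proved by extracting a witness from some $\mathcal T_n$ and, when that witness lies on an open segment that disappears at a later stage, replacing it by discrete points of the next generations that accumulate at an endpoint of the sub-segment of incompatible points; your ``trapped'' case is exactly the case the paper disposes of with ``it is straightforward to check.'' One correction to how you justify the key replacement step: your preliminary monotonicity claim --- that an incompatibility witnessed on an open segment can be slid toward \emph{either} endpoint --- is false, because the incompatibility region of a point excludes the two slope-$\pm 1$ rays emanating from it, so the intersection of that region with a slope-$\pm 1$ segment is a sub-segment that need not contain both ends; what must be (and in the paper is, via Figure~\ref{fig:incompatible with W}) verified is that in the configurations arising from the construction this sub-segment reaches precisely the endpoint at which the discrete points of $\mathcal T_{n+2}$ accumulate, and your argument should be phrased in those terms rather than via a general sliding principle.
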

		\begin{proof}
			Let $W$ be an indecomposable in $\mathcal{C}_{\mathcal{Z}_+}$ such that $W$ is not in $\mathcal T_\infty$.
			It is straightforward to check that if $W$ is not in $\mathcal T_n$ for all $n\geq 0$, then there exists $X\in\mathcal T_\infty$ such that $W$ and $X$ are incompatible.
			
			Suppose $W$ is in $\mathcal T_n$ for some $n\geq 0$.
			Without loss of generality, suppose $W$ is in $\mathcal T_n$ but not $\mathcal T_{n+1}$.
			Then $W$ was on a line segment obtained from the construction of $\mathcal T_n$.
			Further, there is a sub-segment of a line segment in $\mathcal T_{n+1}$, whose complement is also a sub-segment, such that $W$ is incompatible with each $X$ in the sub-segment (see Figure~\ref{fig:incompatible with W}).
			Up to symmetry, we have the following picture which contains only the relevant part of $\mathcal T_{n+1}$.
			\begin{figure}[h]
			\begin{center}
				\begin{tikzpicture}
					\draw[dotted] (-4,-2) -- (4,-2);
					\draw[dashed, draw opacity=.4] (-4,-2) -- (0,2) -- (4,-2);
					
					\foreach \x in {-2,2}
						\filldraw[fill=white](\x-2,\x) circle[radius=.4mm];
					\foreach \x in {-1.95, -1.9, -1.8, -1.6, -1.2, -0.6, 0, 0.6, 1.2, 1.6, 1.8, 1.9, 1.95}
					{
						\filldraw (\x-2, \x) circle[radius=.4mm];
					}
					
					\coordinate (A) at (-3.85,-1.95);
					\coordinate (B) at (-3.8,-2);
					\coordinate (C) at (-3.9,-2);
					\draw[dashed] (C) -- (A);
					\draw (A) -- (B);
					\coordinate (A) at (-3.7,-1.9);
					\coordinate (B) at (-3.6,-2);
					\coordinate (C) at (-3.8,-2);
					\draw[dashed] (C) -- (A);
					\draw (A) -- (B);
					\coordinate (A) at (-3.4,-1.8);
					\coordinate (B) at (-3.2,-2);
					\coordinate (C) at (-3.6,-2);
					\draw[dashed] (C) -- (A);
					\draw (A) -- (B);
					\coordinate (A) at (-2.8,-1.6);
					\coordinate (B) at (-2.4,-2);
					\coordinate (C) at (-3.2,-2);
					\draw[dashed] (C) -- (A);
					\draw (A) -- (B);
					\coordinate (A) at (-1.8,-1.4);
					\coordinate (B) at (-1.2,-2);
					\coordinate (C) at (-2.4,-2);
					\draw[dashed] (C) -- (A);
					\draw (A) -- (B);
					\coordinate (A) at (-0.6,-1.4);
					\coordinate (B) at (0,-2);
					\coordinate (C) at (-1.2,-2);
					\draw[dashed] (C) -- (A);
					\draw (A) -- (B);
					\coordinate (A) at (0.6,-1.4);
					\coordinate (B) at (1.2,-2);
					\coordinate (C) at (0,-2);
					\draw[dashed] (C) -- (A);
					\draw (A) -- (B);
					\coordinate (A) at (1.8,-1.4);
					\coordinate (B) at (2.4,-2);
					\coordinate (C) at (1.2,-2);
					\draw[dashed] (C) -- (A);
					\draw (A) -- (B);
					\coordinate (A) at (2.8,-1.6);
					\coordinate (B) at (3.2,-2);
					\coordinate (C) at (2.4,-2);
					\draw[dashed] (C) -- (A);
					\draw (A) -- (B);
					\coordinate (A) at (3.4,-1.8);
					\coordinate (B) at (3.6,-2);
					\coordinate (C) at (3.2,-2);
					\draw[dashed] (C) -- (A);
					\draw (A) -- (B);
					\coordinate (A) at (3.7,-1.9);
					\coordinate (B) at (3.8,-2);
					\coordinate (C) at (3.6,-2);
					\draw[dashed] (C) -- (A);
					\draw (A) -- (B);
					\coordinate (A) at (3.85,-1.95);
					\coordinate (B) at (3.9,-2);
					\coordinate (C) at (3.8,-2);
					\draw[dashed] (C) -- (A);
					\draw (A) -- (B);
					
					\filldraw[fill=blue, draw=blue] (-2.3,-.3) circle[radius=.4mm];
					\draw (-2.3,-.3) node [anchor=south east] {\textcolor{blue}{$W$}};
					
					\filldraw[fill=blue, fill opacity=.3, draw opacity=0] (-2.3,-.3) -- (-.6, -2) -- (1.7,.3) -- (0,2) -- (-2.3,-.3);
					\draw (-.5,.2) node [rotate=45] {\textcolor{blue}{Incompatible}};
					\draw (-.1, -.2) node [rotate=45] {\textcolor{blue}{with $W$}};
					\draw[red, thick] (-.3,-1.7) -- (-.6,-1.4);
					
					\draw (0,-2) node[anchor=north] {};
				\end{tikzpicture}
				\caption{A visual aid to the proof of Proposition~\ref{prop:disconnected is a T-cluster}. There are infinitely many objects in both the red line segment and $\mathcal T_\infty$.}\label{fig:incompatible with W}
				\end{center}
			\end{figure}
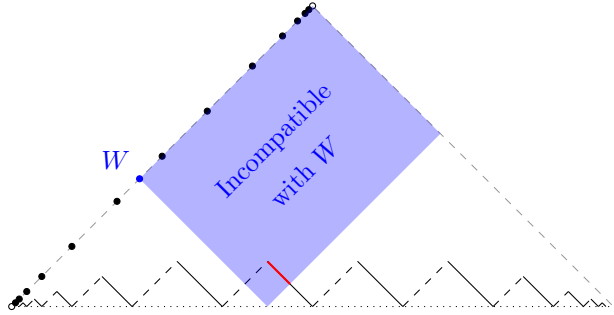
			An infinite sequence of points used to construct $\mathcal T_{n+2}$ approaches the upper end of the red line segment.
			Then there exist infinitely many such $X_i$ in the sequence that are incompatible with $W$.
			Each of the $X_i$ in the sequence is in $\mathcal T_{n+2}$ and in $\mathcal T_\infty$.
			This concludes the proof.
		\end{proof}
		
		\subsection{Mutation of \textbf{T}-clusters}\label{sec:algebraic mutation}
		We now describe the notion of mutation for $\TT$-clusters, which we call $\TT$-mutation (Definition \ref{def:algebra mutation}).
		Again, the reason for the `$\TT$-' prefix is to indicate that our notion of mutation is based on the compatibility condition, which itself relies on tilting rectangles.
		Before we provide further details, we warn the reader that unlike in the standard setting of cluster algebras, for an arbitrary object $X$ in a $\TT$-cluster $\mathcal T$ there may not be a mutation at $X$.
		
		To describe the notion of $\TT$-mutation, we need the following technical lemma.
		\begin{lemma}\label{lem:minimal tilting rectangle}
			Let $\mathcal T$ be a $\TT$-cluster.
			Suppose $X\in\mathcal T$ and $Y\notin \mathcal T$ such that $(\mathcal T\setminus\{X\})\cup\{Y\}$ is a compatible set.
			Then there exists a tilting rectangle in $\ClosedHeart$ whose left and right corners are $X$ and $Y$ (or $Y$ and $X$) and whose top and bottom corners are in $\mathcal T$.
		Moreover, for an indecomposable $Z$ in the interior or on a side of this tilting rectangle, if $Z$ is not a corner vertex then it is not in $\mathcal T$.
		\end{lemma}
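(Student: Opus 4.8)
The plan is to first locate the rectangle, then identify its top and bottom corners with members of $\mathcal T$ using maximality, and finally dispatch the ``moreover'' clause by the same geometric estimates. Since $\mathcal T$ is maximal and $Y\notin\mathcal T$, some $V\in\mathcal T$ is incompatible with $Y$; since $(\mathcal T\setminus\{X\})\cup\{Y\}$ is compatible, no element of $\mathcal T\setminus\{X\}$ is incompatible with $Y$, so $V=X$. Thus $X$ and $Y$ are incompatible and, moreover, $X$ is the \emph{only} element of $\mathcal T$ incompatible with $Y$. By Remark~\ref{rmk:geometric compatibility} there is a unique tilting rectangle $R$ in $\R\times[-\frac\pi2,\frac\pi2]$ with $\{X,Y\}$ as its left and right corners; after relabelling I may assume $X$ is the left corner (the other case is symmetric), and I write its top corner as $T=X+(a,a)$ and bottom corner as $B$, so that $Y=T+(b,-b)$ with $0<a\le\frac\pi2-y_X$ and $0<b\le y_X+\frac\pi2$ as in Definition~\ref{def:tilting rectangle}. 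I would then check $T,B\in\Ind(\ClosedHeart)$: recall that $\Ind(\ClosedHeart)$ consists of the points of the open strip lying weakly to the right of $\mathcal Z$ and weakly to the left of $\mathcal Z[1]$, and that the functions giving the $x$-coordinate of $\mathcal Z$, respectively $\mathcal Z[1]$, as a function of $y$ are $1$-Lipschitz (all slopes $\pm1$); hence the forward wedge of $X$ stays weakly right of $\mathcal Z$ and the backward wedge of $Y$ stays weakly left of $\mathcal Z[1]$, so all of $R$ lies between $\mathcal Z$ and $\mathcal Z[1]$. It then remains to rule out $T$ or $B$ meeting $y=\pm\frac\pi2$, which one excludes using $X,Y\in\Ind(\ClosedHeart)$ together with the compatibility of $(\mathcal T\setminus\{X\})\cup\{Y\}$ — this, rather than mere incompatibility of $X$ and $Y$, is what forbids $R$ from reaching the boundary, and it uses that $\ClosedHeart$ ``pinches'' near $y=\pm\frac\pi2$.

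\textbf{Step 2 (the corners lie in $\mathcal T$).} By maximality it suffices to show $T$ is compatible with every object of $\mathcal T$ (and then argue symmetrically for $B$). First, $T$ is compatible with both $X$ and $Y$: the segments $XT$ and $TY$ are sides of $R$, so $T$ lies on a bounding ray — not strictly inside — of the forward wedge of $X$ and of the backward wedge of $Y$, hence no tilting rectangle realizes either incompatibility. The crux is the geometric claim
\[
(\star)\qquad \text{$V$ incompatible with $T$}\ \Longrightarrow\ \text{$V$ incompatible with $X$ or with $Y$}.
\]
Granting $(\star)$: if $V\in\mathcal T$ were incompatible with $T$, then $V$ would be incompatible with $X$ (impossible, since $\mathcal T$ is pairwise compatible) or with $Y$ (forcing $V=X$ by Step 1, contradicting the compatibility of $X$ and $T$). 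So no object of $\mathcal T$ is incompatible with $T$, whence $T\in\mathcal T$.

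\textbf{Step 3 (proof of $(\star)$).} I would use the reformulation of Remark~\ref{rmk:geometric compatibility}: $P$ and $Q$ are incompatible exactly when $Q$ lies in the strict forward wedge of $P$ and in the closed backward wedge of $P[1]$, or the same with $P$ and $Q$ interchanged — the closed backward wedge of $P[1]$ being precisely the intersection of the two half-planes that ``clip'' $\overline{H(P)}$ at $y=\pm\frac\pi2$. One then splits according to whether $V$ lies in the strict forward wedge of $T$ or $T$ lies in that of $V$. The facts that make this go: the forward wedge of $T=X+(a,a)$ is contained in that of $X$ (triangle inequality), while the lines bounding the backward wedges of $X[1],T[1],Y[1]$ transform predictably under the translations $X\rightsquigarrow T\rightsquigarrow Y$; crucially the Definition~\ref{def:tilting rectangle} bounds $a\le\frac\pi2-y_X$ and $b\le y_X+\frac\pi2$ are exactly what keeps the clipping constraints satisfied when one replaces $T$ by $X$ or by $Y$. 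In the first case a short computation shows that $V$ either already lies in the closed backward wedge of $X[1]$ (so $V$ is incompatible with $X$), or else lies past the relevant bounding line, which then forces $V$ into the strict forward wedge of $Y$ and into the closed backward wedge of $Y[1]$ (so $V$ is incompatible with $Y$). In the second case, comparing $a$ with the parameter measuring how far $V$ lies behind $T$: if $V$ lies far enough behind, $X$ lands in the strict forward wedge of $V$ with the clipping surviving, giving $V$ incompatible with $X$; otherwise $Y$ lands in the strict forward wedge of $V$, again with the clipping surviving, giving $V$ incompatible with $Y$.

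\textbf{Step 4 (the ``moreover'' clause) and the main obstacle.} Let $Z$ be an indecomposable of $R$ that is not a corner. If $Z$ lies in the interior of $R$, then — since $R$ is contained in the closed backward wedge of $X[1]$, a convex region containing all four corners, and $Z$ is strictly forward of $X$ — the pair $X,Z$ is incompatible, so $Z\notin\mathcal T$ because $\mathcal T$ is pairwise compatible. If $Z$ lies on side $TY$ or $BY$, the same estimate gives $X,Z$ incompatible, so again $Z\notin\mathcal T$. If $Z$ lies on side $XT$ or $XB$, one checks instead that $Z$ is incompatible with $Y$: the tilting rectangle from $Z$ to $Y$ has top corner $T$, respectively bottom corner $B$, and is genuine by the Definition~\ref{def:tilting rectangle} bounds; then, $Z\ne X$ being in $\mathcal T$ would put the two incompatible objects $Z$ and $Y$ into the compatible set $(\mathcal T\setminus\{X\})\cup\{Y\}$, a contradiction. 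Hence no non-corner point of $R$ lies in $\mathcal T$. The bulk of the work — and the only genuinely delicate part — is the two-case verification of $(\star)$, with several sub-cases keyed to which of $y=\pm\frac\pi2$, or which clipping constraint, is the binding one; a secondary nuisance is the boundary check in Step 1 that $T$ and $B$ do not land on $y=\pm\frac\pi2$, which is the one place where the full hypothesis (compatibility of $(\mathcal T\setminus\{X\})\cup\{Y\}$, not just incompatibility of $X$ and $Y$) is really used.
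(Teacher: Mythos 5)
Your overall strategy is the same as the paper's: deduce from maximality that $X$ is the unique element of $\mathcal T$ incompatible with $Y$, take the resulting tilting rectangle, and prove the top/bottom corners lie in $\mathcal T$ by showing that any object incompatible with a corner is already incompatible with $X$ or with $Y$. Your claim $(\star)$ is exactly the paper's region-elimination argument (regions $1,2,3$ to the left of $T_1$ and their mirror images on the right) stated in contrapositive form, and the two-case computation you sketch — comparing the clipping parameter $d$ with $\pi/2+y_X$ in one case, and the parameter $c$ with $a$ in the other — does close correctly; the handling of the ``moreover'' clause is likewise the paper's.

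There is, however, one step that fails: the claim in Step~1 that the hypotheses let you rule out $T$ or $B$ landing on $y=\pm\frac{\pi}{2}$. This is false. Take $\mathcal T=\mathcal Z$, let $X$ be a right vertex of $\mathcal Z$, and let $Y=X[1]$; then $(\mathcal T\setminus\{X\})\cup\{Y\}$ is a compatible set (this is precisely the $\TT$-mutation of Example~\ref{xmp:mutation}), yet the unique tilting rectangle with left corner $X$ and right corner $X[1]$ has its top and bottom corners on $y=\frac{\pi}{2}$ and $y=-\frac{\pi}{2}$, so $T=B=0$. No amount of ``pinching'' of $\ClosedHeart$ excludes this. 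The correct reading — and the one the paper's proof adopts explicitly (``suppose $T_1\notin\mathcal T$ and further $T_1\neq 0$'') — is that a corner equal to the zero object is permitted, and the membership claim is asserted only for nonzero corners. With that adjustment your Step~2 applies verbatim to whichever of $T,B$ is nonzero, and the rest of your argument stands; but as written, Step~1 asserts something you cannot prove, and Step~2 tacitly assumes $T$ is an indecomposable of $\ClosedHeart$, which it need not be.
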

		\begin{proof}
			Because $\mathcal T$ is a $\TT$-cluster and further $(\mathcal T\setminus\{X\})\cup\{Y\}$ is also a compatible set, it follows that $X$ and $Y$ cannot be compatible.
			Hence, there exists a tilting rectangle in $\ClosedHeart$ whose left and right corners are respectively $X$ and $Y$, or dually $Y$ and $X$.
			Since the cases are symmetric, without loss of generality, we only treat the former case.

			Let $T_1$ and $T_2$ respectively denote the top and bottom vertices of the rectangle. We aim to show $T_1$ and $T_2$ belong to $\mathcal{T}$.
			Since $X\not\cong Y$, we must have $T_1\not\cong T_2$.
			For the sake of contradiction, suppose $T_1\notin \mathcal T$ and further $T_1\neq 0$. (The case for $T_2$ is similar.)
			
			Since $T_1\notin \mathcal T$, there must exist $Z$ in $\mathcal T$ such that $T_1$ and $Z$ are incompatible.
			Since $X$ and $T_1$ are joined by a diagonal, we conclude $Z\not\cong X$.
			Similarly, $Z\not\cong Y$.
			In the following diagram, consider the regions labelled $1$, $2$, $3$. (If $T_2=0$, we only have regions $1$ and $3$.)
			\begin{displaymath}
				\begin{tikzpicture}
					\draw [dotted] (-2,2) -- (6,2);
					\draw [dotted] (-2,-2) -- (6,-2);
					
					\filldraw (3,-0.5) circle[radius=.4mm];
					\draw (3,-0.5) -- (0.5,2) -- (-1,0.5) -- (1.5,-2) -- (3,-0.5);
					\draw (3,-0.5) node[anchor=west] {$X$};
					
					\filldraw (5,0.5) circle[radius=.4mm];
					\draw (5,0.5) -- (3.5,2) -- (1,-0.5) -- (2.5,-2) -- (5,0.5);
					\draw (5,0.5) node[anchor=west] {$Y$};
					
					\filldraw (3.5,-1) circle[radius=.4mm];
					\draw (3.5,-1) -- (0.5,2) -- (-0.5,1) -- (2.5,-2) -- (3.5,-1);
					\draw (3.5,-1) node[anchor=west] {$T_2$};
					
					\filldraw (4.5,1) circle[radius=.4mm];
					\draw (1.5,-2) -- (4.5,1) -- (3.5,2) -- (0.5,-1) -- (1.5,-2);
					\draw (4.5,1) node[anchor=west] {$T_1$};
					
					\coordinate (A) at (3.25,0.75);
					\coordinate (B) at (2,-0.5);
					\coordinate (C) at (1.25,-1.25);
					\foreach \x/\xtext in {A/1, B/2, C/3}
					{
						\draw (\x) node {\xtext};
						\draw (\x) circle[radius=.2];
					}
					
				\end{tikzpicture}
			\end{displaymath}
			
		We show that $Z$ cannot be to the left of $T_1$. This is because if $Z$ is to the left of $T_1$, it must be in regions $1$, $2$, or $3$. But, $Y$ is compatible with all objects in $\mathcal T \setminus \{X\}$, and therefore $Z$ cannot be in regions $1$ or $2$.
		Further, $Z$ cannot be in regions $2$ or $3$, because $Z$ is compatible with $X$.
		
		Similarly, one can show that $Z$ cannot be to the right of $T_1$. This gives the desired contradiction and implies that $T_1$ belongs to $\mathcal T$.
		
		To prove the last assertion of the lemma, suppose $Z$ is on a side of the tilting rectangle but is not one of the corners. 
		Then one may check $Z$ is incompatible with either $X$ or $Y$.
		If $Z$ is in the interior of the tilting rectangle, it is compatible neither with $X$ nor with $Y$.
		Thus, $Z\notin\mathcal T$.
		\end{proof}
		
		Intuitively, if we may take out $X$ and replace it with $Y$, then there is a tilting rectangle whose top and bottom corners are in $\mathcal T$ and nothing ``between" the corners may be in $\mathcal T$.
		We use the preceding lemma and this intuition to prove the following proposition. In particular, the proposition implies that for a given $\TT$-cluster $\mathcal{T}$ and a given indecomposable object $X$ in it, if the mutation at $X$ is possible, then it is unique.
		
		\begin{proposition}\label{prop:mutation yields a unique cluster}
			Let $\mathcal T$ be a $\TT$-cluster, and let $X$ be in $\mathcal T$ such that there exists $Y\notin \mathcal T$ where $(\mathcal T\setminus\{X\})\cup\{Y\}$ is a compatible set.
			Then the following are true.
			\begin{enumerate}
				\item The set $\mathcal T':= (\mathcal T\setminus\{X\})\cup\{Y\}$ is a $\TT$-cluster.
				\item If there exists $Y'$ such that $(\mathcal T\setminus\{X\})\cup\{Y'\}$ is also a compatible set, then $Y= Y'$.
			\end{enumerate}
		\end{proposition}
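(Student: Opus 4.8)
The plan is to prove both parts at once, via a single claim about which indecomposables are compatible with $\mathcal T\setminus\{X\}$. First I observe that (2) immediately implies (1): if $\mathcal T':=(\mathcal T\setminus\{X\})\cup\{Y\}$ were not maximal there would be an indecomposable $Z\notin\mathcal T'$ with $\mathcal T'\cup\{Z\}$ compatible; then $Z\notin\mathcal T$ (else $Z=X$, which is incompatible with $Y\in\mathcal T'$ as in Remark~\ref{rmk:geometric compatibility}) and $Z$ is compatible with every element of $\mathcal T\setminus\{X\}$, so $(\mathcal T\setminus\{X\})\cup\{Z\}$ is compatible and (2) forces $Z=Y\in\mathcal T'$, a contradiction. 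I also record that any $Y\notin\mathcal T$ with $(\mathcal T\setminus\{X\})\cup\{Y\}$ compatible is automatically incompatible with $X$, since otherwise $\mathcal T\cup\{Y\}$ would be compatible, violating maximality of $\mathcal T$. So it suffices to prove: \emph{every indecomposable of $\ClosedHeart$ compatible with all of $\mathcal T\setminus\{X\}$ lies in $(\mathcal T\setminus\{X\})\cup\{X,Y\}$}; then a compatible $(\mathcal T\setminus\{X\})\cup\{Y'\}$ with $Y'\notin\mathcal T$ forces $Y'\in\{X\}\cup(\mathcal T\setminus\{X\})\cup\{Y\}$, whence $Y'=Y$.

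To prove the displayed claim, suppose $W$ is compatible with all of $\mathcal T\setminus\{X\}$ but $W\notin\mathcal T$ and $W\neq Y$. Maximality of $\mathcal T$ forces $W$ to be incompatible with some element of $\mathcal T$, necessarily $X$; thus $(\mathcal T\setminus\{X\})\cup\{W\}$ is compatible. I then apply Lemma~\ref{lem:minimal tilting rectangle} to the pairs $(X,Y)$ and $(X,W)$, obtaining tilting rectangles $R$ and $R'$ with left/right corners $\{X,Y\}$ and $\{X,W\}$, each having its other two corners in $\mathcal T\sqcup\{0\}$ and no non-corner indecomposable lying in $\mathcal T$; since a tilting rectangle is determined by its left and right corners, both are unique. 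After reflecting the strip if necessary, assume $X$ is the left corner of $R$, with $Y$ the right corner and $T_1,T_2$ the top and bottom corners. As $W$ is incompatible with $X$, it is either the right corner of $R'$ (so $R'$, like $R$, has $X$ as its left corner) or the left corner of $R'$ (so $X$ is the right corner of $R'$).

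In the first case I rotate coordinates by $45^\circ$ so that tilting rectangles become axis-parallel boxes; by Definition~\ref{def:tilting rectangle} and Remark~\ref{rmk:geometric compatibility}, incompatibility of two indecomposables becomes an elementary inequality on these coordinates. Compatibility of $W$ with $T_1$ and with $T_2$ then confines $W$ to the box $R$; if $W\neq Y$, one of the two remaining corners of $R'$ is forced to be a non-corner point on a side of $R$, and a short boundary check (using the upper bounds on the side lengths in Definition~\ref{def:tilting rectangle}) shows it is not the zero object, contradicting the last clause of Lemma~\ref{lem:minimal tilting rectangle}. Hence $W=Y$, a contradiction.

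The opposite-side case is the crux. Here $X$ lies strictly between $W$ and $Y$ in the strip and $R,R'$ are essentially disjoint, so neither rectangle's minimality clause directly restricts the other. The plan is to rule this configuration out: one shows that $W$ and $Y$ must be incompatible (were they compatible, $(\mathcal T\setminus\{X\})\cup\{W,Y\}$ would be compatible, contradicting maximality of $\mathcal T'$ --- which is the point at which it is cleanest to run the extremality argument for (1) and (2) simultaneously rather than deriving (2) first), then that the unique tilting rectangle with left/right corners $\{W,Y\}$ contains $X$ in its interior and contains $T_1$ or $T_2$ on one of its sides as a non-corner point, contradicting $T_1,T_2\in\mathcal T$ once the minimality clause of Lemma~\ref{lem:minimal tilting rectangle} is transported to the $\TT$-cluster $\mathcal T'$. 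I expect the bookkeeping of this last step --- in particular arranging the logical dependencies so that the minimality clause is available for $\mathcal T'$ without circularity --- to be the main obstacle; an alternative, closer to the paper's geometry, is to first establish directly that deleting $X$ from $\mathcal T$ leaves a single ``quadrilateral hole'' realized by a tilting rectangle whose diagonal corners are precisely $X$ and $Y$, which makes the displayed claim immediate.
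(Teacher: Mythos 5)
Your reduction of (1) to (2), and your ``same side'' case, are sound and essentially parallel the paper's treatment of the right-hand portion of its region analysis: there too one confines the candidate to the rectangle $XT_1YT_2$ using compatibility with $T_1$ and $T_2$ and then contradicts the minimality clause of Lemma~\ref{lem:minimal tilting rectangle} via a corner of the smaller rectangle. But the opposite-side case, which you yourself identify as the crux, is not proved, and the strategy you sketch for it cannot be repaired as stated. First, it is circular exactly as you fear: you deduce (1) from (2), and then your Case~2 argument for the claim underlying (2) invokes maximality of $\mathcal T'$, which is (1). ``Running the two extremality arguments simultaneously'' does not break this loop, because the only way you propose to show $W$ and $Y$ are incompatible is by appealing to the maximality of $\mathcal T'$.

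Second, and more fundamentally, the intermediate statement you aim for --- that such a $W$ must be incompatible with $Y$ --- is false as a statement about the geometry of the strip. In the $45^\circ$-rotated coordinates, the set of points incompatible with $X$ from the left but compatible with each of $T_1$, $T_2$, and $Y$ is nonempty: it is precisely the region of points lying componentwise below $Y[-1]$ (the paper's region $1$), where the would-be tilting rectangle joining $W$ to $Y$ fails the side-length bounds of Definition~\ref{def:tilting rectangle} and hence does not exist, so $W$ and $Y$ are genuinely compatible there. No amount of compatibility bookkeeping with members of $\mathcal T$ excludes these points; the paper excludes them by observing that they lie to the left of $Y[-1]$ and hence outside $\ClosedHeart$ altogether. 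This ambient-category constraint is the missing ingredient in your proposal, and without it (or a worked-out version of your alternative ``quadrilateral hole'' idea, which is only alluded to) the opposite-side case remains open.
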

		\begin{proof}
			We start with the first statement.
			If the statement fails, there is an object $Z$ in $\ClosedHeart$ such that $Z$ is compatible with $\mathcal T'$ but not with $\mathcal T$.
			This implies that $Z$ is incompatible with $X$ but it is compatible with $Y$.
			By Lemma \ref{lem:minimal tilting rectangle}, there exists a tilting rectangle $X T_1 Y T_2$ (or $Y T_1 X T_2$) where $T_1$ and $T_2$ are in $\mathcal T$.
			By the same lemma, if $M$ is a point in the tilting rectangle which is not on a corner, then $M$ is incompatible either with $X$ or with $Y$.
			
			Without loss of generality (by symmetry), assume the tilting rectangle is $X T_1 Y T_2$.
			Consider the regions labelled $1$ through $7$ in the following picture.
			\begin{displaymath}
				\begin{tikzpicture}
					\draw [dotted] (-2,2) -- (10,2);
					\draw [dotted] (-2,-2) -- (10,-2);
					
					\filldraw (3,-0.5) circle[radius=.4mm];
					\draw (3,-0.5) -- (0.5,2) -- (-1,0.5) -- (1.5,-2) -- (3,-0.5) -- (5.5,2) -- (7,0.5) -- (4.5,-2) -- (3,-0.5);
					\draw (3,-0.5) node[anchor=west] {$X$};
					
					\filldraw (5,0.5) circle[radius=.4mm];
					\draw (5,0.5) -- (3.5,2) -- (1,-0.5) -- (2.5,-2) -- (5,0.5) -- (6.5,2) -- (9,-0.5) -- (7.5,-2) -- (5,0.5);
					\draw (5,0.5) node[anchor=east] {$Y$};
					
					\filldraw (3.5,-1) circle[radius=.4mm];
					\draw (3.5,-1) -- (0.5, 2) -- (-0.5,1) -- (2.5,-2) -- (3.5,-1) -- (6.5,2) -- (7.5,1) -- (4.5,-2) -- (3.5,-1);
					\draw (3.5,-1) node[anchor=north] {$T_2$};
					
					\filldraw (4.5,1) circle[radius=.4mm];
					\draw (4.5,1) -- (1.5,-2) -- (0.5,-1) -- (3.5,2) -- (4.5,1) -- (7.5,-2) -- (8.5,-1) -- (5.5,2) -- (4.5,1);
					\draw (4.5,1) node[anchor=south] {$T_1$};
					
					\coordinate (A) at (1.25,-1.25);
					\coordinate (O) at (0,0);
					\coordinate (B) at (.75,0.75);
					\coordinate (C) at (2,-0.5);
					\coordinate (F) at (4.75,-0.75);
					\coordinate (E) at (6,0.5);
					\coordinate (D) at (5.25,1.25);
					\foreach \x/\xtext in {A/3, O/1, B/2, C/4, F/6, E/7, D/5}
					{
						\draw (\x) node {\xtext};
						\draw (\x) circle[radius=.2];
					}
				\end{tikzpicture}
			\end{displaymath}
			Note that $Z$ must be in one of the labelled regions, or else $Z$ is compatible with $X$ or is incompatible with $Y$.
			First, observe that $Z$ cannot be in region $1$, because then $Z$ would be to the left of $Y[-1]$ and thus not in $\ClosedHeart$.
			Next, $Z$ cannot be in regions $2$ or $3$, because $Z$ would be incompatible with $T_2$ or $T_1$, respectively.
			Further, $Z$ cannot be in region $4$, because otherwise $Z$ would be incompatible with both $T_1$ and $T_2$.
			
			Now on the right, $Z$ cannot be in regions 5 or 6, since then $Z$ would be incompatible with $T_1$ or $T_2$, respectively.
			Finally, $Z$ cannot be in region $7$, since $Z$ would not be compatible with both $T_1$ and $T_2$.
			Thus, if $Z$ is not in $\mathcal T$ and $Z\neq Y$, then $Z$ is not in $\mathcal T'$.
			
			For (2), consider a $Y'$ such that $(\mathcal T\setminus\{X\})\cup\{Y'\}$ is a compatible set. 
			We see that $Y'$ cannot be in any of the labelled regions but must be incompatible with $X$.
			By Lemma \ref{lem:minimal tilting rectangle} again, $Y'$ cannot be on the interior or sides (without corners) of the tilting rectangle $XT_1YT_2$.
			Thus $Y'=Y$.
			\end{proof}
			
		\begin{definition}\label{def:algebra mutation}
			Let $\mathcal T$ be a $\TT$-cluster. Suppose $X\in\mathcal T$ and $Y\notin\mathcal T$ such that $\mathcal{T}':=(\mathcal T\setminus\{X\})\cup\{Y\}$ is also a $\TT$-cluster.
			Define $\mu:\mathcal T\to\mathcal T'$ by
			\begin{displaymath}
				\mu(T)= \begin{cases}
 					Y & T=X \\
 					T & \text{otherwise},
 				\end{cases}
			\end{displaymath}
			and call it the $\TT$\textdef{-mutation} of $\mathcal{T}$ at $X$.
		\end{definition}
		
		\begin{remark}\label{rmk:cluster theory}
			Proposition \ref{prop:mutation yields a unique cluster} asserts that the $\TT$-clusters and $\TT$-mutations yield a cluster theory in the sense of \cite[Definition 5.1.1]{IRT22}.
			The primary difference between a cluster structure as in \cite{BIRS09} and a cluster theory in \cite{IRT22} is that, for a cluster theory, we do not require that every object be mutable.
			The uniqueness of the mutation, if it exists, is required by both cluster structures and cluster theories.
		\end{remark}
		
		\begin{example}\label{xmp:mutation}
			Suppose $\mathcal Z$ has at least $3$ line segments, and let $\mathcal T=\mathcal Z$.
			Also, let $\ell_i$ be a line segment of $\mathcal{Z}$ and $X$ be an interior point of $\ell_i$. 
			Then any nondegenerate rectangle $\Diamond$ in $\mathcal C_\mathcal{Z}$ whose left corner is $X$ has a side that intersects $\mathcal T=\mathcal Z$ at infinitely many points.
			By Lemma \ref{lem:minimal tilting rectangle},
			there is no $\TT$-mutation $\mathcal T\to \mathcal{T}'$ at $X$ with $\mathcal T'\neq \mathcal T$.
			\begin{displaymath}
				\begin{tikzpicture}
					\draw[dotted] (-1,2) -- (6,2);
					\draw[dotted] (-1,-2) -- (6,-2);
					\draw (0,-2) -- (1,-1) -- (-1,1) -- (0,2);
					\draw[dashed] (4,-2) -- (3,-1) -- (5,1) -- (4,2);
					\filldraw (1,-1) circle[radius=.4mm];
					\filldraw (0,0) circle[radius=.4mm];
					\filldraw[fill=white] (5,1) circle[radius=.5mm];
					\filldraw[fill=white] (2,-2) circle[radius=.5mm];
					\filldraw[fill=white] (4,2) circle[radius=.5mm];
					\draw (0,0) node[anchor=east] {$X$};
					\draw (1,-1) node[anchor=east] {$Y$};
					\draw (5,1) node[anchor=west] {$Y[1]$};
					\filldraw[fill opacity=.15, draw opacity=0] (1,-1) -- (2,-2) -- (5,1) -- (4,2) -- (1,-1);
				\end{tikzpicture}
			\end{displaymath}
			In contrast, choose $Y$ to be the intersection of $\ell_i$ and $\ell_{i+1}$, where $\ell_i$ has slope $-1$.
			(By our assumption on $\mathcal Z$, there exists such a $Y$.)
			Then, we can find a $\TT$-mutation of $\mathcal T$ at $Y$, given by $\mathcal{T}':=(\mathcal T\setminus\{Y\})\cup\{Y[1]\}$.
			For a given zigzag $\mathcal{Z}$, all the possible ways to $\TT$-mutate $\mathcal Z$ are discussed in Section \ref{sec:mutation on associahedron}.
		\end{example}
		
	\section{The associahedron}\label{sec:associahedron}
		Throughout this section, $\mathcal{Z}$ denotes a zigzag in $\mathcal{D}$ (Definition \ref{def:zigzag}), and by $\ClosedHeart$ we denote the associated full subcategory in $\mathcal D$ introduced in Notation \ref{note:closed heart}.
		Moreover, we fix a permissible function $\intc:\Ind(\mathcal D)\sqcup\{0\}\to \R$ (Definition \ref{def:permissible function}) such that $\intc(X)>0$ for all indecomposable objects $X$ in $\ClosedHeart$.  Some of the fundamental steps of our construction of the continuous associahedron in this section are inspired by an analogous approach developed in \cite{B-MDMTY24}, where the problem is treated for simply laced finite quivers in the discrete setting. Hence, to observe the similarities and differences, we encourage the reader to consult Section \ref{sec:Amplituhedron for Dynkin Quivers} and the references therein, where some key ingredients of \cite{B-MDMTY24} and some work preceding that reference are summarized.
		\subsection{Solutions}\label{sec:solutions}
	    As recalled in Section \ref{sec:Amplituhedron for Dynkin Quivers}, in the finite setting there is a correspondence between the clusters of the associated cluster algebra and the solutions of certain systems 
	    of equations arising from the mesh relations, as discussed in  \cite{B-MDMTY24}. 
		To employ this idea in the continuous setting, we first generalize the notion of a solution with respect to a system of linear equations induced by the continuous deformed mesh relations.
		Then, we connect $\TT$-clusters to such solutions and show that for a solution $\Phi$ and each indecomposable $X$ in $\ClosedHeart$, the possible values of $\Phi(X)$ are bounded.
		
		\begin{definition}\label{def:solution}
			With the notations as above, a function $\Phi:\Ind(\mathcal D)\sqcup\{0\}\to \R$ is called a \textdef{solution with respect to $\intc$} in $\ClosedHeart$ if it satisfies the following conditions.
			\begin{itemize}
				\item $\Phi$ satisfies the continuous deformed mesh relations over $\intc$ (Definition \ref{def:contintuous deformed mesh relations}),
				and
				\item $\Phi(X)\geq 0$ for each indecomposable $X$ in $\ClosedHeart$.
			\end{itemize}
		\end{definition}
		Since our ultimate goal is to relate these solutions to $\TT$-clusters,
		we are only interested in $\Phi|_{\Ind(\ClosedHeart)}$.
		However, by Proposition \ref{prop:all the values}, one can extend any such restricted function to all of $\Ind(\mathcal D)\sqcup\{0\}$.
		We note that this extension is
		unique, thus two solutions $\Phi_1$ and $\Phi_2$ are the same if and only if they are the same on $\ClosedHeart$.
		
		\begin{proposition}\label{prop:cluster solutions}
		Let $\Phi:\Ind(\mathcal{D})\sqcup\{0\}\to \R$ be a solution with respect to $\intc$ in $\ClosedHeart$.
		If there exists a $\TT$-cluster $\mathcal T$ such that $\Phi(X)=0$ for all $X\in\mathcal T$, then $\Phi(X)>0$ for all $X\notin \mathcal T$.
		\end{proposition}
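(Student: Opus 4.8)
The plan is to argue by contradiction, using the maximality of $\mathcal T$ together with the strict positivity of $\intc$ on $\Ind(\ClosedHeart)$. Since $\Phi$ is a solution with respect to $\intc$, we already know $\Phi(W)\ge 0$ for every indecomposable $W$ in $\ClosedHeart$, so it suffices to rule out $\Phi(W)=0$ for some indecomposable $W\in\Ind(\ClosedHeart)\setminus\mathcal T$. Suppose such a $W$ exists. Because $\mathcal T$ is a $\TT$-cluster, its maximality forces the existence of $X\in\mathcal T$ with $X$ and $W$ incompatible; by Remark~\ref{rmk:geometric compatibility}, this incompatibility is witnessed by a tilting rectangle $\Diamond$ in $\R\times[-\frac\pi2,\frac\pi2]$ whose left and right corners are $X$ and $W$ (in one of the two orders), with some top corner $T_1$ and bottom corner $T_2$, each of which is either an indecomposable object of $\mathcal D$ or the zero object.

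The first key step is a geometric closure property: both $T_1$ and $T_2$ lie in $\Ind(\ClosedHeart)\sqcup\{0\}$, and the (filled) parallelogram $\Diamond$ lies in the closure of the band between $\mathcal Z$ and $\mathcal Z[1]$. Recall that $\Ind(\ClosedHeart)$ consists precisely of the points of $\R\times(-\frac\pi2,\frac\pi2)$ lying weakly between the two zigzags $\mathcal Z$ and $\mathcal Z[1]$, each of which is the graph of a $1$-Lipschitz piecewise-linear function of $y$ (slopes $\pm1$), while the four sides of $\Diamond$ also have slopes $\pm1$. Starting from $X,W\in\Ind(\ClosedHeart)$, a direct slope comparison shows that the two sides of $\Diamond$ emanating from $X$ stay weakly to the right of $\mathcal Z$ and the two sides emanating from $W$ stay weakly to the left of $\mathcal Z[1]$; in particular the interior of $\Diamond$ meets neither zigzag. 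Hence $T_1,T_2\in\Ind(\ClosedHeart)\sqcup\{0\}$ so that $\Phi(T_1),\Phi(T_2)\ge 0$, and, since $\intc$ is permissible and strictly positive on every indecomposable of $\ClosedHeart$ — in particular on the interior points of the (necessarily positive-area) parallelogram $\Diamond$ — the surface integral $\int_{\Diamond}\intc$ is strictly positive.

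With these sign facts in hand I would apply the continuous deformed mesh relation (Definition~\ref{def:contintuous deformed mesh relations}) to the tilting rectangle $\Diamond$, which reads $\Phi(X)+\Phi(W)=\Phi(T_1)+\Phi(T_2)+\int_{\Diamond}\intc$. The left-hand side equals $0$, since $\Phi(X)=0$ ($X\in\mathcal T$) and $\Phi(W)=0$ (our standing assumption), whereas the right-hand side is at least $\int_{\Diamond}\intc>0$. This contradiction shows that no such $W$ exists, i.e.\ $\Phi(W)\ne 0$, and therefore $\Phi(W)>0$, for every indecomposable $W\notin\mathcal T$, which is the claim.

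I expect the only real obstacle to be the geometric step of the second paragraph — checking carefully that the top and bottom corners of the tilting rectangle witnessing incompatibility again belong to $\ClosedHeart$ (or are zero) and that the parallelogram does not protrude past $\mathcal Z$ or $\mathcal Z[1]$; once this is established, the conclusion is a one-line consequence of the mesh relation and the sign conditions. This geometric fact could alternatively be phrased homologically as closure of the relevant class of objects under extensions, but the slope comparison is the most self-contained route given the machinery already developed.
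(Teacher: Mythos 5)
Your proof is correct and takes essentially the same route as the paper's: maximality of $\mathcal T$ produces an $X\in\mathcal T$ incompatible with $W$, hence a tilting rectangle $\Diamond$ contained in $\ClosedHeart$, and the continuous deformed mesh relation together with $\Phi(T_1),\Phi(T_2)\ge 0$ and $\int_{\Diamond}\intc>0$ forces $\Phi(W)>0$. The only differences are cosmetic: you phrase it as a contradiction rather than reading off $\Phi(W)=\Phi(X)+\Phi(W)>0$ directly, and you spell out the Lipschitz/slope argument showing the rectangle's top and bottom corners lie in $\Ind(\ClosedHeart)\sqcup\{0\}$, which the paper asserts without proof.
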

		\begin{proof}
			Let $W$ be an indecomposable in $\ClosedHeart$ such that $W$ is not in $\mathcal T$.
			Using the same notation as in Definition \ref{def:tilting rectangle}, there exists $X\in \mathcal T$ and a tilting rectangle $\Diamond=XYWZ$ or $\Diamond=WYXZ$ contained in $\ClosedHeart$.
			In particular, since $Y$ and $Z$ are respectively the top and bottom vertices in either rectangle,
			and because $\Phi$ satisfies the continuous deformed mesh relations over $\intc$, we have
			\begin{displaymath}
				\Phi(X) + \Phi(W) = \Phi(Y) + \Phi(Z) + \int_{\Diamond} \intc.
			\end{displaymath}
			The right side of the equation is positive since $\intc$ takes positive values and $\Diamond$ is nondegenerate.
		\end{proof}
		
		Recall that any zigzag contained in $\ClosedHeart$ is a $\TT$-cluster (Example \ref{xmp:universal examples}).
		By Proposition \ref{prop:all the values}, this means that all such clusters have a unique solution with respect to $\intc$ in $\ClosedHeart$.
		The study of arbitrary $\TT$-clusters is more complex.
		In particular, there are totally disconnected $\TT$-clusters, such as the example in Section \ref{sec:totally disconnected}.
		In $\mathcal T_1$ (Figure \ref{fig:T1}) from the same section, one may instead fill the smaller triangles on the bottom with a scaled down version of any cluster that does not intersect the top point of the triangle or the left side.
		Furthermore, if one begins with a different zigzag, perhaps with many line segments, there are even more possible constructions.

		\begin{question}\label{quest:open problem}
		    For each $\TT$-cluster $\mathcal T$, does there exist a solution $\Phi$ with respect to $\intc$ in $\ClosedHeart$ such that $\Phi(X)=0$ for all $X \in\mathcal T$?
		    For which $\TT$-clusters is such a solution unique (when it exists)?
		\end{question}
		
		Let $\gZ$-vectors (Definition \ref{def:g-vector}) and the rays $\mathfrak u$ and $\mathfrak d$ be as defined in Section \ref{sec:g-vectors}.
		Let $X$ be an indecomposable object in $\ClosedHeart$ which is neither in $\mathcal Z$  nor in $\mathcal Z[1]$. Suppose $GX$ is the region in $\ClosedHeart$ bounded by $\mathcal Z$, $\mathfrak u$, and $\mathfrak d$.
		The reader may refer back to Figure \ref{fig:g-vector} for a depiction of such a $GX$.
		
		\begin{theorem}\label{thm:bounded}
			Let $X$ be an indecomposable object in $\ClosedHeart$.
			There exists a bound $\xi_X\in \R_{>0}$ such that $\Phi(X) \leq \xi_X$, for all solutions $\Phi$ with respect to $\intc$ in $\ClosedHeart$.
		\end{theorem}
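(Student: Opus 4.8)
The plan is to bound $\Phi$ on the zigzag $\mathcal Z$ and on $\mathcal Z[1]$ directly from a single distinguished triangle attached to each such object, and then to transport these bounds to an arbitrary indecomposable of $\ClosedHeart$ via the explicit quilting formula of Proposition~\ref{prop:g-vectors}, simply discarding the summands that come with an unfavourable sign.

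First I would treat $X\in\mathcal Z$. If $X=(x,y)\in\mathcal Z$, then $\mathcal D$ contains the distinguished triangle $X\to 0\to X[1]\to X[1]$, and by Proposition~\ref{prop:rectangles are triangles} together with the computation of the top and bottom points in Section~\ref{sec:the category D}, this triangle corresponds to the tilting rectangle $\Diamond_X$ with left corner $X$, right corner $X[1]=(x+\pi,-y)$, and top and bottom corners on the lines $y=\frac{\pi}{2}$ and $y=-\frac{\pi}{2}$, which are therefore the zero object. Since $\Phi$ satisfies the continuous deformed mesh relations over $\intc$ and $\Phi(0)=0$, this gives $\Phi(X)+\Phi(X[1])=\int_{\Diamond_X}\intc$. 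As $X$ and $X[1]$ both lie in $\ClosedHeart$, both values are nonnegative, so $\Phi(X)\le\int_{\Diamond_X}\intc$ and $\Phi(X[1])\le\int_{\Diamond_X}\intc$, and $\int_{\Diamond_X}\intc$ is a fixed positive real number: finite by permissibility of $\intc$, positive because $\intc>0$ on $\Ind(\ClosedHeart)$ and $\Diamond_X$ is nondegenerate. This proves the theorem for every $X\in\mathcal Z\cup\mathcal Z[1]$, with $\xi_X=\int_{\Diamond_X}\intc$ when $X\in\mathcal Z$ and $\xi_X=\int_{\Diamond_{X[-1]}}\intc$ when $X\in\mathcal Z[1]$.

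Next I would handle an indecomposable $X$ of $\ClosedHeart$ not in $\mathcal Z\cup\mathcal Z[1]$, equivalently $X\in\mathcal D^\heartsuit\setminus\mathcal Z$. Let $Z_{\mathfrak u},Z_{\mathfrak d}$ and the finitely many vertices $Z_1,\dots,Z_j$ of $\mathcal Z$ be as in Section~\ref{sec:g-vectors} (with $Z_0=Z_{\mathfrak u}$, $Z_{j+1}=Z_{\mathfrak d}$, $Z_i=0$ otherwise), and let $GX$ be the region of $\ClosedHeart$ bounded by $\mathcal Z$, $\mathfrak u$, and $\mathfrak d$. Running the inductive argument of Proposition~\ref{prop:g-vectors}—which, as noted in the proof of Proposition~\ref{def:dimension vectors are intervals}, does not depend on the values chosen along $\mathcal Z$—with $\Phi$ in place of $\gZ$, each application of the deformed mesh relation now also contributes an integral of $\intc$ over a tilting rectangle, and by the diamond-cancellation identities recalled in Section~\ref{sec:Amplituhedron for Dynkin Quivers} these finitely many integrals combine into a single finite $\Z$-combination of integrals of $\intc$ over tilting rectangles, which equals $\int_{GX}\intc$; thus
\begin{displaymath}
\Phi(X)=\sum_{i}(-1)^{i+1}\Phi(Z_i)+\int_{GX}\intc
\end{displaymath}
when $Z_0$ lies to the left of $Z_1$, and the same formula with odd and even interchanged otherwise. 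Since $Z_i\in\mathcal Z\subset\ClosedHeart$ or $Z_i=0$, each $\Phi(Z_i)\ge 0$, so discarding the summands with coefficient $-1$ and bounding each remaining $\Phi(Z_i)$ by $\xi_{Z_i}$ from the previous step (with $\xi_0:=0$) yields
\begin{displaymath}
\Phi(X)\le\sum_{i\ \mathrm{odd}}\xi_{Z_i}+\int_{GX}\intc=:\xi_X.
\end{displaymath}
The data $Z_{\mathfrak u},Z_{\mathfrak d},Z_i$ and $GX$ are determined by $X$ and the fixed zigzag $\mathcal Z$, so $\xi_X$ depends only on $X$; and $\xi_X>0$ because $X\notin\mathcal Z$ forces $GX$ to be nondegenerate while $\intc>0$ on $\Ind(\ClosedHeart)$, hence $\int_{GX}\intc>0$.

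The main obstacle is making the transfer formula precise: one must check that iterating the deformed mesh relation along the staircase of tilting rectangles joining $X$ to $\mathcal Z$ (exactly as in Proposition~\ref{prop:g-vectors}, including the case analysis for rays that bounce off $y=\pm\frac{\pi}{2}$ and for $Z_i=0$) accumulates precisely $\int_{GX}\intc$ on the right-hand side. This is a bookkeeping argument identical in structure to that proposition, combined with additivity of $\int(\cdot)\,\intc$ over diamond-tiled rectangles; the only additional observation required is that finitely many tilting rectangles occur, so permissibility of $\intc$ makes all the integrals involved—and in particular $\int_{GX}\intc$—finite real numbers. Everything else is the straightforward sign manipulation above.
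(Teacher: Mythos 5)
Your proposal is correct and follows essentially the same route as the paper's proof: bound $\Phi$ on $\mathcal Z\cup\mathcal Z[1]$ via the degenerate tilting rectangle with corners $X$ and $X[1]$, then transfer to a general indecomposable by rerunning the Proposition~\ref{prop:g-vectors} induction with the deformed mesh relations, which produces the formula $\Phi(X)=\sum_i(\pm)\Phi(Z_i)+\int_{GX}\intc$, and finally discard the negatively-signed terms and bound the rest. If anything, you are more explicit than the paper about why the accumulated correction terms sum to $\int_{GX}\intc$, a point the paper leaves as ``a similar argument to that in the proof of Proposition~\ref{prop:g-vectors}.''
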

		\begin{proof}
			Let $\Phi$ be a solution with respect to $\intc$ in $\ClosedHeart$.
			We first show that the possible values of the objects in $\mathcal Z$ and $\mathcal Z[1]$ are bounded.
			
			Let $X\in\mathcal Z$.
			Let $\Diamond$ be the tilting rectangle in $\ClosedHeart$ whose left and right corners are $X$ and $X[1]$, respectively.
			Note that $\Phi(X[1]) \geq 0$, and therefore the maximum possible value of $\Phi(X)$ is $\int_{\Diamond}\intc$.
			Similarly, $\Phi(X[1]) \leq \int_{\Diamond}\intc$.
			
			Now let $X$ be an indecomposable not in $\mathcal Z$ and not in $\mathcal Z[1]$.
			Let $Z_0,\ldots, Z_n$ be the objects in $\mathcal Z$ corresponding to the nonnegative coordinates of $g_{\mathcal Z}(X)$.
			Using a similar argument to that in the proof of Proposition \ref{prop:g-vectors}, we see that 
			\begin{displaymath}
				\Phi(X) = \begin{cases}
 					\sum\limits_{\text{odd } i} \Phi(Z_i) - \sum\limits_{\text{even } i} \Phi(Z_i) + GX & Z_0 \text{ is left of } Z_1 \\
 					\sum\limits_{\text{even } i} \Phi(Z_i) - \sum\limits_{\text{odd } i} \Phi(Z_i) + GX & Z_0 \text{ is right of } Z_1.
 				\end{cases}
			\end{displaymath}
			Suppose $Z_0$ is left of $Z_1$.
			The other case is similar.
			Let $\Diamond_i$ be the rectangle in $\ClosedHeart$ whose left and right corners are $Z_i$ and $Z_i[1]$, respectively.
			By using the minimum values of the even $\Phi(Z_i)$'s and the maximum values of the odd $\Phi(Z_i)$'s, it follows that
			\begin{displaymath}
				\Phi (X) \leq GX + \sum\limits_{\text{odd } i} \left(\int_{\Diamond_i} \intc \right).
			\end{displaymath}
            Now, choose $\xi_X$ to be the right side of the displayed inequality.
			This completes the proof.
		\end{proof}

		\subsection{Associahedron}\label{sec:associahedron (subsection)}
		Now we are equipped with the required tools to describe the titular object of the paper: a \textdef{continuous associahedron}.
		First, let us fix the ambient space where this associahedron will be realized. 
		By $\prod_{\Ind(\ClosedHeart)}\R$, we  denote the real vector space whose coordinates are indexed by indecomposable objects in $\ClosedHeart$.
		
		In this subsection, we introduce the continuous associahedron and show that it is a convex object in the ambient space $\prod_{\Ind(\ClosedHeart)}\R$. Further, we prove that the solutions corresponding to $\TT$-clusters are extremal points (that is, on the boundary). 
		To each function $\Phi:\Ind(\mathcal{D})\sqcup\{0\}\to \R$, one can naturally associate a vector in $\prod_{\Ind(\ClosedHeart)} \R$. In particular, for a fixed $\intc$, we are interested in those vectors corresponding to the solutions with respect to $\intc$ in $\ClosedHeart$. 
	
		\begin{definition}\label{def:continuous associahedron}
		For each fixed $\intc$, the \textdef{continuous associahedron} $\UZc$ is the subset of $\prod_{\Ind(\ClosedHeart)}\R$ consisting of those vectors corresponding to the solutions with respect to $\intc$ in $\ClosedHeart$. 
		\end{definition}
		
	\begin{remark}\label{rmk:projection to g-vectors}
	By Proposition~\ref{prop:all the values}, each solution $\Phi$ with respect to $\intc$ in $\ClosedHeart$ is  determined by the values $\Phi$ takes on any zigzag in $\ClosedHeart$.
	In particular, if we know the values of $\Phi$ on $\mathcal Z[1]$ then we may recover the values of $\Phi$ on the rest of $\ClosedHeart$.
	So, one may consider the projection of $\UZc$ onto $\prod_{\Ind(\mathcal Z[1])} \R$ without losing any information.

    As recalled in Section~\ref{sec:Amplituhedron for Dynkin Quivers}, in \cite{B-MDMTY24} the authors give another characterization of $\mathbb A_{\intc}$ in terms of the $g$-vectors of indecomposable modules associated to elements of $\mathcal I$. (See Theorem \ref{th-one} and the paragraphs preceding it.) 
	That is to say, $\mathbb A_{\intc}$ is obtained as the projection of $\mathbb U_{\intc}$ via a map determined by the $g$-vectors of those indecomposable objects associated to elements of $\mathcal I^{[1]}$.
	Consequently, in the setting of Dynkin quivers, they also recover $\mathbb A_{\intc}$ as a polytopal realization of the $g$-vector fans. This approach has been further studied and developed in \cite{PP+23}. In particular, starting from an arbitrary initial seed in any finite type cluster algebra, in \cite[Section 3.4]{PP+23} the authors showed that the mesh mutations are the minimal relations among the $g$-vectors with respect to the chosen seed. Consequently, they elegantly generalized the earlier results of \cite{ABHY18} and \cite{B-MDMTY24} which were shown for acyclic initial seeds. With this observation in mind, we remark that our construction in this work is the continuous analogue of the one given in \cite{B-MDMTY24}, and it is not in the full generality of \cite{PP+23}. 
    Our treatment of zigzags, introduced in Section~\ref{subsec:Quilting}, may be viewed as the continuous analogue of acyclic seeds. It is an interesting problem to study similar problems for continuous cyclic initial seeds.  
    Inspired by such results, it is natural to hope for an analogous realization of the associahedron in the continuous setting. Namely, to study the projection of $\UZc$ onto $\prod_{\Ind(\mathcal Z[1])}\R$
	determined by the $g_{\mathcal Z}$-vectors corresponding to each $Z$ in $\Ind(\mathcal Z[1])$. This direction of work requires further investigation.
	\end{remark}
		
		We recall that a subset $\mathbb X$ of a vector space $\mathbb V$ is said to be \textdef{convex} if for each pair of points $A,B\in\mathbb X$ and every $t\in[0,1]$, the linear combination $t A + (1-t)B$ is in $\mathbb X$.
		
		\begin{theorem}\label{thm:convex}
			The set $\UZc$ is convex in $\prod_{\Ind(\ClosedHeart)}\R$. 
		\end{theorem}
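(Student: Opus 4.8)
The plan is to observe that, by Definition~\ref{def:continuous associahedron}, the set $\UZc$ is cut out of the vector space $\prod_{\Ind(\ClosedHeart)}\R$ by two kinds of conditions: the continuous deformed mesh relations over $\intc$ (one affine-linear equation per tilting rectangle in $\R\times[-\frac{\pi}{2},\frac{\pi}{2}]$, in which the term $\int_{\Diamond}\intc$ is a constant depending only on the rectangle and on $\intc$), and the inequalities $\Phi(X)\geq 0$ for $X\in\Ind(\ClosedHeart)$. An intersection of affine subspaces and closed half-spaces is convex, so this is really all there is to it; the only thing to spell out is the passage between a solution $\Phi$ (a function on $\Ind(\mathcal D)\sqcup\{0\}$) and the point of $\prod_{\Ind(\ClosedHeart)}\R$ it represents.

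Concretely, I would argue as follows. Let $\Phi_1$ and $\Phi_2$ be solutions with respect to $\intc$ in $\ClosedHeart$ and let $P_1,P_2\in\prod_{\Ind(\ClosedHeart)}\R$ be the corresponding points, so $P_i=\Phi_i|_{\Ind(\ClosedHeart)}$; fix $t\in[0,1]$. Set $\Psi:=t\Phi_1+(1-t)\Phi_2$, again regarded as a function on $\Ind(\mathcal D)\sqcup\{0\}$; its restriction to $\Ind(\ClosedHeart)$ is $tP_1+(1-t)P_2$, so it is enough to check that $\Psi$ is again a solution. Clearly $\Psi(0)=t\cdot 0+(1-t)\cdot 0=0$. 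For every tilting rectangle $\Diamond=XYWZ$, using the continuous deformed mesh relations for $\Phi_1$ and $\Phi_2$ and the identity $t+(1-t)=1$, one computes
\begin{displaymath}
\Psi(X)+\Psi(W)=t\Phi_1(Y)+t\Phi_1(Z)+(1-t)\Phi_2(Y)+(1-t)\Phi_2(Z)+\int_{\Diamond}\intc=\Psi(Y)+\Psi(Z)+\int_{\Diamond}\intc ,
\end{displaymath}
so $\Psi$ satisfies the continuous deformed mesh relations over $\intc$. Moreover, for each indecomposable $X$ in $\ClosedHeart$ we have $\Psi(X)=t\Phi_1(X)+(1-t)\Phi_2(X)\geq 0$, since $\Phi_1(X),\Phi_2(X)\geq 0$ and $t,1-t\geq 0$. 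Hence $\Psi$ is a solution with respect to $\intc$ in $\ClosedHeart$, its associated point is $tP_1+(1-t)P_2$, and therefore $tP_1+(1-t)P_2\in\UZc$, establishing convexity.

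I do not expect a genuine obstacle: the statement is an instance of ``an intersection of affine subspaces and half-spaces is convex,'' and the argument is unaffected by the fact that $\prod_{\Ind(\ClosedHeart)}\R$ may be infinite-dimensional, since each mesh relation and each inequality involves only finitely many coordinates (the at most four corners of a tilting rectangle, resp.\ a single coordinate). The one point that deserves care --- and the only place the hypotheses on $\intc$ are used --- is that $\int_{\Diamond}\intc$ is a bona fide real number independent of the function being tested (this is exactly permissibility, Definition~\ref{def:permissible function}), so that the deformed mesh relations are affine-linear rather than linear-plus-a-$\Phi$-dependent term; granting that, the displayed computation finishes the proof.
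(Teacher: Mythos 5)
Your proof is correct and follows essentially the same route as the paper's: form the convex combination $\Psi=t\Phi_1+(1-t)\Phi_2$, verify the continuous deformed mesh relations using that $\int_{\Diamond}\intc$ is a fixed constant and $t+(1-t)=1$, and check nonnegativity coordinatewise. Your version is if anything slightly more careful than the paper's, which leaves the nonnegativity check implicit.
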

		\begin{proof}
			Let $\Phi_0$ and $\Phi_1$ be solutions to $\intc$ in $\ClosedHeart$.
			For each $t\in(0,1)$, define $\Phi_t$ as
			\begin{displaymath}
				\Phi_t(X) := t \cdot \Phi_1(X) + (1-t)\cdot \Phi_0(X).
			\end{displaymath}
			For each tilting rectangle $\Diamond=XYWZ$ in $\ClosedHeart$, we have the following equations from $\Phi_0$ and $\Phi_1$:
			\begin{align*}
				\Phi_0(X) + \Phi_0(W)&= \Phi_0(Y)+\Phi_0(Z)+\int_{\Diamond} \intc \\
				\Phi_1(X) + \Phi_1(W)&= \Phi_1(Y)+\Phi_1(Z)+\int_{\Diamond} \intc.
			\end{align*}
			Since $\int_{\Diamond} \intc$ is fixed, we have
			\begin{displaymath}
				\Phi_t(X) + \Phi_t(W) = \Phi_t(Y)+\Phi_t(Z) + \int_{\Diamond} \intc.
			\end{displaymath}
			Thus, any line segment in $\prod_{\Ind(\ClosedHeart)}\R$ connecting two points in $\UZc$ is entirely contained in $\UZc$.
		\end{proof}
		
		The previous theorem and its proof allow us to make the following definition.
		
		\begin{definition}\label{def:boundary}
		    Let $\Phi\in\UZc$ be a solution with respect to $\intc$ in $\ClosedHeart$.
		    We say $\Phi$ is \textdef{on the boundary} of $\UZc$ if there exists a line segment $\ell$ parameterized by $t\in[0,1]$ satisfying the following conditions. 
		    \begin{itemize}
		        \item $\Phi=\Phi_1$ and $\Phi_0\in \UZc$ are the distinct endpoints of $\ell$ (and so $\ell\subsetneq \UZc$).
		        \item If $\ell'$ is a parameterized line segment that contains $\ell$ such that $\Phi_1$ is not an endpoint of $\ell'$, then $\ell'$ is not contained in $\UZc$.
		    \end{itemize}
		    We call the set of all such $\Phi$ the \textdef{boundary} of $\UZc$.
		\end{definition}
		
		\begin{theorem}\label{thm:clusters are extremal}
			Let $\mathcal T$ be a $\TT$-cluster and $\Phi$ a solution with respect to $\intc$ in $\ClosedHeart$ such that $\Phi(X)=0$ for all $X\in\mathcal T$.
			Then $\Phi$ is on the boundary of $\UZc$.
		\end{theorem}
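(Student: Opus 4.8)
The plan is to verify Definition~\ref{def:boundary} directly: I would pair $\Phi$ with a carefully chosen second solution $\Phi_0 \in \UZc$, take $\ell$ to be the segment joining them, and show that $\ell$ cannot be prolonged beyond $\Phi$ without leaving $\UZc$. The underlying point is that $\UZc$ is the intersection of the affine subspace of solutions of the continuous deformed mesh relations with the orthant $\{\Phi : \Phi(X)\geq 0 \text{ for all } X \in \Ind(\ClosedHeart)\}$, and that the hypothesis $\Phi(X)=0$ for all $X \in \mathcal T$ means $\Phi$ already lies on all of the coordinate hyperplanes $\mathbb H_X$, $X \in \mathcal T$. The one place where work is needed is producing a solution $\Phi_0 \in \UZc$ that is \emph{strictly positive} at some element of $\mathcal T$; that is the step I would expect to be the crux.

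For this, I would recall from the discussion following Proposition~\ref{prop:cluster solutions} that each of the zigzags $\mathcal Z$ and $\mathcal Z[1]$ is a $\TT$-cluster possessing a unique associated solution, say $\Phi_{\mathcal Z}$ and $\Phi_{\mathcal Z[1]}$; Proposition~\ref{prop:cluster solutions} then gives $\Phi_{\mathcal Z}(W) > 0$ for every $W \notin \mathcal Z$ and $\Phi_{\mathcal Z[1]}(W) > 0$ for every $W \notin \mathcal Z[1]$. Fix any $X_0 \in \mathcal T$. Since $\mathcal Z$ and $\mathcal Z[1]$ are disjoint (indeed $\mathcal Z \subseteq \mathcal D^\heartsuit$ while $\mathcal Z[1]$ is disjoint from $\mathcal D^\heartsuit$), the point $X_0$ lies outside at least one of $\mathcal Z$, $\mathcal Z[1]$; I would let $\Phi_0$ be whichever of $\Phi_{\mathcal Z}$, $\Phi_{\mathcal Z[1]}$ is strictly positive at $X_0$. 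Then $\Phi_0 \in \UZc$, and $\Phi_0(X_0) > 0 = \Phi(X_0)$ forces $\Phi_0 \neq \Phi$.

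Finally I would take $\ell$ to be the segment from $\Phi_0$ to $\Phi$, parameterized by $t \in [0,1]$ with distinct endpoints $\Phi_0$ and $\Phi = \Phi_1$; by Theorem~\ref{thm:convex}, $\ell \subseteq \UZc$. Suppose $\ell'$ is a parameterized segment containing $\ell$ with $\Phi_1$ not an endpoint of $\ell'$. Then $\ell'$ contains a point $\Phi + t(\Phi - \Phi_0)$ with $t > 0$; the difference $\Phi - \Phi_0$ satisfies the homogeneous (continuous) mesh relations, so this point again satisfies the continuous deformed mesh relations, yet its $X_0$-coordinate equals $\Phi(X_0) + t(\Phi(X_0) - \Phi_0(X_0)) = -t\,\Phi_0(X_0) < 0$. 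Since $X_0 \in \Ind(\ClosedHeart)$ and every point of $\UZc$ is nonnegative on each such coordinate, this point is not in $\UZc$, so $\ell' \not\subseteq \UZc$. Hence $\ell$ witnesses that $\Phi$ lies on the boundary of $\UZc$ in the sense of Definition~\ref{def:boundary}.
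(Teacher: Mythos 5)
Your proposal is correct and follows essentially the same route as the paper: choose a zigzag $\TT$-cluster avoiding some $X_0\in\mathcal T$, take its unique solution $\Phi_0$ (Proposition~\ref{prop:all the values}), use Proposition~\ref{prop:cluster solutions} to get $\Phi_0(X_0)>0$, and observe that any prolongation of the segment past $\Phi$ makes the $X_0$-coordinate negative. The paper picks an arbitrary zigzag not containing $X_0$ where you specialize to $\mathcal Z$ or $\mathcal Z[1]$, but this is an immaterial difference.
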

		\begin{proof}
		    Let $X\in \mathcal T$, $\Phi_1=\Phi$, and $\mathcal T'$ be any zigzag contained in $\ClosedHeart$ that does not contain $X$.
		    By Proposition \ref{prop:all the values}, there is a unique solution $\Phi_0$ such that $\Phi_0(X')=0$ for all $X'\in \mathcal T'$.
			For all $t\in\R$ and indecomposables $X$ in $\ClosedHeart$, let
			\begin{displaymath}
				\Phi_t(X) := t\cdot \Phi_1(X) + (1-t)\cdot \Phi_0(X).
			\end{displaymath}
			Let $\e>0$ and note $\Phi_{1+\e}(X) = - \e\cdot \Phi_0(X)$.
			Since $X\notin \mathcal T'$, Proposition \ref{prop:cluster solutions} asserts that $\Phi_0(X)>0$.
			Thus, $\Phi_{1+\e}(X)<0$ and so $\Phi_{1+\e}\notin\UZc$.
			Therefore, $\Phi=\Phi_1$ is on the boundary of $\UZc$.
		\end{proof}
	
		\subsection{\textbf{T}-mutation a continuous associahedron}\label{sec:mutation on associahedron}
		Recall that in the finite
		setting (Section \ref{sec:Amplituhedron for Dynkin Quivers}), the vertices of the ABHY associahedron correspond to the clusters, the facets are 
		in bijection with cluster variables, and the mutation of clusters corresponds to the edges which connect two vertices of the associahedron. These edges are given by the intersection of the hyperplanes associated to the mutable cluster variables.
		(For full details, see \cite{B-MDMTY24}.)
		
		In this subsection, we discuss analogous phenomena in the continuous setting. In particular, we describe the connections between $\TT$-mutations and the continuous associahedron $\UZc$.
		Namely, we show that for two different $\TT$-clusters, each of which corresponds to a unique solution in $\UZc$, the $\TT$-mutation can be seen via $\UZc$.
		
		\begin{notation}\label{note:hyperplane} 
		For an indecomposable object $X$ in $\ClosedHeart$, by $\HZc(X)$ we denote the \textdef{hyperplane} in $\prod_{\Ind(\ClosedHeart)}\R$ associated to $X$, which is the set of functions $\Phi:\Ind(\ClosedHeart)\sqcup\{0\}\to \R$ such that $\Phi(X)=0$. 
		\end{notation}
		
		\begin{theorem}\label{thm:mutation on associahedron}
		Let $\mathcal T$ and $\mathcal T'$ be $\TT$-clusters with respective unique solutions $\Phi$ and $\Phi'$ in $\UZc$. 
		The following are equivalent.
			\begin{enumerate}
				\item There is a $\TT$-mutation $\mu:\mathcal T\to\mathcal T'$, where $\mathcal T'=(\mathcal T\setminus \{T_0\})\cup\{T_1\}$, with $T_0\in\mathcal T$ and $T_1\in\mathcal T'$.
				\item There is a line segment $\{\Phi_t \mid t\in[0,1]\}$ 
				which connects $\Phi$ to $\Phi'$ in $\UZc$ such that $\Phi_0=\Phi$, $\Phi_1=\Phi'$, and $\Phi_t(X)=0$, for all $X\in\mathcal T\cap \mathcal T'$. 
			\end{enumerate}
			The line segment in the second part is given by
					\begin{displaymath}
					\left(\bigcap_{X\in\mathcal T\cap \mathcal T'} \HZc(X)\right) \cap \UZc,
					\end{displaymath}
					where
					$\mathcal T\cap \mathcal T'=\mathcal T\setminus \{T_0\}=\mathcal T'\setminus \{T_1\}$. 
		\end{theorem}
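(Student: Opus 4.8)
The plan is to establish the single assertion that, under either hypothesis, the set $\bigl(\bigcap_{X\in\mathcal T\cap\mathcal T'}\HZc(X)\bigr)\cap\UZc$ is a nondegenerate line segment with endpoints $\Phi$ and $\Phi'$. Condition $(2)$ is then exactly this statement, while condition $(1)$ is what makes the index set ``$\mathcal T\cap\mathcal T'$'' equal to ``$\mathcal T$ with one object removed''. So I would prove $(1)\Rightarrow(2)$ together with the displayed formula for the edge, and then $(2)\Rightarrow(1)$.

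For $(1)\Rightarrow(2)$, assume the $\TT$-mutation exists, so $\mathcal T\cap\mathcal T'=\mathcal T\setminus\{T_0\}=\mathcal T'\setminus\{T_1\}$. Lemma~\ref{lem:minimal tilting rectangle}, applied with $X=T_0$ and $Y=T_1$ (here $T_1\notin\mathcal T$ and $(\mathcal T\setminus\{T_0\})\cup\{T_1\}=\mathcal T'$ is compatible), gives a tilting rectangle $\Diamond$ in $\ClosedHeart$ whose left and right corners are $T_0$ and $T_1$ and whose top and bottom corners $S_1,S_2$ lie in $\mathcal T$; being corners distinct from $T_0$, they lie in $\mathcal T\cap\mathcal T'$. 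Put $c:=\int_\Diamond\intc$, which is strictly positive since $\intc$ is positive on $\Ind(\ClosedHeart)$ and $\Diamond$ is nondegenerate. The segment $\Phi_t:=(1-t)\Phi+t\Phi'$, $t\in[0,1]$, lies in $\UZc$ by Theorem~\ref{thm:convex}; each $\Phi_t$ vanishes on $\mathcal T\cap\mathcal T'$ because $\Phi$ and $\Phi'$ do; and its endpoints are distinct because $\Phi(T_1)>0=\Phi'(T_1)$ by Proposition~\ref{prop:cluster solutions}. This gives the inclusion $\supseteq$ and shows $(2)$ holds for this segment.

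The reverse inclusion is the heart of the matter. Let $\Psi$ be any solution with respect to $\intc$ vanishing on $\mathcal T\cap\mathcal T'$. The continuous deformed mesh relation on $\Diamond$ gives $\Psi(T_0)+\Psi(T_1)=\Psi(S_1)+\Psi(S_2)+c=c$, so $\Psi(T_0)=sc$ for a unique $s\in[0,1]$. The zero set $\{X\in\Ind(\ClosedHeart)\mid\Psi(X)=0\}$ is a compatible set, since an incompatible pair inside it would bound a tilting rectangle on which the deformed mesh relation forces a sum of two nonnegative $\Psi$-values plus a positive integral to vanish. This zero set contains $\mathcal T\cap\mathcal T'$, and by Proposition~\ref{prop:mutation yields a unique cluster} the only indecomposables compatible with all of $\mathcal T\cap\mathcal T'$ are those of $(\mathcal T\cap\mathcal T')\cup\{T_0,T_1\}$; since $T_0$ and $T_1$ are incompatible (opposite corners of $\Diamond$; cf.\ Remark~\ref{rmk:geometric compatibility}), the zero set is one of $\mathcal T$, $\mathcal T'$, or $\mathcal T\cap\mathcal T'$. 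In the first two cases the uniqueness hypotheses give $\Psi=\Phi$ (so $s=0$) or $\Psi=\Phi'$ (so $s=1$). In the third case $s\in(0,1)$, and $\Psi-\Phi_s$ satisfies the undeformed continuous mesh relations (Remark~\ref{rmk:deformed over 0 is not deformed}) and vanishes on $(\mathcal T\cap\mathcal T')\cup\{T_0\}=\mathcal T$; the remaining task is to conclude $\Psi-\Phi_s\equiv 0$, i.e.\ that the only mesh function vanishing on a $\TT$-cluster with a unique solution is zero. I expect this rigidity step to be the main obstacle: one cannot merely perturb $\Phi$ by such a mesh function, because the perturbed function need not stay nonnegative and $\ClosedHeart$ has infinitely many indecomposables, so the argument must instead propagate the vanishing outward from $\mathcal T$ using the tilting rectangles produced by Lemma~\ref{lem:minimal tilting rectangle} and Proposition~\ref{prop:mutation yields a unique cluster} — this is precisely where the hypothesis that $\mathcal T$ and $\mathcal T'$ have \emph{unique} solutions enters, and it is what forces $\mathcal T\cap\mathcal T'$ to have ``corank one'' in each of $\mathcal T$ and $\mathcal T'$.

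For $(2)\Rightarrow(1)$, assume $K:=\bigl(\bigcap_{X\in\mathcal T\cap\mathcal T'}\HZc(X)\bigr)\cap\UZc$ is a line segment with endpoints $\Phi$ and $\Phi'$. Then $\Phi\ne\Phi'$, hence $\mathcal T\ne\mathcal T'$, and since $\TT$-clusters are maximal compatible sets $\mathcal T\setminus\mathcal T'\ne\emptyset$. Using Proposition~\ref{prop:cluster solutions} one checks that each interior $\Phi_t$ has zero set exactly $\mathcal T\cap\mathcal T'$, so any solution vanishing on $\mathcal T\cap\mathcal T'$ lies in $K$ and equals some $\Phi_t$; consequently $\mathcal T$ and $\mathcal T'$ are the only $\TT$-clusters that both contain $\mathcal T\cap\mathcal T'$ and admit a solution. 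Combined with the rigidity from the previous paragraph — a solution vanishing on $\mathcal T\cap\mathcal T'$ is pinned down by a single further coordinate — this forces $\mathcal T\setminus\mathcal T'$ to be a singleton $\{T_0\}$, so $\mathcal T\cap\mathcal T'=\mathcal T\setminus\{T_0\}$. Proposition~\ref{prop:mutation yields a unique cluster}(1) then identifies $\mathcal T'$, being a $\TT$-cluster distinct from $\mathcal T$ that contains $\mathcal T\setminus\{T_0\}$, as $(\mathcal T\setminus\{T_0\})\cup\{T_1\}$ for the unique admissible $T_1$; this swap is the $\TT$-mutation $\mu\colon\mathcal T\to\mathcal T'$ at $T_0$, and the displayed description of the connecting edge is what was proved in $(1)\Rightarrow(2)$.
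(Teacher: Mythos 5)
Your architecture is the same as the paper's: convexity (Theorem~\ref{thm:convex}) produces the connecting segment for $(1)\Rightarrow(2)$, and Proposition~\ref{prop:mutation yields a unique cluster} together with the uniqueness hypothesis is what identifies the edge and drives $(2)\Rightarrow(1)$. Your forward direction, including the use of Lemma~\ref{lem:minimal tilting rectangle} to place the top and bottom corners of the exchange rectangle in $\mathcal T\cap\mathcal T'$ and of Proposition~\ref{prop:cluster solutions} to see $\Phi\neq\Phi'$, is correct and matches the paper, as does the inclusion of the segment into $\bigl(\bigcap_{X\in\mathcal T\cap\mathcal T'}\HZc(X)\bigr)\cap\UZc$.

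The step you flag as unresolved --- that a solution $\Psi$ vanishing on $\mathcal T\cap\mathcal T'$ with $\Psi(T_0)=sc$ for $s\in(0,1)$ must equal $\Phi_s$ --- is a genuine gap in your write-up, and your diagnosis of why it is hard is accurate: uniqueness of the solution for $\mathcal T$ does not obviously yield the rigidity, since the perturbation $\Phi+\epsilon(\Psi-\Phi_s)$ satisfies the deformed mesh relations and vanishes on $\mathcal T$ but need not stay nonnegative on the infinitely many coordinates of $\ClosedHeart$. Be aware, though, that the paper does not supply this argument either: its entire treatment of the reverse inclusion is to note that uniqueness makes $\bigl(\bigcap_{X\in\mathcal T}\HZc(X)\bigr)\cap\UZc=\{\Phi\}$ and $\bigl(\bigcap_{X\in\mathcal T'}\HZc(X)\bigr)\cap\UZc=\{\Phi'\}$ and then to assert, citing Proposition~\ref{prop:mutation yields a unique cluster}, that the segment is the desired intersection. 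Your case analysis of the zero set of $\Psi$ (compatible, contains $\mathcal T\cap\mathcal T'$, hence equal to $\mathcal T$, $\mathcal T'$, or $\mathcal T\cap\mathcal T'$) is a correct and more explicit version of that sentence, but it only disposes of the endpoint cases $s\in\{0,1\}$. Likewise your $(2)\Rightarrow(1)$ leans on the same unproved rigidity to conclude that $\mathcal T\setminus\mathcal T'$ is a singleton, whereas the paper simply applies Proposition~\ref{prop:mutation yields a unique cluster} to $\mathcal T\cap\mathcal T'=\mathcal T\setminus\{T_0\}=\mathcal T'\setminus\{T_1\}$, treating that equality as given. In short: same route as the paper, correct wherever the paper is explicit, and stalled exactly where the paper is tersest.
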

		\begin{proof}
		If (2) holds, to get (1) apply Proposition \ref{prop:mutation yields a unique cluster} to the equation $\mathcal T\cap \mathcal T'=\mathcal T\setminus \{T_0\}=\mathcal T'\setminus \{T_1\}$. 
			
			Now assume (1).
			Let $\Phi_0=\Phi$ and $\Phi_1=\Phi'$. Moreover, for all indecomposable $X$ in $\ClosedHeart$ define
			\begin{displaymath}
				\Phi_t(X) = t\cdot \Phi_1(X) + (1-t)\cdot \Phi_0(X),
			\end{displaymath}
			for $t \in [0,1]$.
			Then, for any $X$ in $\mathcal T\cap \mathcal T'$ we have $\Phi_t(X)=0$.
			Since $\UZc$ is convex (Theorem \ref{thm:convex}), we have the desired line segment.
			We know $\Phi$ and $\Phi'$ are unique, so we have the singleton sets:
			\begin{equation*}
				\{\Phi\} = \left(\bigcap_{X\in\mathcal T} \HZc(X)\right) \cap \UZc \qquad \text{and} \qquad
				\{\Phi'\} = \left(\bigcap_{X\in\mathcal T'} \HZc(X)\right) \cap \UZc.
			\end{equation*}
			By Proposition \ref{prop:mutation yields a unique cluster}, the line segment is the desired intersection. 
		\end{proof}
		
		For the next proposition, recall the definition of a right vertex (Definition \ref{def:right vertex}).
		An example of the setting and statement of the proposition can be seen in Figure~\ref{fig:kaveh's picture}.
		\begin{proposition}\label{prop:n dimensional hypercube}
		Suppose $\mathcal Z$ has $n$ right vertices, and let $\mathscr{M}_{\mathcal Z}$ be the set of $\TT$-clusters obtained from $\mathcal Z$ via finitely many $\TT$-mutations. 
		Then $\mathscr M_{\mathcal Z}$ and the edges in $\UZc$ corresponding to the $\TT$-mutations form the $1$-skeleton of an $n$-dimensional hypercube in $\UZc$.
		In particular, $\mathscr M_{\mathcal Z}$ is finite.
		\end{proposition}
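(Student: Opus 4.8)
The plan is to identify $\mathscr{M}_{\mathcal Z}$ with the power set of the set of right vertices of $\mathcal Z$. Write $Y_1,\dots,Y_n$ for the right vertices of $\mathcal Z$ (Definition~\ref{def:right vertex}), and for $S\subseteq\{1,\dots,n\}$ put
\[
\mathcal Z_S:=\bigl(\mathcal Z\setminus\{Y_j:j\in S\}\bigr)\cup\{Y_j[1]:j\in S\},
\]
so that $\mathcal Z_\emptyset=\mathcal Z$. Each $\mathcal Z_S$ lies in $\ClosedHeart$ (the unshifted part is in $\mathcal Z\subseteq\mathcal D^\heartsuit$ and each $Y_j[1]\in\mathcal Z[1]$), and $S\mapsto\mathcal Z_S$ is injective, since for $j\in S\triangle S'$ exactly one of $\mathcal Z_S,\mathcal Z_{S'}$ contains $Y_j$. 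I would then prove that $\mathscr{M}_{\mathcal Z}=\{\mathcal Z_S:S\subseteq\{1,\dots,n\}\}$, that the $\TT$-mutations out of $\mathcal Z_S$ are exactly the $n$ maps $\mathcal Z_S\to\mathcal Z_{S\triangle\{j\}}$, and that each $\mathcal Z_S$ corresponds to a unique point $\Phi_S\in\UZc$; granting this, Theorem~\ref{thm:mutation on associahedron} realizes each such mutation as the edge $\bigl(\bigcap_{X\in\mathcal Z_S\cap\mathcal Z_{S\triangle\{j\}}}\HZc(X)\bigr)\cap\UZc$ joining $\Phi_S$ to $\Phi_{S\triangle\{j\}}$ inside $\UZc$, and the incidence pattern of the $\Phi_S$ and these edges is exactly that of subsets of $\{1,\dots,n\}$ under symmetric difference by a singleton --- the $1$-skeleton of an $n$-cube --- while finiteness is immediate from $|\mathscr{M}_{\mathcal Z}|=2^n$.

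The combinatorial skeleton I would obtain by induction on $|S|$, showing simultaneously that $\mathcal Z_S$ is a $\TT$-cluster and that its $\TT$-mutations are precisely: at $Y_j$ for $j\notin S$ (yielding $\mathcal Z_{S\cup\{j\}}$) and at $Y_j[1]$ for $j\in S$ (yielding $\mathcal Z_{S\setminus\{j\}}$). The base case $S=\emptyset$ is Example~\ref{xmp:universal examples}(1) together with Example~\ref{xmp:mutation}. The ``no other mutations'' part, at every step, comes from the observation that if $X\in\mathcal Z_S$ is an interior point of a segment of $\mathcal Z$ or a left vertex of $\mathcal Z$, then every tilting rectangle having $X$ as a corner has a side lying along a segment of $\mathcal Z$, hence meets $\mathcal Z_S$ in infinitely many non-corner points (only finitely many points of $\mathcal Z$ are deleted in forming $\mathcal Z_S$), so Lemma~\ref{lem:minimal tilting rectangle} forbids a mutation at $X$; moreover a left vertex of $\mathcal Z$ cannot be the right corner of a tilting rectangle inside $\ClosedHeart$, as the opposite corner would lie west of $\mathcal Z$. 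The crucial geometric input for the inductive step is that $Y_j[1]$ is compatible with every object of $\mathcal Z$ other than $Y_j$: since $\mathcal Z$ lies in a horizontal window of width $\le\pi$, no point of $\mathcal Z$ is strictly east of $Y_j[1]$, so by Remark~\ref{rmk:geometric compatibility} an incompatibility would arise from a tilting rectangle with some $Z\in\mathcal Z$ as left corner and $Y_j[1]$ as right corner, which for a right vertex $Y_j$ forces $Z$ into the closed wedge $\{x-y\ge x_{Y_j}-y_{Y_j},\ x+y\ge x_{Y_j}+y_{Y_j}\}$; but traversing $\mathcal Z$ downward from $Y_j$ strictly drops $x+y$ below $x_{Y_j}+y_{Y_j}$ and traversing upward strictly drops $x-y$ below $x_{Y_j}-y_{Y_j}$, so $\mathcal Z\setminus\{Y_j\}$ avoids that wedge. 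With this in hand $\mathcal Z_S$ is always a set of pairwise compatible objects (pairs inside $\mathcal Z\setminus\{Y_k\}_{k\in S}$ and pairs inside $\{Y_k[1]\}_{k\in S}$ because these are subsets of the $\TT$-clusters $\mathcal Z$ and $\mathcal Z[1]$, and mixed pairs by the fact just proved), so Proposition~\ref{prop:mutation yields a unique cluster}(1) promotes $(\mathcal Z_S\setminus\{Y_j\})\cup\{Y_j[1]\}=\mathcal Z_{S\cup\{j\}}$ and $(\mathcal Z_S\setminus\{Y_j[1]\})\cup\{Y_j\}=\mathcal Z_{S\setminus\{j\}}$ to $\TT$-clusters, and Proposition~\ref{prop:mutation yields a unique cluster}(2) gives uniqueness of each mutation. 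This closes the induction, shows every cluster reachable from $\mathcal Z$ is some $\mathcal Z_S$, and gives $|\mathscr{M}_{\mathcal Z}|=2^n$.

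Finally I would pin down $\Phi_S$. For uniqueness: if $\Phi$ is a solution with $\Phi|_{\mathcal Z_S}=0$, the continuous deformed mesh relation on the full-height tilting rectangle $\Diamond_j$ at $Y_j$ (top and bottom corners $0$) forces $\Phi(Y_j)=\int_{\Diamond_j}\intc$ for $j\in S$, pinning down $\Phi|_{\mathcal Z}$ and hence $\Phi$ by Proposition~\ref{prop:all the values}. For existence, define $\Phi_S$ by these values on $\mathcal Z$ and extend by Proposition~\ref{prop:all the values}; then $\Phi_S(Y_j[1])=0$ again from $\Diamond_j$, so $\Phi_S$ vanishes on $\mathcal Z_S$, and it remains to check $\Phi_S\ge 0$ on $\ClosedHeart$. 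Letting $\Phi_\emptyset$ be the solution attached to the zigzag $\mathcal Z$ (unique and nonnegative, as noted after Proposition~\ref{prop:cluster solutions}), the difference $\Phi_S-\Phi_\emptyset$ satisfies the undeformed continuous mesh relations and restricts on $\mathcal Z$ to $\sum_{j\in S}\bigl(\int_{\Diamond_j}\intc\bigr)\,\gZ(\cdot)(Y_j)$, so by uniqueness of extensions $\Phi_S(X)=\Phi_\emptyset(X)+\sum_{j\in S}\bigl(\int_{\Diamond_j}\intc\bigr)\,\gZ(X)(Y_j)$. The step I expect to be the main obstacle is the inequality $\Phi_S\ge 0$: on $\mathcal Z$ and on $\mathcal Z[1]$ it is a one-line check from the $\gZ$-vector formula (Proposition~\ref{prop:g-vectors}), but for a heart object $X$ off $\mathcal Z$ the coordinate $\gZ(X)(Y_j)$ can equal $-1$, and one must show that the region $GX$ of Section~\ref{sec:g-vectors} then carries enough of the positive density $\intc$ to absorb $\int_{\Diamond_j}\intc$; I would establish this by a geometric comparison of $GX$ with the relevant full-height rectangles, using Proposition~\ref{prop:g-vectors} to identify exactly which right vertices enter $\gZ(X)$ with a minus sign. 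Once $\Phi_S\in\UZc$ for all $S$, Proposition~\ref{prop:cluster solutions} makes the zero set of $\Phi_S$ exactly $\mathcal Z_S$ (so distinct $S$ give distinct $\Phi_S$), and the hypercube conclusion follows as described in the first paragraph.
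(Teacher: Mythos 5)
Your combinatorial skeleton is correct and is essentially the paper's argument, carried out in much greater detail: the paper's own proof simply observes that the only $\TT$-mutable objects of $\mathcal Z$ are its right vertices, that the $n$ mutations commute and are independent, and concludes the bijection with $\{0,1\}^n$; your wedge computation showing $Y_j[1]$ is compatible with $\mathcal Z\setminus\{Y_j\}$, and your verification that no new mutable objects appear in $\mathcal Z_S$, are exactly the details the paper leaves implicit, and they check out. Where you go beyond the paper is in insisting that each $\mathcal Z_S$ actually corresponds to a point of $\UZc$ before invoking Theorem \ref{thm:mutation on associahedron}; the paper's proof is silent on this, even though the statement asserts the hypercube sits \emph{in} $\UZc$. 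Your uniqueness argument for $\Phi_S$ (the full-height rectangle $\Diamond_j$ forces $\Phi(Y_j)=\int_{\Diamond_j}\intc$, then Proposition \ref{prop:all the values}) is fine, as is the formula $\Phi_S=\Phi_\emptyset+\sum_{j\in S}\bigl(\int_{\Diamond_j}\intc\bigr)\,\gZ(\cdot)(Y_j)$.

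The one genuine gap is the step you flag yourself: the inequality $\Phi_S\ge 0$, without which $\Phi_S\notin\UZc$ and the edges do not exist. It is closable, but not by the route you sketch, because the difficulty you anticipate does not occur: for $X\in\mathcal D^\heartsuit$ a right vertex of $\mathcal Z$ can never enter $\gZ(X)$ with coefficient $-1$. Indeed, in Proposition \ref{prop:g-vectors} the interior $Z_i$ alternate between right and left vertices, and $Z_0$ lies to the left of $Z_1$ exactly when $Z_1$ is a right vertex; so in the first case the right vertices among $Z_1,\dots,Z_j$ are precisely the odd-indexed ones (sign $+$), and in the second case precisely the even-indexed ones (sign $+$ again). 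A short check of the endpoint cases (if $Z_{\mathfrak u}$ is a right vertex, the segment below it runs down-left, forcing the ``$Z_0$ right of $Z_1$'' case and sign $+$; dually for $Z_{\mathfrak d}$; and rule (1) explicitly discards right vertices reached after a bounce) shows $\gZ(X)(Y_j)\in\{0,1\}$ throughout $\mathcal D^\heartsuit$, whence $\Phi_S\ge\Phi_\emptyset\ge 0$ there with no need for $GX$ to ``absorb'' anything. The coefficient $-1$ occurs only at $X=Y_j[1]\in\mathcal Z[1]$, where $\Phi_S(Y_j[1])=\int_{\Diamond_j}\intc-\int_{\Diamond_j}\intc=0$, and $\Phi_S(Z[1])=\int_{\Diamond_Z}\intc>0$ for the remaining $Z[1]$. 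With that supplied, your proof is complete and strictly more careful than the one in the paper.
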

		\begin{proof}
			We first note that the only $\TT$-mutable objects in $\mathcal Z$ are the right vertices (see Example \ref{xmp:mutation}).
			For a vertex $X$, consider the mutation $\mathcal Z\to (\mathcal Z\setminus\{X\})\cup\{X[1]\}$.
			Choose a different right vertex $Y$ of $\mathcal Z$.
			We may also mutate
			\begin{displaymath}
				(\mathcal Z\setminus\{X\})\cup\{X[1]\} \to
				\left(\left(\left(\mathcal Z\setminus\{X\}\right)\cup\{X[1]\}\right)\setminus \{Y\}\right)\cup \{Y[1]\}.
			\end{displaymath}
			We note that the mutation of $X$ and $Y$ are independent of each other.
			Further, observe that $X$ and $Y$ were chosen arbitrarily, therefore this argument holds for all $n$ right vertices of $\mathcal Z$.
			
			We then have a bijection from the set of $\TT$-clusters that can be obtained from $\mathcal Z$ in finitely many $\TT$-mutations	to the set $\{0,1\}^n$.
			This is in particular the number of vertices of 
			an $n$-dimensional hypercube.
			The commutativity of the mutations yields the $n$-dimensional hypercube structure.
		\end{proof}
	
	\begin{figure}
	    \centering
	    \begin{tikzpicture}
	       \filldraw (0,0) circle[radius=.4mm];
	       \draw[dashed] (0,2) -- (0,-2);
	       \draw (0,0) -- (1.414,1.414);
	       \draw (0,0) -- (2,0);
	       \draw (0,0) -- (1.414,-1.414);
	       \filldraw[draw opacity=0, fill opacity=.2] (0,2) arc (225:270:2) arc (180:225:2) arc (135:180:2) arc (90:135:2);
	       \draw (0,0) node[anchor=east] {$\mathcal Z$};
	       \draw (1.414,1.414) node[anchor=south west] {$(\mathcal Z\setminus \{X_1\})\cup \{X_1[1]\}$};
	       \draw (2,0) node[anchor=west] {$(\mathcal Z\setminus \{X_2\})\cup \{X_2[1]\}$};
	       \draw (1.414,-1.414) node[anchor=north west] {$(\mathcal Z\setminus \{X_3\})\cup \{X_3[1]\}$};
	    \end{tikzpicture}
	    \qquad
	    \begin{tikzpicture}
	       \draw (1,1) -- (3,1) -- (3,3) -- (1,3) -- (1,1);
	       \draw (0,0) -- (1,1);
	       \draw (0,2) -- (1,3);
	       \draw (2,0) -- (3,1);
	       \draw (2,2) -- (3,3);
	       \draw (0,2) node[anchor=east] {$X_1,X_2,X_3$};
	       \draw (0,0) node[anchor=east] {$X_1,X_2,X_3[1]$};
	       \draw (2,2) node[anchor=west] {$X_1,X_2[1],X_3$};
	       \draw (2,0) node[anchor=west] {$X_1,X_2[1],X_3[1]$};
	       \draw (1,3) node[anchor=east] {$X_1[1],X_2,X_3$};
	       \draw (1,1) node[anchor=east] {$X_1[1],X_2,X_3[1]$};
	       \draw (3,1) node[anchor=west] {$X_1[1],X_2[1],X_3[1]$};
	       \draw (3,3) node[anchor=west] {$X_1[1],X_2[1],X_3$};
	       \filldraw[fill=white, fill opacity=.5] (0,0) -- (0,2) -- (2,2) -- (2,0) -- (0,0);
	    \end{tikzpicture}
	    \caption{We illustrate possible mutations where $\mathcal T=\mathcal Z$ has $3$ right vertices: $X_1, X_2, X_3$ (Definition \ref{def:right vertex}).
	    On the left, we have the web-like effect for mutation at the unique $\Phi$ in $\UZc$ such that $\Phi(X)=0$ for all $X\in \mathcal Z$.
	    The vertices represented by the webbing between the lines are not $\TT$-mutable.
	    On the right, we have the mutation structure starting with a zigzag $\mathcal Z$ with $3$ right vertices.
	    Each facet corresponds to one of $X_1$, $X_2$, $X_3$, $X_1[1]$, $X_2[1]$, and $X_3[1]$, and the vertices are labelled by the facets they belong to.
	    Thus, the vertices are the corresponding $\TT$-clusters.}
	    \label{fig:kaveh's picture}
	\end{figure}
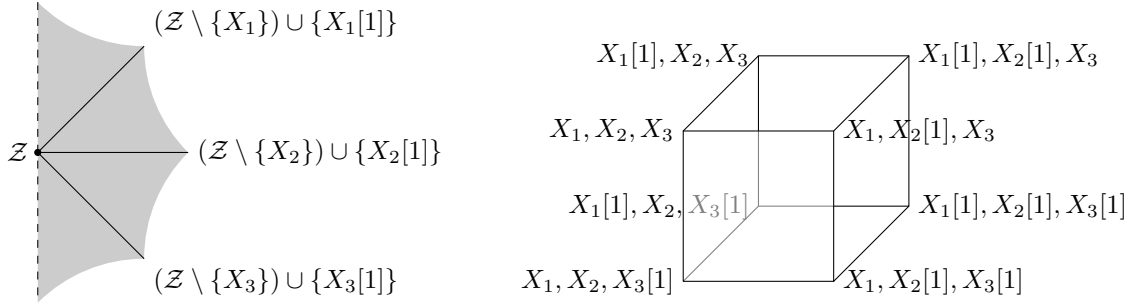

\subsection{Finite embeddings}\label{sec:finite embeddings}
    The main goal of this section is to study the relationship between the cluster structures of type $A_n$ and the $\TT$-clusters. In particular, in Subsection~\ref{sec:continuingxmp} we return to Example~\ref{xmp:universal examples}~(\ref{xmp:universal examples:vertical curve}) to prove a technical lemma (Lemma~\ref{lem:vertical solution}) that we need to deduce Theorem \ref{thm:big finite embedding}.
    In Subsection~\ref{sec:finite embeddings construction},
    we use the results of the first subsection to complete our argument.
	
\subsubsection{Using Example~\ref{xmp:universal examples}~(\ref{xmp:universal examples:vertical curve})}\label{sec:continuingxmp}
    We now show that each $\TT$-cluster $\mathcal T$ as in Example \ref{xmp:universal examples} (\ref{xmp:universal examples:vertical curve}) has a unique solution $\Phi$ in $\UZc$ such that $\Phi(X)=0$ for all $X\in\mathcal T$.
	
    Let $\ell$ be a curve between $\mathcal Z$ and $\mathcal Z[1]$ such that the slope of $\ell$ at each point is greater than $1$, less than $-1$, or equal to $\infty$.
    Additionally, suppose that for all $a\in(-\frac{\pi}{2},\frac{\pi}{2})$, there is an $A\in\ell$ such that the $y$-coordinate of $A$ is $a$.
    Assign $\Phi(A)=0$ for all $A$ on $\ell$.
    
    Now we construct a tilting rectangle that contains part of $\ell$. It may be helpful for the reader to refer to Figure~\ref{fig:vertical computation} while reading this construction.
    Let $X$ be an indecomposable in $\ClosedHeart$ such that $X$ is not on $\ell$ and $X$ is to the left of $\ell$.
    Without loss of generality, we assume that the rays emanating from $X$ at $\pm 45^\circ$ intersect $\ell$.
    Due to our conditions on $\ell$, there is a unique $X'$ not on $\ell$ such that the tilting rectangle with left and right corners $X$ and $X'$ has top and bottom corners on $\ell$.
    We create a sequence of regions $R_1\subsetneq R_2\subsetneq \cdots$ such that \[\Phi(X) = \int_{\lim_{n\to\infty}R_n}\intc.\]
	
    We now choose $X$ such that $X$ and $X'$ above are both in $\ClosedHeart$. 
    Let $\Diamond$ be the tilting rectangle whose left and right corners are $X$ and $X'$, respectively, and whose top and bottom corners are on $\ell$.
    Then
    \begin{displaymath}
	    \Phi(X) + \Phi(X') = \int_{\Diamond} \intc.
    \end{displaymath}
    We subdivide the rectangle with objects $Y,Y',Z,Z'$, each distinct from each other and from $X, X'$, such that each of the four smaller rectangles share a corner on $\ell$ (illustrated in Figure \ref{fig:vertical computation}).
    Let $R_1$ be the tilting rectangle whose left corner is $X$, top corner is $Y$, bottom corner is $Z$, and right corner is on $\ell$.
    Then we have
    \begin{displaymath}
	    \Phi(X) = \Phi(Y)+\Phi(Z) +\int_{R_1} \intc.
    \end{displaymath}
    In order for $\Phi$ to be a solution with respect to $\intc$ in $\UZc$,
    we must have $\Phi(X)\geq 0$, $\Phi(Y)\geq 0$, and $\Phi(Z)\geq 0$.
    Thus, we must have $\Phi(X) \geq \int_{R_1} \intc$.
    By a similar process, we create two smaller tilting rectangles whose left corners are $Y$ and $Z$, respectively, and whose right corners are on $\ell$.
    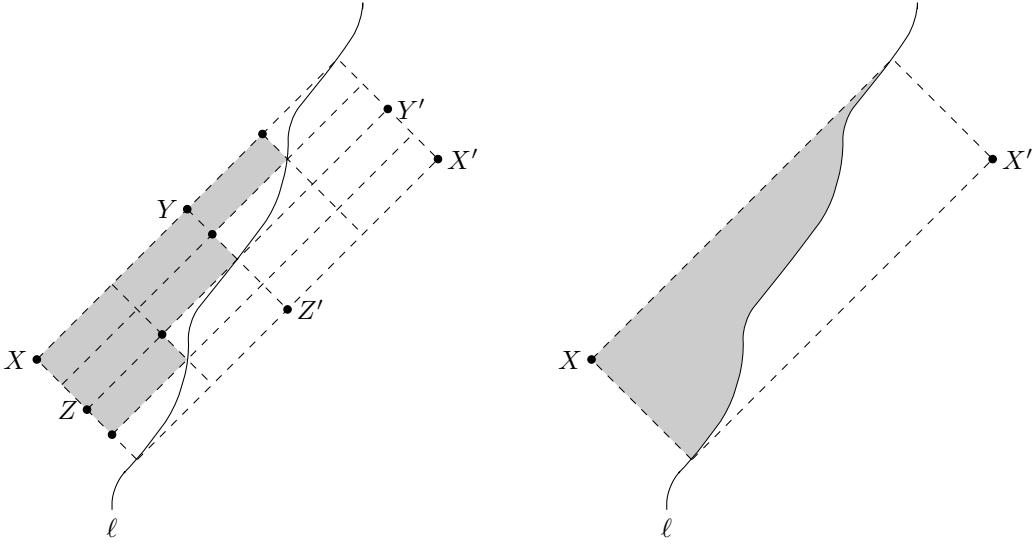
\begin{figure}
    \begin{center}
	\begin{tikzpicture}
		\filldraw[draw opacity=0, fill opacity=.2] (-1,2) -- (1,4) -- (1.666,3.333) -- (-0.333,1.333) -- (-1,2);
		\filldraw[draw opacity=0, fill opacity=.2] (1,4) -- (2,5) -- (2.333,4.666) -- (1.333,3.666) -- (1,4);
		\filldraw[draw opacity=0, fill opacity = .2] (-0.333, 1.333) -- (.666,2.333) -- (1, 2) -- (0,1) -- (-0.333, 1.333);
		\draw[rounded corners = 7] (0,0) -- (0,.333) -- (.333,.666) -- (.833,1.333) -- (1.03,2)--(1,2.5)--(1.666,3.333)--(2.166,4)--(2.35,4.666)--(2.333,5.166)--(3,6)--(3.333,6.5)--(3.333,6.666);
		\filldraw (-1,2) circle[radius=.5mm];
		\filldraw (4.333,4.666) circle[radius=.5mm];
		\filldraw (3.666,5.333) circle[radius=.5mm];
		\filldraw (2.333,2.666) circle[radius=.5mm];
		\filldraw (1,4) circle[radius=.5mm];
		\filldraw (-0.333,1.333) circle[radius=.5mm];
		\filldraw (1.333,3.666) circle[radius=.5mm];
		\filldraw (2,5) circle[radius=.5mm];
		\filldraw (.666,2.333) circle[radius=.5mm];
		\filldraw (0,1) circle[radius=.5mm];
		\draw[dashed] (.333,.666) -- (-1,2) -- (3,6) -- (4.333,4.666) -- (.333,.666);
		\draw[dashed] (1,4) -- (2.333,2.666);
		\draw[dashed] (0,3) -- (1.333,1.666);
		\draw[dashed] (2,5) -- (3.333,3.666);
		\draw[dashed] (-0.666,1.666) -- (3.333,5.666);
		\draw[dashed] (-0.333,1.333) -- (3.666,5.333);
		\draw[dashed] (0,1) -- (4,5);
		\draw (-1,2) node[anchor=east] {$X$};
		\draw (4.333,4.666) node[anchor=west] {$X'$};
		\draw (1,4) node[anchor=east] {$Y$};
		\draw (3.666,5.333) node[anchor=west] {$Y'$};
		\draw (-0.333,1.333) node[anchor=east] {$Z$};
		\draw (2.333,2.666) node[anchor=west] {$Z'$};
		\draw(0,0) node[anchor=north] {$\ell$};
	\end{tikzpicture}
	\qquad
	\begin{tikzpicture}
		\filldraw[draw opacity=0, fill opacity=.2] (.333,.666) -- (-1,2) -- (3,6) -- (4,5) -- (1.333,1.666) -- (.333,.666);
		\filldraw[fill=white, draw=white, rounded corners = 7](0,0) -- (0,.333) -- (.333,.666) -- (.833,1.333) -- (1.03,2)--(1,2.5)--(1.666,3.333)--(2.166,4)--(2.35,4.666)--(2.333,5.166)--(3,6)--(3.333,6.5)--(3.333,6.666) -- (5,6) -- (2,1.666) -- (.333,.666);
		\draw[rounded corners = 7] (0,0) -- (0,.333) -- (.333,.666) -- (.833,1.333) -- (1.03,2)--(1,2.5)--(1.666,3.333)--(2.166,4)--(2.35,4.666)--(2.333,5.166)--(3,6)--(3.333,6.5)--(3.333,6.666);
		\filldraw (-1,2) circle[radius=.5mm];
		\filldraw (4.333,4.666) circle[radius=.5mm];
		\draw[dashed] (.333,.666) -- (-1,2) -- (3,6) -- (4.333,4.666) -- (.333,.666);
		\draw (-1,2) node[anchor=east] {$X$};
		\draw (4.333,4.666) node[anchor=west] {$X'$};
		\draw(0,0) node[anchor=north] {$\ell$};
	\end{tikzpicture}
	\caption{On the left, we have the region $R_2$ obtained in the processs of finding a lower bound of $\Phi(X)$, where $\Phi$ is $0$ on $\ell$.
	On the right, we have the region $R$ such that $\Phi(X)=\int_R \intc $.}\label{fig:vertical computation}
    \end{center}
    \end{figure}
    Let $R_2$ be the union of $R_1$ and the two smaller rectangles.
    If we want to extend $\Phi$ to a solution with respect to $\intc$ in $\ClosedHeart$, we must have $\Phi(X)\geq \int_{R_2}\intc$.

    We continue defining successively larger regions $R_3 \subsetneq R_4 \subsetneq \cdots$ similarly.
    The limit $R=\lim_{n\to\infty} R_n$ is the region inside $\Diamond$ on the left of $\ell$, and we see $\Phi(X)\geq \int_{R}\intc$.
    Denote by $R'$ the region in 
    $\Diamond$ on the right of $\ell$ so that $R\cup R'\cup\ell=\Diamond$.
    Then $\Phi(X')\geq \int_{R'}\intc$ by the same argument. Since $\Phi(X)+\Phi(X')=\int_{\Diamond}\intc$, we must have $\Phi(X)=\int_{R}\intc$ and $\Phi(X')=\int_{R'}\intc$.
    Repeating this argument for each point in $R$ shows that $\Phi$ extends uniquely to all of $\Diamond$. 
    
    \begin{lemma}\label{lem:vertical solution}
    Let $\ell$ be a curve in $\ClosedHeart$ as in
    Example \ref{xmp:universal examples} (\ref{xmp:universal examples:vertical curve}). 
    Then there is a unique solution $\Phi$ with respect to $\intc$
    in $\ClosedHeart$ such that $\Phi(A)=0$, for all $A\in\ell$.
    \end{lemma}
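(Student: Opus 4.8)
The plan is to prove existence by writing down the solution explicitly in terms of $\intc$-areas of the regions cut off by $\ell$, and to prove uniqueness by showing that any competing solution is trapped between two bounds that force it to equal the explicit one.

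\emph{Existence.} Set $\Phi(A)=0$ for every $A\in\ell$. Since $\ell$ separates $\ClosedHeart$, each indecomposable $X\in\Ind(\ClosedHeart)$ not on $\ell$ lies strictly to one side of $\ell$; if $X$ lies to the left of $\ell$, let $R(X)$ be the bounded region between $\ell$ and the two forward rays of slope $\pm 1$ out of $X$ (allowing at most one reflection off $y=\pm\tfrac\pi2$, as in the discussion preceding the statement and Figure~\ref{fig:vertical computation}), and if $X$ lies to the right of $\ell$, let $R(X)$ be the analogous region bounded by $\ell$ and the two backward rays out of $X$. Define $\Phi(X):=\int_{R(X)}\intc$; this is a finite, nonnegative real number because $\intc$ is permissible (Definition~\ref{def:permissible function}) and strictly positive on $\ClosedHeart$, and because $R(X)$ is covered by finitely many tilting rectangles. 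The one thing to check is that $\Phi$ satisfies the continuous deformed mesh relations over $\intc$ on every tilting rectangle $\Diamond=XYWZ$ in $\ClosedHeart$. This is a geometric computation: the slope hypothesis on $\ell$ forces $\ell$ to meet such a $\Diamond$ transversally, so that (up to boundaries of measure zero) $R(X)$ is the disjoint union of $\Diamond$ with $R(Y)\cup R(Z)$ while $R(Y)\cap R(Z)=R(W)$; inclusion--exclusion for $\int(\,\cdot\,)\intc$ then yields $\Phi(X)+\Phi(W)=\Phi(Y)+\Phi(Z)+\int_{\Diamond}\intc$, and a parallel bookkeeping handles the case where $\ell$ passes through the interior of $\Diamond$, using the backward description for the corners of $\Diamond$ right of $\ell$. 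Finally, extend $\Phi$ from $\Ind(\ClosedHeart)$ to $\Ind(\mathcal D)\sqcup\{0\}$ via Proposition~\ref{prop:all the values} applied to the zigzag $\mathcal Z\subseteq\ClosedHeart$; the extension is consistent because $\Phi$ already obeys the mesh relations on $\ClosedHeart$. Nonnegativity on $\ClosedHeart$ is clear since $\intc>0$, and $\Phi|_\ell=0$ because $R(A)$ is degenerate for $A\in\ell$, so $\Phi$ is the desired solution.

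\emph{Uniqueness.} Let $\Psi$ be any solution with respect to $\intc$ in $\ClosedHeart$ with $\Psi|_\ell=0$. For $X$ strictly to the left of $\ell$, iterate the subdivision used in the discussion above: writing a nested exhaustion $R_1\subsetneq R_2\subsetneq\cdots$ of $R(X)$ by tilting rectangles whose right corners lie on $\ell$ (where $\Psi$ vanishes), the mesh relations together with $\Psi\ge 0$ at each corner give $\Psi(X)\ge\int_{R_n}\intc$ for every $n$, hence $\Psi(X)\ge\int_{R(X)}\intc=\Phi(X)$; the mirror argument gives $\Psi(X')\ge\Phi(X')$ for $X'$ strictly to the right of $\ell$. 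Now for $P\in\mathcal Z$, let $\Diamond_P$ be the tilting rectangle with left corner $P$ and right corner $P[1]$; its top and bottom corners lie on $y=\pm\tfrac\pi2$ and are therefore $0$ in $\mathcal D$, so the deformed mesh relation reads $\Psi(P)+\Psi(P[1])=\int_{\Diamond_P}\intc=\Phi(P)+\Phi(P[1])$. Since $\mathcal Z[1]$ lies entirely to the right of $\ell$, the lower bound gives $\Psi(P[1])\ge\Phi(P[1])$, whence $\Psi(P)\le\Phi(P)$; combined with the lower bound $\Psi(P)\ge\Phi(P)$ this forces $\Psi(P)=\Phi(P)$ for all $P\in\mathcal Z$. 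By uniqueness of the extension from $\mathcal Z$ (Proposition~\ref{prop:all the values}), we conclude $\Psi=\Phi$.

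I expect the main obstacle to be the purely geometric bookkeeping compressed into the word ``region'': giving a clean, case-free description of $R(X)$ and of the decomposition $R(X)=\Diamond\sqcup\big(R(Y)\cup R(Z)\big)$ when one or more of the defining rays reflects off $y=\pm\tfrac\pi2$ before meeting $\ell$ (the ``without loss of generality'' in the discussion preceding the statement), and confirming that every object produced during the subdivision genuinely stays inside $\ClosedHeart$. By contrast, the conceptual skeleton --- nonnegativity pushes $\Psi$ up to the region integral, and the fixed total $\int_{\Diamond_P}\intc$ pushes it back down --- is already essentially present in the construction above the statement.
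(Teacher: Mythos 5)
Your proposal is correct and rests on the same engine as the paper's argument: the nested exhaustion $R_1\subsetneq R_2\subsetneq\cdots$ of the region cut off by $\ell$, with nonnegativity forcing $\Phi(X)\ge\int_{R_n}\intc$ and a fixed total over a tilting rectangle straddling $\ell$ forcing the reverse inequality, followed by a reduction to a zigzag and Proposition~\ref{prop:all the values}. The packaging differs in two ways. First, the paper closes the squeeze locally: it covers $\ell$ by a finite interlocking chain of tilting rectangles $\Diamond_1,\ldots,\Diamond_n$ whose top and bottom corners lie on $\ell$ (finiteness coming from $\mathcal Z$ and $\mathcal Z[1]$ having finitely many segments), pins $\Phi$ down on each $\Diamond_i$ by the left-corner/right-corner sum $\Phi(X)+\Phi(X')=\int_{\Diamond_i}\intc$, and then observes that the left sides of the $\Diamond_i$ form a zigzag to which Proposition~\ref{prop:all the values} applies. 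You instead close the squeeze globally using the rectangles $\Diamond_P$ from $P\in\mathcal Z$ to $P[1]$ (whose top and bottom corners vanish), which is the same device as in the proof of Theorem~\ref{thm:bounded}, and then extend from $\mathcal Z$ itself; this avoids constructing the chain but requires the mirror lower bound on the right of $\ell$ for the points of $\mathcal Z[1]$. Second, your existence argument writes the solution as an explicit region integral $\Phi(X)=\int_{R(X)}\intc$ and verifies the mesh relations by inclusion--exclusion, whereas the paper gets existence and uniqueness simultaneously from the squeeze; your version makes the formula visible at the cost of the case analysis you flag (reflections off $y=\pm\frac{\pi}{2}$, and rectangles lying entirely on one side of $\ell$), which is exactly the bookkeeping the paper also compresses into its ``without loss of generality'' and ``straightforward to check.''
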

    \begin{proof}
        Note that $\ell\subsetneq\Ind(\ClosedHeart)$.
        Since $\mathcal Z$ and $\mathcal Z[1]$ have finitely many line segments, we find finitely many tilting rectangles $\{\Diamond_i\}_{i=1}^n$ in $\Ind(\ClosedHeart)$ with the following three properties.
        First, $\ell\subsetneq\bigcup_{i=1}^n\Diamond_i$.
        Second, for $1\leq i<n$, the bottom corner of $\Diamond_i$ is the top corner of $\Diamond_{i+1}$. Third, top corner of $\Diamond_1$ and bottom corner of $\Diamond_n$ have $y$-coordinates $\frac{\pi}{2}$ and $-\frac{\pi}{2}$, respectively.
        By the construction preceding the lemma, we extend $\Phi$ uniquely on each of these $\Diamond_i$'s.
        The left sides of all $\Diamond_i$'s form a zigzag in $\Ind(\ClosedHeart)$.
        By Proposition \ref{prop:all the values}, there is a unique extension of $\Phi$ as stated in the proposition.
    \end{proof}

\subsubsection{Construction of the embeddings}\label{sec:finite embeddings construction}
In this subsection we employ our argument in Section~\ref{sec:continuingxmp} to prove Theorem~\ref{thm:big finite embedding}.
Throughout the section, we use our notation from Section \ref{sec:Amplituhedron for Dynkin Quivers}.
For the theorem, we consider the ascending linear orientation of $A_n$, for $n\geq 2$, meaning that the $A_n$ quiver is linearly ordered and $P_i\hookrightarrow P_j$ if $i>j$.
    There is an exact embedding $\rep(A_2)\to \rep(A_3)$ determined by sending $P_1\mapsto P_1$ and $P_2\mapsto P_2$.
	In general, there is an exact embedding $\rep (A_n)\to \rep(A_{n+1})$ by sending $P_i\mapsto P_i$, for $1\leq i\leq n$.
    Also, for simplicity, we work with the zigzag $\mathcal Z_+$ (initially introduced in Section~\ref{sec:totally disconnected}) consisting of exactly one line segment, which has slope $+1$.
	Otherwise, the computations become exceedingly involved.
		
	We construct a $\TT$-cluster $\mathcal T_U$ (Figure \ref{fig:embedding cluster}) similar to $\mathcal T_1$ from Section \ref{sec:totally disconnected} (see Figure \ref{fig:T1}).  Before beginning, we note here that although the construction of $\mathcal{T}_U$ depends on $\mathcal{Z}_+$, we avoid cumbersome notation by suppressing mention of $\mathcal{Z}_+$ in the notation $\mathcal{T}_U$.
	We consider $\{X_i=(x_i,y_i)\mid i\in\Z_{<0}\}$,
	as a sequence of indecomposables in $\mathcal Z_+$ such that $j<i$ implies $y_j<y_i$ and $\lim_{j\to-\infty} y_j=-\frac{\pi}{2}$.
	Let $(x_0,\frac{\pi}{2})$ be the top boundary point of $\mathcal Z_+$.
	    For $i<0$, we define $E_i$ as in Section \ref{sec:totally disconnected} and a vertical line segment $\ell_i$:
	    \begin{align*}
	        E_i &:= \left( x_{i+1} + (x_i+\frac{\pi}{2}),\, y_{i+1}-(y_i+\frac{\pi}{2})\right), \\
	        \ell_i &:= \left\{(x,y) \mid x=x_{i+1} + (x_i+\frac{\pi}{2}),\, y<y_{i+1}-(y_i+\frac{\pi}{2})\right\}.
	    \end{align*}
	    Note that $\ell_i$ does not include $E_i$.
	    Let
	    \begin{displaymath}
	        \mathcal T_U = \{X_i \mid i<0\} \cup \left(\bigcup_{i<0} \ell_i \right).
	    \end{displaymath}
	    		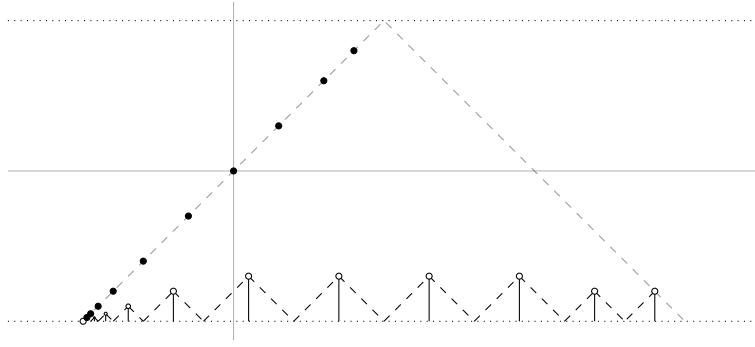
\begin{figure}
		\begin{center}
			\begin{tikzpicture}
				\draw[dotted] (-5,2) -- (5,2);
				\draw[dotted] (-5,-2) -- (5,-2);
				\draw[dashed, draw opacity=.4] (-4,-2) -- (0,2) -- (4,-2);
				\draw[draw opacity = .3] (-5,0) -- (5,0);
				\draw[draw opacity = .3] (-2,2.25) -- (-2,-2.25);
				
				\filldraw[fill=white](-4,-2) circle[radius=.4mm];
				\foreach \x in {-1.95, -1.9, -1.8, -1.6, -1.2, -0.6, 0, 0.6, 1.2, 1.6}
				{
					\filldraw (\x-2, \x) circle[radius=.4mm];
				}
				
				\coordinate (A) at (-3.85,-1.95);
				\coordinate (B) at (-3.8,-2);
				\coordinate (C) at (-3.9,-2);
				\coordinate (D) at (-3.85,-2);
				\draw[dashed] (C) -- (A);
				\draw[dashed] (A) -- (B);
				\draw (A) -- (D);
				\filldraw[fill=white] (A) circle[radius=.1mm];
				\coordinate (A) at (-3.7,-1.9);
				\coordinate (B) at (-3.6,-2);
				\coordinate (C) at (-3.8,-2);
				\coordinate (D) at (-3.7,-2);
				\draw[dashed] (C) -- (A);
				\draw[dashed] (A) -- (B);
				\draw (A) -- (D);
				\filldraw[fill=white] (A) circle[radius=.2mm];
				\coordinate (A) at (-3.4,-1.8);
				\coordinate (B) at (-3.2,-2);
				\coordinate (C) at (-3.6,-2);
				\coordinate (D) at (-3.4,-2);
				\draw[dashed] (C) -- (A);
				\draw[dashed] (A) -- (B);
				\draw (A) -- (D);
				\filldraw[fill=white] (A) circle[radius=.3mm];
				\coordinate (A) at (-2.8,-1.6);
				\coordinate (B) at (-2.4,-2);
				\coordinate (C) at (-3.2,-2);
				\coordinate (D) at (-2.8,-2);
				\draw[dashed] (C) -- (A);
				\draw[dashed] (A) -- (B);
				\draw (A) -- (D);
				\filldraw[fill=white] (A) circle[radius=.4mm];
				\coordinate (A) at (-1.8,-1.4);
				\coordinate (B) at (-1.2,-2);
				\coordinate (C) at (-2.4,-2);
				\coordinate (D) at (-1.8,-2);
				\draw[dashed] (C) -- (A);
				\draw[dashed] (A) -- (B);
				\draw (A) -- (D);
				\filldraw[fill=white] (A) circle[radius=.4mm];
				\coordinate (A) at (-0.6,-1.4);
				\coordinate (B) at (0,-2);
				\coordinate (C) at (-1.2,-2);
				\coordinate (D) at (-0.6,-2);
				\draw[dashed] (C) -- (A);
				\draw[dashed] (A) -- (B);
				\draw (A) -- (D);
				\filldraw[fill=white] (A) circle[radius=.4mm];
				\coordinate (A) at (0.6,-1.4);
				\coordinate (B) at (1.2,-2);
				\coordinate (C) at (0,-2);
				\coordinate (D) at (0.6,-2);
				\draw[dashed] (C) -- (A);
				\draw[dashed] (A) -- (B);
				\draw (A) -- (D);
				\filldraw[fill=white] (A) circle[radius=.4mm];
				\coordinate (A) at (1.8,-1.4);
				\coordinate (B) at (2.4,-2);
				\coordinate (C) at (1.2,-2);
				\coordinate (D) at (1.8,-2);
				\draw[dashed] (C) -- (A);
				\draw[dashed] (A) -- (B);
				\draw (A) -- (D);
				\filldraw[fill=white] (A) circle[radius=.4mm];
				\coordinate (A) at (2.8,-1.6);
				\coordinate (B) at (3.2,-2);
				\coordinate (C) at (2.4,-2);
				\coordinate (D) at (2.8,-2);
				\draw[dashed] (C) -- (A);
				\draw[dashed] (A) -- (B);
				\draw (A) -- (D);
				\filldraw[fill=white] (A) circle[radius=.4mm];
				\coordinate (A) at (3.6,-1.6);
				\coordinate (B) at (3.2,-2);
				\coordinate (C) at (4,-2);
				\coordinate (D) at (3.6,-2);
				\draw[dashed] (A) -- (B);
				\draw (A) -- (D);
				\filldraw[fill=white] (A) circle[radius=.4mm];
			\end{tikzpicture}
			\caption{The $\TT$-cluster $\mathcal T_U$ used to construct the finite embeddings.}\label{fig:embedding cluster}
		\end{center}
		\end{figure}
		As in $\mathcal T_1$, we have the smaller triangles along the bottom.
		However, $\mathcal T_U$ has a rightmost triangle on the bottom.
		One may verify that $\mathcal T_U$ is a $\TT$-cluster.
		
		By Lemma \ref{lem:minimal tilting rectangle}, only the discrete set of points on $\mathcal Z_+$ are $\TT$-mutable.
		As in Section \ref{sec:t-structures}, $\mathcal D^\heartsuit$ denotes the heart of the $t$-structure induced by $\mathcal{Z}_+$ in the category $\mathcal{D}$.
		
		\begin{proposition}\label{prop:big finite representation embedding}
	        For $n\geq 2$, there is an exact embedding $\rep(A_n)\to \mathcal D^\heartsuit$ of abelian categories determined by $P_{-i}\mapsto X_i$, for $-n\leq i\leq -1$.
	        The embedding factors as $\rep(A_n)\to \rep(A_{n+1})\to \mathcal D^\heartsuit$.
		\end{proposition}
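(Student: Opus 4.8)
The plan is to realize $F$ as the essentially unique right-exact extension of the rule $P_{-i}\mapsto X_i$ and then to verify exactness and full faithfulness separately. First I would fix concrete models. For the ascending linear orientation, $\rep(A_n)=\modu(kA_n)$ has indecomposables the interval modules $M[a,b]$ with $1\le a\le b\le n$, with $P_i=M[i,n]$, with $\Hom_{kA_n}(P_i,P_j)$ one-dimensional when $j\le i$ and zero otherwise, and with $M[a,b]$ admitting the projective resolution $0\to P_{b+1}\to P_a\to M[a,b]\to0$ (using $P_{n+1}=0$). On the other side $\mathcal Z$ is a single segment of slope $+1$, so by Proposition~\ref{prop:zigzag is projective} the indecomposable projectives of $\mathcal D^\heartsuit$ are precisely the points of $\mathcal Z$; since $X_{-1},\dots,X_{-n}$ lie on $\mathcal Z$ in that top-to-bottom order, the $\Hom$-formula in $\mathcal D$ (Section~\ref{sec:the category D}) shows $\Hom_{\mathcal D^\heartsuit}(X_{-i},X_{-j})=k$ exactly when $j\le i$ and $0$ otherwise, with nonzero morphisms composing to nonzero morphisms. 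Hence $P_i\mapsto X_{-i}$ induces an isomorphism of endomorphism algebras and an equivalence $F_0\colon\operatorname{proj}(kA_n)\xrightarrow{\ \sim\ }\add(X_{-1}\oplus\cdots\oplus X_{-n})\subseteq\mathcal D^\heartsuit$ sending the canonical map $P_{b+1}\to P_a$ to the canonical nonzero map $X_{-(b+1)}\to X_{-a}$.

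The one geometric input I would isolate is: for $1\le a\le b\le n-1$, the canonical map $X_{-(b+1)}\to X_{-a}$ is a monomorphism in $\mathcal D^\heartsuit$. To see this, form the tilting rectangle whose left corner is $X_{-(b+1)}$, whose top corner is $X_{-a}$, and whose bottom corner lies on $y=-\tfrac{\pi}{2}$ (hence equals $0$ in $\mathcal D$); by Proposition~\ref{prop:rectangles are triangles} this yields a distinguished triangle $X_{-(b+1)}\to X_{-a}\to W\to X_{-(b+1)}[1]$ whose second map is, up to scalar, our canonical map, and a direct check of the coordinates of $W$ shows it lies strictly between $\mathcal Z$ and $\mathcal Z[1]$, so $W\in\mathcal D^\heartsuit$ and the triangle becomes a short exact sequence $0\to X_{-(b+1)}\to X_{-a}\to W\to0$ in the heart. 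I would then define $F\colon\rep(A_n)\to\mathcal D^\heartsuit$ by left Kan extension of $F_0$ along $\operatorname{proj}(kA_n)\hookrightarrow\rep(A_n)$: for a projective presentation $Q^1\xrightarrow{q}Q^0\to M\to0$, set $F(M):=\operatorname{coker}(F_0(q))$. Standard arguments make $F$ a well-defined additive right-exact functor restricting to $F_0$ on projectives, so $F(P_i)=X_{-i}$. By the geometric input, $F$ carries the resolution $0\to P_{b+1}\to P_a\to M[a,b]\to0$ to the short exact sequence $0\to X_{-(b+1)}\to X_{-a}\to F(M[a,b])\to0$; since $kA_n$ is hereditary, an arbitrary $M$ is a direct sum of interval modules whose minimal projective resolution is the corresponding direct sum, and a finite direct sum of monomorphisms is a monomorphism, so $F$ carries every projective resolution to a short exact sequence. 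Equivalently $L_1F=0$, so $F$ is exact.

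It remains to show $F$ is an embedding, that is, fully faithful. Put $T:=X_{-1}\oplus\cdots\oplus X_{-n}$ and let $G:=\Hom_{\mathcal D^\heartsuit}(T,-)$, a functor from $\mathcal D^\heartsuit$ to the category of $\operatorname{End}_{\mathcal D^\heartsuit}(T)$-modules, which we identify with $\rep(A_n)$ through $F_0$ so that $G$ restricts on $\add T$ to a quasi-inverse of $F_0$. Since $T$ is projective in $\mathcal D^\heartsuit$ (Proposition~\ref{prop:zigzag is projective}), $G$ is exact. Now $G\circ F$ and $\operatorname{id}_{\rep(A_n)}$ are both right exact and agree on projectives, hence $G\circ F\cong\operatorname{id}$. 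In particular $F$ is faithful, and for all $M,N$ the map $\Hom(M,N)\to\Hom(FM,FN)$ is a split monomorphism; its surjectivity follows from a diagram chase with projective presentations of $M$, using the exactness of $G$ and the full faithfulness of $G$ on $\add T$ to conclude that $G$ is faithful on the essential image of $F$. Thus $F$ is fully faithful. Finally, the factorization is immediate: the composite $\rep(A_n)\xrightarrow{\ \iota\ }\rep(A_{n+1})\xrightarrow{\ F_{n+1}\ }\mathcal D^\heartsuit$ sends $P_i^{(n)}\mapsto P_i^{(n+1)}\mapsto X_{-i}$ for $1\le i\le n$, so it agrees with $F_n$ on projectives, and two right-exact functors on $\rep(A_n)$ agreeing on projectives are naturally isomorphic.

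I expect the geometric input of the second paragraph — that each canonical map $X_{-(b+1)}\to X_{-a}$ is a monomorphism realizing exactly the cokernel predicted by the projective resolution of $M[a,b]$ — to be the main obstacle, since it is the one place where the combinatorics of $\rep(A_n)$ must be matched precisely against the geometry of tilting rectangles in the strip $\R\times[-\tfrac{\pi}{2},\tfrac{\pi}{2}]$; once that dictionary is set up, the remainder is routine homological algebra for hereditary categories.
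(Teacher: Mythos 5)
Your proof is correct and follows the same route as the paper's: the functor is determined by sending the projectives $P_{-i}$ to the points $X_i$ on $\mathcal Z$, and exactness together with full faithfulness comes from matching Hom spaces of corresponding indecomposables. The paper's own proof is only a two-sentence sketch (``one may check that the embedding is exact''), and your right-exact extension from $\operatorname{proj}(kA_n)$ together with the monomorphism lemma --- realizing $X_{-(b+1)}\to X_{-a}$ via the degenerate tilting rectangle whose bottom corner lies on $y=-\frac{\pi}{2}$ and checking that its right corner stays strictly left of $\mathcal Z[1]$ --- supplies exactly the details the paper omits.
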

		\begin{proof}
		    From the paragraph preceding the proposition, it follows that all pairwise distinct indecomposable projective objects in $\rep(A_n)$ are sent to pairwise distinct indecomposable projective objects in $\mathcal D^\heartsuit$.
		    Furthermore, for any two indecomposables $E, F$ in $\rep(A_n)$, the space of morphisms from $E$ to $F$ in $\rep(A_n)$ is isomorphic to the space of morphisms of the corresponding indecomposables in $\mathcal D^\heartsuit$.
		    From this, one may check that the embedding is exact.
		    The factorization follows from the definitions of the embeddings.
		\end{proof}paperbiblio
		
		The indecomposable objects of $\mathcal D^b(A_n)$ and $\mathcal D$ are shifts of copies of $\rep(A_n)$ and $\mathcal D^\heartsuit$, respectively.
		The following proposition then follows from straightforward computations.
		
		\begin{proposition}\label{prop:big finite derived embedding}
		    With the same notation as above, for each $n\geq 2$, there is a triangulated embedding $\Theta_n:\mathcal D^b(A_n)\to \mathcal D$ determined by sending $P_{-i}[m]\to X_i[m]$, for all $m\in\mathbb Z$.
	        Furthermore, the embedding factors as $\mathcal D^b(A_n)\to \mathcal D^b(A_{n+1})\to \mathcal D$.
		\end{proposition}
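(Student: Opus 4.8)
The plan is to obtain $\Theta_n$ by promoting the exact embedding $\iota\colon\rep(A_n)\to\mathcal D^\heartsuit$ of Proposition~\ref{prop:big finite representation embedding} to the level of derived categories, exploiting that both $\mathcal D^b(A_n)$ and $\mathcal D$ behave like derived categories of hereditary hearts. Since $kA_n$ is hereditary, every object of $\mathcal D^b(A_n)$ is isomorphic to a finite direct sum $\bigoplus_m M_m[m]$ with $M_m\in\rep(A_n)$; by Theorem~\ref{ref:nice heart} and the $t$-structure of Section~\ref{sec:t-structures} (whose heart $\mathcal D^\heartsuit$ has global dimension $1$), the same is true of $\mathcal D$ with $\rep(A_n)$ replaced by $\mathcal D^\heartsuit$, and moreover
\[
\Hom_{\mathcal D}(X[i],Y[j])\cong\Ext^{\,j-i}_{\mathcal D^\heartsuit}(X,Y),
\]
which vanishes unless $j-i\in\{0,1\}$; the identical formula holds in $\mathcal D^b(A_n)$. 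I would therefore define $\Theta_n$ on objects by $\Theta_n\bigl(\bigoplus_m M_m[m]\bigr):=\bigoplus_m\iota(M_m)[m]$, so that on indecomposables $P_{-i}[m]\mapsto X_i[m]$, and on morphisms summand-wise by $\iota$ on the $\Hom$-parts and by the map induced by $\iota$ on the $\Ext^1$-parts.

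The first task is to check that $\Theta_n$ is a fully faithful additive functor. By the two $\Hom$-formulas above, full faithfulness reduces to the statements that $\iota$ induces isomorphisms $\Hom_{\rep(A_n)}(M,N)\xrightarrow{\sim}\Hom_{\mathcal D^\heartsuit}(\iota M,\iota N)$ and $\Ext^1_{\rep(A_n)}(M,N)\xrightarrow{\sim}\Ext^1_{\mathcal D^\heartsuit}(\iota M,\iota N)$. The first is exactly that $\iota$ is an embedding (Proposition~\ref{prop:big finite representation embedding}), and the injectivity of the second is automatic for an exact fully faithful functor, since a short exact sequence that becomes split after applying $\iota$ is already split by fullness and faithfulness. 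Surjectivity of the second amounts to the essential image of $\iota$ being closed under extensions in $\mathcal D^\heartsuit$; this is the one genuine computation, and it is routine because $\iota(\rep(A_n))$ is the finite configuration of indecomposables of $\mathcal D^\heartsuit$ knitted off the $n$ points $X_{-n},\dots,X_{-1}$ of $\mathcal Z$, which is a copy of the Auslander--Reiten quiver of $A_n$ and hence closed under extensions. Compatibility of $\Theta_n$ with composition is then automatic: an exact functor preserves the Yoneda products $\Hom\cdot\Hom$, $\Hom\cdot\Ext^1$, $\Ext^1\cdot\Hom$, together with $\Ext^1\cdot\Ext^1=0$ in both categories, and these are precisely the compositions in the two derived categories.

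The second task is to see that $\Theta_n$ is triangulated. It commutes with $[1]$ by construction, so it remains to check that it takes distinguished triangles to distinguished triangles. Since $\rep(A_n)$ is hereditary, every distinguished triangle in $\mathcal D^b(A_n)$ is isomorphic to a finite direct sum of shifts of split triangles and of (rotations of) triangles $A\to B\to C\to A[1]$ arising from short exact sequences $0\to A\to B\to C\to 0$ in $\rep(A_n)$; exactness of $\iota$ sends such a short exact sequence to a short exact sequence in $\mathcal D^\heartsuit$, whose associated triangle in $\mathcal D$ is $\Theta_n$ of the original. (Equivalently, one may identify $\Theta_n$ with the realization functor of the bounded $t$-structure of Section~\ref{sec:t-structures} restricted to the triangulated subcategory of $\mathcal D$ generated by $\iota(\rep(A_n))$, which is triangulated for formal reasons.) Finally, the factorization $\rep(A_n)\to\rep(A_{n+1})\to\mathcal D^\heartsuit$ of Proposition~\ref{prop:big finite representation embedding} is a factorization through exact embeddings of hereditary hearts, and the construction above is functorial for such maps, so it produces triangulated embeddings $\mathcal D^b(A_n)\to\mathcal D^b(A_{n+1})\to\mathcal D$ whose composite is $\Theta_n$. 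I expect the main obstacle to be the extension-closedness of $\iota(\rep(A_n))$ inside $\mathcal D^\heartsuit$, equivalently the $\Ext^1$-fullness of $\iota$; everything else is bookkeeping with the hereditary structure, which is why the statement can legitimately be called a straightforward computation.
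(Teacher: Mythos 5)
Your proposal is correct and follows essentially the same route the paper takes: the paper simply observes that the indecomposables of $\mathcal D^b(A_n)$ and $\mathcal D$ are shifts of objects of $\rep(A_n)$ and $\mathcal D^\heartsuit$ respectively and declares the rest a ``straightforward computation,'' which is precisely the reduction you carry out via the hereditary decomposition, the $\Hom$/$\Ext^1$ full-faithfulness check, and exactness of $\iota$. Your identification of extension-closedness of the image as the only substantive verification is an accurate and useful elaboration of what the paper leaves implicit.
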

		
		Consider again the $\TT$-cluster $\mathcal T_U$ defined at the beginning of this section (depicted in Figure \ref{fig:embedding cluster}).
		The following notation will be of use in the proof of Theorem \ref{thm:big finite embedding}. Recall $\mathcal I^+$ as defined paperbiblioin Section \ref{sec:Amplituhedron for Dynkin Quivers}, page \pageref{tag:I +}.
		\begin{notation}\label{note:theta of cluster}
		Let $\mathcal S$ be a cluster of type $A_n$ consisting of indecomposables in $\repPlus$.
		Using $\Theta_n$ in Proposition \ref{prop:big finite derived embedding}, set $\widetilde{\mathcal S} := \left(\mathcal T_U\setminus \{X_i\}_{i\geq -n}\right)\cup \Theta(\mathcal S)$, where  $\{X_i \mid i\in\mathbb Z_{<0}\}$ is as before.
		\end{notation}
		
		\begin{proposition}\label{prop:embedding T computable}
		   With the same notation $\mathcal T_U$ as above, there is a unique solution $\Phi$ with respect to $\intc$
		   in $\mathcal{C}_{\mathcal{Z}_+}$ such that $\Phi(X)=0$, for all $X\in\mathcal T_U$.
		\end{proposition}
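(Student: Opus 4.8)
The plan is to reduce to the vertical-curve computation already carried out for Lemma~\ref{lem:vertical solution}, applied to one sub-triangle of $\ClosedHeart$ at a time, and then to glue the resulting data via Proposition~\ref{prop:all the values}. First I would set up notation as in the discussion of $\mathcal T_1$: write $\mathcal Z^{(i)}$ for the closed segment of $\mathcal Z$ between $X_i$ and $X_{i+1}$, and let $\Delta_i\subseteq\ClosedHeart$ be the closed sub-triangle with apex $E_i$, right side $\ell_i$ (the vertical segment), bottom side a segment of $y=-\frac{\pi}{2}$, and left side the slope-$(+1)$ segment through $E_i$. As in that discussion each $\Delta_i$ is isosceles and similar to the triangle cut out by $\mathcal Z$, $\mathcal Z[1]$ and $y=-\frac{\pi}{2}$, and inside $\Delta_i$ the segment $\ell_i$ has slope $\infty$ and meets every horizontal level, so it is a ``vertical curve'' in the sense of Example~\ref{xmp:universal examples}~(\ref{xmp:universal examples:vertical curve}).

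For existence I would build $\Phi$ on the zigzag $\mathcal Z$ and extend. Put $\Phi(X_i)=0$. For $W\in\mathcal Z^{(i)}$ with $W\neq X_i,X_{i+1}$, the point $W$ lies in $\Delta_i$ to the left of $\ell_i$, and the construction preceding Lemma~\ref{lem:vertical solution} (run with $\ell=\ell_i$, which uses only $\Phi\equiv 0$ on $\ell_i$, nonnegativity, and the deformed mesh relations) forces $\Phi(W)=\int_{R(W)}\intc$, where $R(W)$ is the part lying left of $\ell_i$ of the symmetric tilting rectangle with left corner $W$ whose vertical diagonal lies on the line through $\ell_i$; that same argument forces $\Phi\equiv 0$ on $\ell_i$ itself. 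I would then check the two consistency points needed here: $R(W)$ degenerates as $W\to X_i$ or $W\to X_{i+1}$ along $\mathcal Z$, so $\Phi(W)\to 0$ and setting $\Phi(X_i)=0$ is compatible; and the value assigned to a point of $\mathcal Z$ is unambiguous, since $\mathcal Z$ is a left boundary and only one $\Delta_i$ ever lies to its right. By Proposition~\ref{prop:all the values}, $\Phi|_{\mathcal Z}$ extends uniquely to $\Ind(\mathcal D)\sqcup\{0\}$ satisfying the continuous deformed mesh relations over $\intc$. To see this extension is a \emph{solution}, I would repeat the argument of Proposition~\ref{prop:cluster solutions}: any indecomposable $W\notin\mathcal T_U$ in $\ClosedHeart$ is incompatible with some member of $\mathcal T_U$, hence fits as a left (or right) corner of a tilting rectangle whose opposite corner lies in $\mathcal T_U$ and whose top and bottom corners already carry nonnegative values, so $\Phi(W)$ is a sum of nonnegative terms plus a strictly positive integral of $\intc$ (using $\intc>0$), whence $\Phi(W)>0$. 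Together with $\Phi\equiv 0$ on $\mathcal T_U$ this gives $\Phi\ge 0$ on $\Ind(\ClosedHeart)$, i.e.\ $\Phi$ is a solution vanishing on $\mathcal T_U$.

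Uniqueness then comes essentially for free, because every step above was forced: given any solution $\Psi$ with $\Psi|_{\mathcal T_U}=0$, the same vertical-curve argument applied with $\ell=\ell_i$ shows $\Psi$ agrees with $\Phi$ on each $\Delta_i$, hence on all of $\mathcal Z$, hence — by the uniqueness clause of Proposition~\ref{prop:all the values} — on all of $\Ind(\mathcal D)\sqcup\{0\}$. I expect the main obstacle to be the bookkeeping in the existence half: checking that the locally forced values genuinely assemble into a well-defined function on $\mathcal Z$, and above all that the global extension stays nonnegative on the ``interstitial'' regions between $\mathcal Z$ and the chain $\{\Delta_i\}_{i<0}$, uniformly as $i\to-\infty$ and near the accumulation point $(-\frac{\pi}{2},-\frac{\pi}{2})$ of the $X_i$. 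This is exactly the place where the (already asserted) maximality of $\mathcal T_U$ is used, through the Proposition~\ref{prop:cluster solutions}-style estimate; uniqueness, by contrast, should be immediate from the forcing structure.
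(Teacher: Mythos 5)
Your overall architecture --- force $\Phi$ on each small triangle via the vertical-curve construction, propagate to all of $\mathcal Z$, then invoke Proposition~\ref{prop:all the values} --- is exactly the paper's, but the central forcing step fails as you state it. Normalize $\mathcal Z$ to lie on $y=x$, so $X_i=(y_i,y_i)$. A point $W=(w,w)$ of $\mathcal Z^{(i)}$ with $y_i<w<y_{i+1}$ does \emph{not} lie in $\Delta_i$: it sits strictly to the upper-left of the left side of $\Delta_i$. More importantly, the ``symmetric tilting rectangle with left corner $W$ whose vertical diagonal lies on the line through $\ell_i$'' has its top corner at $(c,c)$ with $c=y_{i+1}+y_i+\tfrac{\pi}{2}$, which lies a height $2\bigl(y_i+\tfrac{\pi}{2}\bigr)>0$ \emph{above} $E_i$ --- so it is not on $\ell_i$, not in $\mathcal T_U$, and for $i$ near $-1$ not even inside the strip (so the rectangle is not a tilting rectangle in the sense of Definition~\ref{def:tilting rectangle}). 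Its deformed mesh relation therefore involves an unknown value at the top corner and does not force $\Phi(W)=\int_{R(W)}\intc$; your consistency checks are then checks on the wrong quantity. The repair is the paper's indirect route: first run the construction of Section~\ref{sec:continuingxmp} entirely inside $\Delta_i$ with $\ell=\ell_i$ to determine $\Phi$ on all of $\Delta_i$, in particular on its right side (the slope-$(-1)$ segment from $E_i$ down to $y=-\tfrac{\pi}{2}$); then note that the tilting rectangle with left corner $W$, top corner $X_{i+1}$, and bottom corner the zero object on $y=-\tfrac{\pi}{2}$ has its right corner precisely on that right side of $\Delta_i$, so its mesh relation determines $\Phi(W)$ from three already-known values. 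Your uniqueness argument relies on the same broken step and is repaired the same way.

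A second, independent problem is that your nonnegativity check is circular. Proposition~\ref{prop:cluster solutions} \emph{assumes} $\Phi$ is a solution, i.e.\ $\Phi\ge 0$ on all of $\Ind(\ClosedHeart)$, and its proof uses $\Phi(Y),\Phi(Z)\ge 0$ at the top and bottom corners of the tilting rectangle. You invoke the same estimate to \emph{prove} nonnegativity, asserting the top and bottom corners ``already carry nonnegative values,'' but no ordering or induction has been set up under which this is known first; for a general $W$ those corners are themselves points whose nonnegativity is exactly what is in question. To show the unique mesh-compatible extension is genuinely a solution you need a direct positivity argument --- e.g.\ exhibiting each $\Phi(W)$ as an integral of $\intc$ over an explicit region, as is done for points of the $\Delta_i$ --- rather than an appeal to Proposition~\ref{prop:cluster solutions}. (To be fair, the paper's own proof is also terse on this point, but your write-up presents the circular step as the justification.)
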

		\begin{proof}
		    Let $\Phi(X)=0$ for all $X\in\mathcal T_U$.
		    Then $\Phi$ is defined
		    on a countable number of points on $\mathcal Z_+$, but not on all of $\mathcal Z_+$.
		    By Lemma~\ref{lem:vertical solution}, if we consider each of the small triangles in the bottom of Figure~\ref{fig:T1} as a triangular patch, 
		    this $\Phi$ uniquely extends to all of them.
		    From here it is straightforward to check that we may uniquely extend $\Phi$ to all of $\mathcal Z_+$.
		    Then, Proposition \ref{prop:all the values} implies that $\Phi$ uniquely extends to all of $\mathcal{C}_{\mathcal{Z}_+}$.
		    Thus, we have a unique $\Phi$ as in the statement of the proposition.
		\end{proof}
		
		For each $A_n$, by Proposition \ref{prop:big finite derived embedding}, we may consider $\repPlus$ as a subcategory of $\mathcal{C}_{\mathcal{Z}_+}$.
		The almost split triangles in $\repPlus$, inherited from $\mathcal D^b(A_n)$, determine tilting rectangles in $\mathcal{C}_{\mathcal{Z}_+}$.
		Assign to each indecomposable $E$ in $\rep (A_n)$ the value of $\int \intc$ over the corresponding tilting rectangle in $\mathcal{C}_{\mathcal{Z}_+}$ determined by the almost split triangle starting with $E$.
		While not integer values, we may  still find values for the deformed mesh relations in $\repPlus$.paperbiblio
		Denote by $\mathbb U_{n,\intc}$ the associahedron obtained from $A_n$ using $\intc$ in this way.
	
		\begin{theorem}\label{thm:big finite embedding}
		    There is an infinite sequence of embeddings
		    \begin{displaymath}
			\mathbb U_{2,\intc}\hookrightarrow\mathbb U_{3,\intc}\hookrightarrow\cdots\hookrightarrow\mathbb U_{n,\intc}\hookrightarrow\mathbb U_{n+1,\intc}\hookrightarrow\cdots  \mathbb{U}_{\mathcal{Z}_+,\underline{c}}.
		\end{displaymath}
		For $n\geq 2$, the composition of embeddings $\mathbb U_{n,\intc}\to \mathbb{U}_{\mathcal{Z}_+,\underline{c}}$ takes the point corresponding to any cluster $\mathcal S$ to the unique solution $\Phi$ with respect to $\intc$ 
		in $\mathcal{C}_{\mathcal{Z}_+}$ such that $\Phi(X)=0$ for all $X\in\widetilde{\mathcal S}$ (Notation \ref{note:theta of cluster}).
		Furthermore, the composition $\mathbb U_{n,\intc}\to \mathbb{U}_{\mathcal{Z}_+,\underline{c}}$ takes a mutation edge to a $\TT$-mutation edge.
		\end{theorem}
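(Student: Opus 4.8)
The plan is to construct, for each $n\ge 2$, an affine injection $\Psi_n\colon\mathbb U_{n,\intc}\hookrightarrow\UZc$, to check that the images nest ($\mathrm{Im}(\Psi_n)\subseteq\mathrm{Im}(\Psi_{n+1})$) so that $\Psi_{n+1}^{-1}\circ\Psi_n$ furnishes the finite-to-finite map $\mathbb U_{n,\intc}\hookrightarrow\mathbb U_{n+1,\intc}$ and hence the whole tower, and finally to track clusters and $\TT$-mutations through the maps $\Psi_n$.

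To build $\Psi_n$, recall that the $\intc$-deformed mesh relations propagate from the projective slice, so a point $p\in\mathbb U_{n,\intc}\subseteq\prod_{\mathcal I}\R$ is determined by its values on $P_1,\dots,P_n$; transported through the triangulated embedding $\Theta_n$ of Proposition~\ref{prop:big finite derived embedding}, $p$ becomes a function on the finite set $\Theta_n(\mathcal I)\subseteq\Ind(\ClosedHeart)$ determined by its values at $X_{-1}=\Theta_n(P_1),\dots,X_{-n}=\Theta_n(P_n)$. Given such a $p$, I would define $\Phi=\Psi_n(p)$ on $\Ind(\mathcal D)\sqcup\{0\}$ by: letting $\Phi$ agree with (the transported) $p$ on $\Theta_n(\mathcal I)$; letting $\Phi$ vanish on every vertical ray $\ell_i$ and on every $X_i$ with $i<-n$; extending $\Phi$ over the triangular patches along the bottom of $\mathcal Z$ by Lemma~\ref{lem:vertical solution}; and then extending $\Phi$ to all of $\ClosedHeart$ and of $\mathcal D$ by Proposition~\ref{prop:all the values}. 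This is well defined because the $\intc$-integral over the tilting rectangle $\Theta_n(\Diamond)$ attached to an almost split triangle equals the constant used for $\Diamond$ in $\mathbb U_{n,\intc}$; hence the equations satisfied by $p$ are precisely the continuous deformed mesh relations on the rectangles coming from $\Theta_n$, so propagating $\Phi$ from $X_{-1},\dots,X_{-n}$ reproduces $p$ on all of $\Theta_n(\mathcal I)$. Every step (transport, zeroing out, and the extensions of Lemma~\ref{lem:vertical solution} and Proposition~\ref{prop:all the values}) is affine in $p$, and $p=\Phi|_{\Theta_n(\mathcal I)}$, so $\Psi_n$ is an affine injection.

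The substantive step is to check that $\Phi=\Psi_n(p)$ is a \emph{solution}, i.e.\ lies in $\UZc$. The continuous deformed mesh relations hold on every tilting rectangle of $\ClosedHeart$ by Proposition~\ref{prop:all the values}, so only $\Phi\ge 0$ on $\Ind(\ClosedHeart)$ remains. I would stratify $\Ind(\ClosedHeart)$: on $\Theta_n(\mathcal I)$ one has $\Phi=p\ge 0$; on the triangular patches below $\mathcal Z$ the construction preceding Lemma~\ref{lem:vertical solution} exhibits $\Phi$ as a surface integral of the strictly positive $\intc$ over a region, hence $\Phi>0$; and for the remaining indecomposables a case analysis on their position relative to $\mathcal T_U$ and $\Theta_n(\mathcal I)$ shows $\Phi$ is a sum of already-controlled nonnegative values and $\intc$-integrals. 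This nonnegativity is the main obstacle, and it is exactly where the particular shape of $\mathcal T_U$ is needed: for a general $\TT$-cluster the quilting extension of Proposition~\ref{prop:all the values} need not stay in the positive orthant, whereas $\mathcal T_U$ is built (like $\mathcal T_1$ of Section~\ref{sec:totally disconnected}, but with a rightmost bottom triangle) so that every new indecomposable is caught by a tilting rectangle with corners among the controlled points. Closely tied to this, I would verify that for any cluster $\mathcal S$ of $A_n$ the set $\widetilde{\mathcal S}=(\mathcal T_U\setminus\{X_i\}_{i\ge-n})\cup\Theta_n(\mathcal S)$ is a genuine $\TT$-cluster: pairwise compatibility among the elements from $\mathcal S$ follows from Remarks~\ref{rmk:finite compatibility} and~\ref{rmk:geometric compatibility} (finite compatibility matches the tilting-rectangle condition), compatibility across the ``finite'' region and the bottom triangles is a geometric separation argument, and maximality combines the maximality of $\mathcal S$ with that of $\mathcal T_U$ on the patches.

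Finally I would trace the cluster structure. For a cluster $\mathcal S$ of $A_n$, the corresponding vertex $v_{\mathcal S}\in\mathbb U_{n,\intc}$ satisfies $p(Y)=0$ for all $Y\in\mathcal S$ (Theorem~\ref{th-one}), so $\Psi_n(v_{\mathcal S})$ vanishes on $\Theta_n(\mathcal S)$ and on $\mathcal T_U\setminus\{X_i\}_{i\ge-n}$, i.e.\ on $\widetilde{\mathcal S}$, and by the construction it is the unique solution with this property (this recovers Proposition~\ref{prop:embedding T computable} when $\mathcal S$ is the projective cluster, for then $\widetilde{\mathcal S}=\mathcal T_U$); Theorem~\ref{thm:clusters are extremal} then places it on the boundary of $\UZc$. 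If $\mathcal S\to\mathcal S'=(\mathcal S\setminus\{M\})\cup\{M'\}$ is a mutation in $A_n$, then $\widetilde{\mathcal S'}=(\widetilde{\mathcal S}\setminus\{\Theta_n(M)\})\cup\{\Theta_n(M')\}$ is a $\TT$-mutation of $\widetilde{\mathcal S}$, so by Theorem~\ref{thm:mutation on associahedron} the solutions of $\widetilde{\mathcal S}$ and $\widetilde{\mathcal S'}$ are joined in $\UZc$ by the edge $\bigcap_{X\in\widetilde{\mathcal S}\cap\widetilde{\mathcal S'}}\HZc(X)\cap\UZc$; since $\Psi_n$ is affine and injective and carries $v_{\mathcal S},v_{\mathcal S'}$ to those two solutions, it carries the mutation edge $\bigcap_{Y\in\mathcal S\cap\mathcal S'}\{p(Y)=0\}\cap\mathbb U_{n,\intc}$ (again Theorem~\ref{th-one}) onto this $\TT$-mutation edge. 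For the maps $\mathbb U_{n,\intc}\hookrightarrow\mathbb U_{n+1,\intc}$, the factorization of $\Theta_n$ through $\Theta_{n+1}$ (Proposition~\ref{prop:big finite derived embedding}) together with $X_{-(n+1)}=\Theta_{n+1}(P_{n+1})$ gives $\mathrm{Im}(\Psi_n)\subseteq\mathrm{Im}(\Psi_{n+1})$ and the identity $\widetilde{\iota(\mathcal S)\cup\{P_{n+1}\}}=\widetilde{\mathcal S}$ for the inclusion $\iota\colon\rep(A_n)\hookrightarrow\rep(A_{n+1})$; hence $\Psi_{n+1}^{-1}\circ\Psi_n$ is a well-defined affine injection sending the vertex of $\mathcal S$ to that of $\iota(\mathcal S)\cup\{P_{n+1}\}$ and mutation edges to mutation edges, which assembles the infinite tower and its compatibility with clusters and $\TT$-mutations.
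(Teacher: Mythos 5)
Your proposal follows the same overall strategy as the paper --- everything is built from the triangulated embeddings $\Theta_n$ of Proposition~\ref{prop:big finite derived embedding}, the background $\TT$-cluster $\mathcal T_U$, the assignment $\mathcal S\mapsto\widetilde{\mathcal S}$, Lemma~\ref{lem:vertical solution}, Proposition~\ref{prop:all the values}, and Theorems~\ref{thm:clusters are extremal} and~\ref{thm:mutation on associahedron} --- but you organize it differently: you construct an explicit affine injection $\Psi_n$ on all of $\mathbb U_{n,\intc}$ first and read off the cluster and mutation statements afterwards, whereas the paper first proves the combinatorial statements (clusters go to $\TT$-clusters, mutations to $\TT$-mutations) and treats the ambient geometric embedding only briefly at the end. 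Your route has a genuine payoff: the existence and uniqueness of the solution vanishing on $\widetilde{\mathcal S}$ fall out of the construction of $\Psi_n$ directly, whereas the paper obtains them by running a finite sequence of $\TT$-mutations from $\widetilde{\mathcal S}$ back to $\mathcal T_U$ and invoking Proposition~\ref{prop:embedding T computable}.

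The one place you are too quick is the verification that $\widetilde{\mathcal S}$ is actually a $\TT$-cluster, which is where the paper spends most of its effort. Pairwise compatibility is indeed routine, but maximality is not a formal consequence of ``the maximality of $\mathcal S$ plus that of $\mathcal T_U$ on the patches'': the set $\mathcal T_U\setminus\{X_i\}_{i\ge -n}$ is far from maximal on its own, and $\mathcal S$ is maximal only inside $\repPlus$. The missing step is to show that any indecomposable $Y\notin\widetilde{\mathcal S}$ that is compatible with every $\ell_i$ and every $X_i$ with $i<-n$ is forced to lie in the image of $\Theta_n|_{\repPlus}$. The paper does this by tracing the two rays out of $Y$ down to points $E_j$ and $E_i$ of the construction, producing a distinguished triangle $X_j\to X_{i+1}\to Y\to{}$ with all terms in $\ClosedHeart$, and then pulling this back through the triangulated embedding to exhibit $Y=\Theta_n(F)$ with $F\notin\mathcal S$, so that an incompatibility inside $A_n$ can be transported to one in $\ClosedHeart$. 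Essentially the same case analysis is what must underlie your asserted nonnegativity of $\Psi_n(p)$ at indecomposables outside $\Theta_n(\mathcal I)$ and outside the bottom triangles, so it should be written out rather than left as ``a geometric separation argument.'' With that step supplied, your argument goes through and recovers the theorem.
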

		\begin{proof}
		    We first show that each $\Theta_n|_{\repPlus}$ in Proposition \ref{prop:big finite derived embedding} takes clusters in $\repPlus$ to $\TT$-clusters in $\mathcal{C}_{\mathcal{Z}_+}$.
		    Then we show that $\Theta_n|_{\repPlus}$ takes mutations to $\TT$-mutations.
		    
		    Let $\mathcal S$ be a cluster in $\repPlus$ and $\widetilde{\mathcal S}$ be as in Notation~\ref{note:theta of cluster}.
		    Suppose $Y\in\mathcal{C}_{\mathcal{Z}_+}$ but $Y\notin\widetilde{\mathcal S}$.
		    We show that there must exist a $T\in\widetilde{\mathcal S}$ such that $T$ and $Y$ are incompatible.
		    If there exists $T\in\ell_i$, fpaperbiblioor some $i<0$, such that $T$ and $Y$ are incompatible we are done.
		    So, for every $i<0$, suppose $Y$ is compatible with all $X\in\ell_i$.
		    Using Figure \ref{fig:embedding cluster}, we see the rays extending from $Y$ in the negative $y$-direction with slope $-1$ and $+1$ must intersect some $E_j$ and $E_i$, respectively.
		    Otherwise, one of the rays intersects one of the $\ell_i$'s, which contradicts our assumption. This is because we took $A_n$ to be the linearly ordered quiver with $P_i\hookrightarrow P_j$ if $i>j$.
		    
		    If $E_i=E_{-1}$ then $Y=X_j[1]$ because $Y\in\Ind(\mathcal Z_+[1])$, and there is a distinguished triangle $X_j\to 0 \to Y\stackrel{\cong}{\to} X_j[1]$.
		    If $E_i\neq E_{-1}$, there is a distinguished triangle $X_j\to X_{i+1}\to Y\to$ in $\mathcal D$ with all terms in $\mathcal{C}_{\mathcal{Z}_+}$.
		    In either case, we use the fact that $\Theta_n$ is a triangulated embedding by Proposition~\ref{prop:big finite derived embedding}.
		    This implies there is a distinguished triangle $P_j\to W\to F\to$ in $\mathcal D^b(A_n)$ whose terms are in $\repPlus$, such that $Y=\Theta_n(F)$.
		    Since $Y\notin \widetilde{\mathcal S}$, we have $F\notin \mathcal S$.
		    Hence, there is $S\in\mathcal S$ such that $S$ and $F$ are incompatible.
		    Using Proposition \ref{prop:big finite derived embedding} again, we have $\Theta_n(S)$ and $Y$ are incompatible. Now $\Theta_n(S)$ is the desired $T\in\widetilde{\mathcal S}$ such that $T$ and $Y$ are incompatible. This shows that each $\Theta_n|_{\repPlus}$ in Proposition \ref{prop:big finite derived embedding} takes clusters in $\repPlus$ to $\TT$-clusters in $\mathcal{C}_{\mathcal{Z}_+}$.
		    
		    Now, we let $\mu:\mathcal S\to\mathcal S'$ be a mutation of clusters in $\repPlus$.
		    Then $\widetilde{\mathcal S}$ and $\widetilde{\mathcal S'}$ differ by one element.
		    Thus, by Proposition \ref{prop:mutation yields a unique cluster}, $\widetilde{\mathcal S}\to \widetilde{\mathcal S'}$ is a $\TT$-mutation.
		    
		    We finish the proof by showing that each $\Theta_n$ induces a geometric embedding $\mathbb U_{n,\intc}\to \mathbb{U}_{\mathcal{Z}_+,\underline{c}}$.
		    We index the coordinates of the ambient space $\prod_{\mathcal I^+}\mathbb R$ of $\mathbb U_{n,\intc}$ by the indecomposables in $\mathcal I^+$.
		    Then $\Theta_n|_{\repPlus}$ induces a geometric embedding $\prod_{\mathcal I^+}\mathbb R\hookrightarrow \prod_{\Ind(\mathcal{C}_{\mathcal{Z}_+})}\R$.
		    Note that the deformed mesh relations in $\repPlus$ (Section \ref{sec:Amplituhedron for Dynkin Quivers}, Equation~\eqref{defmesh}) satisfy the continuous deformed mesh relations in $\mathcal{C}_{\mathcal{Z}_+}$ (Definition \ref{def:contintuous deformed mesh relations}).
		    Thus, the embedding takes nonnegative solutions to nonnegative solutions.
		    Given our embeddings, for each $n\geq 2$ we may consider an $A_n$ cluster and complete it to an $A_{n+1}$ cluster by including the projective $P_{n+1}$.
            By this assumption and the paragraph before the theorem, the deformed mesh relations for $A_n$ can be viewed as a set of deformed mesh relations for $A_{n+1}$.
	
		    Furthermore, for each $A_n$ cluster $\mathcal S$, there exists a sequence of finitely many mutations $\{\mu_i\}_{i=1}^m$ such that $\mu_m\cdots\mu_1$ takes $\mathcal S$ to the cluster of projective indecomposables.
		    Therefore, by our previous argument, the corresponding sequence of $\TT$-mutations of $\widetilde{\mathcal S}$ forms a finite sequence that takes $\widetilde{\mathcal S}$ to $\mathcal T_U$.
		    Then by Proposition \ref{prop:embedding T computable}, for each $\widetilde{\mathcal S}$ above, there is a unique solution $\Phi$ with respect to $\intc$ in $\mathcal{C}_{\mathcal{Z}_+}$ such that $\Phi(X)=0$, for all $X\in\widetilde{\mathcal S}$.
		    Finally, since each $\Theta_n$ takes $A_n$ clusters to $\TT$-clusters and mutations to $\TT$-mutations, Theorem \ref{thm:mutation on associahedron} implies the final statement of the theorem.
		\end{proof}
		We remark that in Theorem \ref{thm:big finite embedding} there is no $\mathbb U_{n,\intc}$ (or indeed any discrete associahedron) that immediately precedes $\mathbb{U}_{\mathcal{Z}_+,\underline{c}}$.

        We finish this section by pointing out that an interesting uniform treatment of all generalized associahedra of a given finite Dynkin type has appeared in \cite{HPS18}, in terms of what the authors called the ``universal associahedron". The universal associahedron is a different generalization from the continuous associahedron introduced in our work. 
        In particular, in Theorem~\ref{thm:big finite embedding}, we vary $n$ but maintain a given zigzag (which can be viewed as an initial acyclic seed), whereas in the study of the universal associahedron the authors work with different initial seeds for a fixed $n$. For more details, see \cite{HPS18} and the references therein.

\end{document}